\documentclass{amsart}
\usepackage[utf8]{inputenc}
\usepackage{amssymb}
\usepackage{hyperref}
\usepackage[final]{showkeys} 

\input xy
\xyoption{all}

\theoremstyle{definition}
\newtheorem{mydef}{Definition}[section]
\newtheorem{lem}[mydef]{Lemma}
\newtheorem{thm}[mydef]{Theorem}
\newtheorem{conjecture}[mydef]{Conjecture}
\newtheorem{cor}[mydef]{Corollary}

\newtheorem{hypothesis}[mydef]{Hypothesis}

\newtheorem{defin}[mydef]{Definition}
\newtheorem{example}[mydef]{Example}
\newtheorem{remark}[mydef]{Remark}

\newtheorem{notation}[mydef]{Notation}
\newtheorem{fact}[mydef]{Fact}

\newcommand{\fct}[2]{{}^{#1}#2}



\newcommand{\ba}{\bar{a}}
\newcommand{\bb}{\bar{b}}

\newcommand{\bx}{\bar{x}}
\newcommand{\by}{\bar{y}}

\newcommand{\Ksatpp}[2]{{#1}^{#2\text{-sat}}}
\newcommand{\Ksatp}[1]{\Ksatpp{\K}{#1}}


\newcommand{\sea}{\mathfrak{C}}

\newcommand{\cf}[1]{\text{cf} (#1)}
\newcommand{\seq}[1]{\langle #1 \rangle}
\newcommand{\rest}{\upharpoonright}

\newcommand{\s}{\mathfrak{s}}

\newcommand{\id}{\text{id}}





\newcommand{\leap}[1]{\le_{#1}}
\newcommand{\ltap}[1]{<_{#1}}

\newcommand{\geap}[1]{\ge_{#1}}

\newcommand{\lta}{\ltap{\K}}
\newcommand{\lea}{\leap{\K}}

\newcommand{\gea}{\geap{\K}}


\def\lee{\preceq}


\newcommand{\K}{\mathbf{K}}

\newcommand{\Kslpp}[2]{{#1}^{{#2}\text{-sl}}}
\newcommand{\Kslp}[1]{\Kslpp{\K}{#1}}


\newbox\noforkbox \newdimen\forklinewidth
\forklinewidth=0.3pt \setbox0\hbox{$\textstyle\smile$}
\setbox1\hbox to \wd0{\hfil\vrule width \forklinewidth depth-2pt
 height 10pt \hfil}
\wd1=0 cm \setbox\noforkbox\hbox{\lower 2pt\box1\lower
2pt\box0\relax}
\def\unionstick{\mathop{\copy\noforkbox}\limits}
\newcommand{\nf}{\unionstick}
\newcommand{\nfs}[4]{#2 \nf_{#1}^{#4} #3}

\def\1nf{\unionstick^{(1)}}

\def\2nf{\unionstick^{(2)}}
\def\3nf{\unionstick^{(3)}}


\newcommand{\gtp}{\mathbf{tp}}

\newcommand{\gS}{\mathbf{S}}
\newcommand{\gSna}{\mathbf{S}^{\text{na}}}

\newcommand{\Sna}{\gS^\text{na}}
\newcommand{\Sbs}{\gS^\text{bs}}

\newcommand{\Ii}{\mathbb{I}}

\newcommand{\hanf}[1]{h (#1)}
\newcommand{\ehanf}[1]{\beth_{\left(2^{#1}\right)^+}}


\newcommand{\EM}{\operatorname{EM}}
\newcommand{\Ll}{\mathbb{L}}
\newcommand{\otp}{\operatorname{otp}}

\newcommand{\WGCH}{\operatorname{WGCH}}

\newcommand{\GCH}{\operatorname{GCH}}
\newcommand{\GCHWD}{\operatorname{GCHWD}}

\newcommand{\Card}{\operatorname{CARD}}

\newcommand{\NF}{\operatorname{NF}}

\newcommand{\tlt}{\triangleleft}
\newcommand{\tleq}{\trianglelefteq}

\newcommand{\Cat}{\operatorname{Cat}}

\newcommand{\goodp}{\text{good}^+}

\newcommand{\LS}{\text{LS}}









\newcommand{\Tow}{\mathcal{T}}
\newcommand{\R}{\mathcal{R}}

\title[Categoricity spectrum of large AECs]{The categoricity spectrum of large abstract elementary classes}
\date{\today \\
AMS 2010 Subject Classification: Primary 03C48. Secondary: 03C45, 03C52, 03C55, 03C75, 03E05.}
\keywords{abstract elementary classes, categoricity, amalgamation, no maximal models, good frames, towers, weak generalized continuum hypothesis}

\parindent 0pt
\parskip 5pt

\setcounter{tocdepth}{1}

\author{Sebastien Vasey}
\email{sebv@math.harvard.edu}
\urladdr{http://math.harvard.edu/\textasciitilde sebv/}
\address{Department of Mathematics \\ Harvard University \\ Cambridge, Massachusetts, USA}

\begin{document}

\begin{abstract}
  The categoricity spectrum of a class of structures is the collection of cardinals in which the class has a single model up to isomorphism. Assuming that cardinal exponentiation is injective (a weakening of the generalized continuum hypothesis, GCH), we give a complete list of the possible categoricity spectrums of an abstract elementary class with amalgamation and arbitrarily large models. Specifically, the categoricity spectrum is either empty, an end segment starting below the Hanf number, or a closed interval consisting of finite successors of the Löwenheim-Skolem-Tarski number (there are examples of each type). We also prove (assuming a strengthening of the GCH) that the categoricity spectrum of an abstract elementary class with no maximal models is either bounded or contains an end segment. This answers several longstanding questions around Shelah's categoricity conjecture.
\end{abstract}

\maketitle

\tableofcontents

\section{Introduction}

\subsection{Motivation}

A recurring question in mathematics is whether a list of properties characterize a certain structure. To make this precise, let us restrict ourselves here to structures in the model-theoretic sense: a universe with operations and relations on it. We say that a class $K$ of such structures is \emph{categorical} if it contains exactly one member up to isomorphism. The Löwenheim-Skolem theorem, a basic result of model theory, says that if a set $T$ of sentences in first-order logic\footnote{That is, sentences like $\forall x \exists y: x \cdot y = 1 \land y \cdot x = 1$: quantification is over elements and only finite conjunctions and disjunctions are allowed.} has an infinite model, then it has models of all infinite sizes (at least $|T|$). In particular, the class of models of $T$ cannot be categorical.

The following weakening of the definition of categoricity was proposed as a remedy by Łoś \cite{los-conjecture}: for $\lambda$ a cardinal, we say that a class $K$ of structures is \emph{categorical in $\lambda$} if it has a single model \emph{of cardinality $\lambda$} up to isomorphism. We say that a theory (i.e.\ a set of sentences) $T$ is \emph{categorical in $\lambda$} if its class of models is categorical in $\lambda$. The following central result of modern model theory describes the behavior of categoricity  for countable first-order theories:

\begin{fact}[Morley's categoricity theorem, \cite{morley-cip}]
  If a countable first-order theory is categorical in \emph{some} uncountable cardinal, then it is categorical in \emph{all} uncountable cardinals.
\end{fact}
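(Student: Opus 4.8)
The plan is to reduce the statement to two structural facts about the (countable, complete) theory $T$: (A) categoricity in one uncountable cardinal forces $T$ to be $\omega$-stable and to admit no Vaughtian pairs; and (B) any $\omega$-stable theory without Vaughtian pairs is categorical in every uncountable cardinal. Combining (A) and (B) gives the transfer immediately: categoricity in some uncountable $\lambda$ yields the two structural properties, which in turn yield categoricity in all uncountable cardinals. This routes around the Löwenheim--Skolem obstruction by replacing "same structure" with "same dimension invariant."

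For the $\omega$-stability half of (A), I would argue contrapositively. If $T$ is not $\omega$-stable, there is a countable set $A$ with uncountably many complete types in $S(A)$; from this I can extract a binary tree $\langle \varphi_\eta : \eta \in 2^{<\omega} \rangle$ of formulas over a countable parameter set, consistent along each branch and pairwise inconsistent across branches, producing $2^{\aleph_0}$ types over a countable set. Using Ehrenfeucht--Mostowski models built over a linear order of size $\lambda$, I can then manufacture a model of cardinality $\lambda$ that omits most of these types (hence is far from saturated), while a saturated model of cardinality $\lambda$ realizes them; the two are non-isomorphic, contradicting categoricity. The no-Vaughtian-pairs half is similar in spirit: a Vaughtian pair feeds Vaught's two-cardinal theorem to build, alongside a model of size $\lambda$ in which some definable set has full size $\lambda$, a second model of size $\lambda$ in which that definable set is small, again violating categoricity.

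For (B), $\omega$-stability supplies Morley rank, existence and uniqueness of prime models over arbitrary sets, and---using that there are no Vaughtian pairs---a strongly minimal formula $\varphi$. On $\varphi(\mathfrak{C})$ algebraic closure is a pregeometry, so it carries a well-defined dimension. I would show that every model of $T$ is prime and minimal over a maximal $\varphi$-independent subset, so that a model is determined up to isomorphism by this single dimension invariant; the absence of Vaughtian pairs is exactly what guarantees minimality (no proper elementary submodel can capture the whole strongly minimal set). For uncountable $\lambda$ the dimension of any model of size $\lambda$ must equal $\lambda$, whence any two such models share the same invariant and are isomorphic.

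The main obstacle, and the genuine content of the argument, is coupling $\omega$-stability with the absence of Vaughtian pairs to prove that an uncountable model is controlled by a \emph{single} dimension equal to its cardinality. Each ingredient---the existence of a strongly minimal formula, the pregeometry of algebraic closure, the two-cardinal theorem, and the minimality of prime models---is individually delicate, and it is their interaction (rather than any isolated computation) that drives the theorem. By contrast, the type-counting in (A) is comparatively routine once the Ehrenfeucht--Mostowski machinery is in place.
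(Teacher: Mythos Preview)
The paper does not prove this fact: it is stated in the introduction purely as historical motivation and is accompanied only by a citation to Morley's original paper \cite{morley-cip}, with no argument given. There is therefore nothing in the paper to compare your proposal against.

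That said, your sketch is the standard Baldwin--Lachlan route to Morley's theorem and is essentially correct as an outline. A couple of small points worth tightening if you were to write it up: in part (A), the construction of a non-saturated model of size $\lambda$ from failure of $\omega$-stability is usually phrased via the omitting types theorem together with an Ehrenfeucht--Mostowski model realizing few types over countable sets, and one should take care that the argument handles the case $\lambda < 2^{\aleph_0}$ (where the type-counting alone already gives too many models) and $\lambda \ge 2^{\aleph_0}$ (where the EM construction is needed) uniformly. In part (B), the existence of a strongly minimal formula requires not just $\omega$-stability but also the absence of Vaughtian pairs (to rule out an infinite descending chain of Morley rank on the definable sets of rank one), so the two hypotheses interact already at that step, not only at the minimality argument you flag.
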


Two examples of classes that are categorical in all uncountable cardinals are the class of vector spaces (over a fixed countable field) and the class of algebraically closed fields (of a fixed characteristic). The reason for categoricity in both classes is that they have well-understood notions of independence (linear independence in  vector spaces, algebraic independence in fields). The proof of Morley's theorem (as well as further developments) tells us that this is not an accident: \emph{any} class of models of an uncountably categorical countable first-order theory will have a nice notion of independence. Such a notion was baptized \emph{forking} by Shelah \cite{shelahfobook} and is now central, both for pure model theory and for applications to other fields of mathematics (on the latter, see e.g.\ Hrushovski's proof of the Mordell-Lang conjecture for function fields \cite{mordell-lang-hrushovski}).

\subsection{Shelah's categoricity conjecture}

One can see Morley's theorem (and its earlier conjecture by Łoś \cite{los-conjecture}) as the catalyst that led to the development of forking for first-order theories. It is natural to ask whether Morley's theorem (and hence forking) can be generalized to other, not necessarily first-order, classes of structures. For uncountable first-order theories, Shelah \cite{sh31} proved that a first-order theory $T$ categorical in \emph{some} $\mu > |T|$ is categorical in \emph{all} $\mu' > |T|$. The next step was to look at infinitary logics such as $\Ll_{\omega_1, \omega}$ (where countably infinite conjunctions and disjunctions are allowed). The situation here is much more complicated, since the compactness theorem fails, but nevertheless Shelah \cite[Conjecture 2]{sh87a} conjectured the following version of Morley's categoricity theorem\footnote{The conjecture also appears as open problem D.3(a) in \cite{shelahfobook}.}:

\begin{conjecture}[Shelah's categoricity conjecture for $\Ll_{\omega_1, \omega}$]\label{categ-syn}
  If an $\Ll_{\omega_1, \omega}$-sentence is categorical in \emph{some} $\mu \ge \beth_{\omega_1}$, then it is categorical in \emph{all} $\mu' \ge \beth_{\omega_1}$.
\end{conjecture}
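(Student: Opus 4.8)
The plan is to translate the problem into the theory of abstract elementary classes (AECs) and then run a categoricity transfer driven by a forking-like independence notion. First I would invoke Shelah's presentation theorem to realize the class $\K$ of models of the given $\Ll_{\omega_1, \omega}$-sentence, equipped with the substructure relation coming from a countable fragment, as an AEC with $\LS(\K) = \aleph_0$. Since $\beth_{\omega_1}$ is the Hanf number for existence in this setting, categoricity in some $\mu \ge \beth_{\omega_1}$ places us above that Hanf number, where Ehrenfeucht--Mostowski models and order-indiscernibles become available; in particular the class has arbitrarily large models and can be analyzed structurally. The precise alignment between $\beth_{\omega_1}$ and the larger AEC Hanf number is a technical point I would absorb into the later transfer.

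The second step is to extract good structural hypotheses from categoricity in a single large $\mu$. Here I would argue that categoricity above the Hanf number forces amalgamation, joint embedding, and no maximal models in the relevant range of cardinals, the slogan being that a failure of amalgamation in a small cardinal can be amplified through EM models into many nonisomorphic models of size $\mu$, contradicting categoricity. With amalgamation secured I would establish stability and then the existence of a well-behaved independence notion localized to models of a fixed size, i.e. a \emph{good $\lambda$-frame} in Shelah's sense. This is where a generalized continuum hypothesis assumption such as $\WGCH$ enters, through Shelah's few-models machinery, which converts the categoricity hypothesis into the non-splitting and non-forking amalgamation axioms of the frame.

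The third step is the transfer itself. Using the good frame I would build towers and limit models, prove uniqueness of limit models of a given length, and thereby move categoricity one cardinal at a time, both upward and downward, above the Hanf number. Uniqueness of limit models plays the role that uniqueness of saturated or prime models plays in the first-order proof of Morley's theorem: categoricity in $\lambda^+$ means the model of size $\lambda^+$ is saturated, and saturation propagates through the frame. Iterating yields categoricity on an end segment at or below $\beth_{\omega_1}$, and a complementary downward argument closes the gap so that categoricity in one $\mu \ge \beth_{\omega_1}$ gives categoricity in all $\mu' \ge \beth_{\omega_1}$.

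I expect the main obstacle to be the construction of the good frame together with the downward transfer. Building the frame requires symmetry and local character of the independence notion, properties that are not automatic and, lacking compactness in the $\Ll_{\omega_1, \omega}$ setting, must be squeezed out of the categoricity and $\WGCH$ hypotheses rather than imported. The downward transfer is delicate because no a priori bound separates the categoricity cardinal from the Hanf number, so one must rule out categoricity first appearing at an arbitrarily high $\mu$ without already holding below it, which is exactly the behavior the paper's spectrum theorem, classifying the spectrum as empty, an end segment starting below the Hanf number, or a bounded interval, is designed to control.
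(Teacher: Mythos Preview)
The statement you are attempting to prove is a \emph{conjecture}, not a theorem: the paper presents it as Shelah's open categoricity conjecture for $\Ll_{\omega_1,\omega}$ and does not claim to prove it. There is therefore no ``paper's own proof'' to compare against. The paper's main results (e.g.\ Corollary~\ref{main-cor}) concern AECs that are \emph{assumed} to have amalgamation, and they further assume $\WGCH$; neither hypothesis is part of Conjecture~\ref{categ-syn}.

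Your proposal contains a genuine gap at the second step. You write that ``categoricity above the Hanf number forces amalgamation, joint embedding, and no maximal models in the relevant range of cardinals,'' but this is itself an open problem: the paper explicitly cites it as Grossberg's conjecture (see the discussion before Corollary~\ref{main-cor} in the introduction, and \cite[2.3]{grossberg2002}). The known routes to amalgamation from categoricity either assume large cardinals, or assume categoricity in a \emph{long interval} of cardinals together with $\WGCH$ (e.g.\ \cite[I.3.8]{shelahaecbook}), not categoricity in a single $\mu$. Your sketch of ``amplifying a failure of amalgamation through EM models into many nonisomorphic models of size $\mu$'' is not known to work in this generality. Without amalgamation, the subsequent steps---building a good frame, defining Galois types with the right properties, running the transfer---do not get off the ground. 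In short, your outline would, if it worked, resolve a famous open problem; the obstacle you identify at the end (the downward transfer) is real, but the earlier amalgamation step is already the decisive gap.
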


Here, $\omega_1$ is the first uncountable ordinal, and for an ordinal $\alpha$, $\beth_{\alpha}$ is the cardinal obtained by iterating cardinal exponentiation $\alpha$-many times, starting with $\aleph_0$ (see Section \ref{set-thy-sec} for a precise definition). The spirit of the conjecture is that categoricity \emph{somewhere} high-enough should imply categoricity \emph{everywhere} high-enough (and indeed, even that eventual version is open). A construction of Morley (see Example \ref{categ-examples} here) shows that the ``high-enough'' threshold must in this case be at least $\beth_{\omega_1}$.

As the compactness theorem fails, and moreover there is a plethora of other ``reasonable'' logics to work with, it turns out to be convenient to work semantically rather than syntactically. The framework of \emph{abstract elementary classes} (AECs), introduced by Shelah in the late seventies \cite{sh88}, encompasses ``reasonable'' infinitary logics such as $\Ll_{\omega_1, \omega}$ as well as natural classes of algebraic examples (such as Zilber's pseudoexponential fields \cite{zilber-pseudoexp} or certain classes of modules \cite{bet}). Roughly, an AEC is a partially ordered class $\K = (K, \lea)$ of structures satisfying some closure properties. For example, it must be closed under unions of chains and any member of the class must have a substructure which is also in the class and has size at most a fixed cardinal, the Löwenheim-Skolem-Tarski number of the class, written $\LS (\K)$ (this is a replacement for the size of the theory). The reader should see \ref{aec-def} here for a precise definition, and the introductions of \cite{shelahaecbook}, \cite{baldwinbook09}, or \cite{grossberg2002} for more motivation on AECs. Note that AECs can also be characterized as certain kind of accessible categories, see \cite{beke-rosicky, ct-accessible-jsl}.

Shelah has given the following version of Conjecture \ref{categ-syn} for AECs (see \cite[6.14]{sh702}, \cite[N.4.3]{shelahaecbook}):

\begin{conjecture}[Shelah's categoricity conjecture for AECs]\label{categ-aec}
  If an AEC $\K$ is categorical in \emph{some} $\mu \ge \ehanf{\LS (\K)}$, then $\K$ is categorical in \emph{all} $\mu' \ge \ehanf{\LS (\K)}$.
\end{conjecture}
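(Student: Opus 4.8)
The plan is to deduce categoricity everywhere above the Hanf number from categoricity somewhere above it, by first extracting enough structural tameness and independence from the single categoricity hypothesis, and then transporting categoricity both upward and downward along that structure. At the coarsest level this mirrors the first-order strategy behind Morley's theorem: categoricity should force the relevant models to be ``saturated'', saturated models of a fixed cardinality are unique, so the real content is to show that \emph{every} sufficiently large model must be saturated once one of them is.

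First I would invoke Shelah's presentation theorem to realize $\K$ as the class of models of a first-order theory omitting a set of types, and thereby build Ehrenfeucht-Mostowski models over arbitrarily large linear orders. This is exactly the step that explains the threshold $\ehanf{\LS (\K)}$: the Hanf number for omitting types sits at that cardinal, so above it categoricity cannot be a low-cardinality accident and the EM models can transport structure freely between cardinals. From categoricity in a high $\mu$ I would then extract, working below $\mu$, amalgamation, joint embedding, no maximal models, and stability; the model of size $\mu$ should be shown to be saturated, and EM-model arguments push a degree of homogeneity downward.

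Next I would establish \emph{tameness} — the locality principle that Galois types over models are determined by their restrictions to small submodels — and use it to build a local independence notion: a good $\lambda$-frame encoding a nonforking relation $\nf$ at a single cardinal just above $\LS (\K)$. Using tameness I would transfer this frame upward so that $\nf$ becomes a \emph{global} independence relation enjoying the symmetry, transitivity, local character, and uniqueness of first-order forking. Superstability (limit models are saturated, and unions of saturated chains are saturated) is the structural payoff, and it is what lets me assert that above the threshold there is a unique saturated model in each cardinal; the upward transfer of categoricity follows, since once one large model is saturated and $\nf$ is global, every model in a higher cardinal is saturated, hence unique.

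I expect the main obstacle to be twofold. First, obtaining amalgamation and tameness from categoricity \emph{alone}, with no extra axioms, is the central open difficulty, and it is precisely why the honest theorems here assume amalgamation together with a weak form of the GCH rather than proving the bare conjecture. Second, the \emph{downward} transfer from $\mu$ down to $\ehanf{\LS (\K)}$ is delicate: one must rule out the appearance of extra, non-saturated models just above the Hanf number, which requires a careful analysis of towers and of the behavior of $\nf$ near the bottom of the categoricity interval. These two points — the amalgamation/tameness extraction and the near-Hanf-number downward transfer — are where I expect essentially all the work, and all the need for set-theoretic hypotheses, to reside.
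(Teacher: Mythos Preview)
The statement you are asked to prove is labeled a \emph{conjecture} in the paper, and the paper does not claim to prove it in the stated generality. There is therefore no ``paper's own proof'' to compare against. What the paper does prove is the special case where $\K$ has amalgamation and $\WGCH$ holds (Corollary \ref{gv-cor}), and a weaker eventual statement for AECs with no maximal models under $\GCHWD$ (Corollary \ref{event-nmm}). You yourself acknowledge this in your final paragraph, so your proposal is really an outline of a program rather than a proof, and on that reading it is honest but does not resolve the conjecture.

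Comparing your outline to what the paper does in the amalgamation case: your plan is to extract tameness and then use it to globalize a good frame, in the Grossberg--VanDieren style. The paper takes a different route. It never \emph{assumes} or first proves tameness in order to transfer the frame; instead it builds, under $\WGCH$, a weakly successful good $\lambda$-frame at each $\lambda$ below the categoricity cardinal via the theory of towers (Sections 5--6), then uses Fact \ref{succ-frame-thm} to show these local frames are successively \emph{connected} (each is successful $\goodp$ and its successor is the next frame). Once an $\omega$-successful $\goodp$ frame is in hand, the categoricity transfer is imported as a black box from \cite{multidim-v2} (Fact \ref{limit-categ}), which goes through excellence and multidimensional amalgamation rather than through tameness-plus-global-forking. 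Tameness then falls out as a \emph{consequence} (Corollary \ref{gv-cor}), not as the engine of the transfer. The downward step to just below $\hanf{\LS(\K)}$ is handled by Morley's omitting type theorem for AECs (Fact \ref{morley-omitting}) applied to the class of $\LS(\K)^+$-saturated models, which is closer to what you sketch. So your high-level instincts about EM models, saturation, and superstability are right, but the concrete mechanism the paper uses---local successful frames glued together rather than one global tame frame---is genuinely different from your proposal.
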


\subsection{Categoricity in AECs with amalgamation}

One goal of the present paper is to prove (assuming a set-theoretic hypothesis) the statement of Conjecture \ref{categ-aec} for AECs which satisfy the amalgamation property (i.e.\ where every span can be completed to a commuting square). 

Before going further, let us explain our justification for assuming the amalgamation property. First, amalgamation holds in case the AEC comes from a first-order theory (by compactness). Second, it is known \cite{makkaishelah,kosh362, tamelc-jsl} that amalgamation holds (eventually) in any AEC that is categorical above a large cardinal. In fact, Grossberg conjectured \cite[2.3]{grossberg2002} that eventual amalgamation should follow from high-enough categoricity, even without large cardinals. Finally, Shelah showed (assuming a set-theoretic hypothesis) that in an AEC categorical in $\lambda$ and $\lambda^+$, amalgamation holds for models of cardinality $\lambda$ \cite[I.3.8]{shelahaecbook}. Thus in an eventually categorical AEC, eventual amalgamation must hold, so amalgamation is a consequence of the eventual categoricity result we want to prove and hence it seems reasonable to assume it as a starting point.

The first milestone result for general AECs with amalgamation is due to Shelah \cite{sh394} who showed that categoricity in \emph{some successor} $\mu \ge H_2$ implies categoricity in \emph{all} $\mu' \in [H_2, \mu]$. We have set $H_1 := \ehanf{\LS (\K)}$ and $H_2 := \ehanf{H_1}$. Until very recently, it was open whether the successor assumption could be removed, whether one could also go up (i.e.\ get categoricity in all $\mu' > \mu$), and whether the threshold $H_2$ could be lowered to the cardinal $\ehanf{\LS (\K)}$ featuring in Conjecture \ref{categ-aec}. These questions feature in several prominent lists of open problems, for example in \cite[6.14]{sh702}, \cite[Section 9]{grossberg2002}, \cite{baldwin-aec-survey-2006},  \cite[Appendix D]{baldwinbook09}, or the introduction of \cite{shelahaecbook}.

In his book on AECs, Shelah \cite[IV.7.12]{shelahaecbook} asserts that he can remove the successor assumption and also get an upward transfer (this is done at the cost of a set-theoretic hypothesis, see below). The $H_2$ threshold is also slightly lowered, although not quite all the way down to $H_1$. However, Shelah's proof relies on a claim that was not proven at the time the book was written (see also \cite[Section 11]{downward-categ-tame-apal} for an exposition of this proof, modulo the claim). In very recent joint work of the author with Shelah \cite{multidim-v2}, the missing claim was finally proven. It nevertheless remained open whether $H_2$ could be lowered all the way down to $H_1$. This is one contribution of the present paper.

\subsection{Categoricity in AECs with no maximal models}

We also investigate a much weaker framework than AECs with amalgamation: AECs with no maximal models. This was first studied by Shelah and Villaveces \cite{shvi635}, with later contributions by VanDieren \cite{vandierennomax, nomaxerrata}. In these papers, some superstability-like properties were shown to follow from categoricity (assuming again set-theoretic hypotheses). Still, nothing could be said on the categoricity spectrum (i.e.\ the class of categoricity cardinals above $\LS (\K)$, see Definition \ref{cat-spec-def}) and for a long time no further progress was made. Another contribution of the present paper is that the categoricity spectrum in AECs with no maximal models is either bounded or contains an end segment (Corollary \ref{spec-nmm}). This implies the eventual categoricity conjecture in this framework (Corollary \ref{event-nmm}), although we are unable to give an explicit bound on the threshold.

\subsection{Main results}

Let us discuss our results more precisely. Assuming a set-theoretic hypothesis, we list \emph{all} possibilities for the categoricity spectrum  of an AEC $\K$ with amalgamation and arbitrarily large models. There are three possibilities (Corollary \ref{main-cor}): the categoricity spectrum is either empty, an end segment, or a closed interval. Furthermore in case it is an end segment, the first categoricity cardinal must be below $H_1$, and in case it is an interval, there must be finitely-many cardinals between the endpoints and $\LS (\K)$. It was known that each of those three possibilities could happen (see Example \ref{categ-examples}). In particular, for each $n < \omega$ there is an example (due to Hart and Shelah \cite{hs-example} and further analyzed by Baldwin and Kolesnikov \cite{bk-hs}) of an AEC $\K$ with $\LS (\K) = \aleph_0$ which has amalgamation, arbitrarily large models, and is categorical in $\aleph_0, \aleph_1, \ldots, \aleph_n$ but not anywhere above. The present work shows (again assuming amalgamation and arbitrarily large models) that categoricity at $\aleph_{\omega}$ or above must imply eventual categoricity, hence that the Hart-Shelah example is in a sense the only obstruction to transferring categoricity upward.

In passing (and still assuming some set theory), we also show as a consequence of our argument that \emph{tameness}, a locality property introduced by Grossberg and VanDieren in \cite{tamenessone} and used to prove an upward categoricity transfer in \cite{tamenesstwo, tamenessthree} follows from categoricity above $H_1$ in an AEC with amalgamation (Corollary \ref{gv-cor}). This was conjectured in \cite[1.5]{tamenessthree}. 

All of the work described in the previous two paragraphs is done assuming a weakening of the generalized continuum hypothesis (GCH). Recall that GCH is the statement that $2^{\lambda} = \lambda^+$ for any infinite cardinal $\lambda$. We assume here only the \emph{weak generalized continuum hypothesis} (WGCH): $2^{\lambda} < 2^{\lambda^+}$ for any infinite cardinal $\lambda$. It says that cardinal exponentiation is strictly monotonic. The use of this hypothesis is quite prominent in Shelah's work on AECs \cite{shelahaecbook, shelahaecbook2} and can be traced back to a combinatorial principle, the \emph{weak diamond}, proven by Devlin and Shelah \cite{dvsh65} to follow from $2^{\aleph_0} < 2^{\aleph_1}$.

Regarding AECs with no maximal models, we prove assuming the GCH \emph{and} a strengthening of the weak diamond (see Section \ref{set-thy-sec}; this hypothesis is already used in \cite{shvi635}) that the categoricity spectrum is either bounded or contains an end segment. More precisely, we give a list of five possibilities for the categoricity spectrum above a countable limit of fixed points of the $\beth$ operation (Corollary \ref{spec-nmm}). In this case, we do not know whether all of these possibilities can happen.

\subsection{Methods}

The main technical tool of the present paper is the theory of \emph{good frames}, the core object of study in Shelah's two-volume book on AECs \cite{shelahaecbook, shelahaecbook2}. Good frames give a sense in which an AEC is well-behaved locally (i.e.\ at a single cardinal $\lambda$). Key questions around them is when they exist and when they can be transferred upward. The main technical theorem of this paper, Theorem \ref{main-frame-constr}, gives sufficient conditions under which certain good frames exist. These sufficient conditions were known to follow from $\WGCH$, categoricity, and amalgamation, and we show here how to derive them from a strengthening of GCH, categoricity, and no maximal models. This technical theorem thus gives us good frame in many places below the categoricity cardinal.

Using the main result of \cite{tame-succ-v5-toappear}, these good frames are then shown to be ``connected'' in the sense that one can describe what happens in the frames above by looking at what happens in the frames below. Such connected good frames (called successful by Shelah), turn out to be powerful-enough to transfer categoricity and more generally understand the structure of the class completely. This is proven in recent joint work with Saharon Shelah \cite{multidim-v2}: the main result there is (assuming $\WGCH$) that from an $\omega$-successful $\goodp$ frame (roughly: a sequence of $\omega$-many connected good frames), we can derive a condition called \emph{excellence}. Excellence essentially says that for each $n < \omega$ there is an $n$-dimensional notion of nonforking amalgamation, with the usual properties. It implies that the class is very well-behaved and in particular that categoricity transfers can be proven. We emphasize that we do not deal at all with excellence in the present paper. We simply take as a black box that having an $\omega$-successful $\goodp$ frame is enough to prove what we want, see Fact \ref{limit-categ}.

The key tool in the proof of Theorem \ref{main-frame-constr} is the theory of \emph{towers}, connected to the problem of uniqueness of limit models (see \cite{gvv-mlq} for an overview). We give a simplification of this theory here (Section \ref{tower-sec}) incorporating recent developments \cite{uq-forking-mlq} as well as some new results that may have independent interest\footnote{For example, Theorem \ref{indep-sym} gives new conditions for forking symmetry of independent sequences, a key difficulty in \cite{tame-frames-revisited-jsl}.}. Ultimately, these results are used to build the good frames alluded to in the previous paragraph. The expert reader is encouraged to take a quick look at the proof of Theorem \ref{main-frame-constr} to get a better sense of how this is done.

We emphasize that the methods of this paper are usually very local. While AECs with amalgamation or no maximal models are used as convenient test cases, the theorems we obtain make no full use of these assumptions: it often suffices that a condition called solvability (introduced by Shelah in \cite[Chapter IV]{shelahaecbook} - in the first-order case it is equivalent to superstability \cite[5.3]{gv-superstability-jsl}), or really a weakening of it called \emph{semisolvability}, holds near the cardinals where we want to build good frames. Also key and closely related is the notion of a \emph{superlimit model} (due to Shelah \cite[3.1]{sh88}). We prove in fact (assuming the $\WGCH$) that classes with superlimit models at every cardinal satisfy the eventual categoricity conjecture (Corollary \ref{sl-categ-cor}). As discussed further at the end of the paper, this sheds light on several other conjectures of Shelah and paves the way for further work on the local superstability theory of AECs.

\subsection{Acknowledgments}

We thank John T.\ Baldwin, Will Boney, and Marcos Mazari-Armida for comments that helped improve the presentation of this paper. We also thank the referee for multiple thorough reports.

\section{Preliminaries}\label{prelim-sec}

To read the present paper, the reader should preferably have a solid knowledge of AECs and good frames, including knowing Chapter II of \cite{shelahaecbook} and \cite{jrsh875}. Still, we attempt here to give most of the relevant definitions and background facts. The reader may skip this section at first reading and come back to it as needed.

\subsection{Set-theoretic notation}\label{set-thy-sec}

We will often consider intervals of cardinals and may write  $[\lambda, \infty)$ for the class of cardinals greater than or equal to $\lambda$ (i.e.\ $\infty$ denotes an object that is greater than all cardinals).

We assume basic familiarity with ordinals and cardinals. We identify each cardinal $\lambda$ with the least ordinal of cardinality $\lambda$. For a cardinal $\lambda$, $\lambda^+$ denotes the successor of $\lambda$: the minimal cardinal strictly greater than $\lambda$. The \emph{cofinality} $\cf{\alpha}$ of an ordinal $\alpha$ is the least cardinality of an unbounded subset of $\alpha$. A cardinal $\lambda$ is \emph{regular} if $\cf{\lambda} = \lambda$ and singular otherwise.

For $\lambda$ an infinite cardinal, define the following three statements:

\begin{itemize}
\item $\WGCH (\lambda)$ means that $2^{\lambda} < 2^{\lambda^+}$ ($\WGCH$ stands for ``weak generalized continuum hypothesis'').
\item $\GCH (\lambda)$ means that $2^\lambda = \lambda^+$.
\item $\GCHWD (\lambda)$ means that $2^\lambda = \lambda^+$ and for all regular $\theta < \lambda^+$, $\Phi_{\lambda^+} (\{\delta < \lambda^+ \mid \cf{\delta} = \theta\})$ holds\footnote{Although this will not be used, it is known \cite{sh922} that taking $\theta = \cf{\lambda}$ suffices: the conjunction of $2^\lambda = \lambda^+$ with the principle $\Phi_{\lambda^+} (\{\delta < \lambda^+ \mid \cf{\delta} = \cf{\lambda}\})$ is equivalent to $\GCHWD (\lambda)$.}, where for a set $S \subseteq \lambda^+$, $\Phi_{\lambda^+} (S)$ holds if and only if for all $F: \fct{<\lambda^+}{2} \to 2$ there exists $g: \lambda^+ \to 2$ such that for every $f: \lambda^+ \to 2$ the set $\{\delta \in S \mid F (f \rest \delta) = g (\delta)\}$ is stationary (i.e.\ intersects every closed unbounded subset of $\lambda^+$). This was first studied by Devlin and Shelah \cite{dvsh65}, who proved that $2^\lambda < 2^{\lambda^+}$ implies $\Phi_{\lambda^+} (\lambda^+)$ ($\operatorname{WD}$ stands for ``weak diamond'').
\end{itemize}

Note that $\GCHWD (\lambda)$ implies $\GCH (\lambda)$ which implies $\WGCH (\lambda)$. 

For $\Theta$ a class of cardinals, we write $\WGCH (\Theta)$ if $\WGCH (\lambda)$ holds for all $\lambda \in \Theta$. We write $\WGCH$ for $\WGCH (\Card)$, where $\Card$ is the class of all infinite cardinals. Similarly define $\GCH (\Theta)$, $\GCH$, $\GCHWD (\Theta)$, and $\GCHWD$. It is well known that $\GCHWD$ is consistent (it holds for example in Gödel's constructible universe, see \cite[p.~550]{jechbook} and \cite[I.3.4]{vandierennomax}).

For a cardinal $\lambda$ and an ordinal $\alpha$, $\beth_{\alpha} (\lambda)$ is defined recursively as follows: $\beth_0 (\lambda) = \lambda$, $\beth_{\beta + 1} (\lambda) = 2^{\beth_\beta (\lambda)}$, and $\beth_\delta (\lambda) = \sup_{\beta < \delta} \beth_{\beta} (\lambda)$ for $\delta$ a limit ordinal. We write $\beth_\alpha$ for $\beth_\alpha (\aleph_0)$. Also recursively define $\lambda^{+\alpha}$, the $\alpha$th successor of $\lambda$, as follows: $\lambda^{+0} = \lambda$, $\lambda^{+(\alpha + 1)} = \left(\lambda^{+\alpha}\right)^+$, and $\lambda^{+\delta} = \sup_{\beta < \delta} \lambda^{+\beta}$ for $\delta$ limit. For an infinite cardinal $\lambda$, it will be convenient to write $\hanf{\lambda}$ instead of $\ehanf{\lambda}$ (this notation is used already in \cite[4.24]{baldwinbook09}; the relevance of this cardinal is given by Fact \ref{arb-large-fact}). 

\subsection{Model-theoretic notation}

Given a structure $M$, write $|M|$ for its universe and $\|M\|$ for the cardinality of its universe. We often do not distinguish between $M$ and $|M|$, writing e.g.\ $a \in M$ instead of $a \in |M|$. We write $M \subseteq N$ to mean that $M$ is a substructure of $N$.

\subsection{Abstract elementary classes}
An \emph{abstract class} is a pair $\K = (K, \lea)$, where $K$ is a class of structures in a fixed vocabulary $\tau = \tau (\K)$ and $\lea$ is a partial order, $M \lea N$ implies $M \subseteq N$, and both $K$ and $\lea$ respect isomorphisms (the definition is due to Grossberg). We often do not distinguish between $K$ (the class of structures) and $\K$ (the \emph{ordered} class of structures). Any abstract class admits a notion of \emph{$\K$-embedding}: these are functions $f: M \rightarrow N$ such that $f: M \cong f[M]$ and $f[M] \lea N$. Thus one can naturally see $\K$ as a category. Unless explicitly stated, any map $f: M \rightarrow N$ in this paper will be a $\K$-embedding. We write $f: M \xrightarrow[A]{} N$ to mean that $f$ is a $\K$-embedding from $M$ into $N$ which fixes the set $A$ pointwise (so $A \subseteq |M|$). We similarly write $f: M \cong_A N$ for isomorphisms from $M$ onto $N$ fixing $A$.

For $\lambda$ a cardinal, we will write $\K_\lambda$ for the restriction of $\K$ to models of cardinality $\lambda$. Similarly define $\K_{\ge \lambda}$, $\K_{<\lambda}$, or more generally $\K_{\Theta}$, where $\Theta$ is a class of cardinals. 

For an abstract class $\K$, we denote by $\Ii (\K)$ the number of models in $\K$ up to isomorphism (i.e.\ the cardinality of $\K /_{\cong}$). We write $\Ii (\K, \lambda)$ instead of $\Ii (\K_\lambda)$. When $\Ii (\K) = 1$, we say that $\K$ is \emph{categorical}. We say that $\K$ is \emph{categorical in $\lambda$} if $\K_\lambda$ is categorical, i.e.\ $\Ii (\K, \lambda) = 1$.

We say that $\K$ has \emph{amalgamation} if for any $M_0 \lea M_\ell$, $\ell = 1,2$, there is $M_3 \in \K$ and $\K$-embeddings $f_\ell : M_\ell \xrightarrow[M_0]{} M_3$, $\ell = 1,2$. $\K$ has \emph{joint embedding} if any two models can be $\K$-embedded in a common model. $\K$ has \emph{no maximal models} if for any $M \in \K$ there exists $N \in \K$ with $M \lea N$ and $M \neq N$ (we write $M \lta N$). Localized concepts such as \emph{amalgamation in $\lambda$} mean that $\K_\lambda$ has amalgamation.

The definition of an abstract elementary class is due to Shelah \cite{sh88}:

\begin{defin}\label{aec-def}
  An \emph{abstract elementary class (AEC)} is an abstract class $\K$ in a finitary vocabulary satisfying:

  \begin{enumerate}
  \item Coherence: if $M_0, M_1, M_2 \in \K$, $M_0 \subseteq M_1 \lea M_2$ and $M_0 \lea M_2$, then $M_0 \lea M_1$.
  \item Tarski-Vaught chain axioms: if $\seq{M_i : i \in I}$ is a $\lea$-directed system and $M := \bigcup_{i \in I} M_i$, then:
    \begin{enumerate}
    \item $M \in \K$.
    \item $M_i \lea M$ for all $i \in I$.
    \item If $N \in \K$ is such that $M_i \lea N$ for all $i \in I$, then $M \lea N$.
    \end{enumerate}
  \item Löwenheim-Skolem-Tarski axiom: there exists a cardinal $\lambda \ge |\tau (\K)| + \aleph_0$ such that for any $N \in \K$ and any $A \subseteq |N|$, there exists $M \in \K$ with $M \lea N$, $A \subseteq |M|$, and $\|M\| \le |A| + \lambda$. We write $\LS (\K)$ for the least such $\lambda$.
  \end{enumerate}
\end{defin}

\subsection{Types}

In any abstract class $\K$, we can define a semantic notion of type, called Galois or orbital types in the literature (such types were introduced by Shelah in \cite{sh300-orig}). For $M \in \K$, $A \subseteq |M|$, and $b \in M$, we write $\gtp_{\K} (b / A; M)$ for the orbital type of $b$ over $A$ as computed in $M$ (usually $\K$ will be clear from context and we will omit it from the notation). It is the finest notion of type respecting $\K$-embeddings, see \cite[2.16]{sv-infinitary-stability-afml} for a formal definition. For $M \in \K$, we write $\gS_{\K} (M) = \gS (M)$ for $\{\gtp (b / M; N) \mid M \lea N\}$, the class\footnote{If $\K$ is an AEC, $\gS (M)$ will of course be a set.} of all types over $M$. We define naturally what it means for a type to be realized inside a model, to extend another type, and to take the image of a type by a $\K$-embedding.

When $\K$ is an elementary class, $\gtp (b / A; M)$ contains the same information as the usual notion of $\Ll_{\omega, \omega}$-syntactic type. In particular, types in an elementary class are determined by their restrictions to finite sets. This idea was abstracted in \cite{tamenessone} and made into the following definition: for $\chi$ an infinite cardinal, an abstract class $\K$ is \emph{$(<\chi)$-tame} if for any $M \in \K$ and any distinct $p, q \in \gS (M)$, there exists $A \subseteq |M|$ such that $|A| < \chi$ and $p \rest A \neq q \rest A$. We say that $\K$ is \emph{$\chi$-tame} if it is $(<\chi^+)$-tame. Thus elementary classes are $(<\aleph_0)$-tame, but there are examples of non-tame AECs, see e.g.\ \cite[3.2.2]{bv-survey-bfo}.

\subsection{Stability and saturation}

We say that an abstract class $\K$, is \emph{stable in $\lambda$} (for $\lambda$ an infinite cardinal) if $|\gS (M)| \le \lambda$ for any $M \in \K_\lambda$. If $\K$ is an AEC, $\lambda \ge \LS (\K)$, $\K$ is stable in $\lambda$ and $\K$ has amalgamation in $\lambda$, then we will often use without comments the \emph{existence of universal extension} \cite[II.1.16]{shelahaecbook}: for any $M \in \K_\lambda$, there exists $N \in \K_\lambda$ universal over $M$. This means that $M \lea N$ and any extension of $M$ of cardinality $\lambda$ $\K$-embeds into $N$ over $M$.

For $\K$ an AEC and $\lambda > \LS (\K)$, a model $N \in \K$ is called \emph{$\lambda$-saturated} if for any $M \in \K_{<\lambda}$ with $M \lea N$, any $p \in \gS (M)$ is realized in $N$. $N$ is called \emph{saturated} if it is $\|N\|$-saturated.

We will also often use without mention the \emph{model-homogeneous = saturated lemma} \cite[II.1.14]{shelahaecbook}: it says that when $\K_{<\lambda}$ has amalgamation, a model $N \in \K$ is $\lambda$-saturated if and only if it is $\lambda$-model-homogeneous. The latter means that for any $M \in \K$ with $\|M\| < \lambda$, $M \lea N$, any $M' \in \K_{<\lambda}$ $\lea$-extending $M$ can be $\K$-embedded into $N$ over $M$. In particular, assuming amalgamation and joint embedding, there is at most one saturated model of a given cardinality. We write $\Ksatp{\lambda}$ for the abstract class of $\lambda$-saturated models in $\K$ (ordered by the appropriate restriction of $\lea$).

A local notion of saturation is given by the definition of a \emph{limit model}. For an AEC $\K$, $\lambda \ge \LS (\K)$, and $\delta < \lambda^+$ a limit ordinal, $N$ is \emph{$(\lambda, \delta)$-limit over $M$} if $M, N \in \K_\lambda$ and there exists an increasing continuous chain $\seq{M_i : i \le \delta}$ in $\K_\lambda$ with $M_{i + 1}$ universal over $M_i$ for all $i < \delta$ such that $M_0 = M$ and $M_\delta = N$. We say that $N$ is \emph{limit over $M$} if it is $(\lambda, \delta)$-limit over $M$ for some $\lambda$ and $\delta$. We say that $N$ is \emph{limit} if it is limit over $M$ for some $M$. By a back and forth argument \cite[1.3.6]{shvi635}, whenever $M_0, M_1, M_2 \in \K_\lambda$, $M_\ell$ is $(\lambda, \delta_\ell)$-limit over $M_0$ for $\ell = 1,2$, and $\cf{\delta_1} = \cf{\delta_2}$, we have that $M_1 \cong_{M_0} M_2$. Similarly, assuming joint embedding any two limit models of the same length (i.e.\ with the same limit ordinal $\delta$) are isomorphic. The question of \emph{uniqueness of limit models} asks whether the previous two results hold even when the lengths do not have the same cofinality (Fact \ref{sym-uq-lim} gives a positive answer to this question under some superstability-like assumptions).

\subsection{Superstability, forking, and symmetry}

Let $\K$ be an AEC. For $M \lea N$ and $p \in \gS (N)$, we say that $p$ \emph{does not $\lambda$-split over $M$} if whenever $N_1, N_2 \in \K_\lambda$ are such that $M \lea N_\ell \lea N$ for $\ell = 1,2$ and $f: N_1 \cong_M N_2$ is given, we have that $f (p \rest N_1) = p \rest N_2$. This gives a notion of independence with which we will be able, under the definition of superstability given below (already implicit in for example \cite{shvi635}), to define a forking-like notion. 

\begin{defin}[{\cite[10.1]{indep-aec-apal}}]
  An AEC $\K$ is \emph{$\lambda$-superstable} if:

  \begin{enumerate}
  \item $\lambda \ge \LS (\K)$ and $\K_\lambda \neq \emptyset$.
  \item $\K_\lambda$ has amalgamation, joint embeddings, and no maximal models.
  \item $\K$ is stable in $\lambda$.
  \item Splitting has universal local character: whenever $\delta < \lambda^+$ is a limit ordinal and $\seq{M_i : i \le \delta}$ is increasing continuous in $\K_\lambda$ with $M_{i + 1}$ universal over $M_i$ for all $i < \delta$,  then for any $p \in \gS (\bigcup_{i < \delta} M_i)$, there exists $i < \delta$ such that $p$ does not $\lambda$-split over $M_i$.
  \end{enumerate}
\end{defin}

Note that the definition is completely local: it only discusses models of cardinality $\lambda$. In a $\lambda$-superstable AEC, we define nonforking by ``shifting'' nonsplitting by a universal extension:

\begin{defin}[{\cite[3.8]{ss-tame-jsl}}]\label{forking-def}
  Let $\K$ be a $\lambda$-superstable AEC. Let $M \lea N$ both be limit models and let $p \in \gS (N)$. We say that $p$ \emph{does not $\lambda$-fork over $M$} if there exists $M_0 \in \K_\lambda$ such that $M$ is universal over $M_0$ and $p$ does not $\lambda$-split over $M_0$. Usually, $\lambda$ will be clear from context so we will just say that \emph{$p$ does not fork over $M$}. 
\end{defin}

The definition of forking may seem technical, so the reader can immediately forget it and remember instead the next two facts:

\begin{fact}\label{forking-fact}
  If $\K$ is a $\lambda$-superstable AEC, then forking has the following properties:

  \begin{enumerate}
  \item Invariance: if $M \lea N$ are both limit models in $\K_\lambda$ and $p \in \gS (N)$ does not fork over $M$, then if $f: N \cong N'$, $f (p)$ does not fork over $f[M]$.
  \item Monotonicity: if $M \lea M' \lea N' \lea  N$ are all limit models in $\K_\lambda$, $p \in \gS (N)$ does not fork over $M$, then $p \rest N'$ does not fork over $M'$.
  \item Universal local character: if $\delta < \lambda^+$, $\seq{M_i : i \le \delta}$ is an increasing continuous chain of limit models in $\K_\lambda$, with $M_{i + 1}$ universal over $M_i$ for all $i < \delta$, then for any $p \in \gS (M_\delta)$, there exists $i < \delta$ such that $p$ does not fork over $M_i$.
  \item Uniqueness: if $M \lea N$ are both limit models in $\K_\lambda$ and $p,q \in \gS (N)$ do not fork over $M$, then $p \rest M = q \rest M$ implies $p = q$.
  \item Extension: Let $M \lea N$ both be limit models in $\K_\lambda$. If $p \in \gS (M)$, then there exists $q \in \gS (N)$ such that $q$ does not fork over $M$ and $q$ extends $p$.
  \item Transitivity: Let $M_0 \lea M_1 \lea M_2$ all be limit models in $\K_\lambda$. Let $p \in \gS (M_2)$ and assume that $p$ does not fork over $M_1$ and $p \rest M_1$ does not fork over $M_0$. Then $p$ does not fork over $M_0$.
  \item Universal continuity: Let $\delta < \lambda^+$ be a limit ordinal and let $\seq{M_i : i \le \delta}$ be an increasing continuous chain of limit models with $M_{i + 1}$ universal over $M_i$. Let $\seq{p_i : i < \delta}$ be given such that for all $i < \delta$, $p_i \in \gS (M_i)$, and $p_i$ is a nonforking extension of $p_0$. Then there exists a unique $p_\delta \in \gS (M_\delta)$ such that $p_\delta$ does not fork over $M_0$. In particular, $p_\delta$ extends each $p_i$.
  \item Disjointness: If $M \lea N$ are both limit models in $\K_\lambda$ and $p \in \gS (N)$ does not fork over $M$, then $p$ is algebraic if and only if $p \rest M$ is algebraic.
  \end{enumerate}
\end{fact}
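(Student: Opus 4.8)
The plan is to reduce each clause to a property of $\lambda$-nonsplitting, the cruder relation out of which forking is defined, and then to use the $\lambda$-superstability hypotheses (stability in $\lambda$, amalgamation, joint embedding, no maximal models in $\K_\lambda$, and universal local character for splitting) to promote these into genuine forking properties over limit models. Throughout, the key structural feature I would exploit is that $p$ does not fork over $M$ precisely when it does not $\lambda$-split over some $M_0$ with $M$ universal over $M_0$: the universal extension $M_0 \lea M$ is exactly what repairs the base-sensitivity of nonsplitting. I would also record at the outset the basic monotonicity of nonsplitting: if $p$ does not split over $M_0$ then it does not split over any larger base $M_0 \lea M_0' \lea N$, nor does its restriction $p \rest N'$ split over $M_0$ for $N' \lea N$.

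Clauses (1), (2), (3) and (8) I expect to be quick. Invariance (1) is immediate since nonsplitting is phrased purely via $\K$-embeddings. For Monotonicity (2), given a witness $M_0$ for $p$ not forking over $M$, I would note that $M'$ remains universal over $M_0$ whenever $M \lea M'$ (an extension of a universal model is universal), while $p \rest N'$ still does not split over $M_0$; thus $M_0$ witnesses nonforking of $p \rest N'$ over $M'$. Universal local character (3) is essentially the superstability axiom: it yields $i<\delta$ with $p$ not splitting over $M_i$, and then $M_{i+1}$, being universal over $M_i$, witnesses that $p$ does not fork over $M_{i+1}$. Disjointness (8) is a short direct argument combining the definition of nonsplitting (to force a realization of an algebraic $p$ to already realize $p \rest M_0$ inside $M$) with Uniqueness for the converse direction.

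The content lies in Extension (5), Uniqueness (4), Transitivity (6) and Universal continuity (7), all of which rest on two lemmas about nonsplitting over a universal base: existence and uniqueness of the nonsplitting extension. For Extension I would first apply local character to a resolution of the limit model $M$ to obtain $M_0$ with $M$ universal over $M_0$ and $p$ not splitting over $M_0$, then invoke the nonsplitting-extension lemma (whose existence half uses stability and universality) to produce $q \in \gS(N)$ extending $p$ and not splitting over $M_0$; by construction $q$ does not fork over $M$. Uniqueness (4) should follow from uniqueness of nonsplitting extensions over a universal base, once the two given witnesses have been reconciled into a single base that $M$ is universal over.

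I expect the reconciliation of witnesses to be the principal obstacle, and it is shared by clauses (4), (6) and (7). Two types not forking over $M$ may a priori come with different nonsplitting bases $M_0, M_0'$, each with $M$ universal over it; passing to a common base $M_0^\ast$ containing both (via amalgamation inside $M$ and coherence) preserves nonsplitting by monotonicity but may destroy the universality of $M$ over $M_0^\ast$, so the nonsplitting-uniqueness lemma does not apply verbatim. Handling this---by exploiting the limit structure of $M$ to choose the bases cofinally, or by transferring the comparison up to a universal model---is the crux. Granting it, Transitivity (6) follows by identifying $p$ with the unique nonsplitting extension of $p \rest M_0$ and checking that the combined witness still lies below $M_0$, and Universal continuity (7) follows by using Extension to build $p_\delta$ over $M_\delta$ from the common base and Uniqueness (together with the coherence of the $p_i$) to see that $p_\delta$ restricts to each $p_i$ and is the only type with this property.
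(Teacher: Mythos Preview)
Your plan for clauses (1)--(7) matches the paper's approach essentially verbatim: the paper also treats invariance, monotonicity, and universal local character as immediate from the definition, cites the literature for extension and uniqueness (the nonsplitting extension/uniqueness lemmas you describe), derives transitivity on general grounds, and obtains universal continuity by taking the nonforking extension of $p_0$ to $M_\delta$ and invoking uniqueness. Your identification of the ``reconciliation of witnesses'' issue is apt and is exactly what the cited uniqueness result handles.

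The one place your proposal diverges, and where it is too optimistic, is disjointness (8). You list it among the quick clauses, but in the paper it is the \emph{longest} part of the proof. Your sketch---use nonsplitting to push a realization $a \in N$ of an algebraic $p$ into $M$ via universality of $M$ over $M_0$---only yields an element $f(a) \in M$ realizing $p \rest f[N_0]$ for the particular image submodel $f[N_0]$, not $p \rest M$; nonsplitting controls $p$ on isomorphic copies of submodels of $N$, not the type of $f(a)$ over all of $M$. The paper instead argues by \emph{conjugation}: choose $M_0$ and $\delta$ so that $M$ is $(\lambda,\delta)$-limit over $M_0$ with $p$ not forking over $M_0$, let $N'$ be $(\lambda,\delta)$-limit over $N$ (hence over $M_0$), and let $q$ be the nonforking extension of $p$ to $N'$. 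Then $q$ and $p \rest M$ are conjugate (both are nonforking extensions of $p \rest M_0$ to $(\lambda,\delta)$-limits over $M_0$), so algebraicity of $q$---inherited from $p$---transfers to $p \rest M$. This sidesteps the problem your direct argument runs into.
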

\begin{proof}
  Invariance, monotonicity, and universal local character are straightforward to check from the definition. Extension is by \cite[I.4.10]{vandierennomax}, uniqueness is \cite[2.16]{uq-forking-mlq}, and transitivity follows on general grounds (see e.g.\ the proof of \cite[II.2.18]{shelahaecbook}). To prove universal continuity, take $p_\delta$ to be the nonforking extension of $p_0$ and use uniqueness, universal local character, and transitivity. Finally, to see disjointness, first use universal local character and transitivity to find $M_0$ and a limit ordinal $\delta < \lambda^+$ so that $M$ is $(\lambda, \delta)$-limit over $M_0$ and $p$ does not fork over $M_0$. Let $N'$ be $(\lambda, \delta)$-limit over $N$, hence over $M_0$. Let $q$ be the nonforking extension of $p$ to $N'$. If $p \rest M$ is algebraic, then clearly $p$ is algebraic. Now if $p$ is algebraic, then for the same reason $q$ is algebraic. By \cite[2.7]{uq-forking-mlq}, $q$ and $p \rest M$ are conjugates, so $p \rest M$ is also algebraic, as desired.
\end{proof}

\begin{fact}[The canonicity theorem]\label{canon-thm}
  Let $\K$ be a $\lambda$-superstable AEC. Assume we have a relation ``$p$ is free over $M$'' for a type $p \in \gS (N)$ and $M, N \in \K_\lambda$ \emph{limit models}. If this relation satisfies invariance, monotonicity, universal local character, uniqueness, and extension (in the sense given in the statement of Fact \ref{forking-fact}), then $p$ is free over $M$ if and only if $p$ does not $\lambda$-fork over $M$.
\end{fact}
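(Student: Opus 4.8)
The plan is to treat both ``free'' and ``does not $\lambda$-fork'' as abstract independence relations on the limit models of $\K_\lambda$, and to exploit the rigidity forced by their common axioms (extension together with uniqueness), which makes each of them a \emph{unique-extension operator}. First I would note that it suffices to prove a single implication, say that freeness implies nonforking: granting this, if $p \in \gS(N)$ does not fork over $M$ (with $M \lea N$ limit models in $\K_\lambda$), then the free extension $r \in \gS(N)$ of $p \rest M$ supplied by extension is, by the forward implication, a nonforking extension of $p \rest M$, so $r = p$ by uniqueness of nonforking, whence $p$ is free over $M$. The same bookkeeping reduces the forward implication to the case where $N$ is limit over $M$: I would extend $p \rest M$ freely to a model $N^+ \gea N$ that is limit over $M$, observe by uniqueness of freeness that this extension restricts to $p$ on $N$, run the forward implication over $N^+$, and pull the conclusion back to $N$ by monotonicity of nonforking.

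With $N$ limit over $M$, fix a resolution $\seq{N_i : i \le \delta}$ with $N_0 = M$ and each $N_{i+1}$ universal over $N_i$, and choose (by superstability) $M_0 \lta M$ with $M$ universal over $M_0$ and $p \rest M$ not splitting over $M_0$. I would then prove that $p \rest N_i$ does not $\lambda$-split over $M_0$ by induction on $i$: the base stage is the choice of $M_0$, limit stages use that nonsplitting is preserved under increasing unions, and the conclusion at $i = \delta$ says $p$ does not split over $M_0$, hence does not fork over $M$. The crux is the successor stage, i.e.\ the single-step claim: \emph{if $N$ is universal over $M$, $p$ is free over $M$, and $p \rest M$ does not split over $M_0$, then $p$ does not split over $M_0$.}

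To attack the single-step claim I would pass to a monster model $\sea$ and let $P \in \gS(\sea)$ be the free extension of $p \rest M$; by uniqueness of freeness $p = P \rest N$. Since $p \rest M$ does not split over $M_0$, it is fixed by every $M_0$-automorphism of $M$, and combining this with invariance and uniqueness of freeness shows that $P$ is fixed by every $\sigma \in \Aut(\sea / M_0)$ with $\sigma[M] = M$: indeed such a $\sigma$ sends $P$ to a free extension of $\sigma(p \rest M) = p \rest M$ over $M$, which must be $P$ again. One then wants to promote this partial invariance to genuine nonsplitting of $p = P \rest N$ over $M_0$, using the universality of $M$ over $M_0$ together with the uniqueness of nonsplitting extensions over $M_0$ to control the automorphisms $\sigma$ that \emph{move} $M$.

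The hard part will be exactly this last step. The shared axioms only certify that each relation is a unique-extension operator; any attempt to identify the two operators using extension and uniqueness alone is circular, since verifying that the free extension is nonforking (or vice versa) presupposes the very agreement one is trying to establish. Breaking the circularity requires the concrete input underlying the definition of nonforking — that $\lambda$-nonsplitting extensions over the \emph{small} base $M_0$ exist and are unique and invariant — and the delicate point is descending from invariance under automorphisms stabilizing $M$ to nonsplitting over the strictly smaller $M_0$, where the universality of $M$ over $M_0$ is essential. (Alternatively, one can bypass the hand computation by quoting a general canonicity theorem for independence relations satisfying invariance, monotonicity, universal local character, uniqueness, and extension.)
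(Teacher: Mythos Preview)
Your reduction to a single implication and to the case ``$N$ limit over $M$'' is fine, and the partial invariance you establish (that the global free extension $P$ is fixed by automorphisms over $M_0$ that stabilise $M$ setwise) is correct. The problem is exactly where you say it is: the promotion step from ``$\sigma(P)=P$ for $\sigma\in\Aut(\sea/M_0)$ with $\sigma[M]=M$'' to ``$p$ does not $\lambda$-split over $M_0$'' is a genuine gap, not just a delicate computation. Nonsplitting over $M_0$ demands invariance under isomorphisms $f:N_1\cong_{M_0}N_2$ between arbitrary $\K_\lambda$-submodels of $N$, and such $f$ need not extend to anything that stabilises $M$. Pulling the $N_\ell$ back into $M$ via universality does not help, because after moving $N_2$ into $M$ you have changed the target of the comparison and must again appeal to nonsplitting of $p$ (not just of $p\rest M$), which is circular. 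In short, the five abstract axioms for ``free'' give you control over $M$, not over the smaller $M_0$, and your argument never bridges that gap.

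There is a structural reason for the difficulty: your main line never uses universal local character \emph{of the free relation}. That hypothesis is not decorative; it is what the referenced argument (to which the paper simply defers, citing \cite[9.6]{indep-aec-apal}) actually exploits. The intended proof is symmetric in the two relations: one first observes that extension plus uniqueness yields transitivity for each relation, and that the five axioms give a conjugation property for each (if $p$ is free over $M_0$ and $M,N$ are both $(\lambda,\delta)$-limit over $M_0$, then some $f:N\cong_{M_0}M$ sends $p$ to $p\rest M$; this needs only uniqueness of limit models of the same cofinality, not $\lambda$-symmetry). One then applies \emph{both} universal local characters to find a common $M_0$ over which $p\rest M$ is simultaneously free and nonforking, takes the nonforking extension $q$ of $p\rest M$ to $N$, conjugates $p$ and $q$ separately down to $p\rest M$, and composes to get an automorphism of $N$ over $M_0$ carrying $q$ to $p$; invariance of nonforking then gives that $p$ does not fork over $M_0$. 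Your parenthetical ``alternatively, quote a general canonicity theorem'' is therefore not an alternative at all---it is precisely what the paper does, and what your direct attempt is missing is the symmetric use of local character that makes that canonicity argument go through.
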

\begin{proof}
  As in the proof of \cite[9.6]{indep-aec-apal}.
\end{proof}

The following property of forking is crucial. It is open whether it follows from superstability. The definition we give is not the same as in \cite{vandieren-symmetry-apal} but is equivalent by \cite[2.18]{uq-forking-mlq}.

\begin{defin}\label{sym-def}
  Let $\K$ be a $\lambda$-superstable AEC. We say that $\K$ has \emph{$\lambda$-symmetry} if for any two limit model $M \lea N$ in $\K_\lambda$ and any $a, b \in N$, the following are equivalent:

  \begin{enumerate}
  \item There exists $M_b \lea N_b$ in $\K_\lambda$ both limits such that $N \lea N_b$, $M \lea M_b$, $b \in M_b$, and $\gtp (a / M_b; N_b)$ does not fork over $M$.
  \item There exists $M_a \lea N_a$ in $\K_\lambda$ both limits such that $N \lea N_a$, $M \lea M_a$, $a \in M_a$, and $\gtp (b / M_a; N_a)$ does not fork over $M$.
  \end{enumerate}
\end{defin}

Symmetry implies the uniqueness of limit models (whether symmetry is \emph{needed} for this is a major open question). This is due to VanDieren \cite{vandieren-symmetry-apal}. We will give another proof of this in Section \ref{tower-sec}, after Lemma \ref{full-constr-2}.

\begin{fact}\label{sym-uq-lim}
  Let $\K$ be a $\lambda$-superstable AEC with $\lambda$-symmetry. Let $M_0, M_1, M_2 \in \K_\lambda$ be given. If both $M_1$ and $M_2$ are limit over $M_0$, then $M_1 \cong_{M_0} M_2$. In particular, any two limit models in $\K_\lambda$ are isomorphic.
\end{fact}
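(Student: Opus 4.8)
The plan is to reduce everything to a comparison of limit models of regular length over a fixed base, and then to bridge the gap between different cofinalities using $\lambda$-symmetry. First, recall from the back-and-forth argument cited just before this statement (\cite[1.3.6]{shvi635}) that whether $N$ is $(\lambda, \delta)$-limit over $M_0$ depends only on $\cf{\delta}$: choosing a club of order type $\cf{\delta}$ in $\delta$ yields a subchain witnessing that $N$ is $(\lambda, \cf{\delta})$-limit over $M_0$, and conversely limits over $M_0$ of the same cofinality are isomorphic over $M_0$. So I may assume $M_1$ is $(\lambda, \theta_1)$-limit and $M_2$ is $(\lambda, \theta_2)$-limit over $M_0$ with $\theta_1 < \theta_2$ regular cardinals. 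It then suffices to show that every $(\lambda, \theta)$-limit over $M_0$ with $\theta$ regular is isomorphic over $M_0$ to a $(\lambda, \omega)$-limit over $M_0$, since two $(\lambda, \omega)$-limits over $M_0$ are isomorphic over $M_0$ by the same back-and-forth.

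The tool for this is the forking calculus of Fact~\ref{forking-fact}, organized into \emph{towers}: increasing chains of limit models decorated with distinguished elements and nonforking requirements. Extension and uniqueness let me place prescribed nonforking types, universal local character lets me absorb any type over a union into an earlier stage, and universal continuity lets me pass to limits of nonforking chains of types. The real difficulty is one of \emph{continuity}: one cannot simply exhaust a union of length $\theta_2$ by a chain of length $\omega$, because for uncountable $\theta_2$ only non-cofinally many stages are reached by a countable chain, so naive interleaving cannot work. What is needed instead is that the towers I build remain limit (equivalently, universal) over $M_0$ across every limit stage, including those of uncountable cofinality.

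This continuity statement is the main obstacle, and it is exactly where $\lambda$-symmetry is indispensable; without it the union of an increasing chain of limit models can fail to be limit over the base, which is precisely what allows uniqueness of limit models to fail in general. The plan here is to isolate the \emph{reduced} towers and prove, using Definition~\ref{sym-def}, that a reduced tower is continuous at every limit stage: symmetry is used to exchange the roles of a new element and the base when checking that no type over the union is lost, thereby upgrading the automatic left-continuity of nonforking into genuine continuity of the tower. Granting this, a relatively full continuous tower has a union that is limit over its base regardless of the tower's length, so building such towers of lengths $\omega$ and $\theta$ over $M_0$ and running a back-and-forth between them produces the isomorphism $M_1 \cong_{M_0} M_2$. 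I expect deriving continuity of reduced towers from symmetry to be by far the most delicate point; everything else is bookkeeping with the forking axioms.

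For the final ``in particular'' clause I would drop the common base. By the statement just proved, every limit model is isomorphic over its base to a $(\lambda, \omega)$-limit over that base, and by the joint embedding clause of the back-and-forth fact all $(\lambda, \omega)$-limit models are isomorphic (not necessarily fixing any set). Hence any two limit models in $\K_\lambda$ are isomorphic, using that $\K_\lambda$ has joint embedding as part of $\lambda$-superstability.
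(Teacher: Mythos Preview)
Your plan is correct and aligns with the paper's own proof: both rest on the theory of towers developed in Section~\ref{tower-sec}, with the crux being that reduced towers are continuous (Fact~\ref{reduced-continuous}, whose proof ultimately uses symmetry via nonforking amalgamation, Fact~\ref{nf-amalgam}) and that full continuous towers yield limit models (Lemma~\ref{full-limit}). The only organizational difference is the endgame: rather than reducing both sides to a $(\lambda,\omega)$-limit, the paper directly builds a single model that is simultaneously $(\lambda,\delta_1)$-limit and $(\lambda,\delta_2)$-limit over the base, by growing a $\tlt$-increasing continuous chain of reduced $(\delta_1+1)$-full towers of length $\delta_2$ and reading off the top-right corner; the vertical direction gives the $(\lambda,\delta_2)$-limit via $\tlt$, and the horizontal direction gives the $(\lambda,\delta_1)$-limit via fullness and continuity. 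Your reduction through $\omega$-limits is the special case $\delta_1=\omega$, $\delta_2=\theta$ of this construction, so the two arguments are essentially the same.
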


This leads to yet another key property of forking (we use symmetry and Fact \ref{sym-uq-lim} to get that any two limit models are isomorphic):

\begin{fact}[The conjugation property]\label{conj-prop}
  Let $\K$ be a $\lambda$-superstable AEC with $\lambda$-symmetry. Let $M \lea N$ be limit models in $\K_\lambda$ and let $p \in \gS (N)$. If $p$ does not fork over $M$, then there is an isomorphism $f: N \cong M$ such that $f (p) = p \rest M$.
\end{fact}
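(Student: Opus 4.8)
The plan is to reduce the statement to the uniqueness of limit models over a well-chosen common base, and then upgrade an abstract isomorphism to a type-preserving one using uniqueness of nonforking extensions. Concretely, I would first locate a limit model $M^- \lea M$ such that $M$ is limit over $M^-$ and $p \rest M$ does not fork over $M^-$. Having done this, I would produce an isomorphism $f : N \cong_{M^-} M$ fixing $M^-$ pointwise, and then check that $f(p) = p \rest M$ by a short forking calculation.

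To find $M^-$, fix a resolution $\seq{M_i : i \le \delta}$ witnessing that $M$ is limit (so $M_\delta = M$ and $M_{i + 1}$ is universal over $M_i$). For limit $i$, each $M_i$ is itself a limit model (over $M_0$). By universal local character (Fact \ref{forking-fact}(3)) applied to $p \rest M \in \gS (M)$, there is $i < \delta$ over which $p \rest M$ does not fork; passing to a larger limit ordinal via monotonicity (Fact \ref{forking-fact}(2), taking the restriction to all of $M$), I may take $M^- := M_i$ to be a limit model with $M$ limit over $M^-$ and $p \rest M$ nonforking over $M^-$. Since $p$ does not fork over $M$ and $p \rest M$ does not fork over $M^-$, transitivity (Fact \ref{forking-fact}(6), applied to $M^- \lea M \lea N$) gives that $p$ does not fork over $M^-$.

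Next, assuming $N$ is limit over $M$, concatenating a chain witnessing $M$ limit over $M^-$ with one witnessing $N$ limit over $M$ shows that $N$ is also limit over $M^-$. As $M$ and $N$ are then both limit over $M^-$, Fact \ref{sym-uq-lim} yields an isomorphism $f : N \cong_{M^-} M$. Now $f(p) \in \gS (M)$, and by invariance (Fact \ref{forking-fact}(1)) $f(p)$ does not fork over $f[M^-] = M^-$; moreover $f(p) \rest M^- = f(p \rest M^-) = p \rest M^-$ because $f$ fixes $M^-$ pointwise. Thus $f(p)$ and $p \rest M$ are both nonforking extensions to $M$ of the single type $p \rest M^- \in \gS (M^-)$, so uniqueness (Fact \ref{forking-fact}(4)) forces $f(p) = p \rest M$, which is exactly what we want.

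The one genuinely nontrivial input, and the step I expect to be the main obstacle, is the claim that $N$ is limit over $M$ whenever $M \lea N$ are both limit models in $\K_\lambda$. This does \emph{not} follow from Fact \ref{sym-uq-lim} by mere cardinality bookkeeping (one cannot in general fit the size-$\lambda$ model $M$ into a proper initial segment of a resolution of $N$); instead it rests on the fact that, under $\lambda$-superstability with $\lambda$-symmetry, limit models are saturated — equivalently, model-homogeneous inside $\K_\lambda$ — so that $N$ is universal over each of its limit submodels and a limit resolution of $N$ over $M$ can be assembled by the usual back-and-forth interweaving argument. I would either cite this from the theory of limit models or isolate it as a preliminary lemma; once it is in hand, the forking manipulations above are routine and symmetry enters only through Fact \ref{sym-uq-lim} and this saturation fact.
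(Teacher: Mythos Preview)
Your argument is correct and is exactly the standard proof the paper defers to (\cite[III.1.21]{shelahaecbook}): pass to a base $M^-$ with $M$ limit over $M^-$ and $p$ nonforking over $M^-$, invoke Fact~\ref{sym-uq-lim} to get $f : N \cong_{M^-} M$, and conclude via uniqueness of nonforking extensions. Your identification of ``$N$ is limit over $M^-$'' as the one nontrivial ingredient is also accurate; the paper gives no further detail beyond the citation.
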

\begin{proof}
  As in \cite[III.1.21]{shelahaecbook}.
\end{proof}

It is worth noting that both superstability and symmetry follow from categoricity, in an AEC with amalgamation and no maximal models. We will state stronger results in Section \ref{arb-large-sec}, but it will be easier to quote from:

\begin{fact}[Structure of categorical AECs with amalgamation]\label{categ-struct}
  Let $\K$ be an AEC with arbitrarily large models. Let $\mu > \LS (\K)$ be such that $\K_{<\mu}$ has amalgamation and no maximal models. If $\K$ is categorical in $\mu$, then:

  \begin{enumerate}
  \item For any $\lambda \in [\LS (\K), \mu)$, $\K$ is $\lambda$-superstable and has $\lambda$-symmetry.
  \item For any $\lambda \in (\LS (\K), \mu]$, $\Ksatp{\lambda}$ is an AEC with $\LS (\Ksatp{\lambda}) = \lambda$. In particular, the model of cardinality $\mu$ is saturated.
  \end{enumerate}
\end{fact}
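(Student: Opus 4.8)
The plan is to assemble the statement from the known structure theory of categorical AECs with amalgamation, handling superstability, symmetry, and the saturated class in turn; fix $\lambda$ in the appropriate range throughout. First I would dispose of the easy clauses of $\lambda$-superstability in part (1): clause (1) of the definition is immediate since $\K$ has arbitrarily large models and $\lambda \geq \LS(\K)$; amalgamation and no maximal models in $\lambda$ are inherited directly from the hypothesis on $\K_{<\mu}$; and joint embedding in $\lambda$ follows because any two models of size $\lambda$ extend (using no maximal models and arbitrarily large models) to models of size $\mu$, which by categoricity in $\mu$ are isomorphic, so both embed into a common model.

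The first substantive point is stability in each $\lambda \in [\LS(\K), \mu)$. This is the classical phenomenon that categoricity forces stability strictly below the categoricity cardinal: were $\K$ unstable in some such $\lambda$, an order-property together with an Ehrenfeucht--Mostowski construction would produce more than one model in some cardinal $\le \mu$, contradicting categoricity in $\mu$. The deepest clause of superstability is the universal local character of $\lambda$-splitting along chains of universal extensions; here I would invoke the Shelah--Villaveces machinery, in the refined form underlying the nonforking and independence results quoted earlier: if local character failed one could build an increasing continuous chain of models together with a single type splitting over every initial segment, and iterating this construction contradicts either stability or categoricity in $\mu$.

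Next is $\lambda$-symmetry, which I expect to be the main obstacle, since the paper itself emphasizes that symmetry is not known to follow from superstability alone. I would follow VanDieren's route: $\lambda$-symmetry is equivalent to the good behavior of unions of saturated models (that a suitable increasing union of saturated models of the relevant cardinality is again saturated), and categoricity in $\mu$ forces exactly this behavior below $\mu$, because the unique model of size $\mu$ arises as such a union and can only do so when symmetry holds. Concretely I would cite \cite{vandieren-symmetry-apal} for the derivation of symmetry from categoricity in precisely this amalgamation-and-no-maximal-models setting.

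Finally, for part (2) with $\lambda \in (\LS(\K), \mu]$, I would check the AEC axioms for $\Ksatp{\lambda}$. Coherence and the ordering are inherited from $\K$, so the only real content is the Tarski--Vaught chain axiom: a $\lea$-directed union of $\lambda$-saturated models must again be $\lambda$-saturated. This is precisely the consequence of symmetry in the relevant cardinals $< \lambda$ established in part (1), via VanDieren's union theorem. For the Löwenheim--Skolem--Tarski number, stability below $\lambda$ yields $\lambda$-saturated models of size $\lambda$ (the limit models), and a standard closure argument -- iteratively realizing all types over small submodels, each step adding at most $\lambda$ elements and using stability to bound the number of types -- produces, inside any $\lambda$-saturated $N$ and over any $A \subseteq |N|$, a $\lambda$-saturated $M \lea N$ with $A \subseteq |M|$ and $\|M\| \le |A| + \lambda$; since a $\lambda$-saturated model must have size at least $\lambda$, this gives $\LS(\Ksatp{\lambda}) = \lambda$ exactly. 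For the last assertion, a saturated model of size $\mu$ exists (build it as a union of a chain of saturated models, superstability handling the case of singular $\mu$), so by categoricity in $\mu$ the unique model of that size is saturated.
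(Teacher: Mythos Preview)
Your proposal is correct and follows essentially the same route as the paper: assemble the statement from the cited structure theory, with superstability coming from the Shelah--Villaveces machinery, symmetry from the VanDieren-style analysis, and the AEC axioms for $\Ksatp{\lambda}$ from the union-of-saturated-models theorem. The paper's own proof is simply a list of citations (to \cite{shvi635}, \cite{shvi-notes-apal}, \cite{categ-saturated-afml}, and \cite{vandieren-chainsat-apal}), so your outline is in fact more detailed than what appears there.

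One small discrepancy worth noting: for $\lambda$-symmetry from categoricity in an arbitrary $\mu > \lambda$, the paper cites \cite[5.7]{categ-saturated-afml} rather than \cite{vandieren-symmetry-apal}. The argument there passes through (semi)solvability and EM models rather than directly through the ``union of saturated models'' equivalence you describe; your sketch of why categoricity in $\mu$ forces the union property at $\lambda$ is a bit loose as stated (it is not immediate that the model of size $\mu$ arising as such a union pins down symmetry at every smaller $\lambda$). The end result is the same, but if you want a precise reference for symmetry at all $\lambda < \mu$ from categoricity at a single $\mu$, \cite{categ-saturated-afml} is the cleaner citation.
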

\begin{proof}
  That superstability holds below the categoricity cardinal essentially appears in \cite[2.2.1]{shvi635}, but a full proof is in \cite{shvi-notes-apal}. That symmetry similarly follows from categoricity  is in \cite[5.7]{categ-saturated-afml}, and the statement on saturated models is also proven there (it is a direct consequence of \cite{vandieren-chainsat-apal}).
\end{proof}

\subsection{Good frames}

Good $\lambda$-frames were introduced by Shelah in \cite[II]{shelahaecbook} as a bare-bone axiomatization of superstability.  We give a simplified definition here.

\begin{defin}[{\cite[II.2.1]{shelahaecbook}}]\label{good-frame-def}
  A \emph{good $\lambda$-frame} is a triple $\s = (\K, \nf, \Sbs)$ where:

\begin{enumerate}
\item $\K$ is an AEC such that:
  \begin{enumerate}
  \item $\lambda \ge \LS (\K)$.
  \item $\K_\lambda \neq \emptyset$.
  \item $\K_\lambda$ has amalgamation, joint embedding, and no maximal models.
  \item $\K$ is stable\footnote{In Shelah's original definition, only the set of basic types is required to be stable. However full stability follows, see \cite[II.4.2]{shelahaecbook}.} in $\lambda$.
  \end{enumerate}
  \item For each $M \in \K_\lambda$, $\Sbs (M)$ (called the set of \emph{basic types} over $M$) is a set of nonalgebraic types over $M$ satisfying the \emph{density property}: if $M \lta N$ are both in $\K_\lambda$, there exists $a \in |N| \backslash |M|$ such that $\gtp (a / M; N) \in \Sbs (M)$.
  \item $\nf$ is an (abstract) independence relation on the basic types satisfying invariance, monotonicity, extension existence, uniqueness, continuity, local character, and symmetry (see \cite[II.2.1]{shelahaecbook} for the full definition of these properties).
\end{enumerate}

We say that $\s$ is \emph{type-full} \cite[III.9.2(1)]{shelahaecbook} if for any $M \in \K_\lambda$, $\Sbs (M) = \gSna (M)$, the set of \emph{all} nonalgebraic types over $M$. Rather than explicitly using the relation $\nf$, we will say that $\gtp (a / M; N)$ \emph{does not $\s$-fork} over $M_0$ if $\nfs{M_0}{a}{M}{N}$ (this is well-defined by the invariance and monotonicity properties). When $\s$ is clear from context, we omit it (this does not conflict with previous terminology by Facts \ref{canon-thm} and \ref{frame-ss}). We say that a good $\lambda$-frame $\s$ is \emph{on $\K$} if the underlying AEC of $\s$ is $\K$. We say that $\s$ is \emph{categorical} if $\K$ is categorical in $\lambda$.
\end{defin}

\begin{remark}
  We will \emph{not} use the axiom (B) \cite[II.2.1]{shelahaecbook} requiring the existence of a superlimit model of size $\lambda$. In fact many papers (e.g.\ \cite{jrsh875}) define good frames without this assumption. Further, we gave a shorter list of properties that in Shelah's original definition, but the other properties follow, see \cite[II.2]{shelahaecbook}. 
\end{remark}

The reader can forget about the class of basic types: in this paper, we will work exclusively with type-full frames. In this case, the existence of a good frame is stronger than superstability:

\begin{fact}\label{frame-ss}
  If the AEC $\K$ has a type-full good $\lambda$-frame, then $\K$ is $\lambda$-superstable and has $\lambda$-symmetry.
\end{fact}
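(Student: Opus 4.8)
The plan is to verify the definition of $\lambda$-superstability clause by clause, and then to invoke the canonicity theorem to convert the frame's abstract symmetry axiom into $\lambda$-symmetry. Clauses (1)--(3) of the definition of $\lambda$-superstability are immediate, since they are literally part of the definition of a good $\lambda$-frame (clauses (1a)--(1d) of Definition \ref{good-frame-def} and the fact that a type-full frame has full stability). The only substantive point is clause (4), universal local character for $\lambda$-splitting, and I would obtain it from the following claim: if $M \lea N$ are in $\K_\lambda$ and $p \in \gS (N)$ does not $\s$-fork over $M$, then $p$ does not $\lambda$-split over $M$.

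To prove the claim, let $N_1, N_2 \in \K_\lambda$ satisfy $M \lea N_\ell \lea N$ for $\ell = 1,2$ and let $f \colon N_1 \cong_M N_2$. By monotonicity of $\s$-nonforking, both $p \rest N_1$ and $p \rest N_2$ do not $\s$-fork over $M$, and by invariance $f (p \rest N_1)$ does not $\s$-fork over $f[M] = M$. Since $f$ fixes $M$ pointwise, the restriction of $f (p \rest N_1)$ to $M$ equals $f (p \rest M) = p \rest M = (p \rest N_2) \rest M$; thus $f (p \rest N_1)$ and $p \rest N_2$ are two types over $N_2$ that do not $\s$-fork over $M$ and agree on $M$, so uniqueness of $\s$-nonforking forces $f (p \rest N_1) = p \rest N_2$, which is precisely nonsplitting. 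Granting the claim, clause (4) follows at once: given a universal chain $\seq{M_i : i \le \delta}$ and $p \in \gS (M_\delta)$, the local character axiom of the good frame yields some $i < \delta$ with $p$ not $\s$-forking over $M_i$, and the claim then gives that $p$ does not $\lambda$-split over $M_i$. Hence $\K$ is $\lambda$-superstable, and in particular $\lambda$-nonforking (in the sense attached to $\lambda$-superstability) is now defined.

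For $\lambda$-symmetry I would apply the canonicity theorem (Fact \ref{canon-thm}) to the relation ``$p$ does not $\s$-fork over $M$'', restricted to limit models $M, N \in \K_\lambda$. Invariance, monotonicity, uniqueness, and extension (in the sense of Fact \ref{forking-fact}) are all frame axioms, while universal local character is the special case of the frame's local character for chains of limit models with universal extensions. Canonicity then gives that $\s$-nonforking and $\lambda$-nonforking coincide on limit models. The symmetry axiom of the good frame is a statement about $\s$-nonforking, so under this identification it transports to a statement about $\lambda$-nonforking. The main obstacle is this final transport: one must check that the abstract frame symmetry axiom of \cite[II.2.1]{shelahaecbook}, with $\s$-nonforking replaced by $\lambda$-nonforking, matches the two clauses of Definition \ref{sym-def} once the relevant models are required to be limit. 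This is essentially bookkeeping about the shape of the witnessing models, and it is where the equivalence recorded in \cite[2.18]{uq-forking-mlq} (cited just before Definition \ref{sym-def}) makes the match clean; I would treat it as the crux of the argument while the remaining verifications are routine.
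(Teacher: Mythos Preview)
Your proposal is correct and follows essentially the same route as the paper's proof: the paper cites \cite[4.2]{bgkv-apal} for the implication ``$\s$-nonforking implies $\lambda$-nonsplitting'' (which you prove directly via monotonicity, invariance, and uniqueness), and then invokes canonicity (Fact \ref{canon-thm}) to transport the frame's symmetry axiom to $\lambda$-symmetry, exactly as you do. One small technicality you glossed over: the frame's local character axiom applies only to basic (here, nonalgebraic) types, so in verifying clause (4) you should handle algebraic $p$ separately---but this is trivial, since an algebraic type is realized in some $M_i$ and hence does not split over it.
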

\begin{proof}
  By \cite[4.2]{bgkv-apal}, $\s$-nonforking implies $\lambda$-nonsplitting. This immediately gives that $\K$ is $\lambda$-superstable. Using canonicity (Fact \ref{canon-thm}), $\lambda$-symmetry then follows from the symmetry axiom of good frames.
\end{proof}

The converse, getting a good frame from superstability and symmetry, is one of the main focus of this paper. One issue is that in a $\lambda$-superstable AEC, $\lambda$-nonforking is only well-behaved over limit models, and in general the class of limit models may not be closed under unions, hence may not form an AEC. If $\lambda > \LS (\K)$, the class of limit models is the same as the class of saturated models in $\K_\lambda$ and we often \emph{do} get that it is closed under unions (see e.g.\ Fact \ref{categ-struct}). Restricting to the class of limit models in $\lambda$, we then get that the AEC is categorical in $\lambda$, so we might as well assume this to begin with. A more serious issue is that the local character and continuity properties given by Fact \ref{forking-fact} are weaker than the corresponding ones for good frames (because in the definition of local character for good frames we do not require that the models in the chain are universal over the previous ones). This will be circumvented by using the theory of towers and previous result on what Jarden and Shelah call \emph{almost good frames}: good frames that still have continuity but have only the weak version of local character of superstable AECs. For now we state what we can get from Fact \ref{forking-fact}:

\begin{fact}\label{ss-frame}
  If $\K$ be an AEC which is $\lambda$-superstable, has $\lambda$-symmetry, and is categorical in $\lambda$, then $\lambda$-nonforking induces a triple $\s = (\K, \nf, \Sna)$ which satisfies all the axioms of a type-full good $\lambda$-frame, except perhaps for local character and continuity. Moreover, it has the conjugation property (in the sense of Fact \ref{conj-prop}) and satisfies the universal versions of local character and continuity stated in Fact \ref{forking-fact}.
\end{fact}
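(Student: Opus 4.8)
The plan is to verify the good $\lambda$-frame axioms of Definition \ref{good-frame-def} one at a time, reading off the forking properties from Fact \ref{forking-fact} and obtaining the structural conditions directly from $\lambda$-superstability.

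First I would record a preliminary observation that makes the whole triple fit together. Because $\K$ is $\lambda$-superstable, $\K_\lambda$ has amalgamation, no maximal models, and is stable in $\lambda$, so universal extensions exist and one can iterate them to build a limit model in $\K_\lambda$. Since $\K$ is categorical in $\lambda$, every model of $\K_\lambda$ is isomorphic to this limit model and is therefore itself limit. Consequently $\lambda$-nonforking is defined for \emph{every} pair $M \lea N$ in $\K_\lambda$, and the ``both limit models'' hypotheses appearing throughout Fact \ref{forking-fact} are automatically met; thus $\s = (\K, \nf, \gSna)$ is a well-defined triple on $\K$. The AEC conditions (1a)--(1d) of Definition \ref{good-frame-def} are exactly clauses (1)--(3) of $\lambda$-superstability together with its amalgamation/joint embedding/no-maximal-models clause. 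Density of $\gSna$ is automatic in the type-full case: given $M \lta N$, pick any $a \in |N| \setminus |M|$; if $\gtp (a / M; N)$ were realized by some $a_0 \in |M|$, the witnessing amalgam would consist of embeddings fixing $M$ that agree on $a$ and $a_0$, forcing $a = a_0$, a contradiction. Hence $\gtp (a / M; N) \in \gSna (M)$.

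Next I would dispatch the forking axioms. Invariance, monotonicity, uniqueness, and extension existence are precisely Fact \ref{forking-fact}(1), (2), (4), and (5); for the extension axiom one must additionally check that the nonforking extension $q$ of a nonalgebraic type $p$ remains nonalgebraic, which is exactly disjointness, Fact \ref{forking-fact}(8) (since $q$ is algebraic iff $q \rest M = p$ is). Local character and continuity are the two axioms we are permitted to omit; in their place I would record only the \emph{universal} versions, Fact \ref{forking-fact}(3) and (7), in which the chains are required to be increasing in the universal ordering. The conjugation property is immediate from Fact \ref{conj-prop}, applicable because $\K$ is $\lambda$-superstable with $\lambda$-symmetry.

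The one remaining axiom, and the step I expect to require the most care, is the symmetry axiom of good frames. We have symmetry available as a hypothesis only in the shape of Definition \ref{sym-def}, whereas Shelah's axiom is phrased as a symmetry property of the abstract relation $\nf$ itself. Since the relation we place on $\s$ is exactly $\lambda$-nonforking, both formulations concern the same relation, so reconciling them is a matter of unwinding definitions together with the equivalence recorded in \cite[2.18]{uq-forking-mlq} (which ties Definition \ref{sym-def} to VanDieren's formulation). I expect this translation---rather than any of the direct citations above---to be the main obstacle, precisely because three superficially different statements of symmetry must all be matched against the single canonical forking relation.
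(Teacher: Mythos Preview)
Your proposal is correct and follows essentially the same route as the paper, which dispatches the result in one line (``Immediate from Fact \ref{forking-fact} and the definition of superstability'') and leaves the reader to unpack exactly the verifications you carry out. Your explicit handling of the symmetry axiom---recognizing that Definition \ref{sym-def} must be matched against Shelah's good-frame formulation via \cite[2.18]{uq-forking-mlq}---and your use of disjointness to keep nonforking extensions nonalgebraic are both points the paper silently absorbs into ``immediate,'' so your write-up is if anything more careful than the original.
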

\begin{proof}
  Immediate from Fact \ref{forking-fact} and the definition of superstability.
\end{proof}
\begin{remark}
  Such weak good frames are called (modulo very minor variations in the definition) \emph{$H$-almost good frames} by Shelah \cite[VII.5.9]{shelahaecbook2}.
\end{remark}

In \cite[\S III.1]{shelahaecbook}, Shelah defines the following positive properties of good frames: weakly successful, successful, $n$-successful, and $\goodp$. He also defines what it means to take the successor $\s^+$ of a successful $\goodp$ frame and even to take the $n$th successor, $\s^{+n}$ of an $n$-successful $\goodp$ frame. We do not repeat the definitions here. The reader may look at \cite[\S2]{counterexample-frame-afml} for an overview. We will mostly just use these terms as ``black boxes'': they will appear in certain facts and theorems, but we will never need their exact definitions. The only fact that the reader must know is that if $\s$ is an $n$-successful $\goodp$ frame on the AEC $\K$, $n \ge 1$, then $\s$ can be extended to be type-full and one can define its $n$th successor $\s^{+n}$ and this is a type-full good $\lambda^{+n}$-frame on $\Ksatp{\lambda^{+n}}$. If $\s$ is $\omega$-successful (i.e.\ $n$-successful for all $n < \omega$), powerful results from \cite{multidim-v2} imply (assuming WGCH) that the good frame can be lifted to any cardinal above $\lambda$ and, in a sense, the structure of the AEC above $\lambda$ is completely understood. For an overview of what we will use, see Fact \ref{limit-categ}.

\section{Nice stability and superlimits}

In this section, we prove several technical results in the setup of nice stability, implicit already in \cite{shvi635}. This turns out to be the right context to study stability in AECs that may not have amalgamation. The main result is Theorem \ref{nicely-stable-sl}, showing roughly speaking that restricting to a superlimit (defined below) preserves nice stability and gives a class that is very close to the original one.

The notion of a \emph{superlimit model} is another attempt at defining a local notion of saturation. It was introduced by Shelah \cite[I.3.3]{shelahaecbook}. We give a definition that makes sense in any abstract class, but we will apply it to $\K_\lambda$, for $\K$ an AEC and $\lambda \ge \LS (\K)$.

\begin{defin}\label{sl-def}
  Let $\K$ be an abstract class and let $M \in \K$.

  \begin{enumerate}
  \item $M$ is \emph{universal} if for every $M_0 \in \K$ there exists a $\K$-embedding $f: M_0 \rightarrow M$.
  \item $M$ is \emph{superlimit} if:

    \begin{enumerate}
    \item $M$ has a proper extension.
    \item $M$ is universal.
    \item Whenever $\delta$ is a limit ordinal and $\seq{M_i : i \le \delta}$ is increasing continuous, if $M \cong M_i$ for all $i < \delta$, then $M \cong \bigcup_{i < \delta} M_i$.
    \end{enumerate}
  \end{enumerate}
\end{defin}
\begin{remark}\label{categ-sl}
  If an abstract class $\K$ is categorical, then any model in $\K$ is universal. If in addition $\K$ has no maximal models, any model in $\K$ is superlimit.
\end{remark}

Given a superlimit, the class of models isomorphic to it generates an AEC:

\begin{defin}[{\cite[II.1.25]{shelahaecbook}}]
  Let $\K$ be an AEC, let $\lambda \ge \LS (\K)$. For $M$ a superlimit in $\K_\lambda$, let $\K^{[M]}$, the \emph{AEC generated by $M$}, be defined as follows: $N \in \K^{[M]}$ if and only if for all $A \subseteq |N|$ with $|A| \le \lambda$, there exists $N_0 \in \K$ such that $N_0 \cong M$, $A \subseteq |N_0|$, and $N_0 \lea N$. Order $\K^{[M]}$ with the restriction of the ordering on $\K$.
\end{defin}

We will use the following straightforward results without comments:

\begin{fact}[{\cite[II.1.26]{shelahaecbook}}]\label{sl-basic}
  Let $\K$ be an AEC and let $\lambda \ge \LS (\K)$. If $M$ is a superlimit in $\K_\lambda$, then:

  \begin{enumerate}
  \item $\K^{[M]}$ is an AEC with $\LS (\K^{[M]}) = \lambda$.
  \item $\K_{\le \lambda}^{[M]} = \{N_0 \in \K_\lambda \mid N_0 \cong M\}$. In particular, $\K_{<\lambda}^{[M]} = \emptyset$ and $\K^{[M]}$ is categorical in $\lambda$.
  \item $\K_\lambda^{[M]}$ has no maximal models.
  \item For any $N_0 \in \K_\lambda$, there exists $N \in \K_\lambda^{[M]}$ such that $N_0 \lea N$.
  \end{enumerate}
\end{fact}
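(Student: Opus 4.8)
The plan is to prove the four clauses in the order $(2)$, then a reformulation, then $(4)$ and $(3)$, and finally $(1)$, whose chain axiom is the only delicate point. For $(2)$ I would simply unwind the definition of $\K^{[M]}$ on a model $N$ with $\|N\| \le \lambda$ by applying the defining condition to $A = |N|$: this yields $N_0 \cong M$ with $|N| \subseteq |N_0|$ and $N_0 \lea N$, and since $N_0 \lea N$ forces $|N_0| \subseteq |N|$ we get $N_0 = N$, so $N \cong M$ (in particular $\|N\| = \lambda$). The converse is immediate, witnessing each $A$ by $N$ itself. Before proceeding I would record a reformulation that I will use throughout: for $N \in \K$, membership $N \in \K^{[M]}$ is equivalent to the statement that \emph{every} $B \lea N$ with $\|B\| \le \lambda$ admits $P \cong M$ with $B \lea P \lea N$. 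One direction takes an LST-hull $B$ of $A$ inside $N$; the other applies the definition to $A = |B|$ and uses coherence in $\K$ to upgrade $B \subseteq P$ to $B \lea P$.

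Parts $(4)$ and $(3)$ then come from universality and the existence of a proper extension. For $(4)$, given $N_0 \in \K_\lambda$, universality of $M$ gives a $\K$-embedding $N_0 \to M$, and transporting the structure of $M$ along (an extension of) the inverse of this embedding produces $N \cong M$ with $N_0 \lea N$; by $(2)$, $N \in \K^{[M]}$. For $(3)$, I would first combine the proper-extension clause of superlimitness with the LST axiom of $\K$ to find a proper extension of $M$ of cardinality $\lambda$ (take a witness point $a$ in some proper extension, then an LST-hull of $|M| \cup \{a\}$ of size $\lambda$, which strictly extends $M$ by coherence). Transporting along an isomorphism, any $N \in \K_\lambda^{[M]}$ has a proper extension $N^* \in \K_\lambda$, and then $(4)$ applied to $N^*$ yields $N' \in \K_\lambda^{[M]}$ with $N \lta N^* \lea N'$, a proper $\K^{[M]}$-extension of $N$.

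The bulk of the work is $(1)$. Since $\K^{[M]} \subseteq \K$ with the restricted ordering, it is an abstract class closed under isomorphism, and both coherence and clauses (b),(c) of the Tarski--Vaught axioms are inherited from $\K$; only closure under unions and the LST axiom need argument. For closure under unions it suffices, by the usual reduction of directed systems to increasing continuous chains, to handle the union $N$ of an increasing continuous chain $\seq{N_\alpha : \alpha < \delta}$ in $\K^{[M]}$: as $N \in \K$, I must verify the defining property for arbitrary $A \subseteq N$ with $|A| \le \lambda$. When $\cf{\delta} > \lambda$ the set $A$ lies inside a single $N_\alpha$, and the reformulated property for $N_\alpha$ (with $N_\alpha \lea N$) supplies the copy of $M$. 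The hard case is $\cf{\delta} \le \lambda$: here I would pass to a cofinal subchain of length $\theta := \cf{\delta} \le \lambda$ and build an increasing continuous chain $\seq{S_\beta : \beta < \theta}$ of copies of $M$ with $S_\beta \lea N_\beta$, arranging at stage $\beta$ that $S_\beta$ absorbs both $S_{<\beta}$ (which sits inside $N_\beta$ precisely because the pieces are increasing) and $A \cap N_\beta$; each $S_\beta$ is produced by the reformulated property inside the piece $N_\beta \in \K^{[M]}$, and the bound $|\beta| < \theta \le \lambda$ keeps all sets of size $\le \lambda$. The union $S = \bigcup_{\beta<\theta} S_\beta$ then contains $A$, satisfies $S \lea N$, and is isomorphic to $M$ by the superlimit chain axiom, giving the required copy of $M$ over $A$.

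For the LST axiom I would use the reformulated property for the given $N \in \K^{[M]}$ to observe that the family of all $P \cong M$ with $P \lea N$ is directed: any two such $P,P'$ embed into a common copy of $M$ over $|P| \cup |P'|$, a set of size $\le \lambda$. Given $A \subseteq N$, starting from copies of $M$ over the singletons of $A$ and closing under such pairwise upper bounds yields a directed subfamily of at most $|A| + \lambda$ copies of $M$ whose union $N'$ contains $A$, satisfies $N' \lea N$, and has cardinality $\le |A| + \lambda$; by the closure under unions just established, $N' \in \K^{[M]}$, and $\LS(\K^{[M]}) = \lambda$ (the value is forced from below by $\K_{<\lambda}^{[M]} = \emptyset$). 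I expect the genuine obstacle to be exactly the low-cofinality case of closure under unions: when $A$ is spread cofinally through a chain of cofinality $\le \lambda$, it need not live inside any single member, so the copy of $M$ over $A$ must be manufactured as a continuous union of copies of $M$ aligned with the chain, and it is there — and only there — that the defining chain property of a superlimit is essential.
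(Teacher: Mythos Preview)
The paper does not supply its own proof of this statement; it is recorded as a Fact with a citation to Shelah's book and left without argument. Your proposal is correct and covers all four parts.

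There is one minor imprecision worth flagging. In the low-cofinality case for closure under unions, you describe an \emph{increasing continuous} chain $\seq{S_\beta : \beta < \theta}$ while also producing each $S_\beta$ via the reformulated property so as to absorb $A \cap N_\beta$. These two requirements can conflict at limit $\beta$: if you set $S_\beta = \bigcup_{\gamma<\beta} S_\gamma$ to maintain continuity, you cannot simultaneously guarantee $A \cap N_\beta \subseteq S_\beta$ when the reindexed cofinal subchain $\seq{N_\beta}$ is itself discontinuous at $\beta$. This is harmless, however: either (i) keep continuity and absorb $A \cap N_{\beta}$ only at successor stages, noting that the final union still contains $A = \bigcup_{\beta<\theta}(A \cap N_{\beta+1})$; or (ii) build the $S_\beta$'s non-continuously and observe that one can interleave a continuous chain of copies of $M$ by induction on the superlimit property (the union over successors is unchanged). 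Either fix is routine.

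Your argument for the L\"owenheim--Skolem--Tarski axiom---closing a family of copies of $M$ under pairwise upper bounds to obtain a directed subfamily of size $\le |A|+\lambda$, and then invoking the just-established closure under directed unions to conclude that its union lies in $\K^{[M]}$---is correct and clean.
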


Note that a superlimit is unique if it exists:

\begin{fact}\label{sl-uq}
  Let $\K$ be an AEC and let $\lambda \ge \LS (\K)$. If $M$ and $N$ are superlimits in $\K_\lambda$, then $M \cong N$. In particular, $\K^{[M]} = \K^{[N]}$.
\end{fact}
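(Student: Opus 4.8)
The plan is to run a sandwich (back-and-forth) argument: build a single increasing continuous $\omega$-chain whose union is recognizable both as a continuous chain of copies of $M$ and as a continuous chain of copies of $N$, and then invoke the superlimit chain condition on each side.

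First I would construct an increasing continuous chain $\seq{Q_i : i < \omega}$ in $\K_\lambda$ with $Q_{2i} \cong M$ and $Q_{2i+1} \cong N$ for all $i$. The construction alternates and uses universality in both directions. Start with $Q_0 \cong M$. Given $Q_{2i} \in \K_\lambda$, the universality of $N$ (clause (b) of Definition \ref{sl-def}, applied to $Q_{2i}$) yields a $\K$-embedding $Q_{2i} \to N'$ with $N' \cong N$; since $\K$ and $\lea$ respect isomorphisms, I may rename elements so that $Q_{2i} \lea Q_{2i+1}$ with $Q_{2i+1} \cong N$. Symmetrically, the universality of $M$ gives $Q_{2i+1} \lea Q_{2i+2}$ with $Q_{2i+2} \cong M$. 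Each $Q_i$ has cardinality $\lambda$, and since $\lambda \ge \LS (\K) \ge \aleph_0$ the union $Q_\omega := \bigcup_{i < \omega} Q_i$ again has cardinality $\lambda$; the Tarski–Vaught chain axioms then give $Q_\omega \in \K_\lambda$ with $Q_i \lea Q_\omega$ for all $i$.

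The key observation is that $Q_\omega$ admits two presentations. The even subchain $\seq{Q_{2i} : i < \omega}$, capped by $Q_\omega$, is an increasing continuous chain $\seq{R_i : i \le \omega}$ (continuity need only be checked at the sole limit $\omega$, where $\bigcup_{i < \omega} Q_{2i} = \bigcup_{i < \omega} Q_i = Q_\omega$) with $R_i \cong M$ for every $i < \omega$; clause (c) of the definition of superlimit applied to $M$ then yields $M \cong Q_\omega$. Running the identical argument on the odd subchain $\seq{Q_{2i+1} : i < \omega}$ together with clause (c) for $N$ yields $N \cong Q_\omega$. Hence $M \cong N$. For the final clause, $\K^{[M]} = \K^{[N]}$ is then immediate, since membership in $\K^{[M]}$ refers to $M$ only through the isomorphism relation $N_0 \cong M$, so replacing $M$ by the isomorphic model $N$ leaves the class unchanged.

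I expect the only (minor) obstacle to be the bookkeeping that makes $Q_\omega$ simultaneously the continuous limit of the even chain of $M$-copies and of the odd chain of $N$-copies, while verifying that each capped subchain is genuinely continuous at $\omega$ and that $Q_\omega$ remains in $\K_\lambda$; once the sandwich is arranged, the two applications of clause (c) are direct. Note that clause (a) (existence of a proper extension) plays no role here — only universality (b) and the chain condition (c) are used.
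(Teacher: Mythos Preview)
Your proof is correct and follows exactly the approach the paper sketches: the paper says only ``build a chain with interleaved copies of $M$ and $N$'', and you have carried this out in full, including the verification that the even and odd subchains are cofinal so that $Q_\omega$ is simultaneously the union of each.
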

\begin{proof}
  Straightforward: build a chain with interleaved copies of $M$ and $N$. See for example \cite[I.3.7(1)]{shelahaecbook}.
\end{proof}

This justifies the following definition:

\begin{defin}\label{ksl-def}
  Let $\K$ be an AEC and let $\lambda \ge \LS (\K)$ be such that $\K_\lambda$ has a superlimit. We write $\Kslp{\lambda}$ for the class $\K^{[M]}$, where $M \in \K_\lambda$ is superlimit (the choice of $M$ does not matter by Fact \ref{sl-uq}).
\end{defin}

The class $\Kslp{\lambda}_\lambda$ is dense in $\K_\lambda$, in the following sense\footnote{This is a special case of the definition of a skeleton, see \cite[5.3]{indep-aec-apal} but since we have no use for skeletons in this paper, we chose to only study the simpler case.}:

\begin{defin}\label{dense-def}
  A class $K^\ast$ of structures is \emph{dense} in an abstract class $\K = (K, \lea)$ if $K^\ast \subseteq K$ and for any $M \in \K$ there exists $N \in K^\ast$ such that $M \lea N$. We say that an abstract class $\K^\ast = (K^\ast, \leap{\K^\ast})$ is \emph{dense in $\K$} if $K^\ast$ is dense in $\K$ and for $M, N \in K^\ast$, $M \leap{\K^\ast} N$ if and only if $M \lea N$.
\end{defin}
\begin{remark}\label{sl-dense}
  By Fact \ref{sl-basic}, whenever $\K$ is an AEC, $\lambda \ge \LS (\K)$, and $\K_\lambda$ has a superlimit, then $\Kslp{\lambda}_\lambda$ is dense in $\K_\lambda$.
\end{remark}

We have the following obvious transitivity property:

\begin{remark}\label{skel-trans}
  If $\K^{\ast \ast}$ is dense in $\K^\ast$ and $\K^\ast$ is dense in $\K$, then $\K^{\ast \ast}$ is dense in $\K$.
\end{remark}

Many properties of an abstract class are preserved when passing to a dense subclass. To state the next remark, we need the following definition (which is already used in \cite{shvi635}):

\begin{defin}\label{ab-def}
  Let $\K$ be an abstract class. An \emph{amalgamation base} is a model $M_0 \in \K$ such that for any $M_1, M_2 \in \K$ with $M_0 \lea M_\ell$, $\ell = 1,2$, there exists $M_3 \in \K$ and $\K$-embeddings $f_\ell : M_\ell \xrightarrow[M_0]{} M_3$.
\end{defin}

The following combinatorial result of Shelah \cite[I.3.8]{shelahaecbook} gives us a way to find amalgamation bases:

\begin{fact}\label{ap-univ-comb}
  Let $\K$ be an AEC and let $\lambda \ge \LS (\K)$. Assume $\WGCH (\lambda)$. If $M$ is superlimit in $\K_\lambda$ and $\K_{\lambda^+}$ has a universal model, then $M$ is an amalgamation base in $\K_\lambda$.
\end{fact}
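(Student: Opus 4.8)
The plan is to prove the contrapositive via a weak diamond argument: assuming $M$ is superlimit but \emph{not} an amalgamation base in $\K_\lambda$, I would build a model of size $\lambda^+$ that $\K$-embeds into no fixed candidate, thereby showing $\K_{\lambda^+}$ has no universal model. First, from the failure of the amalgamation base property I fix $M_0, M_1 \gea M$ in $\K_\lambda$ that cannot be amalgamated over $M$ inside $\K$ (Definition \ref{ab-def}). The key local observation to record is that non-amalgamation is inherited by extensions and is absolute in size: if $N_\rho \cong M$ and $N_{\rho^\frown c} \gea (\text{copy of } M_c) \gea N_\rho$ for $c = 0,1$, then $N_{\rho^\frown 0}, N_{\rho^\frown 1}$ cannot be amalgamated over $N_\rho$ in $\K$ either, since any such amalgam would restrict to an amalgam of the copies of $M_0, M_1$ over $N_\rho \cong M$.

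Next I would build a tree $\seq{N_\rho : \rho \in \fct{<\lambda^+}{2}}$ of copies of $M$, increasing continuous along each branch, with immediate successors non-amalgamable over their predecessor. Here superlimitness is used twice: at a limit node $\rho$ of length $<\lambda^+$, the node $N_\rho := \bigcup_{\beta} N_{\rho \rest \beta}$ is again $\cong M$ by Definition \ref{sl-def}(2c); and to pass a successor I would use Fact \ref{sl-basic}(4) to find $N_{\rho^\frown c} \cong M$ extending the transported witness $M_c$, so that $N_\rho \lea N_{\rho^\frown c}$ while all nodes stay isomorphic to $M$. For a branch $\eta \in \fct{\lambda^+}{2}$, the union $N_\eta := \bigcup_{\delta<\lambda^+} N_{\eta\rest\delta}$ then lies in $\K_{\lambda^+}$ by the chain axioms.

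Now fix any $N^* \in \K_{\lambda^+}$ together with an increasing continuous resolution $\seq{N^*_\delta : \delta < \lambda^+}$ into models of size $\lambda$; the goal is some $N_\eta$ not $\K$-embeddable into $N^*$. Using $\WGCH (\lambda)$, the Devlin--Shelah theorem \cite{dvsh65} gives $\Phi_{\lambda^+}(\lambda^+)$. I would define a colouring $F : \fct{<\lambda^+}{2} \to 2$ that, from a string $x \in \fct{\delta}{2}$ decoded via a fixed pairing as a branch $\rho = \eta\rest\delta$ together with a partial embedding $f_\delta : N_\rho \to N^*_\delta$, outputs a value $c$ such that $N_{\rho^\frown c}$ admits \emph{no} extension of $f_\delta$ into $N^*$. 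This $F$ is well defined precisely because at most one direction can continue: if both $N_{\rho^\frown 0}$ and $N_{\rho^\frown 1}$ extended $f_\delta$ into $N^*$, then $N^*$ would amalgamate them over $N_\rho$, contradicting the inheritance observation. Crucially $F$ may depend on the fixed data $N^*$ and the fixed tree, but not on the choice function to come. Let $g$ be the weak diamond guess for $F$ and set $\eta := g$.

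Finally I would derive the contradiction. Suppose $f : N_\eta \to N^*$ is a $\K$-embedding; code $f$ together with the branch $\eta$ as $f^* \in \fct{\lambda^+}{2}$. By continuity and regularity of $\lambda^+$, the set $C$ of $\delta$ with $f[N_{\eta\rest\delta}] \subseteq N^*_\delta$ and $f^*\rest\delta$ correctly coding $(\eta\rest\delta, f\rest N_{\eta\rest\delta})$ is club. Weak diamond makes $\{\delta : F(f^*\rest\delta) = g(\delta)\}$ stationary, so it meets $C \cap \{\delta \ge \lambda\}$; at such a $\delta$ we get $F(f^*\rest\delta) = g(\delta) = \eta(\delta) =: c$, which by definition of $F$ says that $N_{\eta\rest(\delta+1)} = N_{(\eta\rest\delta)^\frown c}$ admits no extension of $f\rest N_{\eta\rest\delta}$ into $N^*$ — yet $f\rest N_{\eta\rest(\delta+1)}$ is exactly such an extension. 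This contradiction shows $N_\eta \not\hookrightarrow N^*$, so $N^*$ is not universal; as $N^*$ was arbitrary, $\K_{\lambda^+}$ has no universal model. The main obstacle is the bookkeeping in this last step: arranging the pairing and coding so that a \emph{single} weak diamond guess $g$ simultaneously defeats every candidate embedding $f$, and verifying that the stationary prediction set lands on the club where $f$ is ``captured''. The inheritance-of-non-amalgamation and superlimit-preservation lemmas are routine by comparison.
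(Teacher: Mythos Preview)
The paper does not give its own proof of this fact: it is stated with a citation to Shelah \cite[I.3.8]{shelahaecbook} and used as a black box. Your proposal is a correct reconstruction of exactly the weak diamond argument that appears there, so there is nothing to compare against and nothing to correct.

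A couple of minor remarks on points you flagged as obstacles. First, the ``at most one direction can continue'' claim uses that non-amalgamability in $\K_\lambda$ is the same as non-amalgamability in $\K$ (by the L\"owenheim--Skolem--Tarski axiom), so the putative amalgam $N^\ast \in \K_{\lambda^+}$ really does yield a contradiction. Second, your worry about the bookkeeping is justified but standard: one fixes universes inside $\lambda^+$ and a pairing on $\lambda^+$ in advance, so that $F$ can decode $x \in \fct{\delta}{2}$ as a pair $(\rho, h)$ with $\rho \in \fct{\delta}{2}$ and $h$ a partial map; on the garbage inputs $F$ returns $0$, and the club $C$ ensures the decoding is honest where it matters. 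With this set up, a single guess $g$ (hence a single branch $\eta = g$) defeats every embedding $f$ into the fixed $N^\ast$, exactly as you wrote.
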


Note that an abstract class has amalgamation precisely when all its models are amalgamation bases. The following are all easy consequences of the definitions (many appear already in \cite[\S5]{indep-aec-apal}). They will be used without further comments:

\begin{remark}\label{skel-basics}
  Let $\K$ be an abstract class and let $\K^\ast$ be dense in $\K$.

  \begin{enumerate}
  \item $\K \neq \emptyset$ if and only if $\K^\ast \neq \emptyset$.
  \item $\K$ has no maximal models if and only if $\K^\ast$ has no maximal models.
  \item $\K$ has joint embedding if and only if $\K^\ast$ has joint embedding.
  \item For any $M \in \K^\ast$, $M$ is an amalgamation base in $\K^\ast$ if and only if $M$ is an amalgamation base in $\K$.
  \item For any $M \in \K^\ast$, $M$ is superlimit in $\K^\ast$ if and only if $M$ is superlimit in $\K$.
  \item If $\K^\ast$ is closed under unions of $\omega$-chains, then any superlimit (in $\K$) is in $\K^\ast$.
  \end{enumerate}
\end{remark}

Roughly, an AEC is nicely stable if its class of amalgamation bases is dense and behaves like a stable first-order theory. As usual, this is localized to a fixed cardinal $\lambda$. The definition appears for the first time in \cite[2.3]{aec-stable-aleph0-apal} but is studied already in \cite{shvi635}.

\begin{defin}
  An AEC $\K$ is \emph{nicely $\lambda$-stable} (or \emph{nicely stable in $\lambda$}) if:

  \begin{enumerate}
  \item $\lambda \ge \LS (\K)$ and $\K_\lambda \neq \emptyset$.
  \item $\K_\lambda$ has joint embedding and no maximal models.
  \item Density of amalgamation bases: for any $M \in \K_\lambda$ there exists $N \in \K_\lambda$ which is an amalgamation base in $\K_\lambda$ and so that $M \lea N$.
  \item For any amalgamation base (in $\K_\lambda$) $M$, there exists an amalgamation base $N \in \K_\lambda$ with $N$ universal over $M$.
  \item Any limit model in $\K_\lambda$ is an amalgamation base (in $\K_\lambda$).
  \end{enumerate}

  For $\Theta$ a class of cardinals, we say that $\K$ is \emph{nicely $\Theta$-stable} (or \emph{nicely stable in $\Theta$}) if $\K$ is nicely $\lambda$-stable for every $\lambda \in \Theta$.
\end{defin}

\begin{remark}\label{nice-stab-ap}
  Let $\K$ be an AEC and let $\lambda \ge \LS (\K)$. If $\K_\lambda \neq \emptyset$, $\K_\lambda$ has amalgamation, joint embedding, no maximal models, and $\K$ is stable in $\lambda$, then $\K$ is nicely $\lambda$-stable (universal extensions exist by \cite[II.1.16]{shelahaecbook}; all the other properties are easy to check). In particular, if $\K$ is $\lambda$-superstable then $\K$ is nicely $\lambda$-stable. In setups without amalgamation, it is known \cite{shvi635} that nice stability follows from categoricity, no maximal models, and GCH with enough instances of weak diamond ($\GCHWD$). See Fact \ref{semisolv-fact}(\ref{semisolv-1}) here.
\end{remark}

We will often assume in addition to nice stability that there is a superlimit. In this case, the superlimit will be limit, for all possible lengths. For the convenience of the reader, we sketch a proof.

\begin{lem}\label{sl-lim}
  Let $\K$ be a nicely $\lambda$-stable AEC. If $M$ is superlimit in $\K_{\lambda}$, then $M$ is $(\lambda, \delta)$-limit for all limit ordinals $\delta < \lambda^+$.
\end{lem}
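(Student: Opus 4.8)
The plan is to show that $M$ is isomorphic to a genuine $(\lambda, \delta)$-limit model for every limit $\delta < \lambda^+$; since being $(\lambda,\delta)$-limit is invariant under isomorphism, this suffices. The natural attempt is to build an increasing continuous chain $\seq{M_i : i \le \delta}$ with $M_i \cong M$ for all $i < \delta$ and $M_{i+1}$ universal over $M_i$: the superlimit axiom (Definition \ref{sl-def}(2)(c)) would then give $M_\delta \cong M$ at the end (and at every limit stage below $\delta$), while the universal extensions would witness that $M_\delta$ is $(\lambda,\delta)$-limit. The obstacle is that to even \emph{obtain} a universal extension $M_{i+1}$ of $M_i$ one needs $M_i$ to be an amalgamation base (this is where condition (4) of nice stability applies), and it is not given that copies of $M$ are amalgamation bases.

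To break this circularity, I would first prove that $M$ is an amalgamation base by an interleaving argument. I would build an increasing continuous chain $\seq{P_i : i \le \omega}$ in $\K_\lambda$ so that every $P_{2k}$ is a copy of $M$ extending $P_{2k-1}$, and every $P_{2k+1}$ is an amalgamation base with $P_{2k+3}$ universal over $P_{2k+1}$. At even stages I use that $M$ is universal (Definition \ref{sl-def}(2)(b)): any model $\K$-embeds into $M$, so after renaming every model extends to a copy of $M$. At odd stages I use density of amalgamation bases together with condition (4): given the amalgamation base $P_{2k-1}$, property (4) produces an amalgamation base $B$ universal over $P_{2k-1}$; since $P_{2k}$ extends $P_{2k-1}$ it $\K$-embeds into $B$ over $P_{2k-1}$, and renaming $B$ along this embedding yields $P_{2k+1} \gea P_{2k}$ with $P_{2k+1} \cong_{P_{2k-1}} B$, hence $P_{2k+1}$ is an amalgamation base universal over $P_{2k-1}$. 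Now the union $P_\omega$ admits two descriptions. Viewed through the even subchain $\seq{P_{2k} : k < \omega}$ (all copies of $M$), the superlimit axiom gives $P_\omega \cong M$. Viewed through the odd subchain $\seq{P_{2k+1} : k < \omega}$, the consecutive universal extensions exhibit $P_\omega$ as a $(\lambda,\omega)$-limit model, which is an amalgamation base by condition (5) of nice stability. Therefore $M \cong P_\omega$ is an amalgamation base (and is already $(\lambda,\omega)$-limit).

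With this in hand I would carry out the original clean construction for an arbitrary limit $\delta < \lambda^+$. Recursively build $\seq{M_i : i \le \delta}$ with each $M_i$ (for $i < \delta$) a copy of $M$: at successor stages apply condition (4) to the amalgamation base $M_i \cong M$ to get an amalgamation base universal over $M_i$, then extend it to a copy of $M$ via universality and renaming (this keeps it universal over $M_i$), and set this to be $M_{i+1}$; at limit stages $j < \delta$ take unions, where the superlimit axiom guarantees $M_j \cong M$, so the recursion can continue and $M_j$ is again an amalgamation base. The chain witnesses that $M_\delta$ is $(\lambda,\delta)$-limit over $M_0$, and the superlimit axiom applied to $\seq{M_i : i < \delta}$ gives $M_\delta \cong M$. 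Hence $M$ is $(\lambda,\delta)$-limit, as desired.

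I would flag the interleaving step (second paragraph) as the main obstacle: everything else is routine chain-building, but there is a genuine chicken-and-egg problem—one cannot take universal extensions of copies of $M$ until one knows they are amalgamation bases, and the only way to certify that here is to realize $M$ as a limit model, which in turn seems to require such universal extensions. Interleaving copies of $M$ with independently produced universal amalgamation bases, and then reading the \emph{same} union in both ways, is precisely what resolves the difficulty.
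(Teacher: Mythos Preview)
Your proof is correct and uses the same core idea as the paper: interleave copies of $M$ with a chain of amalgamation bases having consecutive universal extensions, then read the union both as a superlimit (via the $M$-copies) and as a $(\lambda,\delta)$-limit (via the amalgamation-base chain).

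The only structural difference is that the paper does this interleaving directly for the given $\delta$ in one pass, building two interleaved continuous chains $\seq{M_i : i \le \delta}$ (copies of $M$) and $\seq{N_i : i \le \delta}$ (amalgamation bases with $N_{i+1}$ universal over $N_i$) satisfying $M_i \lea N_i \lea M_{i+1}$; at limit $i$, $N_i$ is automatically a limit model hence an amalgamation base, and $M_i \cong M$ by the superlimit axiom, so the recursion never stalls. You instead first run the interleaving for $\delta = \omega$ to extract the lemma ``$M$ is an amalgamation base,'' and then carry out a clean single-chain construction for arbitrary $\delta$. Both routes work; the paper's is shorter, while yours isolates a reusable intermediate fact.
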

\begin{proof}
  Fix a limit ordinal $\delta < \lambda^+$. We build $\seq{M_i : i \le \delta}$, $\seq{N_i : i \le \delta}$ increasing continuous in $\K_\lambda$ such that for all $i < \delta$:

  \begin{enumerate}
  \item $M_i \cong M$.
  \item $N_i$ is an amalgamation base.
  \item $N_{i + 1}$ is universal over $N_i$.
  \item $M_i \lea N_i \lea M_{i + 1}$.
  \end{enumerate}

  This is possible: take $M_0 := M$ and $N_0$ an amalgamation base containing $M_0$. For $i$ limits, take unions ($N_i$ is an amalgamation base since it is a limit model, and $M_i \cong M$ by definition of a superlimit). Given $M_i$ and $N_i$, use universality of the superlimit to pick $M_{i + 1}$ containing $N_i$ which is isomorphic to $M$. Then pick $N_{i + 1}'$ an amalgamation base containing $M_{i + 1}$ and let $N_{i + 1}$ be universal over $N_{i + 1}'$.

  This is enough: the chains are interleaved so $M_\delta = N_\delta$. By definition of a superlimit, $M_\delta \cong M$ and by construction $N_\delta$ is $(\lambda, \delta)$-limit.
\end{proof}

The following technical result will be used in Section \ref{categ-nmm-sec}:

\begin{lem}\label{nice-stab-elem}
  Let $\K$ be a nicely $\lambda$-stable AEC. Let $M \lea N$ be in $\K_\lambda$.

  \begin{enumerate}
  \item If $M$ is $(\lambda, \delta_1)$-limit and $N$ is $(\lambda, \delta_2)$-limit, then $M \lee_{\Ll_{\infty, \theta}} N$, where $\theta := \min (\cf{\delta_1}, \cf{\delta_2})$.
  \item If $M$ and $N$ are superlimits, then $M \lee_{\Ll_{\infty, \lambda}} N$.
  \end{enumerate}
\end{lem}
\begin{proof}
  The second part follows from the first and Lemma \ref{sl-lim}. Let us prove the first part. Write $\theta_\ell := \cf{\delta_\ell}$, for $\ell = 1,2$. Note that $\theta \le \theta_\ell$ for $\ell = 1,2$. Let $\phi (\bx)$ be an $\Ll_{\infty, \theta}$ formula and let $\ba$ be a sequence of length $\alpha$ in $M$, with $\alpha < \theta$. We want to see that $M \models \phi[\ba]$ if and only if $N \models \phi[\ba]$. We proceed by induction on the structure of $\phi$. If $\phi$ is quantifier-free, then this holds because by the definition of an abstract class, $M \subseteq N$. If $\phi$ is a conjunction, disjunction, or negation, the result follows from the induction hypothesis. Assume now that $\phi = \exists \by \psi (\bx; \by)$, where $\by$ has length $\beta < \theta$. If $M \models \phi[\ba]$, then by the induction hypothesis $N \models \phi[\ba]$. Assume now that $N \models \phi[\ba]$ and pick $\bb$ in $N$ such that $N \models \psi[\ba; \bb]$. Let $\theta_1 := \cf{\delta_1}$. Since $M$ and $N$ are limit models, they  are amalgamation bases. Thus we can pick $N' \in \K_\lambda$ which is $(\lambda, \theta_1)$-limit over $N$, hence also over $M$. By what has just been said (the induction hypothesis), $N' \models \psi[\ba; \bb]$. Now, $M$ is $(\lambda, \theta_1)$-limit over some $M_0$. By making $M_0$ bigger if necessary (noting that $\alpha < \theta \le \theta_1$), we can assume that $\ba$ is contained entirely in $M_0$. Note that $N'$ is also $(\lambda, \theta_1)$-limit over $M_0$, so by uniqueness of limit models of the same length, there is an isomorphism $f: N' \cong_{M_0} M$. Thus $M \models \psi[\ba; f (\bb)]$, hence $M \models \phi[\ba]$, as desired.
\end{proof}

The next result says that nice stability plays very well with taking dense subclasses:

\begin{thm}\label{dense-nicely-stable}
  Let $\K$ and $\K^\ast$ be AECs and let $\lambda \ge \LS (\K) + \LS (\K^\ast)$. If $\K_\lambda^\ast$ is dense in $\K_\lambda$, then $\K$ is nicely $\lambda$-stable if and only if $\K^\ast$ is nicely $\lambda$-stable.
\end{thm}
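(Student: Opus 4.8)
The plan is to verify each of the five defining clauses of nice $\lambda$-stability separately, in both directions, using density of $\K^\ast_\lambda$ in $\K_\lambda$ together with Remark \ref{skel-basics}. Clauses (1) and (2) are immediate: $\K_\lambda \neq \emptyset \iff \K^\ast_\lambda \neq \emptyset$, and joint embedding and no maximal models transfer, all by Remark \ref{skel-basics}(1)--(3), while $\lambda \ge \LS(\K) + \LS(\K^\ast)$ is assumed. Before treating clauses (3)--(5) I would isolate two auxiliary facts. First, for $M, N \in \K^\ast_\lambda$ with $M \lea N$, the model $N$ is universal over $M$ in $\K_\lambda$ if and only if it is universal over $M$ in $\K^\ast_\lambda$: the forward implication is trivial, and conversely, given any extension $M' \gea M$ in $\K_\lambda$, density produces $M'' \in \K^\ast_\lambda$ with $M' \lea M''$, and $\K^\ast$-universality embeds $M''$, hence $M'$, into $N$ over $M$. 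Second, every $(\lambda,\delta)$-limit model in $\K^\ast_\lambda$ is a $(\lambda,\delta)$-limit model in $\K_\lambda$ over the same base, since its witnessing chain is computed identically in both classes and universality of the successors transfers by the first auxiliary fact.

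For the direction ``$\K$ nicely $\lambda$-stable $\Rightarrow \K^\ast$ nicely $\lambda$-stable'' the key device is an interleaving construction. Given $M \in \K^\ast_\lambda$, pick (by clause (3) for $\K$) a $\K$-amalgamation base $P_0 \gea M$, and build $P_0 \lea Q_0 \lea P_1 \lea Q_1 \lea \cdots$ of length $\omega$ with $P_i \in \K_\lambda$ a $\K$-amalgamation base, $Q_i \in \K^\ast_\lambda$, and $P_{i+1}$ universal over $P_i$ in $\K_\lambda$ (using clause (4) for $\K$ to find $P_{i+1}$, absorbing $Q_i$ into it by universality, and density of $\K^\ast_\lambda$ to find $Q_{i+1} \gea P_{i+1}$). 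Then $N := \bigcup_i P_i = \bigcup_i Q_i$ lies in $\K^\ast_\lambda$ and is a $(\lambda,\omega)$-limit over $P_0$ in $\K_\lambda$, so by clause (5) for $\K$ it is a $\K$-amalgamation base, hence a $\K^\ast$-amalgamation base by Remark \ref{skel-basics}(4); this gives clause (3) for $\K^\ast$. Starting the same construction from a $\K^\ast$-amalgamation base $M = P_0$, the resulting $N$ is in addition universal over $M$ in $\K_\lambda$ (a limit model is universal over its base), hence universal over $M$ in $\K^\ast_\lambda$, giving clause (4). Clause (5) for $\K^\ast$ is immediate: a $\K^\ast$-limit is a $\K$-limit by the second auxiliary fact, hence a $\K$-amalgamation base by clause (5) for $\K$, hence a $\K^\ast$-amalgamation base by Remark \ref{skel-basics}(4).

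For ``$\K^\ast$ nicely $\lambda$-stable $\Rightarrow \K$ nicely $\lambda$-stable'', clause (3) is direct: given $M \in \K_\lambda$, density and clause (3) for $\K^\ast$ produce a $\K^\ast$-amalgamation base above $M$, which is a $\K$-amalgamation base by Remark \ref{skel-basics}(4). For clause (4), given a $\K$-amalgamation base $M$, I would find (density plus clause (3) for $\K^\ast$) a $\K^\ast$-amalgamation base $M'' \gea M$, take $N \gea M''$ universal over $M''$ in $\K^\ast_\lambda$ by clause (4) for $\K^\ast$, and note $N$ is universal over $M''$ in $\K_\lambda$ by the first auxiliary fact; a routine amalgamation argument (using that $M$ is a $\K$-amalgamation base to merge any $M^\dagger \gea M$ with $M''$ over $M$, then embedding the amalgam into $N$ over $M''$) pushes universality down from $M''$ to $M$, and $N$ is a $\K$-amalgamation base as before.

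The main obstacle is clause (5) in this last direction: a $(\lambda,\delta)$-limit $N$ in $\K_\lambda$ need not belong to $\K^\ast$, so Remark \ref{skel-basics}(4) does not apply to it directly. I would resolve this by constructing, inside $\K^\ast_\lambda$, a $(\lambda,\delta)$-limit model $N^\ast$ of the \emph{same} length $\delta$ --- possible because at successor stages clause (4) for $\K^\ast$ supplies universal amalgamation-base extensions, and at limit stages clause (5) for $\K^\ast$ guarantees the union is again an amalgamation base over which the construction continues. By the second auxiliary fact $N^\ast$ is a $(\lambda,\delta)$-limit in $\K_\lambda$, so by the uniqueness of limit models of the same length under joint embedding (recalled in the preliminaries, based on \cite[1.3.6]{shvi635}) we get $N \cong N^\ast$. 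Since $N^\ast$ is a $\K^\ast$-amalgamation base by clause (5) for $\K^\ast$, hence a $\K$-amalgamation base by Remark \ref{skel-basics}(4), and being an amalgamation base is isomorphism-invariant, $N$ is a $\K$-amalgamation base, completing the argument.
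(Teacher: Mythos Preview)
Your proof is correct and follows essentially the same outline as the paper's, verifying the five clauses in each direction via density and Remark~\ref{skel-basics}. There is one point worth flagging. For clause~(5) in the direction $\K^\ast \Rightarrow \K$, you write that a $(\lambda,\delta)$-limit $N$ in $\K_\lambda$ ``need not belong to $\K^\ast$,'' and you work around this by building a $\K^\ast$-limit $N^\ast$ of the same length and invoking uniqueness of limit models. In fact $N$ \emph{does} belong to $\K^\ast$: given a witnessing chain $\seq{M_i : i \le \delta}$ with $M_{i+1}$ universal over $M_i$, one can interleave $N_i \in \K^\ast_\lambda$ with $M_i \lea N_i \lea M_{i+1}$ (using density to find some $N_i' \gea M_i$ in $\K^\ast_\lambda$, then universality of $M_{i+1}$ over $M_i$ to realize an isomorphic copy below $M_{i+1}$), and then $N = \bigcup_i M_i = \bigcup_i N_i \in \K^\ast$ because $\K^\ast$ is an AEC. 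This is the paper's opening observation, from which it concludes that the limit models of $\K_\lambda$ and $\K^\ast_\lambda$ coincide; the rest is then declared straightforward.

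Your detour via uniqueness is nonetheless valid (and, since $\K^\ast$ is closed under isomorphisms, $N \cong N^\ast$ already forces $N \in \K^\ast$ anyway), so the difference is one of directness rather than correctness. The paper's route avoids having to construct $N^\ast$ and invoke joint embedding for the back-and-forth; your route has the virtue of making every step explicit, which the paper does not.
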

\begin{proof}
  First, check that for any $M_0 \lea M$ both in $\K_\lambda$, if $M$ is limit over $M_0$ (in $\K_\lambda$), then $M \in \K_\lambda^\ast$ (write $M$ as an increasing union of models in $\K^\ast$). It follows that limit models in $\K$ and $\K^\ast$ coincide. Notice also that if $M_0 \lea M_1 \lea M_2$, $M_2$ is universal over $M_1$, and $M_0$ is an amalgamation base, then $M_2$ is universal over $M_0$. Thus it is enough to check existence of universal extensions on a dense class of amalgamation bases. The rest of the proof is straightforward (see also Remark \ref{skel-basics}).
\end{proof}

We now aim to show that if $\K$ is nicely $[\lambda, \lambda^{+n}]$-stable and $\K_\lambda$ has a superlimit, then $\Kslp{\lambda}$ (the class generated by the superlimit) is also nicely $[\lambda, \lambda^{+n}]$-stable. The $n = 0$ case is given by what has just been proven, but for $n > 0$ we will need to work a little bit more.

For $\K$ a nicely $\lambda$-stable AEC, we call $M \in \K$ \emph{$(\lambda, \lambda^+)$-limit} if there exists an increasing continuous chain $\seq{M_i : i \le \lambda^+}$ such that $M = M_{\lambda^+}$ and for all $i < \lambda^+$, $M_i \in \K_\lambda$ and $M_{i + 1}$ is universal over $M_i$. By the proof of \cite[Proposition 14]{vandieren-chainsat-apal} (a standard back and forth argument), we have:

\begin{fact}\label{bf-lambdap}
  In a nicely $[\lambda, \lambda^+]$-stable AEC, a model is $(\lambda, \lambda^+)$-limit if and only if it is $(\lambda^+, \lambda^+)$-limit.
\end{fact}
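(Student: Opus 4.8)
The plan is to prove both implications, and the common strategy is the same: given $M$ presented as a limit model of one granularity, build inside $M$ a chain of the other granularity whose union is all of $M$. In each case the construction is a transfinite recursion of length $\lambda^+$ together with a bookkeeping enumeration of the $\lambda^+$ elements of $M$ that guarantees the chain exhausts $M$; the continuity and the top level $M$ are then automatic. The only real input is the ability to realize \emph{universal extensions inside $M$} at the appropriate level: at successor stages I will want to replace the current model by a universal extension of it that still $\lea$-embeds into $M$ over it, and this rests on $M$ being universal (at that level) over the models I am building. I will use throughout the nice-stability axioms (density of amalgamation bases, existence of universal extensions over amalgamation bases, and that limit models are amalgamation bases) and, to see that the outcome does not depend on the bookkeeping, the uniqueness of limit models of the same cofinality \cite[1.3.6]{shvi635}.

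The first ingredient is a cross-cardinal amalgamation lemma: a $\lambda$-sized extension $M'$ of a $\lambda$-amalgamation base $M_0$ can be amalgamated with a $\lambda^+$-sized extension $N$ of $M_0$ over $M_0$. I would obtain this by resolving $N=\bigcup_{j<\lambda^+}N_j$ into a fine chain of amalgamation bases in $\K_\lambda$ with $M_0\lea N_0$, amalgamating $M'$ with $N_0$ over $M_0$, and then propagating the amalgam up the fine chain using $\lambda$-level amalgamation at each successor, taking unions at limits. This already handles the direction $(\lambda^+,\lambda^+)$-limit $\Rightarrow (\lambda,\lambda^+)$-limit: if $\seq{N_i : i\le\lambda^+}$ witnesses $(\lambda^+,\lambda^+)$-limit with union $M$, then any $M_0\lea M$ of size $\lambda$ lies inside some $N_{i^*}$ (by the regularity of $\lambda^+$ and the coherence axiom), and using the cross-cardinal amalgamation together with $\lambda^+$-universality of $N_{i^*+1}$ over $N_{i^*}$ one embeds any $\lambda$-sized extension of $M_0$ into $M$ over $M_0$; hence $M$ is $\lambda$-universal over its $\lambda$-sized amalgamation bases, and the recursion producing the fine chain goes through.

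The reverse direction $(\lambda,\lambda^+)$-limit $\Rightarrow (\lambda^+,\lambda^+)$-limit is where the genuine back-and-forth lives, and this is the step I expect to be the main obstacle. Here I must show that a $(\lambda,\lambda^+)$-limit model $M=\bigcup_{j<\lambda^+}M_j$ is $\lambda^+$-universal over each of its $\lambda^+$-sized amalgamation bases $N$: given a size-$\lambda^+$ extension $N'$ of $N$, I need an embedding $N'\to M$ fixing $N$ pointwise. The difficulty is that to build such an embedding I want to push successive $\lambda$-sized pieces of $N'$ into the successive universal steps $M_k$ of the given fine chain of $M$, but these $\lambda$-level universality moves are a priori free to disturb the base $N$. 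Overcoming this requires interleaving three resolutions at once — that of $N'$, that of $N$, and the fine chain of $M$ — so that the trace of $M$'s fine chain on $N$ is fixed at every stage while the universal extension property of $M_{k}$ over $M_{k-1}$ is invoked only on the new material; this is the delicate bookkeeping of the argument. Granting this universality lemma, I build the coarse chain $\seq{N_i : i\le\lambda^+}$ by taking each $N_{i+1}$ to be a genuine $(\lambda^+,\omega)$-limit over $N_i$ realized inside $M$ (which exists precisely because $M$ is $\lambda^+$-universal over $N_i$); building the steps as honest $(\lambda^+,\omega)$-limits sidesteps the potential circularity of needing to know the coarse approximations are amalgamation bases before proving the theorem, since limit models are amalgamation bases by nice stability. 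The resulting chain of size-$\lambda^+$ models with universal successors and union $M$ is exactly a witness that $M$ is $(\lambda^+,\lambda^+)$-limit, and uniqueness of limit models of equal cofinality shows the whole construction is independent of the choices made along the way.
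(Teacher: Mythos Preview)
Your outline is essentially correct and is indeed a back-and-forth of the kind the paper invokes; the paper itself offers no argument beyond citing \cite[Proposition 14]{vandieren-chainsat-apal} as ``a standard back and forth argument'', so there is nothing to compare line by line.

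That said, your route is more laborious than necessary, and the place you flag as the main obstacle really is one in your formulation. By building the witnessing chain \emph{inside} the given model, the harder direction forces you to prove that a $(\lambda,\lambda^+)$-limit $M$ is $\lambda^+$-universal over each of its $\lambda^+$-sized amalgamation-base submodels, which is exactly the three-resolution interleaving you describe but do not carry out. Making that interleaving go through requires arranging that the trace $N_k' \cap N$ of the $k$th piece of $N'$ on $N$ is precisely the $k$th piece $P_k$ of $N$ (a ``reducedness'' condition), for otherwise the two partial maps you must glue at stage $k+1$ need not agree on their overlap. This can be arranged, but it is genuine extra work.

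A cleaner route, and the one usually meant by ``standard back and forth'' here, is to use uniqueness of limit models of the same cofinality to reduce the statement to: any $(\lambda,\lambda^+)$-limit $M$ is isomorphic to any $(\lambda^+,\lambda^+)$-limit $M'$. One then shows that \emph{both} $M$ and $M'$ are $\lambda$-universal over each of their $\lambda$-sized amalgamation-base $\lea$-submodels: for $M$ this is immediate from the fine chain, and for $M'$ this is exactly your cross-cardinal amalgamation lemma. With this single property in hand, a back-and-forth of length $\lambda^+$ using $\lambda$-sized limit pieces (so that limit stages remain amalgamation bases) yields $M\cong M'$. This avoids the $\lambda^+$-level universality lemma entirely and hence the three-way interleaving; it also dispenses with the bootstrapping issue of finding an initial $\lambda^+$-sized amalgamation base inside $M$ to start your coarse chain.
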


\begin{thm}\label{nicely-stable-sl}
  Let $\K$ be an AEC, let $\lambda \ge \LS (\K)$, and let $n < \omega$. If $\K$ is nicely $[\lambda, \lambda^{+n}]$-stable and $\K_\lambda$ has a superlimit, then:

  \begin{enumerate}
  \item Any $(\lambda^{+n}, \lambda^{+(n + 1)})$-limit is in $\Kslp{\lambda}$. In particular for every $m \le n$, $\Kslp{\lambda}_{\lambda^{+m}}$ is dense in $\K_{\lambda^{+m}}$. 
  \item $\Kslp{\lambda}$ is nicely $[\lambda, \lambda^{+n}]$-stable.
  \end{enumerate}
\end{thm}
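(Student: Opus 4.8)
The plan is to prove part (1) by induction on $m \le n$, establishing the uniform claim $(\ast_m)$: every $(\lambda^{+m}, \lambda^{+(m+1)})$-limit lies in $\Kslp{\lambda}$. Fix a superlimit $M \in \K_\lambda$, so $\Kslp{\lambda} = \K^{[M]}$, and recall from Fact~\ref{sl-basic} that $\LS(\Kslp{\lambda}) = \lambda$ and $\Kslp{\lambda}_\lambda = \{N_0 \in \K_\lambda \mid N_0 \cong M\}$. The recurring move is: to show $N \in \K^{[M]}$ it suffices, for each $A \subseteq |N|$ with $|A| \le \lambda$, to find some $P \in \Kslp{\lambda}$ with $A \subseteq |P|$ and $P \lea N$, since the defining local condition of $\K^{[M]}$ applied to $P$ then yields a copy of $M$ sitting $\lea$-below $P \lea N$ and containing $A$. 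For the base case $m = 0$, I write the $(\lambda, \lambda^+)$-limit as $N = \bigcup_{i < \lambda^+} N_i$ with $\seq{N_i : i \le \lambda^+}$ increasing continuous in $\K_\lambda$ and universal successors. Given $A$ with $|A| \le \lambda$, regularity of $\lambda^+$ yields $A \subseteq |N_j|$ for some $j$; picking a limit $i \in [j, \lambda^+)$, the model $N_i$ is a $(\lambda, i)$-limit, hence (Lemma~\ref{sl-lim} together with uniqueness of limit models of the same length, joint embedding being available) isomorphic to $M$, so $P := N_i$ works.

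For the inductive step, let $N = \bigcup_{i < \lambda^{+(m+2)}} N_i$ be a $(\lambda^{+(m+1)}, \lambda^{+(m+2)})$-limit, with $\seq{N_i}$ increasing continuous in $\K_{\lambda^{+(m+1)}}$ and universal successors. For any $i$ with $\cf{i} = \lambda^{+(m+1)}$, the model $N_i$ is a $(\lambda^{+(m+1)}, i)$-limit, hence, by uniqueness of limit models of the same cofinality, a $(\lambda^{+(m+1)}, \lambda^{+(m+1)})$-limit, hence, by Fact~\ref{bf-lambdap} applied one level down (legitimate since $\K$ is nicely $[\lambda^{+m}, \lambda^{+(m+1)}]$-stable), a $(\lambda^{+m}, \lambda^{+(m+1)})$-limit; so $N_i \in \Kslp{\lambda}$ by $(\ast_m)$. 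Given $A \subseteq |N|$ with $|A| \le \lambda$, regularity of $\lambda^{+(m+2)}$ together with the cofinality of the set of ordinals of cofinality $\lambda^{+(m+1)}$ produce such an $i$ with $A \subseteq |N_i|$, and $P := N_i \lea N$ is the required witness. Thus $(\ast_{m})$ holds for all $m \le n$.

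The density statements follow. Density of $\Kslp{\lambda}_\lambda$ in $\K_\lambda$ is Remark~\ref{sl-dense}. For $1 \le m \le n$ and $P \in \K_{\lambda^{+m}}$, I would extend $P$ to an amalgamation base $P^\ast \in \K_{\lambda^{+m}}$ (density of amalgamation bases from nice $\lambda^{+m}$-stability), then iterate universal extensions through amalgamation bases to build a $(\lambda^{+m}, \lambda^{+m})$-limit $Q$ over $P^\ast$; by Fact~\ref{bf-lambdap} at level $\lambda^{+(m-1)}$ this $Q$ is a $(\lambda^{+(m-1)}, \lambda^{+m})$-limit, so $Q \in \Kslp{\lambda}$ by $(\ast_{m-1})$, and $P \lea Q$ witnesses density at $\lambda^{+m}$. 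Part (2) is then immediate: for each $m \le n$, $\Kslp{\lambda}_{\lambda^{+m}}$ is dense in $\K_{\lambda^{+m}}$, $\LS(\K) + \LS(\Kslp{\lambda}) = \lambda \le \lambda^{+m}$, and $\K$ is nicely $\lambda^{+m}$-stable, so Theorem~\ref{dense-nicely-stable} gives that $\Kslp{\lambda}$ is nicely $\lambda^{+m}$-stable.

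I expect the main obstacle to be the cardinal bookkeeping in the inductive step: one must carefully match the level at which Fact~\ref{bf-lambdap} is invoked with the cofinality chosen for the index $i$, so that the intermediate models $N_i$ of the long chain defining $N$ are correctly recognized as $(\lambda^{+m}, \lambda^{+(m+1)})$-limits and hence fall under the induction hypothesis; the accompanying limit-model construction used for density also requires checking that the amalgamation-base machinery of nice stability genuinely lets the chain be continued through limit stages.
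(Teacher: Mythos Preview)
Your proof is correct and follows essentially the same inductive strategy as the paper: prove $(\ast_m)$ by induction, using Fact~\ref{bf-lambdap} to recognize the relevant intermediate models as $(\lambda^{+m}, \lambda^{+(m+1)})$-limits and then apply the induction hypothesis; derive density exactly as you do; and deduce part~(2) from Theorem~\ref{dense-nicely-stable}. The only cosmetic difference is that where you verify $N \in \K^{[M]}$ by hand (your ``recurring move'' of covering each small $A$ by some $P \in \Kslp{\lambda}$), the paper instead observes that $N$ is the union of an increasing chain of members of $\Kslp{\lambda}$ and invokes the fact that $\Kslp{\lambda}$ is an AEC (Fact~\ref{sl-basic}) and hence closed under such unions; this shortcut avoids unpacking the definition of $\K^{[M]}$ but is otherwise the same argument.
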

\begin{proof} \
  \begin{enumerate}
  \item By induction on $n$. If $n = 0$, note that by Lemma \ref{sl-lim}, the superlimit in $\K_\lambda$ is $(\lambda, \lambda)$-limit. Moreover, it is easy to check that the $(\lambda, \lambda^+)$-limit is a union of an increasing chains of $(\lambda, \lambda)$-limits, hence of members of $\Kslp{\lambda}$. Thus the $(\lambda, \lambda^+)$-limit is in $\Kslp{\lambda}$. If $n > 0$, then similarly the $(\lambda^{+n}, \lambda^{+(n + 1)})$-limit is a union of $(\lambda^{+n}, \lambda^{+n})$-limits. By Fact \ref{bf-lambdap}, these are $(\lambda^{+(n - 1)}, \lambda^{+n})$-limits. By the induction hypothesis, these are all in $\Kslp{\lambda}$, hence the $(\lambda^{+n}, \lambda^{+(n + 1)})$-limit is in $\Kslp{\lambda}$. The ``in particular'' part follows: if $m = 0$, this is Remark \ref{sl-dense}, and if $m > 0$ then by definition of nice stability, any model in $\K_{\lambda^{+m}}$ is contained in an $(\lambda^{+m}, \lambda^{+m})$-limit, hence (by Fact \ref{bf-lambdap}) in an $(\lambda^{+(m - 1)}, \lambda^{+m})$-limit, which is in $\Kslp{\lambda}$.
  \item By the previous part and Theorem \ref{dense-nicely-stable}.
  \end{enumerate}
\end{proof}

We will often be interested in situations where the AEC is not just nicely stable but nicely superstable, in the sense that it also has a superlimit and the class generated by the superlimit is superstable. 

\begin{defin}\label{nice-ss-def}
  An AEC $\K$ is \emph{nicely sl-$\lambda$-superstable} (or \emph{nicely sl-superstable in $\lambda$} -- the ``sl'' stands for ``superlimit'') if:

  \begin{enumerate}
  \item $\K$ is nicely $\lambda$-stable.
  \item $\K_\lambda$ has a superlimit.
  \item $\Kslp{\lambda}$ is $\lambda$-superstable and has $\lambda$-symmetry.
  \end{enumerate}

  For $\Theta$ a class of cardinals, we say that $\K$ is \emph{nicely sl-$\Theta$-superstable} (or \emph{nicely sl-superstable in $\Theta$}) if $\K$ is nicely sl-$\lambda$-superstable for every $\lambda \in \Theta$.
\end{defin}

We caution the reader: nice sl-superstability does not immediately imply superstability, since amalgamation is not assumed. However the following are immediate from the definitions:

\begin{remark}
  Let $\K$ be an AEC.
  
  \begin{enumerate}
  \item If $\K$ is $\lambda$-superstable, has $\lambda$-symmetry, and $\K_\lambda$ has a superlimit, then $\K$ is nicely sl-$\lambda$-superstable.

  \item If $\K$ is nicely sl-$\lambda$-superstable, then $\Kslp{\lambda}$ is $\lambda$-superstable, has $\lambda$-symmetry, and is categorical in $\lambda$.
  \end{enumerate}
\end{remark}

\section{AECs with arbitrarily large models}\label{arb-large-sec}

An AEC $\K$ has \emph{arbitrarily large models} if $\K_{\ge \chi} \neq \emptyset$ for all cardinals $\chi$. Equivalently, $\K$ is a large category (in the sense that it has a proper class of nonisomorphic objects). In this section, we recall some basic facts about AECs with arbitrarily large models. We study in particular a weakening of categoricity called \emph{semisolvability}, equivalent in the first-order case to superstability. 

The following is a sufficient condition for an AEC to have arbitrarily large models: 

\begin{fact}[{\cite[I.1.11]{shelahaecbook}}]\label{arb-large-fact}
  Let $\K$ be an AEC. If $\K_{\ge \chi} \neq \emptyset$ for all $\chi < \hanf{\LS (\K)}$, then $\K$ has arbitrarily large models.
\end{fact}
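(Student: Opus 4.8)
The plan is to reduce the statement to the classical Hanf-number computation for pseudoelementary classes omitting types, and then to run Morley's method of Ehrenfeucht--Mostowski models. Write $\lambda := \LS (\K)$. By Shelah's presentation theorem there is a finitary vocabulary $\tau_1 \supseteq \tau (\K)$ with $|\tau_1| \le \lambda$, a first-order $\tau_1$-theory $T_1$, and a set $\Gamma$ of $\tau_1$-types such that the models of $\K$ are exactly the $\tau (\K)$-reducts of the models of $T_1$ that omit every type in $\Gamma$, with $\lea$ captured by $\tau_1$-substructure. Recalling that $\hanf{\lambda} = \beth_{(2^\lambda)^+} = \sup_{\alpha < (2^\lambda)^+} \beth_\alpha$, the hypothesis together with the L\"owenheim--Skolem--Tarski axiom says precisely that $(T_1, \Gamma)$ has a model omitting $\Gamma$ of cardinality $\beth_\alpha$ for every $\alpha < (2^\lambda)^+$.

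The core of the argument is to extract from these models a single Ehrenfeucht--Mostowski blueprint that is proper for all linear orders and whose models omit $\Gamma$. Fix $\alpha < (2^\lambda)^+$ and a model $M_\alpha \models T_1$ of size $\beth_\alpha$ omitting $\Gamma$. Expand $M_\alpha$ with an arbitrary linear ordering of its universe and apply the generalized Erd\H{o}s--Rado theorem: since $\beth_{(2^\lambda)^+}$ is exactly the threshold making the relevant partition relations for colorings of finite tuples by $\le 2^\lambda$ colors succeed, we obtain a set of $\tau_1$-order-indiscernibles of order type at least $\alpha$. Let $\Phi_\alpha$ be the resulting blueprint, i.e.\ the collection of complete $\tau_1$-types of increasing tuples of these indiscernibles. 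The essential point is that, because $M_\alpha$ omits $\Gamma$, for each $\tau_1$-term $\sigma$ and each $p \in \Gamma$ there is some $\phi \in p$ with $\neg \phi (\sigma (\bx)) \in \Phi_\alpha$; this record of non-realization is what will force omission of $\Gamma$ in the synthetic models below.

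Now count: there are at most $2^{|\tau_1| + \aleph_0} = 2^\lambda$ possible blueprints over $\tau_1$, while $\alpha$ ranges over the regular cardinal $(2^\lambda)^+$. By pigeonhole a single blueprint $\Phi$ occurs for a set of $\alpha$ unbounded in $(2^\lambda)^+$, and in particular is realized by genuine indiscernible sequences of every finite length; hence by compactness $\Phi$ is proper for every linear order $I$, and $\EM (I, \Phi)$ is a model of $T_1$. Moreover every element of $\EM (I, \Phi)$ is $\sigma (\bar t)$ for a $\tau_1$-term $\sigma$ and an increasing tuple $\bar t$ from $I$, whose type is dictated by $\Phi$; by the non-realization clauses recorded above, $\sigma (\bar t)$ omits each $p \in \Gamma$, so $\EM (I, \Phi)$ omits $\Gamma$. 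Its $\tau (\K)$-reduct therefore lies in $\K$ and has cardinality $|I| + \lambda$. Letting $|I|$ be arbitrarily large yields models of $\K$ of arbitrarily large cardinality, as desired (and Fact \ref{arb-large-fact} follows).

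The main obstacle is the middle step: making precise that omission of $\Gamma$ transfers from the extracted, bounded indiscernibles to $\EM (I, \Phi)$ over \emph{arbitrary} index sets $I$. This is exactly what the presentation theorem buys us---every element is a term in finitely many indiscernibles, so its type is controlled by the blueprint---but the bookkeeping (running the Erd\H{o}s--Rado extraction so that all arities are handled simultaneously and the witnessing order types are cofinal in $(2^\lambda)^+$) is where the real work lies, and it is precisely this computation that pins the threshold at $\beth_{(2^\lambda)^+} = \hanf{\lambda}$.
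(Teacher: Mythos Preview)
The paper does not give its own proof of this fact; it simply cites \cite[I.1.11]{shelahaecbook}. Your proposal is a correct outline of precisely the standard argument found there (and in expositions such as Baldwin's \emph{Categoricity}): presentation theorem, Erd\H{o}s--Rado extraction of indiscernibles, pigeonhole on the $2^\lambda$ blueprints, and transfer of type-omission to the EM models via the fact that every element is a term in finitely many generators.

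One small imprecision worth flagging: the claim that from $M_\alpha$ of size $\beth_\alpha$ one gets order-indiscernibles ``of order type at least $\alpha$'' is not how the extraction actually runs. Erd\H{o}s--Rado for $(n{+}1)$-tuples with $2^\lambda$ colors requires size $\beth_n(2^\lambda)^+$ and yields a homogeneous set of size $(2^\lambda)^+$, not $\alpha$. The real bookkeeping (which you correctly identify as the delicate part) produces, for each $n$, $n$-indiscernibles from a sufficiently large $M_\alpha$, and then a tree/compactness argument knits these into a single blueprint proper for all linear orders. Your final paragraph shows you are aware this is where the work lies; just be careful not to overstate what a single application of Erd\H{o}s--Rado to $M_\alpha$ delivers.
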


Clearly, if an AEC has no maximal models (and is not empty), then it has arbitrarily large models. We will make use of the following weakening of having no maximal models. Such notions are implicit already in both \cite[Chapter IV]{shelahaecbook} and \cite{sh893-lwb}.

\begin{defin}
  Let $\K$ be an AEC and let $\lambda < \mu$ be infinite cardinals. We call $\K$ \emph{$(\lambda, \mu)$-extendible} if for any $M \in \K_\lambda$ there exists $N \in \K_{\ge \mu}$ such that $M \lea N$. We say that $\K$ is \emph{$\lambda$-extendible} if $\K$ is $(\lambda, \mu)$-extendible for all $\mu$ and we say that $\K$ is \emph{extendible} if $\K$ is $\lambda$-extendible for all $\lambda$. More generally, for $\Theta$ a class of cardinals, we say that $\K$ is \emph{$(\Theta, \mu)$-extendible} if $\K$ is $(\lambda, \mu)$-extendible for all $\lambda \in \Theta$, and we say that $\K$ is \emph{$\Theta$}-extendible if $\K$ is $\lambda$-extendible for all $\lambda \in \Theta$.
\end{defin}

The following basic properties of extendibility may sometimes be used without mention.

\begin{remark}
  Let $\K$ be an AEC.

  \begin{enumerate}
  \item If $\mu \le \lambda$, then $\K$ is always $(\mu, \lambda)$-extendible.    
  \item Let $\lambda$ be a cardinal and let $\mu_0 \le \mu_1$. If $\K$ is $(\lambda, \mu_1)$-extendible, then $\K$ is $(\lambda, \mu_0)$-extendible.
  \item If $\LS (\K) \le \lambda_0 \le \lambda_1 \le \lambda_2$, $\K$ is $(\lambda_0, \lambda_1)$-extendible and $(\lambda_1, \lambda_2)$-extendible, then $\K$ is $(\lambda_0, \lambda_2)$-extendible.
  \item If $\K$ is $\lambda$-extendible for some $\lambda$ with $\K_\lambda \neq \emptyset$, then $\K$ has arbitrarily large models.
  \item For $\LS (\K) \le \lambda < \mu$, $\K$ is $([\lambda, \mu), \mu)$-extendible if and only if $\K_{[\lambda, \mu)}$ has no maximal models. In particular, $\K_{\ge \LS (\K)}$ is extendible if and only if $\K_{\ge \LS (\K)}$ has no maximal models.

  \end{enumerate}
\end{remark}

We recall Shelah's definition of solvability \cite[Definition IV.1.4]{shelahaecbook}, using a more convenient notation for it with only one cardinal parameter, introduced in \cite{categ-saturated-afml}. We also introduce a weakening, semisolvability in $\lambda$, which only asks for the EM model generated by $\lambda$ to be universal. This has the same name, but is weaker than, the notion introduced in \cite[3.1]{categ-saturated-afml}. Nevertheless, all the proofs there go through with the weaker notion. Both solvability and semisolvability are equivalent to superstability in the first-order case (see \cite{shvi-notes-apal} and \cite[5.3]{gv-superstability-jsl}). Shelah writes that solvability is perhaps the true analog of superstability in abstract elementary classes \cite[N\S4(B)]{shelahaecbook}.

\begin{defin}\label{solv-def}
  Let $\K$ be an AEC and let $\mu \ge \LS (\K)$.
  \begin{enumerate}
  \item Let $\Upsilon[\K]$ denote the set of Ehrenfeucht-Mostowski (EM) blueprints $\Phi$ with $|\tau (\Phi)| \le \LS (\K)$. See \cite[IV.0.8]{shelahaecbook} for the full definition. As is standard, for a linear order $I$ we write $\EM (I, \Phi)$ for the EM model generated by $I$ and $\Phi$, and $\EM_\tau (I, \Phi)$ for its reduct to $\tau$.
  \item \cite[IV.1.4.(1)]{shelahaecbook} We say that \emph{$\Phi$ witnesses $\mu$-solvability} if:
      \begin{enumerate}
        \item $\Phi \in \Upsilon[\K]$.
        \item If $I$ is a linear order of size $\mu$, then $\EM_{\tau (\K)} (I, \Phi)$ is superlimit in $\K_{\mu}$ (recall Definition \ref{sl-def}).
      \end{enumerate}
    \item \emph{$\Phi$ witnesses $\mu$-semisolvability} if:
      \begin{enumerate}
      \item $\Phi \in \Upsilon[\K]$
      \item $\EM_{\tau (\K)} (\mu, \Phi)$ is universal in $\K_{\mu}$.
      \end{enumerate}
    \item $\K$ is \emph{$\mu$-[semi]solvable} if there exists $\Phi$ witnessing $\mu$-[semi]solvability.
    \item For a class $\Theta$ of cardinals, $\K$ is \emph{$\Theta$-[semi]solvable]} if $\K$ is $\mu$-[semi]solvable for every $\mu \in \Theta$.
  \end{enumerate}
\end{defin}

Directly from the definitions, we have:

\begin{remark}\label{solv-rmk} Let $\K$ be an AEC and let $\mu \ge \LS (\K)$.
  \begin{enumerate}
  \item If $\K$ has arbitrarily large models and is categorical in $\mu$, then $\K$ is $\mu$-solvable.
  \item If $\K$ is $\mu$-solvable, then $\K$ is $\mu$-semisolvable.
  \item If $\K$ is $\mu$-semisolvable, then $\K$ is $\mu$-extendible. In particular, $\K$ has arbitrarily large models.
  \item If $\K$ is $\mu$-[semi]solvable, then for any $\lambda \in [\LS (\K), \mu]$, $\K_{\ge \lambda}$ is $\mu$-[semi]solvable (enlarge the blueprint of size $\LS (\K)$ witnessing [semi]solvability to a blueprint of size $\lambda$).
  \end{enumerate}
\end{remark}

In \cite[5.1]{categ-saturated-afml}, it was shown that solvability transfers down in AECs with amalgamation and no maximal models (this is of interest, since we do not know whether \emph{categoricity} itself transfers down). The reason for considering semisolvability here is that we can transfer it down using just no maximal models (and in fact extendibility in the relevant cardinals suffices). A very similar argument appears already in \cite{shvi635}.

\begin{fact}\label{solv-downward}
  Let $\K$ be an AEC and let $\mu > \lambda \ge \LS (\K)$. Assume that $\K$ is $\mu$-semisolvable. The following are equivalent:

  \begin{enumerate}
  \item $\K$ is $(\lambda, \mu)$-extendible.
  \item $\K$ is $\lambda$-semisolvable.
  \end{enumerate}
\end{fact}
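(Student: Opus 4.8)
The plan is to prove the two implications separately, with $(2) \Rightarrow (1)$ being immediate and $(1) \Rightarrow (2)$ carrying all the content. For $(2) \Rightarrow (1)$: if $\K$ is $\lambda$-semisolvable, then by Remark \ref{solv-rmk}(3) it is $\lambda$-extendible, i.e.\ $(\lambda, \nu)$-extendible for every $\nu > \lambda$; taking $\nu = \mu$ gives $(\lambda, \mu)$-extendibility. Note that this direction does not even use the hypothesis that $\K$ is $\mu$-semisolvable.

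For $(1) \Rightarrow (2)$, I would fix $\Phi \in \Upsilon[\K]$ witnessing $\mu$-semisolvability, so that $\EM_{\tau(\K)}(\mu, \Phi)$ is universal in $\K_\mu$, and try to show that $\Phi$ already witnesses $\lambda$-semisolvability, i.e.\ that $\EM_{\tau(\K)}(\lambda, \Phi)$ is universal in $\K_\lambda$. Given $M \in \K_\lambda$, I would use $(\lambda, \mu)$-extendibility to find $N \in \K_\mu$ with $M \lea N$, and then universality of $\EM_{\tau(\K)}(\mu, \Phi)$ to obtain a $\K$-embedding $g \colon N \to \EM_{\tau(\K)}(\mu, \Phi)$; restricting, $g \rest M$ is a $\K$-embedding of $M$ into $\EM_{\tau(\K)}(\mu, \Phi)$. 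Each element of $g[M]$ is a $\tau(\Phi)$-term in finitely many of the generators, so the set $I_0 \subseteq \mu$ of indices occurring in terms for elements of $g[M]$ has cardinality at most $\lambda$. Since $\Phi \in \Upsilon[\K]$, the inclusion $I_0 \subseteq \mu$ yields $\EM_{\tau(\K)}(I_0, \Phi) \lea \EM_{\tau(\K)}(\mu, \Phi)$, and $g[M]$ is contained in $\EM_{\tau(\K)}(I_0, \Phi)$; since also $g[M] \lea \EM_{\tau(\K)}(\mu, \Phi)$, the coherence axiom of Definition \ref{aec-def} gives $g[M] \lea \EM_{\tau(\K)}(I_0, \Phi)$. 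Thus $M$ $\K$-embeds into an EM model over a linear order $I_0$ of size at most $\lambda$.

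The remaining step—and the one I expect to be the main obstacle—is to pass from $\EM_{\tau(\K)}(I_0, \Phi)$ down to $\EM_{\tau(\K)}(\lambda, \Phi)$. Because $\Phi$ is monotone with respect to order-embeddings of the index order, it would suffice to order-embed $I_0$ into $\lambda$; the difficulty is that $I_0$, while of cardinality at most $\lambda$, is a priori only known to have order type some ordinal below $\lambda^+$, and such an order type need not embed into the ordinal $\lambda$ (for instance $\lambda + 1$ does not). I would therefore aim to control the order type of the index set used by $g[M]$, arranging—by a careful choice of the extension $N$ and of the embedding $g$, in the spirit of the no-maximal-models argument of \cite{shvi635}—that $I_0$ can be taken of order type at most $\lambda$, equivalently that $M$ already lands in the initial segment $\EM_{\tau(\K)}(\lambda, \Phi)$. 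This order-theoretic bookkeeping is the combinatorial heart of the implication; once it is in place, universality of $\EM_{\tau(\K)}(\lambda, \Phi)$ in $\K_\lambda$, and hence $\lambda$-semisolvability witnessed again by $\Phi$, follows immediately.
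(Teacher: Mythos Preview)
Your setup for $(1)\Rightarrow(2)$ is exactly right up through the point where you land $M$ inside $\EM_{\tau(\K)}(I_0,\Phi)$ with $|I_0|\le\lambda$ and $\otp(I_0)<\lambda^+$. You also correctly identify the real obstacle: this order type need not embed into the ordinal $\lambda$. But your proposed fix---``careful choice of the extension $N$ and of the embedding $g$'' so that $I_0$ has order type at most $\lambda$---is not a proof, and there is no evident mechanism that would make it one. The embedding $g$ is handed to you by universality; you have no control over where $g[M]$ sits among the generators, and there is no reason in general why the \emph{same} blueprint $\Phi$ should witness $\lambda$-semisolvability. The claim that $\EM_{\tau(\K)}(\lambda,\Phi)$ is universal in $\K_\lambda$ is simply not what is being proved.

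The paper does not try to force the order type down. Instead it accepts that one only gets $M$ into $\EM_{\tau(\K)}(\alpha,\Phi)$ for some $\alpha<\lambda^+$, and then observes that the linear order $I:={}^{<\omega}\lambda$ (ordered lexicographically) has cardinality $\lambda$ yet order-embeds every ordinal $\alpha<\lambda^+$ (see \cite[15.5]{baldwinbook09}). Hence $\EM_{\tau(\K)}(I,\Phi)$ is universal in $\K_\lambda$. One then builds a \emph{new} blueprint $\Psi$, expanding the vocabulary so that $\EM_{\tau(\K)}(J,\Psi)=\EM_{\tau(\K)}({}^{<\omega}J,\Phi)$ for every linear order $J$; in particular $\EM_{\tau(\K)}(\lambda,\Psi)=\EM_{\tau(\K)}(I,\Phi)$ is universal in $\K_\lambda$, and $\Psi$ witnesses $\lambda$-semisolvability. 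The key idea you are missing is thus order-theoretic (absorb all ordinals below $\lambda^+$ into a single size-$\lambda$ linear order) together with the freedom to change the blueprint, rather than any bookkeeping on the embedding.
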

\begin{proof}
  If $\K$ is $\lambda$-semisolvable, then $\K$ is $\lambda$-extendible, hence $(\lambda, \mu)$-extendible. Conversely, assume that $\K$ is $(\lambda, \mu)$-extendible and suppose that $\Phi$ is an EM blueprint witnessing $\mu$-semisolvability. Since $\K$ is $(\lambda, \mu)$-extendible, any $M \in \K_\lambda$ embeds inside $\EM_{\tau (\K)} (\mu, \Phi)$, hence inside $\EM_{\tau (\K)} (A, \Phi)$, for some $A \subseteq \mu$ with $|A| = \lambda$. In particular, $\lambda \le \otp (A) < \lambda^+$, so by renaming, $M$ also embeds inside $\EM_{\tau (\K)} (\otp (A), \Phi)$. We have shown that any model in $\K_\lambda$ embeds inside $\EM_{\tau (\K)} (\alpha, \Phi)$ for some $\alpha \in [\lambda, \lambda^+)$. Now by (for example) \cite[15.5]{baldwinbook09}, the linear order $I := \fct{<\omega}{\lambda}$ (ordered lexicographically) embeds any ordinal $\alpha < \lambda^+$, thus $\EM_{\tau (\K)} (I, \Phi)$ is universal in $\K_\lambda$. Expanding the vocabulary, build a new blueprint $\Psi$ such that for any linear order $J$, $\EM_{\tau (\K)} (J, \Psi) = \EM_{\tau (\K)} (\fct{<\omega}{J}, \Phi)$. Then by construction $\Psi$ witnesses $\lambda$-semisolvability.
\end{proof}

We end this section by stating some known consequences of semisolvability that we will use. Recall (Section \ref{set-thy-sec}) that $\GCHWD (\lambda)$ means that $2^\lambda = \lambda^+$ and enough instances of the weak diamond hold at $\lambda$.

\begin{fact}\label{semisolv-fact}
  Let $\K$ be an AEC and let $\mu > \lambda \ge \LS (\K)$. Assume that $\K$ is $\mu$-semisolvable and $(\lambda, \mu)$-extendible. 

  \begin{enumerate}
  \item\label{semisolv-1} If $\GCHWD (\lambda)$, then $\K$ is nicely $\lambda$-stable.
  \item\label{semisolv-2} If $\K_\lambda$ has amalgamation, then $\K$ is $\lambda$-superstable and has $\lambda$-symmetry.
  \item\label{semisolv-3} If $\K_\lambda$ has a superlimit which is an amalgamation base, then $\K$ is nicely sl-$\lambda$-superstable (recall Definition \ref{nice-ss-def}).
  \item\label{semisolv-4} If $\K_\lambda$ has a superlimit and $\WGCH (\lambda)$, then $\K$ is nicely sl-$\lambda$-superstable.
  \end{enumerate}
\end{fact}
\begin{proof} By Fact \ref{solv-downward}, $\K$ is $[\lambda, \lambda^+]$-semisolvable. In particular, $\K_{\lambda}$ has joint embedding and no maximal models. Of course, it is also not empty. Now:
  
  \begin{enumerate}
  \item By \cite[\S1]{shvi635}. See also \cite[\S3]{vandierennomax}.
  \item By \cite{shvi-notes-apal}, $\K$ is $\lambda$-superstable. By \cite[4.8]{categ-saturated-afml}, $\K$ has $\lambda$-symmetry.
  \item Note that the usual argument of Morley (see e.g.\ \cite[8.20]{baldwinbook09} or Claim 1 in the proof of \cite[3.4]{categ-saturated-afml}) shows that $\Kslp{\lambda}$ (see Definition \ref{ksl-def}) is stable in $\lambda$: if $M$ is superlimit in $\K_\lambda$, then $|\gS (M)| \le \lambda$. Since $\Kslp{\lambda}_\lambda$ has amalgamation, it is nicely $\lambda$-stable (see Remark \ref{nice-stab-ap}). By (for example) Theorem \ref{dense-nicely-stable}, This implies that $\K$ is nicely $\lambda$-stable. The result now follows from \cite{shvi-notes-apal}.
  \item By Fact \ref{ap-univ-comb} and the previous part.
  \end{enumerate}
\end{proof}

\section{Towers and disjoint amalgamation}\label{tower-sec}

In this section, we prove some technical lemmas relative to how much models can be amalgamated to be ``as nonforking as possible''. This relies on the theory of towers. Towers were introduced in \cite{shvi635}, and further studied in several papers since then (e.g.\ \cite{vandierennomax, nomaxerrata, gvv-mlq, vandieren-symmetry-apal, vv-symmetry-transfer-afml}). Essentially, everything before Lemma \ref{tower-ext-cont} is known and appears in some form in either \cite{gvv-mlq} or \cite{vandieren-symmetry-apal}. We give some proofs and definitions here both because the statements are slightly different and the arguments have been simplified (\cite{gvv-mlq} and \cite{vandieren-symmetry-apal} had to work without the uniqueness property of nonforking, proven in \cite{uq-forking-mlq}; thus the definition of tower there is more complicated).

Throughout this section, we assume:

\begin{hypothesis}\label{hyp-5}
  $\K$ is a $\lambda$-superstable AEC with $\lambda$-symmetry.
\end{hypothesis}

We work inside $\K_\lambda$: except if said otherwise, all models come from there. The following consequence of symmetry will be crucial. The idea is that we can make sure that two elements are independent ``in a uniform way''.

\begin{fact}[Nonforking amalgamation]\label{nf-amalgam}
  Let $M_0 \lea M_\ell$, $\ell = 1,2$, be limit models. Let $a_\ell \in M_\ell$. There exists $f_1, f_2, M_3$ such that $M_3$ is limit and $f_\ell : M_\ell \xrightarrow[M_0]{} M_3$ is such that $\gtp (f_\ell (a_\ell) / f_{3 - \ell} (M_{3 - \ell}); M_3)$ does not fork over $M_0$ for $\ell = 1,2$. 
\end{fact}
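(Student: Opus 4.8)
The plan is to build the amalgam one side at a time: use the extension property to make (the image of) $a_1$ independent from $M_2$ over $M_0$, and then use $\lambda$-symmetry to upgrade this to independence of $a_2$ from $M_1$ as well, inside a single model. Since $\K$ is $\lambda$-superstable, $\K_\lambda$ has amalgamation, joint embedding and no maximal models, so I would work inside a monster model $\sea$ for $\K$ and compute all Galois types there; in particular two tuples have the same Galois type over $M_0$ if and only if they are conjugate under $\Aut (\sea / M_0)$, which lets me turn equalities of Galois types into automorphisms of $\sea$ fixing $M_0$.

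First I would realize the one-sided independence. Let $q \in \gS (M_2)$ be the nonforking extension of $\gtp (a_1 / M_0)$ given by Fact \ref{forking-fact}(5), let $c \in \sea$ realize $q$, and choose $g \in \Aut (\sea / M_0)$ with $g (a_1) = c$ (possible since $\gtp (c / M_0) = \gtp (a_1 / M_0)$). Setting $f_1 := g \rest M_1$ and $M_1^\ast := g[M_1]$, we have $c = f_1 (a_1) \in M_1^\ast$ and $\gtp (c / M_2) = q$ does not fork over $M_0$. Passing to limit extensions and using monotonicity (Fact \ref{forking-fact}(2)), I may fix a limit model $N$ with $M_1^\ast, M_2 \lea N$ in which $\gtp (c / M_2; N)$ does not fork over $M_0$. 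Now I would flip the independence using symmetry: applying Definition \ref{sym-def} to the pair $(c, a_2)$ over $M_0$ in $N$, condition (1) holds, witnessed by $M_b := M_2$ (which contains $a_2$) and $N_b := N$, since $\gtp (c / M_2; N)$ does not fork over $M_0$. Hence condition (2) yields a limit $M_a \gea M_0$ with $c \in M_a$, and a limit $N_a \gea N$, such that $\gtp (a_2 / M_a; N_a)$ does not fork over $M_0$.

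At this point both $M_1^\ast$ and $M_a$ are limit over $M_0$ and share the element $c$. The crux — and the step I expect to be the main obstacle — is to produce $\hath \in \Aut (\sea / M_0)$ with $\hath (c) = c$ and $\hath [M_a] = M_1^\ast$, i.e. a \emph{uniqueness of limit models over $M_0$ that fixes the realization $c$}. Granting this, the proof closes: put $f_2 := \hath \rest M_2$, which is a $\K$-embedding fixing $M_0$. Since $\hath$ fixes both $M_0$ and $c$, invariance gives that $\gtp (f_1 (a_1) / f_2 (M_2)) = \gtp (c / \hath [M_2]) = \hath (\gtp (c / M_2))$ does not fork over $M_0$, while $\gtp (f_2 (a_2) / f_1 (M_1)) = \gtp (\hath (a_2) / \hath [M_a]) = \hath (\gtp (a_2 / M_a))$ also does not fork over $M_0$; taking $M_3$ to be a limit model containing $f_1 [M_1] \cup f_2 [M_2]$ and invoking monotonicity one last time gives exactly the two nonforking conditions in the statement.

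The delicate point is therefore the alignment asserted above. Ordinary uniqueness of limit models (Fact \ref{sym-uq-lim}) produces an isomorphism $M_a \cong_{M_0} M_1^\ast$, but not one fixing $c$. I would obtain the refined version by writing each of $M_a$ and $M_1^\ast$ as a limit model over a small submodel containing $c$ and matching those bases up before applying uniqueness of limit models to the tails — precisely the kind of ``disjoint amalgamation'' bookkeeping that the remainder of this section is designed to supply, and the place where the full force of $\lambda$-symmetry (beyond mere uniqueness of limit models) is used.
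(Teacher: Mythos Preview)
Your overall plan (realize the nonforking extension of $\gtp(a_1/M_0)$ over $M_2$ by some $c$, then invoke $\lambda$-symmetry to get $M_a \ni c$ with $\gtp(a_2/M_a)$ nonforking over $M_0$) is exactly the standard opening, and matches the argument referenced from \cite[II.2.16]{shelahaecbook}. The problem is your resolution of the ``alignment'' step. You propose to produce $\hath \in \Aut(\sea/M_0)$ with $\hath(c)=c$ and $\hath[M_a] = M_1^\ast$, and you justify this by appealing to ``the disjoint amalgamation bookkeeping that the remainder of this section is designed to supply.'' That is circular: the tower machinery (in particular Fact~\ref{tower-ext-lem}) is itself built on the present Fact~\ref{nf-amalgam}. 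Moreover, the specific statement you want---an isomorphism of two limit models over $M_0$ that in addition fixes a designated point $c$---does not drop out of Fact~\ref{sym-uq-lim}, and your sketch of how to force a common intermediate base $M_0'$ with $c \in M_0'$ does not work in general (there is no reason $M_a \cap M_1^\ast$ contains any model strictly above $M_0$).

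The fix in the standard proof is both simpler and non-circular: do not try to match $M_a$ with $M_1^\ast$. Instead, take $M_a^+$ limit over $M_a$ and let $a_2'$ realize the nonforking extension of $\gtp(a_2/M_a)$ to $M_a^+$; by transitivity $\gtp(a_2'/M_a^+)$ does not fork over $M_0$. Since $\gtp(a_2'/M_a) = \gtp(a_2/M_a)$, choose $\sigma \in \Aut(\sea/M_a)$ with $\sigma(a_2)=a_2'$, and set $f_2 := \sigma \rest M_2$. Because $\sigma$ fixes \emph{all} of $M_a$, it fixes both $M_0$ and $c$, so $\gtp(f_1(a_1)/f_2[M_2]) = \gtp(c/\sigma[M_2]) = \sigma(\gtp(c/M_2))$ still does not fork over $M_0$ by invariance. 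For the other side, embed $M_1$ into $M_a^+$ over $M_0$ sending $a_1 \mapsto c$: this is possible because $\gtp(a_1/M_0) = \gtp(c/M_0)$ gives an amalgam of $M_1$ and $M_a$ over $M_0$ identifying $a_1$ with $c$, and $M_a^+$ is universal over $M_a$, so that amalgam embeds into $M_a^+$ over $M_a$ (hence fixing $c$). With $f_1$ this embedding, $f_1[M_1] \lea M_a^+$ and $\gtp(a_2'/M_a^+)$ does not fork over $M_0$, so monotonicity finishes. The only tools used are extension, transitivity, invariance, monotonicity, and amalgamation---all available from Fact~\ref{forking-fact}.
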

\begin{proof}
  As in \cite[II.2.16]{shelahaecbook}.
\end{proof}

To define towers, we first introduce some notation:

\begin{notation} \
  \begin{enumerate}
  \item The letter $I$ will denote a well-ordering $(|I|, <_I)$. We usually write $<$ instead of $<_I$.
  \item For $I$ a well-ordering, let $I^-$ be the initial segment of $I$ which is isomorphic to $I$ if $I$ is isomorphic to a limit ordinal or zero, or isomorphic to $\alpha$ if $I$ is isomorphic to $\alpha + 1$.
  \item For $I$ a well-ordering, $i \in I$, and $\alpha$ an ordinal, let $i +_I \alpha$ denote the unique element $j \in I$ (if it exists) such that $\otp (j) = \otp (i) + \alpha$. We write $i + \alpha$ instead of $i +_I \alpha$ when $I$ is clear from context.
  \end{enumerate}
\end{notation}
\begin{defin}
  A \emph{tower} $\Tow$ consists of $\seq{M_i : i \in I} \smallfrown \seq{a_i : i \in I^-}$, where:

  \begin{enumerate}
  \item $I$ is a well-ordering of cardinality at most $\lambda$.
  \item $\seq{M_i : i \in I}$ is an increasing chain of limit models, not necessarily continuous.
  \item $a_i \in M_{i + 1} \backslash M_i$ for each $i \in I^-$.
  \end{enumerate}

  We call $I$ the \emph{length} (or \emph{index set}) of the tower. We call $\Tow$ \emph{continuous} if $\seq{M_i : i \in I}$ is continuous. We say that $\Tow$ is \emph{limit} if $M_{i + 1}$ is limit over $M_i$ for each $i \in I^-$. We may often identify a tower $\Tow$ indexed by $I$ with the tower indexed by the ordinal $\otp (I)$.
\end{defin}
\begin{defin}
  For $\Tow = \seq{M_i : i \in I} \smallfrown \seq{a_i : i \in I^-}$ and $I_0 \subseteq I$, we let $\Tow \rest I_0$ be the sequences $\seq{M_i : i \in I_0} \smallfrown \seq{a_i : i \in I_0^-}$.
\end{defin}
\begin{remark}
  If $\Tow$ is a tower indexed by $I$ and $I_0 \subseteq I$, then $\Tow \rest I_0$ is a tower indexed by $I_0$.
\end{remark}

The reason for indexing towers by a well-ordering instead of just an ordinal is that we will allow towers to be grown by inserting elements not only at the end, but also in the middle. What it means to ``grow'' a tower is given by the following definition:

\begin{defin}[Orderings on towers]
  For $\Tow^\ell = \seq{M_i^\ell : i \in I^\ell} \smallfrown \seq{a_i^\ell : i \in (I^\ell)^-}$, $\ell = 1,2$, two towers, we write $\Tow^1 \tlt \Tow^2$ if:

  \begin{enumerate}
  \item $I^1 \subseteq I^2$.
  \item $M_i^2$ is limit over $M_i^1$ for all $i \in I^1$.
  \item $a_i^1 = a_i^2$ for all $i \in (I^1)^-$.
  \item $\gtp (a_i^1 / M_i^2; M_{i + 1}^2)$ does not fork over $M_i^1$ for all $i \in (I^1)^-$.
  \end{enumerate}

  We write $\Tow^1 \tleq \Tow^2$ if $\Tow^1 = \Tow^2$ or $\Tow^1 \tlt \Tow^2$.
\end{defin}

\begin{remark}
  $\tleq$ is a partial order on the class of all towers. Moreover, if $\Tow^1$ and $\Tow^2$ have index sets $I^1 \subseteq I^2$ respectively, then $\Tow^1 \tleq \Tow^2$ if and only if $\Tow^1 \tleq \Tow^2 \rest I^1$.
\end{remark}

\begin{defin}
  Let $\delta < \lambda^+$ be a limit ordinal and let $\seq{\Tow^j : j < \delta}$ be a $\tleq$-increasing chain of towers. Assume that $\Tow^j = \seq{M_i^j : i \in I^j} \smallfrown \seq{a_i^j : i \in (I^j)^-}$. We define $\Tow^\delta := \seq{M_i^\delta : i \in I^\delta} \smallfrown \seq{a_i^\delta : i \in (I^\delta)^-}$ as follows:

  \begin{enumerate}
  \item $I^\delta = \bigcup_{j < \delta} I^j$.
  \item $a_i^\delta = a_i^j$ for some (any) $j < \delta$ such that $i \in I^j$.
  \item $M_i^\delta = \bigcup_{j < \delta} M_i^j$.
  \end{enumerate}

  We write $\bigcup_{j < \delta} \Tow^j$ for $\Tow^\delta$.
\end{defin}

\begin{remark}
  If $\seq{\Tow^j : j < \delta}$ is a $\tleq$-increasing chain of towers, where $\Tow^j$ is indexed by $I^j$, and $\bigcup_{j < \delta} I^j$ is a well-ordering, then $\bigcup_{j < \delta} \Tow^j$ is a tower and $\Tow^k \tleq \bigcup_{j < \delta} \Tow^j$ for every $k < \delta$.
\end{remark}

\begin{defin}
  If a $\tleq$-increasing chain of towers $\seq{\Tow^j : j < \gamma}$ is such that for every limit $j < \gamma$, $\Tow^j = \bigcup_{k < j} \Tow^k$, we call the chain \emph{continuous}.
\end{defin}

We will often want to extend a given tower to another tower that is both limit and continuous. It is easy to extend a given tower to a limit tower, not requiring continuity (Fact \ref{tower-ext-lem}). But intuitively the continuity of a tower conflicts with the limit requirement imposed on the order relation $\tlt$ between towers, so obtaining both of them simultaneously is a challenge. It turns out (Fact \ref{reduced-density}) that it is easier to extend a given tower to a tower with a stronger property than continuity, being \emph{reduced}, which we now proceed to define (it says that any extension must be ``as disjoint as possible''). Intuitively, being reduced and being limit seem to be conflicting properties for a tower to satisfy. But being reduced and satisfying a property that is similar to being limit (fullness, see Definition \ref{full-def}) are properties that are both preserved under unions of towers (see Lemma \ref{full-union}). Combining these two properties, we will be able to build the desired continuous limit extension (Lemma \ref{tower-ext-cont}).

\begin{defin}[{\cite[3.1.11(1)]{shvi635}}]
  A tower $\Tow = \seq{M_i : i < \alpha} \smallfrown \seq{a_i : i + 1 < \alpha}$ is called \emph{reduced} if whenever $\Tow' = \seq{M_i' : i < \alpha} \smallfrown \seq{a_i : i + 1 < \alpha}$ is such that $\Tow \tleq \Tow'$, we have that $M_i' \cap M_j = M_i$ for any $i \le j < \alpha$.
\end{defin}
\begin{remark}\label{reduced-cont-ex}
  If $\seq{\Tow^j : j < \delta}$ is a chain of towers and $\Tow^j$ is reduced for all $j < \delta$, then $\bigcup_{j < \delta} \Tow^j$ is reduced (provided that its index is a well-ordering).
\end{remark}

The following related definition appears (stated differently) in (for example) \cite[3.3.2]{jrsh875}:

\begin{defin}
  We say a triple $(a, M, N)$ is \emph{reduced} if $(M, N) \smallfrown (a)$ is a reduced tower. 
\end{defin}

To check that a tower is reduced, it is enough to check its restrictions of length two:

\begin{lem}\label{red-tower-triple}
  Let $\Tow = \seq{M_i : i < \alpha} \smallfrown \seq{a_i : i + 1 < \alpha}$ be a continuous tower. If $(a_i, M_i, M_{i + 1})$ is a reduced triple for any $i + 1 < \alpha$, then $\Tow$ is reduced.
\end{lem}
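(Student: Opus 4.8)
The plan is to fix an arbitrary extension $\Tow' = \seq{M_i' : i < \alpha} \smallfrown \seq{a_i : i + 1 < \alpha}$ with $\Tow \tleq \Tow'$ and, for each fixed $i < \alpha$, prove by induction on $j \in [i, \alpha)$ that $M_i' \cap M_j = M_i$. The inclusion $M_i \subseteq M_i' \cap M_j$ is automatic: $M_i \lea M_i'$ since $M_i'$ is limit over $M_i$, and $M_i \lea M_j$ since $i \le j$ and $\seq{M_j : j < \alpha}$ is increasing. So throughout only the reverse inclusion $M_i' \cap M_j \subseteq M_i$ needs attention, and the base case $j = i$ is trivial.

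For the successor step, suppose $M_i' \cap M_k = M_i$ and consider $j = k + 1$. First I would restrict both towers to the two-element index set $\{k, k+1\}$. Directly from the definition of $\tleq$ (each of its four clauses is inherited by restriction), $\Tow \tleq \Tow'$ yields $\Tow \rest \{k, k+1\} \tleq \Tow' \rest \{k, k+1\}$; concretely $(M_k', M_{k+1}') \smallfrown (a_k)$ extends $(M_k, M_{k+1}) \smallfrown (a_k)$ in the sense of $\tleq$. Since $(a_k, M_k, M_{k+1})$ is a reduced triple, this gives $M_k' \cap M_{k+1} = M_k$. Now the key monotonicity observation: because $i \le k$ and the extended chain $\seq{M_j' : j < \alpha}$ is increasing, $M_i' \subseteq M_k'$, whence $M_i' \cap M_{k+1} \subseteq M_k' \cap M_{k+1} = M_k$. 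Combining this with $M_i' \cap M_{k+1} \subseteq M_i'$ gives $M_i' \cap M_{k+1} \subseteq M_i' \cap M_k$, which equals $M_i$ by the induction hypothesis.

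For the limit step with $j < \alpha$ a limit ordinal, I would invoke continuity of $\Tow$: here $M_j = \bigcup_{k < j} M_k$. Intersecting a union with the fixed model $M_i'$ commutes with the union, so $M_i' \cap M_j = \bigcup_{k < j} (M_i' \cap M_k)$; for $k < i$ the term is $M_k \subseteq M_i$, and for $i \le k < j$ it is $M_i$ by the induction hypothesis, so the union is exactly $M_i$.

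The argument is essentially a bookkeeping induction, so I do not expect a genuine obstacle; the one point that must be handled correctly is that reducedness of the triples only controls the consecutive intersection $M_k' \cap M_{k+1}$, and transferring this to the general intersection $M_i' \cap M_{k+1}$ requires the monotonicity of the extended chain (the step $M_i' \subseteq M_k'$). The only place continuity of $\Tow$ enters is the limit stage --- and notably only continuity of the original tower is needed, never of the extension $\Tow'$ --- so I would double-check that the limit computation does not secretly require $M_j'$ to be a union.
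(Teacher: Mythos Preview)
Your proposal is correct and follows essentially the same approach as the paper: fix an extension $\Tow'$, induct on $j \ge i$, use the reduced triple $(a_k,M_k,M_{k+1})$ together with $M_i' \subseteq M_k'$ for the successor step, and use continuity of $\Tow$ for the limit step. Your write-up is more explicit about the restriction to $\{k,k+1\}$ and about where continuity is (and is not) used, but the argument is the same.
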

\begin{proof}
  Let $\Tow' = \seq{M_i' : i < \alpha} \smallfrown \seq{a_i : i + 1 < \alpha}$ be such that $\Tow \tleq \Tow'$. Let $i \le j < \alpha$. We have to see that $M_i' \cap M_j = M_i$. We proceed by induction on $j$. If $j = i$, this is obvious. If $j$ is limit, this is immediate from the induction hypothesis and (since $\Tow$ is assumed to be continuous) $M_j = \bigcup_{k < j} M_k$. Assume now that $j = k + 1$. By assumption, $(a_k, M_k, M_{k + 1})$ is a reduced triple. Thus $M_i' \cap M_j \subseteq M_k' \cap M_j = M_k$, and so $M_i' \cap M_j = M_i' \cap M_k$ which by the induction hypothesis is just $M_i$, as desired.
\end{proof}

Reduced towers exist: any tower has a reduced extension.

\begin{fact}[Density of reduced towers]\label{reduced-density}
  For any tower $\Tow$ of length $\alpha$, there exists a reduced tower $\Tow'$ of length $\alpha$ such that $\Tow \tleq \Tow'$.
\end{fact}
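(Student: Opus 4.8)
The plan is to reach a reduced extension of $\Tow$ by a closure construction of length $\omega$. Starting from $\Tow^0 := \Tow$, I would build a $\tleq$-increasing chain of towers $\seq{\Tow^n : n < \omega}$, all of length $\alpha$, in which $\Tow^{n+1}$ absorbs into each coordinate \emph{every} element that could be pulled into that coordinate by some extension of $\Tow^n$, and then show that the union $\Tow^\omega := \bigcup_{n<\omega}\Tow^n$ is reduced. Since an $\omega$-union of towers of length $\alpha$ whose models have size $\lambda$ is again such a tower, and $\Tow^0 \tleq \Tow^\omega$, this $\Tow^\omega$ will be the desired reduced extension.

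To make ``absorb everything capturable'' precise, write $\Tow^n = \seq{M_i^n : i < \alpha} \smallfrown \seq{a_i^n : i + 1 < \alpha}$ and call an element $c \in M_{i'}^n \setminus M_i^n$ (with $i < i' < \alpha$) \emph{$i$-capturable over $\Tow^n$} if there is an extension $\Tow^n \tleq \Tow''$ of length $\alpha$ with $c \in M_i''$. For a fixed coordinate $i$, every capturable $c$ already lies in the size-$\lambda$ model $\bigcup_{i'<\alpha} M_{i'}^n$, and there are at most $|\alpha| \le \lambda$ coordinates, so there are at most $\lambda$ capture tasks in all. I would then build $\Tow^{n+1}$ by a sub-induction of length $\le \lambda$: enumerate the tasks as $\seq{(i_\xi, c_\xi) : \xi < \lambda}$, set $\mathcal S^0 := \Tow^n$, take unions at limits, and at stage $\xi + 1$ amalgamate the current tower $\mathcal S^\xi$ with a witnessing extension $\Tow''_\xi \geq \Tow^n$ of the task $(i_\xi, c_\xi)$ over $\Tow^n$, so that in the resulting $\mathcal S^{\xi+1} \geq \mathcal S^\xi$ the element $c_\xi$ lands in coordinate $i_\xi$ (here one uses that $c_\xi$ is a point of $\Tow^n$ and so is fixed by the amalgamating maps). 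Setting $\Tow^{n+1} := \mathcal S^\lambda$, every element capturable over $\Tow^n$ has been placed in its intended coordinate.

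Granting this, reducedness of $\Tow^\omega$ is a short diagonal argument. Suppose $\Tow^\omega \tleq \Tow''$ with $c \in M_i'' \cap M_j^\omega$, $c \notin M_i^\omega$, and $i \le j$. Then $c \in M_j^n$ for some $n < \omega$, and $c \notin M_i^\omega \supseteq M_i^n$; since $\tleq$ is transitive we have $\Tow^n \tleq \Tow''$, so $c$ is $i$-capturable over $\Tow^n$. By construction $c \in M_i^{n+1} \subseteq M_i^\omega$, a contradiction. Hence $M_i'' \cap M_j^\omega = M_i^\omega$ for all $i \le j$, i.e.\ $\Tow^\omega$ is reduced; and $\Tow = \Tow^0 \tleq \Tow^\omega$, as required.

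The technical heart, and the step I expect to be the main obstacle, is the \emph{amalgamation of towers} used at stage $\xi + 1$: given two length-$\alpha$ extensions $\Tow^n \tleq \mathcal S^\xi$ and $\Tow^n \tleq \Tow''_\xi$, one must produce a common extension $\mathcal S^{\xi+1}$ of both over $\Tow^n$, coordinate by coordinate and respecting the nonforking clause in the definition of $\tleq$. I would prove this by induction on $\alpha$, applying nonforking amalgamation (Fact \ref{nf-amalgam}) at successor coordinates and using the uniqueness, transitivity, and universal continuity of forking (Fact \ref{forking-fact}) to glue the embeddings coherently, taking unions at limit coordinates. Everything else, namely that $\omega$-unions of towers are towers and the cardinality bookkeeping, is routine; it is only this simultaneous coordinatewise gluing that requires care, and it is precisely where the standing hypotheses of $\lambda$-superstability and $\lambda$-symmetry (through Fact \ref{nf-amalgam}) are used.
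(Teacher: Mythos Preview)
Your diagonal argument in the last step is perfectly correct, but the construction of $\Tow^{n+1}$ has a real gap. The crux is the tower amalgamation you need at stage $\xi+1$: given $\Tow^n \tleq \mathcal S^\xi$ and $\Tow^n \tleq \Tow''_\xi$, you want a common $\tleq$-extension in which the embedding of $\Tow''_\xi$ \emph{fixes every point of $\Tow^n$}, so that $c_\xi$ (which lives in $M_{i'}^n$ for some $i' > i_\xi$) ends up unchanged in coordinate $i_\xi$. If you build the amalgam coordinate by coordinate with embeddings $f_i$ fixing $M_i^n$, then to extend $f_i$ to an $f_{i+1}$ that also fixes $M_{i+1}^n$ you need $f_i$ and $\id_{M_{i+1}^n}$ to agree on $M_i^{\Tow''_\xi} \cap M_{i+1}^n$; this is exactly the condition $M_i^{\Tow''_\xi} \cap M_{i+1}^n = M_i^n$, i.e.\ a reducedness condition on $\Tow^n$ relative to $\Tow''_\xi$. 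Since $\Tow^n$ is not known to be reduced, this fails in general, and then there is no reason for the final embedding to fix $c_\xi$ (which is \emph{not} in $M_{i_\xi}^n$). Fact~\ref{nf-amalgam} lets you control the type of the single element $a_i$, but not the entire set $M_{i+1}^n \setminus M_i^n$. Note also that the paper's own tower-amalgamation lemma (Lemma~\ref{tower-ap}) is proven only \emph{after} Fact~\ref{reduced-density}, via Lemma~\ref{tower-ext-cont}, so you cannot appeal to it here.

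The paper's route (as in \cite[5.5]{gvv-mlq}) avoids amalgamating towers altogether and argues by contradiction. Assuming no extension of $\Tow$ is reduced, one builds a $\tleq$-increasing continuous chain $\seq{\Tow^j : j < \lambda^+}$ where $\Tow^{j+1}$ is chosen to witness the failure of reducedness of $\Tow^j$: there are $i_j \le k_j$ and $c_j \in (M_{i_j}^{j+1} \cap M_{k_j}^j) \setminus M_{i_j}^j$. Since $|\alpha| \le \lambda$, one may fix a stationary set of limit ordinals $\delta$ on which $(i_\delta, k_\delta) = (i^*, k^*)$; on that set the map $\delta \mapsto \min\{j < \delta : c_\delta \in M_{k^*}^j\}$ is regressive, so by Fodor it is constant (say equal to $j_0$) on a stationary $S'$. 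The $c_\delta$ for $\delta \in S'$ are pairwise distinct (since $c_{\delta_1} \in M_{i^*}^{\delta_1+1} \subseteq M_{i^*}^{\delta_2}$ while $c_{\delta_2} \notin M_{i^*}^{\delta_2}$), giving $\lambda^+$ distinct elements inside the size-$\lambda$ model $M_{k^*}^{j_0}$ --- a contradiction. This argument needs only the existence of single $\tlt$-extensions (Fact~\ref{tower-ext-lem}), never the amalgamation of two of them.
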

\begin{proof}
  As in (for example) \cite[5.5]{gvv-mlq}.
\end{proof}

So far, we have not shown that towers have any nontrivial $\tleq$-extensions. In fact, nonforking amalgamation gives a crucially stronger statement. 

\begin{fact}[Existence of extensions of towers]\label{tower-ext-lem}
  Let $\Tow = \seq{M_i : i < \alpha} \smallfrown \seq{a_i : i + 1 < \alpha}$ be a tower. 

  \begin{enumerate}
  \item There exists a limit tower $\Tow'$ of length $\alpha$ such that $\Tow \tlt \Tow'$.
  \item\label{setup-2} Assume in addition that $\Tow$ is limit and continuous. If $q \in \gS (M_0)$, then there exists a limit tower $\Tow' = \seq{M_i' : i < \alpha} \smallfrown \seq{a_i : i + 1 < \alpha}$ and $b \in M_0'$ such that:
    \begin{enumerate}
    \item $\Tow \tlt \Tow'$.
    \item $\gtp (b / M_0; M_0') = q$.
    \item $\gtp (b / \bigcup_{i < \alpha} M_i; \bigcup_{i < \alpha} M_i')$ does not fork over $M_0$.
    \end{enumerate}
  \end{enumerate}
\end{fact}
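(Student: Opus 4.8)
The plan is to prove both parts by induction on the length $\alpha$, building the chain $\seq{M_i' : i < \alpha}$ (and, for part (2), the element $b$) one index at a time. Throughout I will freely use that in a $\lambda$-superstable AEC universal (hence limit) extensions of limit models exist, that ``being limit over'' is transitive, and that any two models limit over a common base are isomorphic over it (Fact \ref{sym-uq-lim}); these are what let me upgrade the various amalgams produced below to be \emph{limit} over the relevant submodels, a bookkeeping step I will treat as routine.

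For part (1) I set $M_0'$ to be any limit extension of $M_0$. At a successor $i+1$ I have $M_{i+1} \gea M_i$ with $a_i \in M_{i+1}$ and a limit extension $M_i' \gea M_i$; I take the nonforking extension $q_i \in \gS(M_i')$ of $\gtp(a_i / M_i; M_{i+1})$ (Fact \ref{forking-fact}(5)), and since $q_i \rest M_i = \gtp(a_i / M_i; M_{i+1})$ I may amalgamate $M_{i+1}$ with a realization of $q_i$ over $M_i$, \emph{identifying the two copies of} $a_i$; extending the amalgam to a limit model $M_{i+1}'$ over both $M_i'$ and $M_{i+1}$ gives $\gtp(a_i / M_i'; M_{i+1}') = q_i$, which does not fork over $M_i$ as required. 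At a limit $i$ (where $\Tow$ need not be continuous) I amalgamate $\bigcup_{j<i} M_j'$ with $M_i$ over $\bigcup_{j<i} M_j$ and then, using that $M_i$ is a limit model, pull the amalgam back through an isomorphism fixing $M_i$ to obtain a genuine extension $M_i' \gea \bigcup_{j<i} M_j'$ that is limit over $M_i$. This produces a limit tower $\Tow'$ with $\Tow \tlt \Tow'$.

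For part (2) I run the same induction but additionally carry an element $b$, starting with $M_0'$ limit over $M_0$ and some $b \in M_0'$ realizing $q$, and I maintain the invariant that $\gtp(b / M_i; M_i')$ does not fork over $M_0$. The key step is the successor: here I apply \emph{nonforking amalgamation} (Fact \ref{nf-amalgam}) to $M_{i+1}$ and $M_i'$ over $M_i$ with the marked elements $a_i \in M_{i+1}$ and $b \in M_i'$, obtaining an amalgam in which simultaneously $\gtp(a_i / M_i'; \cdot)$ does not fork over $M_i$ (the tower condition) and $\gtp(b / M_{i+1}; \cdot)$ does not fork over $M_i$. Combining the latter with the inductive hypothesis that $\gtp(b / M_i; \cdot)$ does not fork over $M_0$, transitivity (Fact \ref{forking-fact}(6)) upgrades it to $\gtp(b / M_{i+1}; \cdot)$ not forking over $M_0$, which (after the limit-extension bookkeeping) restores the invariant at $i+1$. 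At a limit $i$ I build $M_i'$ as in part (1); since $\Tow$ is a \emph{limit continuous} tower, the chain $\seq{M_j : j < i}$ has universal successors and $M_i = \bigcup_{j<i} M_j$, so universal continuity (Fact \ref{forking-fact}(7)) converts the family of nonforking types $\gtp(b / M_j; M_i')$ into the single type $\gtp(b / M_i; M_i')$ not forking over $M_0$. The same universal-continuity argument at the top yields condition (c).

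The main obstacle is the tension in part (2) between the two independence demands: each $a_i$ must be nonforking over $M_i$ against the \emph{enlarged} $M_i'$, while $b$ must stay nonforking over the \emph{fixed} base $M_0$ against the \emph{growing} original tower. It is precisely symmetry, in the guise of nonforking amalgamation, that secures both with a single amalgam at each successor, and transitivity that keeps $b$'s base at $M_0$ rather than letting it drift up to $M_i$; the hypothesis that $\Tow$ is limit and continuous is exactly what makes universal continuity available at limit stages. The residual difficulty is purely organizational: guaranteeing at every stage that the new model is genuinely \emph{limit over} both the previous tower model and the original one, which I expect to discharge using transitivity of being limit over together with uniqueness of limit models (Fact \ref{sym-uq-lim}).
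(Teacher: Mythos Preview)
The paper does not actually give a proof here: it records this as a Fact and cites \cite[II.4.9]{shelahaecbook}. Your inductive scheme---nonforking extension at successors for part~(1), and nonforking amalgamation (Fact~\ref{nf-amalgam}) plus transitivity for part~(2)---is exactly the standard argument behind that citation, so the overall strategy is right.

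Two points deserve tightening. First, at the limit step in part~(2) you invoke universal continuity (Fact~\ref{forking-fact}(7)) to conclude that $\gtp(b/M_i;M_i')$ does not fork over $M_0$. That fact produces a nonforking type $p_i \in \gS(M_i)$ extending the $p_j$'s, but it does not by itself tell you that the \emph{realized} type $\gtp(b/M_i;M_i')$ equals $p_i$: two types over $M_i$ can agree on every $M_j$ without being equal. The correct tool is universal local character (Fact~\ref{forking-fact}(3)): since $\Tow$ is limit, $\gtp(b/M_i;M_i')$ does not fork over some $M_j$, and then transitivity with the inductive invariant gives nonforking over $M_0$.

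Second, your limit step in part~(1) (``pull the amalgam back through an isomorphism fixing $M_i$ to obtain $M_i' \ge \bigcup_{j<i} M_j'$'') cannot literally do both: an isomorphism sending $g[M_i]$ back to $M_i$ will in general move the previously built $M_j'$. The usual fix is either to carry embeddings $g_i : M_i \to M_i'$ throughout (so at limits one amalgamates along $g_{<i}$ and only renames once at the very end), or to explicitly replace each earlier $M_j'$ by its image under the amalgamation map---which is harmless because that map fixes every $M_j$ and every $a_j$, so all tower conditions are preserved. This is indeed routine, as you say, but the phrasing as written promises more than one isomorphism can deliver.
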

\begin{proof}
  As in \cite[II.4.9]{shelahaecbook} or \cite[3.1.8]{jrsh875}. The first part does not use symmetry: we simply use the extension property repeatedly (we first carry out the construction without insisting that the morphisms all be inclusions, then rename). The second part uses symmetry, more specifically nonforking amalgamation (Fact \ref{nf-amalgam}), to deal with $b$ at the same time. We use universal continuity of forking (see Fact \ref{forking-fact}) at limit steps, and for this we need the original tower to be continuous and limit (or only that $M_{\alpha + 1}$ is universal over $M_\alpha$ for all $\alpha$, but for our purpose this weakening does not change much). 
\end{proof}

We emphasize again that the extension $\Tow'$ obtained from Fact \ref{tower-ext-lem} is not necessarily continuous: the definition of tower extension requires that $M_{\delta}'$ should be limit over $M_\delta$, even when $\delta$ is a limit ordinal. We naively fulfill this requirement by breaking continuity. One point of the rest of this section is to develop more elaborate tools to recover it.

The following technical consequence will be used in the proof of the next theorem.

\begin{lem}\label{tower-ext-technical}
  Let $\delta < \lambda^+$ be a limit ordinal. Let $\Tow = \seq{M_i : i \le \delta} \smallfrown \seq{a_i : i < \delta}$ be a tower such that $\Tow \rest \delta$ is limit and continuous. Let $b \in M_\delta$. If $\gtp (b / \bigcup_{i < \delta} M_i; M_\delta)$ does not fork over $M_0$, then there exists a tower $\Tow' = \seq{M_i' : i \le \delta} \smallfrown \seq{a_i : i < \delta}$ such that $\Tow \tlt \Tow'$ and $b \in M_0'$.
\end{lem}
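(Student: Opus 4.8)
The plan is to reduce to the tower extension lemma already in hand (part (2) of Fact \ref{tower-ext-lem}) and then \emph{transport} the element $b$ down to the bottom model using the uniqueness of nonforking. Write $M_{<\delta} := \bigcup_{i < \delta} M_i$. Since $\Tow \rest \delta$ is limit and continuous, the chain $\seq{M_i : i < \delta}$ witnesses that $M_{<\delta}$ is $(\lambda, \delta)$-limit over $M_0$ (each step is limit, hence universal, over the previous one, and continuity holds at limits below $\delta$); thus $M_{<\delta}$ is a limit model in $\K_\lambda$ and $M_{<\delta} \lea M_\delta$. Set $q := \gtp (b / M_0; M_\delta) \in \gS (M_0)$.

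First I would apply part (2) of Fact \ref{tower-ext-lem} to the limit continuous tower $\Tow \rest \delta$ and the type $q$. This yields a limit tower $\overline{\Tow} = \seq{\overline{M}_i : i < \delta} \smallfrown \seq{a_i : i < \delta}$ with $\Tow \rest \delta \tlt \overline{\Tow}$, together with an element $b' \in \overline{M}_0$ such that $\gtp (b' / M_0; \overline{M}_0) = q$ and $\gtp (b' / M_{<\delta}; \bigcup_{i < \delta} \overline{M}_i)$ does not fork over $M_0$. On the other side, the hypothesis is exactly that $\gtp (b / M_{<\delta}; M_\delta)$ does not fork over $M_0$. Both of these types lie over the \emph{limit} model $M_{<\delta}$, both are nonforking over $M_0$, and both restrict to $q$ over $M_0$; hence by uniqueness (Fact \ref{forking-fact}) they coincide:
\[
\gtp (b / M_{<\delta}; M_\delta) = \gtp \bigl( b' / M_{<\delta}; \textstyle\bigcup_{i < \delta} \overline{M}_i \bigr).
\]

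Next I would realize this equality concretely. Choose $M_\delta'$ to be $(\lambda, \omega)$-limit over $M_\delta$. Using amalgamation in $\K_\lambda$ to witness the equality of Galois types above, and then using the universality of the first step of the chain defining $M_\delta'$ over $M_\delta$ to absorb the resulting amalgam into $M_\delta'$, I obtain a $\K$-embedding $f : \bigcup_{i < \delta} \overline{M}_i \to M_\delta'$ which fixes $M_{<\delta}$ pointwise and satisfies $f (b') = b$. Now set $M_i' := f[\overline{M}_i]$ for $i < \delta$ and keep $M_\delta'$ as chosen; I claim $\Tow' := \seq{M_i' : i \le \delta} \smallfrown \seq{a_i : i < \delta}$ works. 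Since $f$ fixes each $M_i \subseteq M_{<\delta}$, we have $M_i \lea M_i'$ with $M_i'$ limit over $M_i$ (as the image of $\overline{M}_i$, which is limit over $M_i$); the models $\seq{M_i' : i \le \delta}$ form an increasing chain with $M_i' \lea M_\delta'$; and $b = f(b') \in f[\overline{M}_0] = M_0'$. Finally $\Tow \tlt \Tow'$: for $i < \delta$ the condition that $\gtp (a_i / M_i'; M_{i+1}')$ does not fork over $M_i$ follows by invariance of nonforking under $f$ from the corresponding condition in $\Tow \rest \delta \tlt \overline{\Tow}$ (note $f(a_i) = a_i$, as $a_i \in M_{i+1} \subseteq M_{<\delta}$), while at $i = \delta$ the only requirement is that $M_\delta'$ be limit over $M_\delta$, which holds by construction.

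The hard part will be the third step: passing from the bare equality of Galois types to a single embedding $f$ that simultaneously fixes $M_{<\delta}$, sends the auxiliary $b'$ to the \emph{original} $b$, and lands in a model that is limit over the original top $M_\delta$ (so that $\Tow'$ genuinely extends $\Tow$ at $\delta$, not merely a copy of it). This is precisely where amalgamation and the careful choice of $M_\delta'$ as a limit model over $M_\delta$ are needed to absorb the amalgam; once $f$ is in place, the verification of the tower conditions is routine bookkeeping with invariance of nonforking.
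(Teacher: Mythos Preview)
Your proof is correct and follows essentially the same route as the paper's: apply Fact~\ref{tower-ext-lem}(\ref{setup-2}) to the limit continuous tower $\Tow \rest \delta$ with $q = \gtp(b/M_0; M_\delta)$, use uniqueness of nonforking to match $\gtp(b'/M_{<\delta})$ with $\gtp(b/M_{<\delta})$, and then realize this equality by an embedding into a model limit over $M_\delta$. Your write-up is simply more explicit than the paper's about the bookkeeping in the last step (verifying that $\Tow'$ is a tower and that $\Tow \tlt \Tow'$), but the argument is the same.
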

\begin{proof}
  Write $M_\delta^0 := \bigcup_{i < \delta} M_i$. Let $q := \gtp (b / M_0; M_\delta)$. By Fact \ref{tower-ext-lem} applied to the tower $\Tow \rest \delta$, there exists $\Tow^\ast = \seq{M_i^\ast : i < \delta} \smallfrown \seq{a_i : i < \delta}$ and $b^\ast \in M_0^\ast$ such that $\Tow \rest \delta \tlt \Tow^\ast$, $\gtp (b^\ast / M_0; M_0^\ast) = q$, and $\gtp (b^\ast / M_\delta^0; M_\delta^\ast)$ does not fork over $M_0$ (we have set $M_\delta^\ast := \bigcup_{i < \delta} M_i^\ast$). Note that we have used that $\delta$ is a limit ordinal to make sure that all the $a_i$'s are still in $\Tow \rest \delta$. By uniqueness, $\gtp (b^\ast / M_\delta^0; M_\delta^\ast) = \gtp (b / M_\delta^0; M_\delta)$. Pick $M_\delta'$ limit over $M_\delta$ and $f: M_\delta^\ast \xrightarrow[M_\delta^0]{} M_\delta'$ such that $f (b^\ast) = b$. Let $M_i' := f[M_i^\ast]$ for $i < \delta$.
\end{proof}

We obtain the following powerful tool to build continuous towers. This was first proven by VanDieren (using a slightly different notion of tower) \cite{vandieren-symmetry-apal}. We give a simplification of VanDieren's original proof here.

\begin{fact}\label{reduced-continuous}
  Any reduced tower is continuous.
\end{fact}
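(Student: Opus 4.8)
The plan is to argue by contradiction using the pull-down tool of Lemma \ref{tower-ext-technical}: if a reduced tower fails to be continuous, I will produce an extension in which some element lying above a limit stage gets pulled down into an earlier model, violating the intersection condition defining reducedness.

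Suppose $\Tow = \seq{M_i : i < \alpha}\smallfrown\seq{a_i : i+1<\alpha}$ is reduced but not continuous, and let $\delta < \alpha$ be the \emph{least} limit ordinal with $M_\delta \neq M_\delta^0$, where $M_\delta^0 := \bigcup_{i<\delta} M_i$; fix a witness $b \in M_\delta \setminus M_\delta^0$. By minimality of $\delta$, the restriction $\Tow \rest \delta$ is continuous. The difficulty is that $\Tow$ need not be \emph{limit} (successor models need not be limit over their predecessors), whereas Lemma \ref{tower-ext-technical} requires a limit continuous bottom segment together with nonforking of the relevant type over the base. So the first move is to pass to a limit extension: using Fact \ref{tower-ext-lem}(1), extend $\Tow$ to a limit tower $\Tow^+$ of the same length and with the same $a_i$'s, building it \emph{continuously at limit stages}. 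Since $\Tow$ is continuous below $\delta$, this makes $\Tow^+ \rest \delta$ limit and continuous, and reducedness of $\Tow$ (applied to $\Tow \tlt \Tow^+$) gives $M_i^+ \cap M_j = M_i$ for all $i \le j$. Note $b \in M_\delta \lea M_\delta^+$, and $b \notin \bigcup_{i<\delta}M_i^+$: otherwise $b \in M_i^+ \cap M_\delta = M_i \subseteq M_\delta^0$ for some $i < \delta$, contradicting the choice of $b$.

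Next I arrange the nonforking hypothesis. Since $\seq{M_i^+ : i < \delta}$ is a continuous chain of limit models with each successor universal over its predecessor, universal local character (Fact \ref{forking-fact}(3)) yields some $i_* < \delta$ with $\gtp(b/\bigcup_{i<\delta}M_i^+; M_\delta^+)$ not forking over $M_{i_*}^+$. Applying Lemma \ref{tower-ext-technical} to the segment tower $\seq{M_i^+ : i_* \le i \le \delta}\smallfrown\seq{a_i : i_* \le i < \delta}$ (whose bottom segment is limit and continuous) and the element $b$, I obtain an extension $\Sigma$ of this segment whose bottom model $N_{i_*}$ contains $b$. Finally I splice $\Sigma$ back into a tower $\Tow^{\mathrm{fin}}$ of full length $\alpha$ that still extends the \emph{original} $\Tow$: below $i_*$ keep the models of $\Tow^+$, on $[i_*,\delta]$ use $\Sigma$, and above $\delta$ build the tail by repeatedly applying Fact \ref{tower-ext-lem}(1) (equivalently nonforking amalgamation, Fact \ref{nf-amalgam}) to extend the top model of $\Sigma$ while realizing each $M_{i+1}$ over $M_i$ with $\gtp(a_i / N_i^{\mathrm{fin}}; N_{i+1}^{\mathrm{fin}})$ not forking over $M_i$. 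By construction $\Tow \tleq \Tow^{\mathrm{fin}}$, so reducedness of $\Tow$ forces $N_{i_*} \cap M_\delta = M_{i_*}$. But $b \in N_{i_*} \cap M_\delta$ while $b \notin M_\delta^0 \supseteq M_{i_*}$, a contradiction. Hence $\Tow$ is continuous.

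The main obstacle is the bookkeeping in the first and last steps rather than any single deep idea: one must arrange the limit extension $\Tow^+$ to be continuous precisely where $\Tow$ is (so that Lemma \ref{tower-ext-technical} applies), and one must carry the nonforking conditions through the splicing so that the final object genuinely satisfies $\Tow \tleq \Tow^{\mathrm{fin}}$ — only then does reducedness of the \emph{original} tower deliver the contradiction. If one prefers to avoid building the tail above $\delta$, an alternative is to first prove that initial-segment restrictions of reduced towers are reduced and thereby reduce to the case $\alpha = \delta + 1$; but establishing that restriction lemma itself requires a tower-amalgamation (pushout) argument of comparable length.
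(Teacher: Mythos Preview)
Your argument has a genuine gap at the very first move: you claim you can use Fact \ref{tower-ext-lem}(1) to produce a limit extension $\Tow^+$ of $\Tow$ that is \emph{also continuous below $\delta$}. Fact \ref{tower-ext-lem}(1) gives no such continuity. If you try to enforce it by hand --- setting $M_\gamma^+ := \bigcup_{i<\gamma} M_i^+$ at each limit $\gamma < \delta$ --- you lose the condition $\Tow \tlt \Tow^+$: there is no reason the union $\bigcup_{i<\gamma} M_i^+$ is \emph{limit over} $M_\gamma$, even though it contains $M_\gamma$ and is itself a limit model. (Having $M_{i+1}^+$ limit over $M_i^+$ for each $i$ only makes the union limit over $M_0^+$; it says nothing about being limit over $M_\gamma = \bigcup_{i<\gamma} M_i$.) Without $\Tow \tlt \Tow^+$ you cannot invoke reducedness of $\Tow$ at either of the two places you use it, and without continuity you cannot feed the segment into Lemma \ref{tower-ext-technical}. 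The statement ``any tower has a continuous limit $\tleq$-extension'' is exactly Lemma \ref{tower-ext-cont}, and that lemma is proven \emph{after} Fact \ref{reduced-continuous} and depends on it --- so appealing to it here is circular.

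The paper's proof sidesteps this with a diagonal trick, which is the missing idea rather than mere bookkeeping. One builds a $\tlt$-increasing continuous chain $\seq{\Tow^j : j \le \delta}$ of \emph{reduced} towers (using density of reduced towers) and passes to the diagonal $\seq{M_i^i : i \le \delta}$. The diagonal is automatically a limit tower because $M_{i+1}^{i+1}$ is limit over $M_{i+1}^i \gea M_i^i$; and it is continuous below $\delta$ because each $\Tow^j$ is reduced, so each $\Tow^j \rest \delta$ is continuous by minimality of the counterexample length, and a short double-union computation then shows $M_\gamma^\gamma = \bigcup_{i<\gamma} M_i^i$. With this diagonal in hand, local character plus Lemma \ref{tower-ext-technical} yield the contradiction exactly as you intended in your final step. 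Your overall plan (pull $b$ down to contradict reducedness) is right; what is missing is a non-circular way to manufacture the limit continuous segment, and the diagonal chain of reduced towers is how it is done.
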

\begin{proof}
  Suppose not. Let $\alpha$ be the least length of a reduced non-continuous tower. Then it is easy to see that $\alpha = \delta + 1$, where $\delta$ is a limit ordinal. Let $\Tow = \seq{M_i : i \le \delta} \smallfrown \seq{a_i : i < \delta}$ be such a reduced non-continuous tower. Thus $\bigcup_{i < \delta} M_i \neq M_\delta$. Pick $b \in M_\delta \backslash \bigcup_{i < \delta} M_i$.

  \underline{Claim}: There is \emph{no} $k < \delta$ and no tower $\Tow' = \seq{M_i' : i \in [k, \delta]} \smallfrown \seq{a_i : i \in [k, \delta]}$ such that $\Tow \rest [k, \delta] \tlt \Tow'$ and $b \in \bigcup_{i \in [k, \delta)} M_i'$.

  \underline{Proof of Claim}: Suppose $\Tow'$ is such a tower and fix $i < \delta$ such that $b \in M_i'$. Then $b \in M_i' \cap M_\delta$ but $b \notin M_i$, so $M_i' \cap M_\delta \neq M_i$. Moreover, one can extend $\Tow'$ to a tower $\Tow''$ of length $\delta + 1$ so that $\Tow'' \rest [k, \delta) = \Tow'$ and $\Tow \tleq \Tow''$ (use universality of $M_k'$ over $M_k$). This implies that $\Tow$ is not reduced, contradiction. $\dagger_{\text{Claim}}$

    We aim to build a tower as in the claim to get a contradiction. We will use Lemma \ref{tower-ext-technical}, but for this we need to start with a tower $\Tow^{\ast \ast}$ extending $\Tow$ that is both limit \emph{and} continuous before $\delta$. This motivates the following construction of a ``diagonal tower'': build $\seq{\Tow^j : j \le \delta}$ an $\tlt$-increasing continuous chain of reduced towers such that $\Tow^0 = \Tow$. Write $\Tow^j = \seq{M_i^j : i \le \delta} \smallfrown \seq{a_i : i < \delta}$. Now consider the diagonal tower $\Tow^\ast := \seq{M_i^i : i \le \delta} \smallfrown \seq{a_i : i < \delta}$.

    It is easy to check that $\Tow^\ast$ is indeed a tower, and further it is limit. Since $\delta$ was minimal, $\Tow^j \rest \delta$ is continuous for all $j \le \delta$, and hence $\Tow^\ast \rest \delta$ is also continuous. By universal local character (which we can use \emph{precisely because} $\Tow^\ast$, hence $\Tow^\ast \rest \delta$, is limit), there exists $i < \delta$ such that $\gtp (b / \bigcup_{k < \delta} M_k^k; M_{\delta}^\delta)$ does not fork over $M_i^i$. Let $\Tow^{\ast \ast} := \seq{M_i^i : i \in [k + 1, \delta]} \smallfrown \seq{a_i : i \in [k + 1, \delta]}$. By Lemma \ref{tower-ext-technical}, where $\Tow$ there stands for $\Tow^{\ast \ast}$ here, there exists a tower $\Tow'$ such that $\Tow^{\ast \ast} \tlt \Tow'$ and $\Tow' \rest \delta$ contains $b$. Since $\Tow'$ also extends $\Tow \rest [k, \delta]$, this contradicts the claim. 
\end{proof}

We now want to give conditions under which a tower $\seq{M_i : i < \alpha} \smallfrown \seq{a_i : i + 1 < \alpha}$ is such that $\bigcup_{i < \alpha} M_i$ is limit over $M_0$. Of course, being a limit tower suffices but it is not clear whether this property is closed under unions. Instead, we will rely on the following weakening (a variation appears in \cite[4.3]{gvv-mlq}):

\begin{defin}\label{full-def}
  A tower $\Tow = \seq{M_i : i \in I} \smallfrown \seq{a_i : i \in I^-}$ is \emph{$I_0$-full} if:

  \begin{enumerate}
  \item $I_0 \subseteq I$.
  \item For any $i \in I_0^-$ and any $p \in \gSna (M_i)$, there exists $k \in [i, i +_{I_0} 1)_{I}$ such that $\gtp (a_k / M_k; M_{k +_{I} 1})$ is the nonforking extension of $p$.
  \end{enumerate}
\end{defin}

We have the following monotonicity properties:

\begin{remark}\label{full-monot}
  Let $I_0 \subseteq I_1 \subseteq I_2$ be well-orderings and let $\Tow$ be a tower indexed by $I_2$.

  \begin{enumerate}
  \item If $\Tow \rest I_1$ is $I_0$-full, then $\Tow$ is $I_0$-full.
  \item If $\Tow$ is $I_1$-full, then $\Tow$ is $I_0$-full.
  \end{enumerate}
\end{remark}

Intuitively, full towers are those for which the $a_i$'s realize all the types many times. To see that full towers generate limit models, we will use:

\begin{fact}[{\cite[II.1.16(4)]{shelahaecbook}}]\label{univ-constr-lem}
  If $\seq{M_i : i \le \lambda}$ is increasing continuous in $\K_\lambda$ such that $M_{i + 1}$ realizes all types over $M_i$ for every $i < \lambda$, then $M_\lambda$ is universal over $M_0$.
\end{fact}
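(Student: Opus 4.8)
The plan is to prove the statement by a direct chain-embedding construction, unwinding the definition of ``universal over $M_0$'': I must show that any $N \in \K_\lambda$ with $M_0 \lea N$ admits a $\K$-embedding into $M_\lambda$ fixing $M_0$ (note $M_0 \lea M_\lambda$ already holds by the Tarski--Vaught chain axioms). First I would fix an enumeration $|N| = \seq{a_i : i < \lambda}$ and build, by induction on $i \le \lambda$, an increasing continuous chain $\seq{N_i : i \le \lambda}$ together with increasing continuous $\K$-embeddings $f_i : N_i \to M_i$ fixing $M_0$ and satisfying $f_i[N_i] \lea M_i$. Here I take $N_0 := M_0$, $f_0 := \id_{M_0}$, and at stage $i + 1$ I let $N_{i + 1} \lea N$ be a Löwenheim--Skolem--Tarski hull of $N_i \cup \{a_i\}$ inside $N$, so that $N_i \lea N_{i + 1}$, $\|N_{i + 1}\| \le \lambda$, and $\bigcup_{i < \lambda} N_i = N$. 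At limit stages I take unions, which remain $\K$-embeddings into $\bigcup_{j < i} M_j = M_i$ by the chain axioms together with continuity of $\seq{M_i : i \le \lambda}$.

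The entire content is in the successor step. Suppose $f_i : N_i \to M_i$ is given with $B := f_i[N_i] \lea M_i$, and set $p := f_i (\gtp (a_i / N_i; N))$, an orbital type over the \emph{model} $B$. Using amalgamation in $\K_\lambda$ (available under the running Hypothesis, since a $\lambda$-superstable AEC has amalgamation in $\lambda$) I extend $p$ to a type $\hat p \in \gS (M_i)$ with $\hat p \rest B = p$. Since $M_{i + 1}$ realizes all types over $M_i$, there is $c \in M_{i + 1}$ realizing $\hat p$, hence realizing $p = \hat p \rest B$. Because equality of orbital types over the model $B$ is witnessed by a single $\K$-embedding, from $\gtp (c / B; M_{i + 1}) = f_i (\gtp (a_i / N_i; N))$ I obtain a $\K$-embedding $f_{i + 1} : N_{i + 1} \to M_{i + 1}$ extending $f_i$ with $f_{i + 1} (a_i) = c$ and $f_{i + 1}[N_{i + 1}] \lea M_{i + 1}$, as required.

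Finally I set $f := \bigcup_{i < \lambda} f_i$. As $\seq{f_i : i < \lambda}$ is an increasing chain of $\K$-embeddings with $f_i[N_i] \lea M_i$ and both chains are continuous, the Tarski--Vaught axioms yield that $f : N = \bigcup_{i < \lambda} N_i \to \bigcup_{i < \lambda} M_i = M_\lambda$ is a $\K$-embedding fixing $M_0$, so $M_\lambda$ is universal over $M_0$. The main obstacle is precisely the successor step: the hypothesis only furnishes realizations of types \emph{over the full model $M_i$}, so one is forced to present the desired type as the restriction of some $\hat p \in \gS (M_i)$, and this is exactly where amalgamation enters to extend $p$ from the submodel $B$ to $M_i$. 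The only other delicate point is ensuring that the realization $c$ produces a genuine $\K$-embedding of the hull $N_{i + 1}$ rather than a mere coincidence of types, which is guaranteed by the fact that $B = f_i[N_i]$ is a model and not just a subset.
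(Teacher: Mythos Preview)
There is a genuine gap in your successor step. You claim that from the equality $\gtp(c/B; M_{i+1}) = f_i(\gtp(a_i/N_i; N))$ you obtain a $\K$-embedding $f_{i+1}: N_{i+1} \to M_{i+1}$ extending $f_i$. But equality of orbital types over $B$ only produces embeddings of \emph{both} sides into a common amalgam $W$: you get $u: N_{i+1} \xrightarrow[B]{} W$ and $v: M_{i+1} \xrightarrow[B]{} W$ with $u(a_i) = v(c)$. There is no reason the image $u[N_{i+1}]$ should land inside $v[M_{i+1}]$, so you cannot conclude that $N_{i+1}$ embeds into $M_{i+1}$. Concretely, your $N_{i+1}$ is an entire Löwenheim--Skolem--Tarski hull of size $\lambda$, and the hypothesis that $M_{i+1}$ realizes all $1$-types over $M_i$ is far weaker than $M_{i+1}$ being universal over $M_i$ (indeed, the latter is essentially what you are trying to prove, just for $M_\lambda$).

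The fix is to run the construction in the opposite direction. Build an increasing continuous chain $\seq{N_i : i \le \lambda}$ with $N_0 = N$ and $\K$-embeddings $g_i : M_i \to N_i$, increasing continuous, with $g_0 = \id_{M_0}$, arranging at each successor that $a_i \in g_{i+1}[M_{i+1}]$. For the successor step, pull back $\gtp(a_i / g_i[M_i]; N_i)$ along $g_i$ to a type in $\gS(M_i)$; by hypothesis it is realized by some $b \in M_{i+1}$, and now type equality \emph{does} give what you need: an amalgam $N_{i+1} \gea N_i$ and an embedding $g_{i+1}: M_{i+1} \to N_{i+1}$ extending $g_i$ with $g_{i+1}(b) = a_i$. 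At the end, $|N| \subseteq g_\lambda[M_\lambda]$, and coherence gives $N \lea g_\lambda[M_\lambda]$, so $g_\lambda^{-1} \rest N$ is the desired $\K$-embedding of $N$ into $M_\lambda$ over $M_0$. (The paper does not supply its own proof, citing \cite[II.1.16(4)]{shelahaecbook}; the argument there is along these lines.)
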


\begin{lem}\label{full-limit}
  If $\Tow = \seq{M_i : i \in I} \smallfrown \seq{a_i : i \in I^-}$ is an $I_0$-full tower and $\lambda \cdot (1 + \otp (I_0)) = \otp (I_0)$, then $\bigcup_{i \in I_0} M_i$ is $(\lambda, \cf{I_0})$-limit over $M_0$.
\end{lem}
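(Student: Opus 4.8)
The plan is to exhibit an explicit witness to $(\lambda, \cf{I_0})$-limitness: writing $\mu := \cf{I_0}$, I would build an increasing continuous chain $\seq{N_\xi : \xi \le \mu}$ in $\K_\lambda$ with $N_0 = M_0$, $N_\mu = \bigcup_{i \in I_0} M_i$, and $N_{\xi + 1}$ universal over $N_\xi$ for all $\xi < \mu$. The engine driving universality is the following immediate consequence of fullness: \emph{for consecutive elements $i <_{I_0} i^+$ of $I_0$ (where $i^+ := i +_{I_0} 1$), the model $M_{i^+}$ realizes every type in $\gS (M_i)$.} Indeed, given nonalgebraic $p \in \gSna (M_i)$, fullness provides $k \in [i, i^+)_I$ with $\gtp (a_k / M_k; M_{k + 1})$ the nonforking extension of $p$; since $k + 1 \le i^+$ we have $a_k \in M_{k+1} \lea M_{i^+}$, and as a nonforking extension restricts to $p$ over $M_i$, the element $a_k$ realizes $p$. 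Algebraic types over $M_i$ are realized already in $M_i \lea M_{i^+}$, so $M_{i^+}$ realizes all of $\gS (M_i)$.

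Next I would use the arithmetic hypothesis $(1 + \otp (I_0)) \cdot \lambda = \otp (I_0)$ to chop $I_0$ into $\mu$ consecutive blocks. Fix a continuous increasing cofinal sequence $\seq{j_\xi : \xi < \mu}$ in $I_0$ with $j_0 = \min I_0$, arranged (this is exactly what the hypothesis buys, reading $\alpha \cdot \beta$ in the convention where it denotes $\alpha$-many copies of $\beta$) so that each interval $[j_\xi, j_{\xi + 1})_{I_0}$ has order type divisible by $\lambda$, hence runs through a chain of $\lambda$-many consecutive $I_0$-elements. Put $N_0 := M_0$ and $N_\xi := \bigcup \{ M_i : i \in I_0,\ i <_{I_0} j_\xi \}$ for $\xi > 0$. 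Because $\seq{j_\xi : \xi < \mu}$ is continuous, $N_\xi = \bigcup_{\zeta < \xi} N_\zeta$ at limit $\xi$, so the chain is increasing and continuous; each $N_\xi$ is a union of at most $\lambda$ models of size $\lambda$, hence lies in $\K_\lambda$; and its top model is $\bigcup_{i \in I_0} M_i$.

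The crux is to show $N_{\xi + 1}$ is universal over $N_\xi$, and \textbf{the main obstacle is precisely that the tower need not be continuous.} The natural move is to apply Fact \ref{univ-constr-lem} to the $\lambda$-indexed continuous (union) chain running across the $\xi$-th block, since by the engine above each step between two consecutive $I_0$-models realizes all types over its predecessor. However, Fact \ref{univ-constr-lem} requires realization at \emph{every} successor step of a genuinely continuous chain, whereas passing to the continuous closure reinstates continuity but only guarantees all-types-realization at the successor stages of the block, which are merely \emph{cofinal} (of order type $\lambda$), the limit stages being points where the non-continuous tower may jump. I expect to resolve this by a routine strengthening of Fact \ref{univ-constr-lem} in which realization is demanded only on a cofinal set of stages; the underlying justification is the standard back-and-forth, where, given $M' \gea N_\xi$ of size $\lambda$, one embeds $M'$ over $N_\xi$ into $N_{\xi+1}$ by a continuous induction, realizing each successive image type (via amalgamation, available from $\lambda$-superstability, together with the engine) at the next consecutive $I_0$-model, and at limit stages observing that the image, lying in a directed union of tower-models, is already absorbed into a single tower-model so that the construction continues. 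Once universality of the successor steps is in hand, the chain $\seq{N_\xi : \xi \le \mu}$ witnesses that $\bigcup_{i \in I_0} M_i$ is $(\lambda, \cf{I_0})$-limit over $M_0$, as required.
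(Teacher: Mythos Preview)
Your approach is essentially the same as the paper's: both use fullness to see that $M_{i +_{I_0} 1}$ realizes all types over $M_i$, and then invoke Fact~\ref{univ-constr-lem} to obtain universality of a $\lambda$-long block, yielding the desired limit witness. The paper simply writes ``By assumption, $M_{i +_{I_0} 1}$ realizes all types over $M_i$. Thus by Fact~\ref{univ-constr-lem} $M_{i +_{I_0} \lambda}$ is universal over $M_i$,'' and then reads off the chain.

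You are right that the paper glosses over the continuity issue, and you deserve credit for flagging it. However, your proposed resolution --- a strengthening of Fact~\ref{univ-constr-lem} to ``realization on a cofinal set,'' established by redoing the back-and-forth --- is more work than needed. The simpler fix: pass to the continuous closure $\seq{N_k : k \le \lambda}$ with $N_k = M_{i +_{I_0} k}$ for $k$ zero or successor and $N_k = \bigcup_{j<k} N_j$ for $k$ limit, and observe that realization at \emph{every} successor step still holds. Indeed, for $k$ limit we have $N_k \lea M_{i +_{I_0} k}$, and since $\K_\lambda$ has amalgamation (from $\lambda$-superstability), any $p \in \gS(N_k)$ extends to some $q \in \gS(M_{i +_{I_0} k})$; fullness gives a realization of $q$ in $N_{k+1} = M_{i +_{I_0} (k+1)}$, which restricts to a realization of $p$. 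Thus Fact~\ref{univ-constr-lem} applies verbatim to the continuous closure, giving that $\bigcup_{k<\lambda} M_{i +_{I_0} k}$ (hence a fortiori $M_{i +_{I_0} \lambda}$) is universal over $M_i$. No new lemma is required.
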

\begin{proof}
  Fix $i \in I_0$. By assumption, $M_{i +_{I_0} 1}$ realizes all types over $M_i$. Thus by Fact \ref{univ-constr-lem} $M_{i +_{I_0} \lambda}$ is universal over $M_i$. Since $i$ was arbitrary, we obtain that $M_0 \smallfrown \seq{M_{i} : i \in I_0}$ is the desired witness that $\bigcup_{i \in I_0} M_i$ is $(\lambda, \cf{I_0})$-limit over $M_0$.
\end{proof}

Full towers are also preserved by unions:

\begin{lem}\label{full-union}
  Let $\delta < \lambda^+$ be a limit ordinal. Let $\seq{\Tow^j : j < \delta}$ be an increasing chain of towers. Assume that $\Tow^j$ is indexed by $I^j$, and $I^\delta := \bigcup_{j < \delta} I^j$ is a well-ordering. Suppose that $I_0 \subseteq I^0$ is such that $\Tow^j$ is $I_0$-full for each $j < \delta$. Then $\bigcup_{j < \delta} \Tow^j$ is $I_0$-full.
\end{lem}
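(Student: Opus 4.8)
The plan is to verify the two clauses of $I_0$-fullness for the union tower $\Tow^\delta := \bigcup_{j<\delta}\Tow^j$, writing $M_i^\delta = \bigcup_{j<\delta} M_i^j$ and $a_i^\delta$ for its entries. The first clause, $I_0 \subseteq I^\delta$, is immediate from $I_0 \subseteq I^0 \subseteq I^\delta$. For the second clause, fix $i \in I_0^-$ and $p \in \gSna(M_i^\delta)$; I must produce $k \in [i, i +_{I_0} 1)_{I^\delta}$ with $\gtp(a_k^\delta / M_k^\delta; M_{k+1}^\delta)$ equal to the nonforking extension of $p$. Since $a_k^\delta = a_k^{j}$ whenever $k \in I^j$, the witness must come from one of the given towers $\Tow^{j}$, whose fullness only speaks about types over $M_i^{j}$. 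So the first move is to pull $p$ down to a single tower: find $j^\ast < \delta$ over which $p$ does not fork, and set $q := p \rest M_i^{j^\ast}$, which lies in $\gSna(M_i^{j^\ast})$ by disjointness (Fact \ref{forking-fact}(8)).

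Here is the one genuinely technical point. The chain $\seq{M_i^j : j<\delta}$ is increasing with $M_i^{j+1}$ limit (hence universal) over $M_i^j$ and union $M_i^\delta$, but it need not be continuous, whereas universal local character (Fact \ref{forking-fact}(3)) is stated for continuous chains. I would get around this by passing to the continuous refinement $N_j := M_i^j$ for $j$ a successor or $0$ and $N_j := \bigcup_{k<j} M_i^k$ for $j$ limit: each $N_j$ is a limit model (for $j$ limit it is the $i$-th model of the tower $\bigcup_{k<j}\Tow^k$), the chain $\seq{N_j : j\le\delta}$ is continuous with $N_\delta = M_i^\delta$, and $N_{j+1}$ is universal over $N_j$ (using amalgamation in $\lambda$ to pass from universality over $M_i^j$ to universality over the smaller $N_j$). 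Applying Fact \ref{forking-fact}(3) gives $j^\ast<\delta$ with $p$ nonforking over $N_{j^\ast}$; since $N_{j^\ast}\lea M_i^{j^\ast}$, monotonicity (Fact \ref{forking-fact}(2)) upgrades this to $p$ nonforking over $M_i^{j^\ast}$.

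With $j^\ast$ in hand, fullness of $\Tow^{j^\ast}$ applied to $q$ yields $k \in [i, i +_{I_0} 1)_{I^{j^\ast}} \subseteq [i, i +_{I_0} 1)_{I^\delta}$ such that, writing $a := a_k^{j^\ast} = a_k^\delta$, the type $r^\ast := \gtp(a/M_k^{j^\ast}; M_{k+1}^{j^\ast})$ is the nonforking extension of $q$. The conceptually central task is then to show that the \emph{larger} type $r := \gtp(a/M_k^\delta; M_{k+1}^\delta)$ is the nonforking extension of $p$, not merely of $q$. For this I combine two nonforking facts: from $\Tow^{j^\ast}\tlt\Tow^\delta$, clause (4) of the tower ordering gives that $r$ does not fork over $M_k^{j^\ast}$; and $r\rest M_k^{j^\ast} = r^\ast$ does not fork over $M_i^{j^\ast}$. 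Transitivity (Fact \ref{forking-fact}(6)) across $M_i^{j^\ast}\lea M_k^{j^\ast}\lea M_k^\delta$ then yields that $r$ does not fork over $M_i^{j^\ast}$, and a further application of monotonicity yields that $r$ does not fork over $M_i^\delta$.

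It remains to check $r\rest M_i^\delta = p$. Both $r\rest M_i^\delta$ and $p$ are types over $M_i^\delta$ that do not fork over $M_i^{j^\ast}$ (the former by the previous step and monotonicity, the latter by the choice of $j^\ast$), and both restrict to $q$ over $M_i^{j^\ast}$; uniqueness of nonforking extensions (Fact \ref{forking-fact}(4)) forces them to be equal. Hence $r$ extends $p$ and does not fork over $M_i^\delta$, so it is the nonforking extension of $p$, and $k$ is the required witness; as $i$ and $p$ were arbitrary, $\Tow^\delta$ is $I_0$-full. The main obstacle is the non-continuity of $\seq{M_i^j : j<\delta}$ in the local-character step, which the refinement above resolves; the rest is a careful but routine assembly of transitivity, monotonicity, and uniqueness of forking, the only delicate idea being to run transitivity through $M_k^{j^\ast}$ so that the nonforking base is transferred from $q$ back up to $p$.
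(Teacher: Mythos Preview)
Your proof is correct and follows essentially the same approach as the paper: find $j^\ast$ so that $p$ does not fork over $M_i^{j^\ast}$, apply $I_0$-fullness of $\Tow^{j^\ast}$ to the restriction, then push back up using the tower ordering, transitivity, and uniqueness. The paper simply writes ``Pick $j < \delta$ such that $p$ does not fork over $M_i^j$'' and ``by transitivity of forking and uniqueness'' without further justification; you have supplied the missing details, in particular the continuous-refinement argument needed because Fact \ref{forking-fact}(3) is stated only for continuous chains, and the appeal to disjointness to ensure $q \in \gSna(M_i^{j^\ast})$. One small imprecision: you write $\Tow^{j^\ast} \tlt \Tow^\delta$, but a priori only $\Tow^{j^\ast} \tleq \Tow^\delta$ holds; the equality case is trivial, so this does not affect correctness.
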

\begin{proof}
  Let $\Tow^\delta := \bigcup_{j < \delta} \Tow^j$. For $j \le \delta$, write $\Tow^j = \seq{M_i^j : i \in I^j} \smallfrown \seq{a_i : i \in (I^j)^-}$. Let $i \in I_0^-$ and let $p \in \gSna (M_i^\delta)$. Pick $j < \delta$ such that $p$ does not fork over $M_i^j$. Since $\Tow^{j}$ is $I_0$-full, there exists $k \in [i, i +_{I_0} 1)_{I^j}$ such that $\gtp (a_k / M_k^j; M_{k +_{I^j} 1}^j)$ is the nonforking extension of $p \rest M_i^j$. Clearly, $k \in [i, i +_{I_0} 1)_{I^\delta}$. Moreover, by definition of extension of tower, $\gtp (a_k / M_k^\delta; M_{k + 1}^\delta)$ does not fork over $M_k^j$. By transitivity of forking and uniqueness, $\gtp (a_k / M_k^\delta; M_{k + 1}^\delta)$ is the nonforking extension of $p \rest M_i^j$, hence of $p$. 
\end{proof}

\begin{defin}
  For $I$ and $J$ linear orders, let $I \times J$ be the usual cartesian product, ordered lexicographically: $(i_1, j_1) < (i_2, j_2)$ if and only if either $i_1 < i_2$, or $i_1 = i_2$ and $j_1 < j_2$. 
\end{defin}

We now prove two construction lemmas about full towers. The idea is also described on p.~373 of \cite{gvv-mlq}. First, in order to be able to ensure that towers have full extensions, they need to have enough space. This is the notion of a strong limit tower:

\begin{defin}
  A tower $\seq{M_i: i < \alpha} \smallfrown \seq{a_i : i + 1 < \alpha}$ is \emph{strongly limit} if for any $i \in (0, \alpha)$, $M_i$ is limit over $\bigcup_{j < i} M_j$.
\end{defin}
\begin{remark}\label{str-lim-ext}
  The proof of Fact \ref{tower-ext-lem} shows that any tower $\tlt$-extends to a strongly limit tower of the same length.
\end{remark}

The difference between this definition and that of a limit tower is that if $i$ is a limit ordinal, then $M_i$ is also required to be limit over the union of its predecessors (so in particular the tower is \emph{not} continuous). 

\begin{lem}[First construction lemma]\label{full-constr-1}
  Let $I$ be a well-ordering and let $\alpha < \gamma$ be ordinals in $[1, \lambda^+)$ with $\gamma \cdot \lambda = \gamma$. Let $\Tow$ be a strongly limit tower indexed by $I \times \alpha$. Then there is a strongly limit tower $\Tow'$ indexed by $I \times \gamma$ such that $\Tow'$ is $(I \times \{0\})$-full and $\Tow' \rest (I \times \alpha) = \Tow$.
\end{lem}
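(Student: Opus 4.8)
The plan is to build $\Tow'$ one block at a time. Write $J := I \times \gamma$ and identify $I$ with the copy $J_0 := \{(i,0) : i \in I\}$ inside $J$, so that ``$I$-full'' means ``$J_0$-full''. For $i \in I^-$ with successor $i' := i +_I 1$, the interval $[(i,0), (i',0))_J$ computed in the lexicographic order is exactly the block $\{i\} \times \gamma$; hence what fullness demands, block by block, is that the new points $a_{(i,\beta)}$ (for $\alpha \le \beta < \gamma$) realize the nonforking extensions to $M_{(i,\beta)}$ of \emph{all} the types in $\gSna(M_{(i,0)})$, while the restriction of $\Tow'$ to $I \times \alpha$ stays equal to $\Tow$. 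The block indexed by a maximal element of $I$ (if one exists) carries no fullness requirement, so there I would simply extend $\Tow$ freely using Fact \ref{tower-ext-lem}(1); the work is in the blocks with $i \in I^-$.

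For a single such block I would proceed by recursion on $\beta \in [\alpha, \gamma)$. By stability in $\lambda$ there are at most $\lambda$ types in $\gSna(M_{(i,0)})$, and the hypothesis $\gamma \cdot \lambda = \gamma$ is precisely what lets me schedule a surjection from the new indices $[\alpha,\gamma)$ onto this set of types, so that each type is assigned to cofinally many $\beta$. At a successor step $\beta \to \beta + 1$ I take $M_{(i,\beta+1)}$ a limit extension of $M_{(i,\beta)}$ and, using the extension property of nonforking (Fact \ref{forking-fact}(5)) together with disjointness (so the realizing element is new), choose $a_{(i,\beta)} \in M_{(i,\beta+1)} \setminus M_{(i,\beta)}$ realizing the nonforking extension to $M_{(i,\beta)}$ of the type assigned to $\beta$; at limit $\beta$ I take unions. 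This produces a continuous limit chain over the top $\bar M_i$ of the old block whose union $M^{**}$ is, by construction, a limit model over $\bar M_i$ realizing all of $\gSna(M_{(i,0)})$ as required.

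The main obstacle is \emph{positioning}: since every block point lies below $(i',0)$ in $J$, the entire new chain must be fitted below the model $M_{(i',0)}$ coming from $\Tow$, and the first new model must moreover absorb the old inter-block element. Here I would use that $\Tow$ is a \emph{limit} tower to argue that $M_{(i',0)}$ is itself a limit model over $\bar M_i$, so that $M_{(i',0)}$ and $M^{**}$ are two limit models over the common base $\bar M_i$; by the uniqueness of limit models (Fact \ref{sym-uq-lim}) there is an isomorphism fixing $\bar M_i$ carrying the new chain into $M_{(i',0)}$. Transporting the $M_{(i,\beta)}$ and $a_{(i,\beta)}$ through this isomorphism places them correctly, and invariance of nonforking (Fact \ref{forking-fact}(1)), together with the fact that the isomorphism fixes $M_{(i,0)} \subseteq \bar M_i$, preserves the fullness data. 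I expect the genuinely delicate point to be the degenerate cases of this step — in particular when the old block is indexed by a limit ordinal and $\Tow$ is continuous at $(i',0)$, forcing $M_{(i',0)} = \bar M_i$ and leaving no room below it; the natural remedy is to read the conclusion as a $\tlt$-extension that enlarges $M_{(i',0)}$ to an honest limit model over $\bar M_i$, which is consistent with keeping $\Tow'$ a \emph{limit} tower and with the elements $a_i$ being preserved.

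Finally I would assemble the blocks. Because each block construction fixes the block-start models $M_{(i,0)}$ and leaves them untouched, the per-block pieces cohere into a single increasing chain along $J$; one checks it is a tower and that it remains a \emph{limit} tower by noting that each block-start index $(i',0)$ is a limit point of its own block $\{i\}\times\gamma$ (as $\gamma$ is a limit ordinal, being closed under $\cdot \lambda$), so no new ``limit over'' obligation is created at block boundaries. Fullness of the resulting $\Tow'$ is then immediate from the block construction, and monotonicity of fullness (Remark \ref{full-monot}) together with, if one prefers to realize the chain as a union of successive $\tleq$-extensions, preservation of fullness under unions (Lemma \ref{full-union}) guarantees it is inherited throughout; the equality $\Tow' \rest (I \times \alpha) = \Tow$ holds by construction since nothing at the old indices below $\alpha$ in any block was altered.
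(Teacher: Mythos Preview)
Your approach is the paper's approach: its entire proof is the one-liner ``Straightforward: realize all the relevant types,'' and you have correctly unpacked this as a block-by-block enumeration of $\gSna(M_{(i,0)})$ (using stability in $\lambda$ and $\gamma\cdot\lambda=\gamma$), realization of nonforking extensions, and a positioning step via uniqueness of limit models.

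Two points deserve comment. First, a small gap in your positioning argument when $\alpha=\alpha_0+1$: the isomorphism from Fact~\ref{sym-uq-lim} fixes only the base $\bar M_i=M_{(i,\alpha_0)}$, not the old inter-block element $a_{(i,\alpha_0)}$, so after transporting your externally built chain into $M_{(i',0)}$ it is not automatic that $a_{(i,\alpha_0)}$ lies in the image of the first new model. The easy repair is to first use that (by uniqueness of limit models) $M_{(i',0)}$ is $(\lambda,\omega\cdot 2)$-limit over $M_{(i,\alpha_0)}$ to find $M'\lea M_{(i',0)}$ limit over $M_{(i,\alpha_0)}$ with $a_{(i,\alpha_0)}\in M'$ and $M_{(i',0)}$ still limit over $M'$; then set $M_{(i,\alpha)}:=M'$ and build the remaining models inside $M_{(i',0)}$ over $M'$.

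Second, you are right that the degenerate case you flag (limit $\alpha$ with $M_{(i',0)}=\bigcup_{\beta<\alpha}M_{(i,\beta)}$) genuinely obstructs the literal conclusion $\Tow'\rest(I\times\alpha)=\Tow$: there is no room to insert even a single new model. This is an imprecision in the lemma as stated, and your proposed remedy of weakening the conclusion to $\Tow\tleq\Tow'$ is exactly what the sole application (Lemma~\ref{full-constr-2}) needs. So here you have simply been more careful than the paper's own one-line proof.
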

\begin{proof}
  Straightforward: realize all the relevant types.
\end{proof}

\begin{lem}[Second construction lemma]\label{full-constr-2}
  Let $I$ be a well-ordering and let $\alpha \in [1, \lambda^+)$ be an ordinal. Let $\Tow$ be a tower indexed by $I \times \alpha$. Then there is an ordinal $\beta \in [\alpha, \lambda^+)$ and a tower $\Tow'$ indexed by $I \times \beta$ such that $\Tow \tlt \Tow'$, $\Tow'$ is reduced, and $\Tow'$ is $(I \times \{0\})$-full.
\end{lem}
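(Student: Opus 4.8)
The plan is to build $\Tow'$ as the union of an $\omega$-chain of towers in which fullness and reducedness are enforced alternately, since no single extension can secure both at once: growing the models so that they realize all types (fullness) and keeping intersections as small as possible (reducedness) pull in opposite directions, and each operation can destroy the other.

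First I would set up the chain $\seq{\Tow^n : n < \omega}$. Start by making $\Tow$ limit via Fact \ref{tower-ext-lem}(1) and then applying Lemma \ref{full-constr-1} to obtain a limit, $I$-full tower $\Tow^0$ indexed by $I \times \gamma_0$ for some $\gamma_0 \in (\alpha, \lambda^+)$ with $\gamma_0 \cdot \lambda = \gamma_0$; the growth of the models coming from the limit step (together with the index-set extension coming from \ref{full-constr-1}) yields $\Tow \tlt \Tow^0$. Given $\Tow^{2k}$ ($I$-full), let $\Tow^{2k} \tleq \Tow^{2k+1}$ with $\Tow^{2k+1}$ reduced of the same length, produced by Fact \ref{reduced-density}. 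Given $\Tow^{2k+1}$, first make it limit (Fact \ref{tower-ext-lem}(1)) and then apply Lemma \ref{full-constr-1} again to get a limit, $I$-full tower $\Tow^{2k+2}$ indexed by $I \times \gamma_{2k+2}$, choosing $\gamma_{2k+2} > \gamma_{2k+1}$ with $\gamma_{2k+2} \cdot \lambda = \gamma_{2k+2}$. Each step is a $\tleq$-extension — the full steps are even $\tlt$-extensions, because the end-extension produced by \ref{full-constr-1} is preceded by a model-growing limit step — so $\seq{\Tow^n : n < \omega}$ is a $\tleq$-increasing chain of towers.

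Then I would set $\Tow' := \bigcup_{n < \omega} \Tow^n$. Its index set is $I \times \beta$ with $\beta := \sup_n \gamma_n$; as a countable supremum of ordinals below the regular cardinal $\lambda^+$ we get $\beta \in [\alpha, \lambda^+)$, and $I \times \beta$ is a well-ordering, so $\Tow'$ is a genuine tower with $\Tow \tlt \Tow^0 \tleq \Tow'$ and hence $\Tow \tlt \Tow'$. The even-indexed subchain $\seq{\Tow^{2k} : k < \omega}$ is cofinal and each of its terms is $I$-full (identifying $I$ with $I \times \{0\}$, which lies in every index set), so Lemma \ref{full-union} shows that its union, which equals $\Tow'$, is $I$-full. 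Dually, the odd-indexed subchain $\seq{\Tow^{2k+1} : k < \omega}$ is a cofinal chain of reduced towers, so by Remark \ref{reduced-cont-ex} its union $\Tow'$ is reduced (and therefore automatically continuous by Fact \ref{reduced-continuous}).

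The main obstacle is exactly this incompatibility of fullness and reducedness at any single stage, which forces the alternate-and-unite strategy, and the technical heart is verifying that the union inherits both. For reducedness the inheritance is immediate from \ref{reduced-cont-ex}. For fullness one must be careful that the models keep growing across the intervening reduced steps, so the family of types that must be realized keeps enlarging; it is only the local-character content packaged inside \ref{full-union} — which requires each term to be full merely over \emph{its own} models, with local character bridging to fullness over the larger union models — that guarantees the cofinal full subtowers suffice. A secondary bookkeeping point, worth checking carefully, is that interleaving a limit step before each application of \ref{full-constr-1} converts its end-extensions into bona fide $\tlt$-extensions, so that the union construction on $\tleq$-increasing chains of towers legitimately applies.
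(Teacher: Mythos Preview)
Your proposal is correct and follows essentially the same alternate-and-unite strategy as the paper: build an $\omega$-chain in which the two properties are enforced at alternating stages, then take the union and use Remark \ref{reduced-cont-ex} and Lemma \ref{full-union} on the respective cofinal subchains. The only differences are cosmetic (you assign fullness to even stages and reducedness to odd, the paper does the reverse) and that you explicitly insert a limit step via Fact \ref{tower-ext-lem}(1) before each application of Lemma \ref{full-constr-1}; this last point is a genuine detail the paper's proof glosses over, since Lemma \ref{full-constr-1} does require its input to be limit and the reduced towers produced by Fact \ref{reduced-density} need not be.
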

\begin{proof}
  We build a $\tleq$-increasing continuous chain of towers $\seq{\Tow^i : i \le \omega}$ and an increasing continuous chain of ordinals $\seq{\alpha_i : i \le \omega}$ such that:

  \begin{enumerate}
  \item $\Tow^i$ is indexed by $I \times \alpha_i$.
  \item $\Tow \tlt \Tow^0$.
  \item For $i < \omega$ nonzero and even, $\Tow^i$ is reduced.
  \item For $i < \omega$ odd, $\Tow^i$ is strongly limit and $I$-full.
  \end{enumerate}

  This is enough: $\Tow^\omega$ is reduced by Remark \ref{reduced-cont-ex} and $I$-full by Lemma \ref{full-union}. Thus one can set $\Tow' := \Tow^\omega$, $\beta := \alpha_\omega$.

  This is possible: take $\alpha_0 := \alpha$ and any $\Tow^0$ such that $\Tow \tlt \Tow^0$ (exists by Fact \ref{tower-ext-lem}). Now let $i > 0$ and assume that $\Tow^{i - 1}$ and $\alpha_{i - 1}$ are given. If $i$ is even, take $\alpha_i := \alpha_{i - 1}$ and let $\Tow^i$ be a reduced tower $\tleq$-extending $\Tow^{i - 1}$ which is indexed by $I \times \alpha_i$ (exists by Fact \ref{reduced-density}). If $i$ is odd, use Remark \ref{str-lim-ext} and Lemma \ref{full-constr-1}.
\end{proof}

In passing, we can now give a proof of Fact \ref{sym-uq-lim}:

\begin{proof}[Proof of Fact \ref{sym-uq-lim}]
  It is enough to show that for any $M \in \K_\lambda$ and any limit ordinals $\delta_1, \delta_2 < \lambda^+$ there exists $N$ which is both $(\lambda, \delta_1)$-limit and $(\lambda, \delta_2)$-limit over $M$. Extending $M$ if necessary, we can assume without loss of generality that $M$ is a limit model. We build a $\tlt$-increasing continuous chain of towers $\seq{\Tow^i : i \le \delta_2}$ such that:

  \begin{enumerate}
  \item The first model in $\Tow^0$ is $M$.
  \item For each $i \le \delta_2$, $\Tow^i$ is indexed by $(\delta_1 + 1) \times \alpha_i$, for some nonzero ordinal $\alpha_i$.
  \item For each nonzero $i \le \delta_2$, $\Tow^i$ is $((\delta_1 + 1) \times \{0\})$-full and reduced.
  \end{enumerate}

  This is possible: use Lemma \ref{full-constr-2} at successor steps, recalling that being full and reduced is preserved at limits (Remark \ref{reduced-cont-ex} and Lemma \ref{full-union}).
  
  This is enough: write $\Tow^i = \seq{M_j^i : j \in (\delta_1 + 1) \times \alpha_i}$. By Remark \ref{full-monot}, $\Tow^{\delta_2}$ is $\delta_1$-full. By Fact \ref{reduced-continuous}, $\Tow^{\delta_2}$ is continuous. Thus by Lemma \ref{full-limit}, $M_{\delta_1}^{\delta_2}$ is $(\lambda, \delta_1)$-limit over $M_0^{\delta_2}$, hence over $M$. Moreover by definition of $\tlt$, $M_{\delta_1}^{\delta_2}$ is also $(\lambda, \delta_1)$-limit over $M_{\delta_1}^0$, hence over $M$, as desired.
\end{proof}

We are now ready to prove that any tower is extended by a continuous limit tower (this could also be shown using the diagonal tower from the proof of Fact \ref{reduced-continuous}).

\begin{lem}\label{tower-ext-cont}
  For any tower $\Tow$ there exists a continuous limit tower $\Tow'$ such that $\Tow \tleq \Tow'$.
\end{lem}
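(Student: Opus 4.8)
The plan is to secure continuity and limitness in two separate passes, since, as noted before Lemma~\ref{full-limit}, being a limit tower is not visibly preserved under unions.

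\emph{First pass (continuity).} By Fact~\ref{reduced-density} there is a reduced tower $\Tow_0$ with $\Tow \tleq \Tow_0$, and by Fact~\ref{reduced-continuous} $\Tow_0$ is continuous. Since $\tleq$ is transitive, it is enough to produce a continuous limit tower $\Tow'$ with $\Tow_0 \tlt \Tow'$. Writing $\Tow_0 = \seq{M_i : i < \alpha} \smallfrown \seq{a_i : i + 1 < \alpha}$, I therefore work from now on with the \emph{continuous} tower $\Tow_0$.

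\emph{Second pass (limitness, by thickening).} I would build an increasing sequence $\seq{M_i' : i < \alpha}$, keeping the same elements $a_i$, so that $\Tow' := \seq{M_i' : i < \alpha} \smallfrown \seq{a_i : i + 1 < \alpha}$ satisfies:
\begin{enumerate}
\item $M_i'$ is limit over $M_i$ for every $i < \alpha$, and $\gtp(a_i / M_i'; M_{i+1}')$ does not fork over $M_i$ for every $i + 1 < \alpha$ (so that $\Tow_0 \tlt \Tow'$);
\item $M_{i+1}'$ is limit over $M_i'$ whenever $i + 1 < \alpha$ (so that $\Tow'$ is a limit tower);
\item $M_i' = \bigcup_{j < i} M_j'$ for limit $i$ (so that $\Tow'$ is continuous).
\end{enumerate}
The successor steps are routine: given $M_i'$, nonforking amalgamation (Fact~\ref{nf-amalgam}) together with Fact~\ref{tower-ext-lem} yields $M_{i+1}' \gea M_{i+1}$ that is limit over $M_i'$ and in which $a_i$ realizes the nonforking extension of $\gtp(a_i / M_i; M_{i+1})$ over $M_i$. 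Continuity of $\Tow_0$ enters at limit $i$: clause (3) forces $M_i' := \bigcup_{j < i} M_j'$, and since $M_i = \bigcup_{j < i} M_j$ we at least get $M_i \subseteq M_i'$.

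\emph{The main obstacle} is clause (1) at limit stages: I must ensure that the \emph{forced} model $M_i' = \bigcup_{j<i} M_j'$ is limit \emph{over} $M_i = \bigcup_{j<i} M_j$, i.e.\ that a union of limit extensions taken over a base that is itself increasing remains a limit extension of the union of the bases. This is exactly the phenomenon the full-tower machinery was designed to control. I would arrange it by performing the thickening ``fully'': amalgamation in $\K_\lambda$ (available by $\lambda$-superstability) lets me choose each $M_{j+1}'$ to realize all types over the amalgam of $M_j'$ and $M_{j+1}$, so that the vertical chain realizes, cofinally often, all types over the growing base. By Fact~\ref{univ-constr-lem}, applied along a cofinal subchain as in the proof of Lemma~\ref{full-limit}, this makes $\bigcup_{j<i} M_j'$ limit over $M_i$, and uniqueness of limit models (Fact~\ref{sym-uq-lim}) then guarantees that it is a genuine $(\lambda, \cf{i})$-limit extension of $M_i$. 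Keeping this fullness bookkeeping consistent with the horizontal nonforking requirements in clause (1) is the delicate point, and is where the constructions of Lemmas~\ref{full-constr-1} and~\ref{full-constr-2} (realizing all relevant types while staying reduced and $\tlt$-increasing) do the real work.
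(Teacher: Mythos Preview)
Your two-pass plan looks reasonable at first, but the second pass has a real gap that you correctly identify as ``the main obstacle'' and then do not actually resolve. In your inductive construction on the \emph{fixed} index set $\alpha$, at a limit stage $i$ you are forced to set $M_i' = \bigcup_{j<i} M_j'$, and you need this to be limit over $M_i = \bigcup_{j<i} M_j$. Your proposed fix---choose each $M_{j+1}'$ to realize all types over an amalgam of $M_j'$ and $M_{j+1}$, then invoke Fact~\ref{univ-constr-lem} ``as in Lemma~\ref{full-limit}''---does not apply: Fact~\ref{univ-constr-lem} needs a chain of length $\lambda$ \emph{starting at $M_i$}, but your chain $\seq{M_j' : j < i}$ does not start at $M_i$, and $M_i$ is typically not contained in any single $M_j'$ (indeed, in the reduced case $M_j' \cap M_i = M_j$). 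Lemma~\ref{full-limit} likewise requires $(1+\otp(I_0)) \cdot \lambda = \otp(I_0)$, i.e.\ $\lambda$-many intermediate indices between consecutive points of $I_0$; on the bare index set $\alpha$ you simply have no such room. Your final sentence gestures at Lemmas~\ref{full-constr-1} and~\ref{full-constr-2}, but those lemmas are precisely about \emph{thickening the index set}, which is incompatible with the inductive construction you have just described on $\alpha$ itself.

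The paper's proof solves exactly this problem by working on a thickened index set from the start. One first passes (via Fact~\ref{tower-ext-lem}) to a limit tower $\Tow$ indexed by $I$, pads it to a limit tower on $I \times \gamma$ with $\gamma \cdot \lambda = \gamma$, and then applies Lemma~\ref{full-constr-2} to obtain a single tower $\Tow^{\ast\ast}$ on $I \times \gamma \times \gamma'$ that is simultaneously reduced and $(I \times \gamma)$-full. Reducedness gives continuity of $\Tow^{\ast\ast}$ (Fact~\ref{reduced-continuous}), hence of its restriction $\Tow' := \Tow^{\ast\ast} \rest I$. Limitness of $\Tow'$ then falls out directly: for each $i \in I^-$, the interval $[i, i+_I 1)$ in the thick index set has order type divisible by $\gamma$, so $(I \times \gamma)$-fullness and Lemma~\ref{full-limit} make the model at $i+_I 1$ limit over the model at $i$. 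There is no limit-stage obstacle because continuity and limitness are obtained in a single shot rather than maintained through an induction.
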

\begin{proof}
  By Fact \ref{tower-ext-lem}, we can assume without loss of generality (taking an extension of $\Tow$ if necessary) that $\Tow$ is a limit tower. Say $\Tow$ is indexed by $I$. Let $\gamma < \lambda^+$ be nonzero such that $\gamma \cdot \lambda = \gamma$. Let $\Tow^\ast$ be any limit tower indexed by $I \times \gamma$ such that $\Tow^\ast \rest I = \Tow$ (it is easy to see that such towers exist). By Lemma \ref{full-constr-2} (where $I$, $\Tow$ there stands for $I \times \gamma$, $\Tow^\ast$ here), there is a reduced and $(I \times \gamma)$-full tower $\Tow^{\ast \ast}$ indexed by $I' := I \times \gamma \times \gamma'$ (for some $\gamma'$) such that $\Tow^\ast \tleq \Tow^{\ast \ast}$. By Fact \ref{reduced-continuous}, $\Tow^{\ast \ast}$ is continuous. Let $\Tow' := \Tow^{\ast \ast} \rest I$. Clearly, $\Tow'$ is also continuous. To see that $\Tow'$ is limit, let $i \in I^-$. Observe that $\Tow^{\ast \ast} \rest [i, i +_I 1]_{I'}$ is $(\{i\} \times \gamma)$-full, hence by Lemma \ref{full-limit}, $M_{i +_I 1}$ is limit over $M_i$, and so $\Tow'$ is indeed a limit tower.
\end{proof}

We can now prove a strengthening of Fact \ref{tower-ext-lem}, where the starting tower is no longer assumed to be continuous and limit.

\begin{lem}\label{tower-ext-step}
  Let $\Tow = \seq{M_i : i < \alpha} \smallfrown \seq{a_i : i + 1 < \alpha}$ be a tower and let $N_0$ be limit over $M_0$. Let $b \in N_0$. There exists a limit continuous tower $\Tow' = \seq{M_i' : i < \alpha} \smallfrown \seq{a_i : i + 1 < \alpha}$ and an isomorphism $f: N_0 \cong_{M_0} M_0'$ such that $\Tow \tleq \Tow'$ and $\gtp (f (b) / \bigcup_{i < \alpha} M_i; \bigcup_{i < \alpha} M_i') $ does not fork over $M_0$.
\end{lem}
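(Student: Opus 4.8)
The statement strengthens the second part of Fact~\ref{tower-ext-lem} in three ways: the starting tower $\Tow$ is now arbitrary (not assumed continuous and limit), the output is required to be continuous, and the bottom model is identified as an isomorphic copy of the prescribed $N_0$ carrying the marked element $b$. The plan is to first manufacture, by the existing machinery, a continuous limit extension of $\Tow$ whose bottom model contains a \emph{fresh} realization $b''$ of $q := \gtp(b / M_0; N_0)$ that is nonforking over $M_0$ across the whole union, and then to transport $N_0$ onto this bottom model by the uniqueness (and homogeneity) of limit models, arranging $b \mapsto b''$.

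Concretely, I would set $q := \gtp(b / M_0; N_0) \in \gS(M_0)$, and using the first part of Fact~\ref{tower-ext-lem} followed by Lemma~\ref{tower-ext-cont} extend $\Tow$ to a continuous limit tower $\Tow^0 = \seq{M_i^0 : i < \alpha} \smallfrown \seq{a_i : i + 1 < \alpha}$ with $\Tow \tleq \Tow^0$; note $M_0^0$ is limit over $M_0$. Let $q^0 \in \gS(M_0^0)$ be the nonforking extension of $q$. Now $\Tow^0$ satisfies the hypotheses of Fact~\ref{tower-ext-lem}(\ref{setup-2}), which applied to $\Tow^0$ and $q^0$ yields a limit tower $\Tow'' = \seq{M_i'' : i < \alpha}$ and $b'' \in M_0''$ with $\Tow^0 \tlt \Tow''$, $\gtp(b'' / M_0^0; M_0'') = q^0$, and $\gtp(b'' / \bigcup_{i < \alpha} M_i^0; \bigcup_{i < \alpha} M_i'')$ nonforking over $M_0^0$. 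Finally I would apply Lemma~\ref{tower-ext-cont} once more to $\Tow''$ to get a continuous limit tower $\Tow' = \seq{M_i' : i < \alpha}$ with $\Tow'' \tleq \Tow'$; since $b'' \in M_0'' \lea M_0'$ we still have $b'' \in M_0'$, and by transitivity of $\tleq$ we get $\Tow \tleq \Tow'$, with $\Tow'$ continuous and limit and $M_0'$ limit over $M_0$. Each of these steps preserves the index set $\alpha$, since the cited results restrict back to the original length.

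For the nonforking clause I would work at the level of nonsplitting rather than forking, so as to avoid requiring the unions $\bigcup_i M_i$ to be limit models (which they need not be): since $q^0$ is nonforking over $M_0$ and $\gtp(b'' / \bigcup_i M_i^0; \cdot)$ is nonforking over $M_0^0$, transitivity of nonsplitting gives that $\gtp(b'' / \bigcup_i M_i^0; \cdot)$ is nonforking over $M_0$, and as $\bigcup_i M_i \lea \bigcup_i M_i^0$, monotonicity (a restriction of a nonsplitting type is nonsplitting) yields that $\gtp(b'' / \bigcup_{i < \alpha} M_i; \bigcup_{i < \alpha} M_i')$ does not fork over $M_0$, exactly as required of $f(b)$.

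The main obstacle is the isomorphism $f : N_0 \cong_{M_0} M_0'$ with $f(b) = b''$. Both $N_0$ and $M_0'$ are limit over $M_0$, so they are isomorphic over $M_0$ by Fact~\ref{sym-uq-lim}, and $\gtp(b / M_0; N_0) = q = q^0 \rest M_0 = \gtp(b'' / M_0; M_0')$; what is needed beyond bare uniqueness is that this common Galois type be respected by the isomorphism, i.e.\ a homogeneity statement for limit models with a single marked point. I expect to obtain $f$ by running the back-and-forth argument underlying Fact~\ref{sym-uq-lim} while seeding it with the partial map $\id_{M_0} \cup \{b \mapsto b''\}$; this is legitimate precisely because $\gtp(b / M_0; N_0) = \gtp(b'' / M_0; M_0')$, and the universality of the successive steps in the witnessing limit chains lets the back-and-forth proceed. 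Once $f$ is in hand, $f(b) = b''$ gives $\gtp(f(b) / \bigcup_i M_i; \bigcup_i M_i') = \gtp(b'' / \bigcup_i M_i; \bigcup_i M_i')$, which is nonforking over $M_0$ by the previous paragraph, completing the proof. The seeded back-and-forth is the step I expect to require the most care.
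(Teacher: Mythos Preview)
Your proposal is correct and follows essentially the same route as the paper: extend to a continuous limit tower via Lemma~\ref{tower-ext-cont}, invoke Fact~\ref{tower-ext-lem}(\ref{setup-2}) with the nonforking extension of $\gtp(b/M_0;N_0)$, use transitivity and monotonicity for the nonforking clause, and finish with the homogeneity of limit models to produce $f$. The paper arranges the isomorphism as a composition $f = g \circ f_0$ (first mapping $N_0$ over $M_0^1$, then onto $M_0^2$) rather than a single seeded back-and-forth at the end, but each of those two pieces relies on exactly the same one-point homogeneity of limit models that you flag as the delicate step.
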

\begin{proof}
  First, let $\Tow^1 = \seq{M_i^1 : i < \alpha} \smallfrown \seq{a_i : i + 1 < \alpha}$ be a limit continuous tower extending $\Tow$, as given by Lemma \ref{tower-ext-cont}. Find $N_0'$ limit over $M_0^1$ and $f_0 : N_0 \cong_{M_0} N_0'$ such that $q := \gtp (f_0(b) / M_0^1; N_0')$ does not fork over $M_0$. By Fact \ref{tower-ext-lem}(\ref{setup-2}), there exists a limit continuous tower $\Tow^2 = \seq{M_i^2 : i < \alpha} \smallfrown \seq{a_i : i + 1 < \alpha}$ and a $b' \in M_0^2$ such that $\Tow^1 \tlt \Tow^2$, $b'$ realizes $q$, and $\gtp (b' / \bigcup_{i < \alpha} M_i^1; \bigcup_{i < \alpha} M_i^2)$ does not fork over $M_0^1$. Since $b'$ realizes $q$, transitivity and monotonicity imply that $\gtp (b' / \bigcup_{i < \alpha} M_i; \bigcup_{i < \alpha} M_i^2)$ does not fork over $M_0$. Since $M_0^2$ is limit over $M_0^1$, there exists $g: N_0' \cong_{M_0^1} M_0^2$ such that $g (f_0 (b)) = b'$. Let $\Tow' := \Tow^2$, $f := g f_0$.
\end{proof}

We have arrived to one of the key results of this section. Roughly, it says that any two towers can be amalgamated into a continuous rectangle of models. The conclusion is similar to \cite[3.1.10]{jrsh875} but, crucially, the hypotheses are weaker: the proof given here is very similar to that of Jarden and Shelah, but we do \emph{not} assume the continuity property of good frames. We only have the continuity for universal chains given to us by local character. Thus we have to use the (hard) fact that towers can be extended to continuous limit towers, as well as Lemma \ref{tower-ext-step}.

\begin{lem}[Amalgamation of towers]\label{tower-ap}
  Let $\Tow = \seq{N_j : j < \beta} \smallfrown \seq{b_j : j + 1 < \beta}$ and $\Tow' := \seq{M_i : i < \alpha} \smallfrown \seq{a_i : i + 1 < \alpha}$ be towers. Assume that $N_0 = M_0$. Then there exists $\seq{f_i : i < \alpha}$ and $\seq{M_{i, j} : i < \alpha, j < \beta}$ such that:

  \begin{enumerate}
  \item For all $i < \alpha$ and $j < \beta$, $M_{i, j}$ is a limit model.

  \item For all $j < \beta$, $M_{0, j} = N_j$.
  \item For each $j < \beta$, $\seq{M_{i, j} : i < \alpha}$ is increasing continuous. Moreover, $M_{i + 1, j}$ is limit over $M_{i, j}$ whenever $i + 1 < \alpha$.
  \item For each $i < \alpha$, $\seq{M_{i, j} : j < \beta}$ is increasing continuous.
  \item $\seq{f_i : i < \alpha}$ is increasing.
  \item For all $i < \alpha$, $f_i : M_i \cong_{M_{0, 0}} M_{i, 0}$.

  \item For all $i + 1 < \alpha$ and $j < \beta$, $\gtp (f_{i + 1} (a_i) / M_{i, j}; M_{i + 1, j})$ does not fork over $M_{i, 0}$.
  \item For all $j + 1 < \beta$ and $i < \alpha$, $\gtp (b_j / M_{i, j}; M_{i, j + 1})$ does not fork over $M_{0, j}$.
  \item\label{tower-ap-9} If $\Tow$ is reduced, then for any $i < \alpha$ and $j < \beta$, $M_{0, j} \cap M_{i, 0} = M_{0, 0}$
  \end{enumerate}
\end{lem}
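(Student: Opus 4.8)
The plan is to construct the grid by transfinite recursion on $i < \alpha$, building the columns $\seq{M_{i,j} : j < \beta}$ together with the maps $f_i$ one column at a time; equivalently this is an induction on the length $\alpha$ of $\Tow'$. Before starting, note that condition (4) forces column $0$ (which is $\Tow$) to be continuous and, via condition (3) together with the isomorphisms $f_i$, that $M_{i+1}$ must be limit over $M_i$, i.e.\ $\Tow'$ must be a limit tower. So I would first replace $\Tow$ and $\Tow'$ by $\tleq$-extensions that are continuous limit towers sharing the common bottom model $N_0 = M_0$ (using Lemma \ref{tower-ext-cont}), and rename accordingly. The base column $i = 0$ is then $\Tow$ itself with $f_0 = \id$; at a limit stage $i = \delta$ I set $M_{\delta, j} := \bigcup_{i < \delta} M_{i,j}$ and $f_\delta := \bigcup_{i < \delta} f_i$, so that each $M_{\delta,j}$ is a limit model (the column below it is built with cofinally many universal extensions) and the column $\seq{M_{i,j} : i \le \delta}$ is continuous by construction.

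The heart of the argument is the successor step, which I expect to be a clean application of Lemma \ref{tower-ext-step}. Suppose column $i$ has been built as a continuous tower $\seq{M_{i,j} : j < \beta} \smallfrown \seq{b_j}$ with each $\gtp(b_j / M_{i,j}; M_{i,j+1})$ a nonforking extension over $N_j = M_{0,j}$ (condition (8)), and $f_i : M_i \cong M_{i,0}$ is given. Since $\Tow'$ is limit, $M_{i+1}$ is limit over $M_i$, so by uniqueness of limit models (Fact \ref{sym-uq-lim}) I can extend $f_i$ to an isomorphism $\bar{f} : M_{i+1} \cong N_0$ onto a limit model $N_0 \gea M_{i,0}$, and set $b := \bar{f}(a_i) \in N_0$. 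Applying Lemma \ref{tower-ext-step} to the tower $\seq{M_{i,j} : j < \beta}$, the limit model $N_0$, and the element $b$, I obtain the next column $\seq{M_{i+1,j} : j < \beta}$, which is a continuous limit tower with $\seq{M_{i,j}} \tlt \seq{M_{i+1,j}}$, together with an isomorphism $f : N_0 \cong_{M_{i,0}} M_{i+1,0}$ such that $\gtp(f(b) / \bigcup_j M_{i,j}; \bigcup_j M_{i+1,j})$ does not fork over $M_{i,0}$. Setting $f_{i+1} := f \circ \bar{f}$ yields an isomorphism $M_{i+1} \cong M_{i+1,0}$ extending $f_i$ with $f_{i+1}(a_i) = f(b)$.

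It then remains to read off the listed conditions. Properties (1), (2), (5), (6) and the continuity clauses of (3), (4) are immediate from the construction and the output of Lemma \ref{tower-ext-step}, while "$M_{i+1,j}$ limit over $M_{i,j}$" is exactly clause (2) of $\seq{M_{i,j}} \tlt \seq{M_{i+1,j}}$. Condition (7) follows from the nonforking conclusion of Lemma \ref{tower-ext-step} by monotonicity (Fact \ref{forking-fact}), restricting the base from $\bigcup_j M_{i,j}$ down to $M_{i,j}$. Condition (8) for column $i+1$ comes from combining the inductive hypothesis ($\gtp(b_j / M_{i,j})$ does not fork over $N_j$) with the clause of $\tlt$ giving $\gtp(b_j / M_{i+1,j}; M_{i+1,j+1})$ nonforking over $M_{i,j}$, via transitivity of forking (Fact \ref{forking-fact}); at limit columns (7) and (8) pass to the union by the continuity of forking (Fact \ref{forking-fact}). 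For condition (9), when $\Tow$ is reduced I would observe that by conditions (3) and (8) the sequence $\seq{M_{i,j} : j < \beta} \smallfrown \seq{b_j}$ is itself a $\tleq$-extension of $\Tow$; applying reducedness with first index $0$ gives $M_{i,0} \cap N_j = N_0 = M_{0,0}$ for all $i, j$, which is precisely (9).

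The main obstacle I anticipate is not any single step but the simultaneous bookkeeping of the two independence relations over the two-dimensional grid: the elements $a_i$ of $\Tow'$ must remain nonforking over $M_{i,0}$ while the elements $b_j$ of $\Tow$ remain nonforking over $N_j$, and both families of nonforking statements must survive the unions taken at limit stages in each direction. Isolating the successor step as a single instance of the one-variable extension lemma (Lemma \ref{tower-ext-step}) is what makes this tractable, and the transitivity and continuity properties of forking (Fact \ref{forking-fact}) are exactly the tools that glue the local nonforking statements into the global ones. A secondary subtlety worth flagging is the preliminary reduction to continuous limit towers, which is genuinely necessary because conditions (3) and (4) cannot hold for arbitrary discontinuous or non-limit input towers.
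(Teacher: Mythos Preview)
Your approach is essentially the paper's: reduce to the case where $\Tow'$ is limit and continuous, then build the grid column by column, using Lemma~\ref{tower-ext-step} at successor steps and unions at limits, and read off condition~(\ref{tower-ap-9}) from the fact that each row is a $\tleq$-extension of $\Tow$.

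The one place where you diverge is the preliminary reduction. The paper replaces only $\Tow'$, not $\Tow$: it passes to a continuous limit $\tleq$-extension $\Tow^\ast$ of $\Tow'$, but then sets $M_0'' := M_0$ and $M_i'' := M_i^\ast$ for $i>0$ so that the common base $N_0 = M_0$ is preserved; after running the construction for $\Tow''$ it recovers the original $\Tow'$ by restricting the maps, $f_i := g_i \rest M_i$, and resetting $M_{i,0} := f_i[M_i]$. Your proposal to also replace $\Tow$ by a $\tleq$-extension is problematic for two reasons: first, Lemma~\ref{tower-ext-cont} will in general move the bottom model, so the two extended towers need not share a common base; second, condition~(2) pins $M_{0,j} = N_j$ to the \emph{original} $N_j$'s, so you cannot simply ``rename''. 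You do not actually need this step: Lemma~\ref{tower-ext-step} accepts an arbitrary tower as input, so the induction runs with $\Tow$ untouched. Your observation that condition~(4) at $i=0$ forces $\Tow$ to be continuous is correct, but that is a constraint on the lemma's applicability rather than something the proof should arrange.
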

\begin{proof}
  First, we claim that we can assume without loss of generality that $\Tow'$ is limit and continuous. Indeed, suppose that we could prove the lemma for limit and continuous $\Tow'$. By Lemma \ref{tower-ext-cont} there exists $\Tow^\ast = \seq{M_i^\ast : i < \alpha} \smallfrown \seq{a_i : i + 1 < \alpha}$ $\tleq$-extending $\Tow'$ which is limit and continuous. Let $\Tow'' := \seq{M_i' : i < \alpha} \smallfrown \seq{a_i : i + 1 < \alpha}$ be defined by $M_0' = M_0$, $M_i' := M_i^\ast$ for $i > 0$. Then $\Tow''$ is also a limit continuous tower. By what we are assuming, there exists $\seq{M_{i, j}' : i < \alpha, j < \beta}$, $\seq{g_i : i < \alpha}$ satisfying the conclusion of the lemma, where $\Tow'$, $\seq{M_{i, j} : i < \alpha, j < \beta}$, $\seq{f_i : i < \beta}$ there is $\Tow''$, $\seq{M_{i, j}' : i < \alpha, j < \beta}$,$\seq{g_i : i < \alpha}$ here. Now define $f_i := g_i \rest M_i$ and let $M_{i, 0} := f_i[M_i]$, $M_{i, j} := M_{i, j}'$ for $j > 0$. This works. The main point is that, for $i + 1 < \alpha$,  $j \in (0, \beta)$, $\gtp (f_{i + 1} (a_i) / M_{i, j}; M_{i + 1, j})$ does not fork over $M_{i, 0}'$ (by construction), and since $\Tow' \tleq \Tow^\ast$, using invariance and the definition of $\tleq$, $\gtp (f_{i + 1} (a_i) / M_{i, 0}'; M_{i + 1, 0}')$ does not fork over $M_{i, 0}$. Thus by transitivity $\gtp (f_{i + 1} (a_i) / M_{i, j}; M_{i + 1, j})$ does not fork over $M_{i, 0}$.

  We have shown that we can assume without loss of generality that $\Tow'$ is limit and continuous. So assume that it is. We build a $\tleq$-increasing continuous chain of towers $\seq{\Tow^i : i < \alpha}$ and an increasing continuous chain $\seq{f_i : i < \alpha}$ such that, writing $\Tow^i = \seq{M_{i, j} : j < \beta} \smallfrown \seq{b_j : j + 1 < \beta}$, we have:

  \begin{enumerate}
  \item $\Tow^0 = \Tow$.
  \item For all $i < \alpha$, $f_i: N_i \cong_{N_0} M_{i, 0}$.
  \item For all $i + 1 < \alpha$, $\gtp (f_{i + 1} (a_i) / \bigcup_{j < \beta} M_{i, j}; M_{i + 1, j})$ does not fork over $M_{i, 0}$.
  \end{enumerate}

  This is possible by Lemma \ref{tower-ext-step} and some renaming. This is enough: check the requirements and the definition of $\tleq$.
\end{proof}

As an application of Lemma \ref{tower-ap}, we consider a key problem from \cite{tame-frames-revisited-jsl}: whether independent sequences have the symmetry property (or equivalently \cite[5.7]{tame-frames-revisited-jsl}, whether independence of a sequence depends on the order in which it is enumerated). This was shown by Shelah \cite[III.\S5]{shelahaecbook} assuming additional properties involving uniqueness triples. Here we answer positively assuming categoricity in $\lambda$. Categoricity is a much weaker hypothesis in practice, since usually we can restrict to a superlimit. Moreover categoricity is only needed here because Fact \ref{forking-fact} gives us a nonforking notion only over limit models. If we have a nonforking notions over all members of $\K_\lambda$, we can remove categoricity from the hypotheses.

Since the result will not be needed for the rest of the present paper so we only sketch the proof. The idea is similar to that in \cite[Theorem 3]{vandieren-symmetry-apal}, but we give a self-contained abstract proof.

\begin{thm}\label{indep-sym}
  Let $\K$ be a $\lambda$-superstable AEC with $\lambda$-symmetry. If $\K$ is categorical in $\lambda$, then the frame of independent sequences of length less than $\lambda^+$ defined in \cite[4.3]{tame-frames-revisited-jsl} has the symmetry property.
\end{thm}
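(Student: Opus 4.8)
The plan is to deduce the symmetry property from the amalgamation of towers (Lemma \ref{tower-ap}), whose conclusion is manifestly symmetric in its two families of models. By \cite[5.7]{tame-frames-revisited-jsl} the symmetry property is equivalent to the independence of a sequence being invariant under reordering, so it suffices to prove the following block-swap statement: if $\ba^\frown \bb$ is an independent sequence over a model $M$, then so is $\bb^\frown \ba$. (An arbitrary reordering is then obtained by iterating such swaps after localizing over the appropriate initial model, using monotonicity of forking.) First I would unfold the definition of \cite[4.3]{tame-frames-revisited-jsl}: an independent sequence over $M$ is, up to inessential data, a tower whose displayed types all do not fork over the first model. Since $\K$ is categorical in $\lambda$, every $M' \in \K_\lambda$ is saturated, hence a limit model, so the towers witnessing independence consist of limit models and the entire apparatus of this section applies.

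Given $\ba^\frown \bb$ independent over $M$, I would encode $\bb$ (over $M$) as a tower $\Tow$ and $\ba$ (over $M$) as a tower $\Tow'$, arranging by Fact \ref{reduced-density} and Lemma \ref{tower-ext-cont} that $\Tow$ is reduced, limit and continuous. Applying Lemma \ref{tower-ap} to $\Tow$ and $\Tow'$ produces a continuous rectangle $\seq{M_{i,j} : i < \alpha,\ j < \beta}$ of limit models, together with isomorphisms $f_i : M_i \cong_M M_{i,0}$, such that each horizontal type $\gtp(f_{i+1}(a_i)/M_{i,j}; M_{i+1,j})$ does not fork over $M_{i,0}$ and each vertical type $\gtp(b_j / M_{i,j}; M_{i,j+1})$ does not fork over $M_{0,j}$, while reducedness gives the disjointness $M_{0,j} \cap M_{i,0} = M$ (conclusions (7), (8) and (\ref{tower-ap-9}) of Lemma \ref{tower-ap}). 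Because nonforking has the uniqueness property (Fact \ref{forking-fact}) and the conjugation property (Fact \ref{conj-prop}) holds, this rectangle is, up to an isomorphism fixing $M$, the unique free amalgamation of the two towers; in particular it realizes the given configuration $\ba^\frown \bb$ along its rows, and reading the same rectangle column-by-column then displays $\bb^\frown \ba$.

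The step I expect to be the main obstacle is converting the \emph{local} nonforking data --- the horizontal and vertical conditions are stated over the moving models $M_{i,0}$ and $M_{0,j}$ --- into the \emph{global} nonforking over the single base $M$ demanded by the definition of an independent sequence. For this I would run a double induction up the rows and columns of the rectangle: at each cell I combine one horizontal and one vertical nonforking step by transitivity, using monotonicity to restrict and the disjointness conclusion (\ref{tower-ap-9}) to keep the types anchored at $M$. The base case $\alpha = \beta = 1$ is exactly nonforking amalgamation (Fact \ref{nf-amalgam}), i.e.\ $\lambda$-symmetry, and the general case is this induction. The only genuinely delicate point is ensuring that the bookkeeping produces types matching the precise formulation of independence in \cite[4.3]{tame-frames-revisited-jsl}; once that is checked, the block-swap statement --- and hence the symmetry property of the frame of independent sequences --- follows. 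As the result is not used elsewhere in the paper, I would present only this sketch.
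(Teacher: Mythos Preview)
Your overall strategy (deduce symmetry from Lemma \ref{tower-ap}) matches the paper's, but you have misidentified where the real work lies, and the paper's execution is both simpler and different in a way worth noting.

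First, the paper does not reduce to a general block swap. Citing the proof of \cite[5.7]{tame-frames-revisited-jsl}, it reduces to the case of a \emph{single element} against a sequence: it suffices to show that $b\ba$ independent over $M$ implies $\ba b$ independent over $M$. This makes one of the two towers in the amalgamation have length two, so the output of Lemma \ref{tower-ap} is just a single $\tleq$-extension of the $\ba$-tower together with an element $b'\equiv_M b$ sitting at the bottom. No rectangle bookkeeping is needed.

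Second, and more importantly, the step you describe as the main obstacle (the ``double induction'' converting local nonforking over $M_{i,0}$, $M_{0,j}$ into nonforking over $M$) is in fact a one-line application of transitivity, since the towers you started from were already independent over $M$. The genuine obstacle is the step you handwave: your claim that the rectangle ``realizes the given configuration $\ba^\frown\bb$''. Lemma \ref{tower-ap} only produces $f(\ba)$ and $\bb$, not $\ba$ and $\bb$, and it is not automatic that $f(\ba)^\frown\bb \equiv_M \ba^\frown\bb$. The paper handles exactly this point by an explicit forking-calculus argument inside a model-homogeneous $\mathfrak{C}\in\K_{\lambda^+}$: from the rectangle one gets $b'$ with $b'\equiv_M b$, $\ba\nf_M b'$, and $b'\nf_M\ba$; then one picks $\ba'$ with $\ba'b'\equiv_M \ba b$, uses invariance to get $\ba'\nf_M b'$, and applies the \emph{uniqueness} property of the independent-sequences frame (from \cite[4.12]{tame-frames-revisited-jsl}) to conclude $\ba b'\equiv_M \ba' b' \equiv_M \ba b$, whence by invariance $b\nf_M\ba$. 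This three-line calculation is the heart of the proof; your sketch buries it under ``unique free amalgamation'' without saying which uniqueness statement is being invoked or why it applies to the original (not just the constructed) configuration.
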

\begin{proof}[Proof sketch]
  We assume some knowledge of \cite{tame-frames-revisited-jsl}, in particular we will use without further mention \cite[4.12]{tame-frames-revisited-jsl}, which says that most of the basic properties of forking generalize to independent sequences. For simplicity, work inside a model-homogeneous model $\sea \in \K_{\lambda^+}$. For $\bb_1, \bb_2 \in \fct{<\lambda^{+}}{\sea}$, we write $\bb_1 \equiv_M \bb_2$ to mean that the two sequences have the same type over $M$ (again inside $\sea$). Equivalently, there is an automorphism of $\sea$ fixing $M$ sending $\bb_1$ to $\bb_2$. We write $\bb_1 \nf_M \bb_2$ to mean that the sequence $\bb_2 \bb_1$ is independent over $M$ (inside $\sea$).

  By the proof of \cite[5.7]{tame-frames-revisited-jsl}, it is enough to see that whenever $b \ba$ is independent in $(M_0, M, N)$, we also have that $\ba b$ is independent in $(M_0, M, N)$. By transitivity, it is in fact enough to see that whenever $\ba \nf_M b$, we also have $b \nf_M \ba$. So assume $\ba \nf_M b$.

  In the present setup, an independent sequence is nothing more than a tower, so rephrasing Lemma \ref{tower-ap} in terms of independent sequences, we get that we can find $b'$ such that $b' \equiv_M b$, $\ba \nf_M b'$, and $b' \nf_M \ba$. It is now a simple matter of forking calculus to get the result: pick $\ba'$ such that $\ba' b' \equiv_M \ba b$. By invariance, $\ba' \nf_M b'$. By uniqueness, $\ba b' \equiv_M \ba' b'$. Thus $\ba b' \equiv_M \ba b$, and so by invariance, $b \nf_M \ba$, as desired.
\end{proof}

We end this section by deducing some results around disjoint amalgamation. The following appears in \cite[3.2.3(3)]{jrsh875} and the proof carries through to our setup.

\begin{fact}\label{ext-decomp}
  Assume that $\K$ is categorical in $\lambda$. Let $\R$ be a class of triples $(a, M, N)$ with $M \lea N$ limit  and $a \in N \backslash M$ such that:

  \begin{enumerate}
  \item $\R$ is closed under isomorphisms: $(a, M, N) \in \R$ and $f: N \cong N'$ implies $(f (a), f[M], N') \in \R$.
  \item $\R$ has the existence property: for any $M$ and any $p \in \gSna (M)$, there exists $(a, M, N) \in \R$ such that $p = \gtp (a / M; N)$.
  \end{enumerate}

  Let $(a, M, N) \in \R$ and let $N'$ be limit such that $N \lea N'$. Then there exists a limit ordinal $\delta < \lambda^+$ and a continuous tower $\Tow = \seq{M_i : i \le \delta} \smallfrown \seq{a_i : i < \delta}$ that $a_0 = a$, $M_0 = M$, $N' \lea M_\delta$, and $(a_i, M_i, M_{i + 1}) \in \R$ for all $i < \delta$.
\end{fact}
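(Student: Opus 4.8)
The plan is to build the tower ``by hand,'' using the existence property of $\R$ to realize types one at a time, and then to exploit categoricity to swallow $N'$ up to a renaming that fixes the prescribed initial data $(a,M,N)$. First I would set $M_0 := M$, $a_0 := a$, and $M_1 := N$, so that the initial triple is literally $(a,M,N)\in\R$; here $M_1 = N$ is limit and $a_0 \in M_1\setminus M_0$, as required. The point of taking $M_1 = N$ (rather than some larger limit extension of $N$) is that I will want the renaming at the end to fix all of $N$, and in particular to fix $a$.

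Next I would build, by transfinite recursion of length $\delta := \lambda\cdot\lambda$ (a limit ordinal $<\lambda^+$), an increasing continuous chain $\seq{M_i : i \le \delta}$ in $\K_\lambda$ together with $\seq{a_i : i < \delta}$, taking unions at limit stages. Writing $i_\xi := 1 + \lambda\cdot\xi$ for $\xi \le \lambda$ (so $i_0 = 1$ and $i_\lambda = \delta$), a bookkeeping function ensures that inside each block $[i_\xi, i_{\xi+1})$ every nonalgebraic type over $M_{i_\xi}$ is realized by some $a_k$: at a successor stage $i$ lying in the $\xi$-th block I let $q\in\gSna(M_i)$ be the nonforking extension to $M_i$ of the type $p\in\gSna(M_{i_\xi})$ currently demanded (nonalgebraic by disjointness, Fact \ref{forking-fact}(8)), and by the existence property of $\R$ I pick $(a_i, M_i, M_{i+1})\in\R$ realizing $q$. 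There are at most $\lambda$ types over each $M_{i_\xi}$ by stability, so a block of length $\lambda$ suffices, and then $M_{i_{\xi+1}}$ realizes all types over $M_{i_\xi}$. Applying Fact \ref{univ-constr-lem} to the continuous chain $\seq{M_{i_\xi} : \xi\le\lambda}$ yields that $M_\delta$ is universal over $M_1 = N$. This is a genuine continuous tower with all triples in $\R$: continuity holds by construction, the successor models come from $\R$-triples and are limit, and — crucially — since limit models exist (by $\lambda$-superstability) and $\K$ is categorical in $\lambda$, \emph{every} member of $\K_\lambda$ is limit (being isomorphic to the unique limit model, cf.\ Fact \ref{sym-uq-lim}), so in particular all the union models are limit. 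Thus the usual worry about intermediate union models failing to be limit simply evaporates here.

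It remains to arrange $N' \lea M_\delta$ while keeping $M_0 = M$ and $a_0 = a$. Since $M_\delta$ is universal over $N$ and $N \lea N'$ with $\|N'\| = \lambda$, there is $g : N' \xrightarrow[N]{} M_\delta$; set $N'' := g[N'] \lea M_\delta$, so $g : N' \cong_N N''$. I then extend $g^{-1} : N'' \cong_N N'$ to an isomorphism $\Phi : M_\delta \cong P$ onto some $P\in\K$ with $N' \lea P$ (extending the structure on $M_\delta\setminus N''$ to fresh points), noting that $\Phi$ fixes $N$ pointwise because $g^{-1}$ does. Applying $\Phi$ to the whole tower and invoking that $\R$ is closed under isomorphism produces a continuous tower $\seq{\Phi[M_i] : i\le\delta}\smallfrown\seq{\Phi(a_i) : i<\delta}$ whose triples all remain in $\R$; since $\Phi$ fixes $N \supseteq M\cup\{a\}$ we still have $M_0 = M$ and $a_0 = a$, while $\Phi[M_\delta] = P \gea N'$. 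Relabelling gives the desired conclusion.

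The main obstacle is exactly this final renaming. The theorem simultaneously demands $N' \lea M_\delta$ and the exact initial data $a_0 = a$, $M_0 = M$, and a carelessly built tower that is only universal over $M_0 = M$ would permit renaming over $M$ but would in general move $a$. The fix is the choice to make the tower universal over $N = M_1$ and to rename by an isomorphism fixing all of $N$; the isomorphism-invariance of $\R$ (hypothesis (1) of the statement) is precisely what guarantees that the renamed triples remain in $\R$, and the existence property (hypothesis (2)) is what let me realize the types needed for universality through $\R$-triples in the first place.
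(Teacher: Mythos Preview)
Your argument is correct. The paper does not give its own proof of this fact; it simply cites \cite[3.2.3(3)]{jrsh875} and remarks that the argument carries through, so there is no detailed comparison to make, but your construction is exactly the standard one: build a continuous $\R$-tower over $N$ that is universal over $N$ (using stability, the existence property of $\R$, and bookkeeping), then rename over $N$ to absorb $N'$. Two small remarks on the writeup: the phrase ``at a successor stage $i$'' is slightly misleading, since the step of choosing $a_i$ and $M_{i+1}$ via the existence property must also be carried out when $i$ is a limit (after $M_i$ has been set to the union); and the appeal to categoricity to conclude that every model in $\K_\lambda$ is limit is indeed the key point that makes the continuous tower well-defined here, exactly as you flag.
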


Reduced triples have the existence property in the sense just given (we are still assuming $\lambda$-superstability and $\lambda$-symmetry, see Hypothesis \ref{hyp-5}):

\begin{fact}[Existence property for reduced triples]\label{ext-reduced}
  Assume that $\K$ is categorical in $\lambda$. For any $M \in \K_\lambda$ and any $p \in \gSna (M)$, there exists a reduced triple $(a, M, N)$ such that $p = \gtp (a / M; N)$.
\end{fact}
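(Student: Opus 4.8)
The plan is to realize $p$ by a concrete triple, pass to a reduced extension via Fact~\ref{reduced-density}, and then use the conjugation property (Fact~\ref{conj-prop}) to move the base of that reduced triple back down to $M$ without disturbing the type.

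First, I would observe that $M$ is itself a limit model. Indeed, $\lambda$-superstability guarantees that $\K_\lambda$ contains limit models, and categoricity in $\lambda$ forces every member of $\K_\lambda$ to be isomorphic to one; in particular every triple base is automatically limit, so the statement is sensible. Using that $M$ is limit, pick $N$ limit over $M$. Then $N$ is universal over $M$, and since $p$ is nonalgebraic it is realized by some $a \in N \setminus M$. Thus $(M, N) \smallfrown (a)$ is a (length-two) tower, i.e.\ $(a, M, N)$ is a triple with $\gtp(a/M; N) = p$.

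Next, apply density of reduced towers (Fact~\ref{reduced-density}) to $(M, N) \smallfrown (a)$, obtaining a reduced tower $(M_0', M_1') \smallfrown (a)$ with $(M, N) \smallfrown (a) \tleq (M_0', M_1') \smallfrown (a)$. If this extension is an equality we are already done, so assume it is proper: then $M_0'$ is limit over $M$, $N \lea M_1'$, and $q := \gtp(a/M_0'; M_1')$ does not fork over $M$. By monotonicity of Galois types, $q \rest M = \gtp(a/M; M_1') = \gtp(a/M; N) = p$, so $q$ is a nonforking extension of $p$.

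The only real obstacle is that Fact~\ref{reduced-density} delivers a reduced triple $(a, M_0', M_1')$ based at the strictly larger model $M_0'$ rather than at $M$. To repair this base-shift I would invoke the conjugation property (Fact~\ref{conj-prop}) applied to $M \lea M_0'$ and $q \in \gS(M_0')$: as $q$ does not fork over $M$, there is an isomorphism $f : M_0' \cong M$ with $f(q) = q \rest M = p$. Extend $f$ to an isomorphism $\hat f : M_1' \cong N''$ onto a fresh model $N''$, obtained by transporting the structure of $M_1'$ along any bijection extending $f$; then $M = f[M_0'] = \hat f[M_0'] \lea N''$. Put $a'' := \hat f(a)$. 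Since reducedness of a tower is preserved under isomorphism (both $\tleq$ and the intersection condition defining it are phrased through $\K$-embeddings), the triple $(a'', M, N'')$ is reduced, and by invariance $\gtp(a''/M; N'') = \hat f(q) = f(q) = p$. Renaming $N''$ and $a''$ yields the required reduced triple over $M$ realizing $p$.
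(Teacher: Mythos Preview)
Your proof is correct and follows essentially the same approach as the paper: realize $p$, apply density of reduced towers (Fact~\ref{reduced-density}) to obtain a reduced triple over a larger base, then use the conjugation property (Fact~\ref{conj-prop}) to pull the base back to $M$ via an isomorphism that preserves the type. Your version is slightly more explicit (e.g., noting that categoricity makes every model in $\K_\lambda$ limit, and handling the trivial equality case), but the argument is the same.
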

\begin{proof}
  Let $p \in \gS (M)$ and write $p = \gtp (b / M; N_0)$. By Fact \ref{reduced-density}, there exists limit $M', N'$ such that $M \lea M'$, $N_0 \lea N'$, $q := \gtp (b / M'; N')$ does not fork over $M$, and $(b, M', N')$ is reduced. By the conjugation property, there exists $f: M' \cong M$ such that $f (q) = p$. Extend $f$ to some $g: N' \cong N$ and let $a := g (b)$. Then $(a, M, N)$ is reduced and $\gtp (a / M; N) = f(q) = p$, as desired.
\end{proof}

We can now improve Fact \ref{nf-amalgam} to make also the amalgams of $M_1$ and $M_2$ disjoint over $M_0$. Note that this gives in particular disjoint amalgamation.

\begin{lem}[Disjoint nonforking amalgamation]\label{d-nf-amalgam}
  Assume that $\K$ is categorical in $\lambda$. Let $M_0 \lea M_\ell$, $\ell = 1,2$, be in $\K_\lambda$. Let $a_\ell \in M_\ell$. There exists $f_1, f_2, M_3$ such that $M_3 \in \K_\lambda$, and $f_\ell : M_\ell \xrightarrow[M_0]{} M_3$ is such that $\gtp (f_\ell (a_\ell) / f_{3 - \ell} (M_{3 - \ell}); M_3)$ does not fork over $M_0$ for $\ell = 1,2$. Moreover, $f_1[M_1] \cap f_2[M_2] = M_0$.
\end{lem}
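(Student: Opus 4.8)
The plan is to present (extensions of) $M_1$ and $M_2$ as the tops of two towers based at $M_0$, with $a_1$ and $a_2$ occurring as the \emph{first} designated elements, and then feed these towers into the amalgamation of towers (Lemma \ref{tower-ap}). Disjointness will then come for free from its clause (\ref{tower-ap-9}), while the two nonforking requirements will be read off from conclusions (7) and (8) of that lemma \emph{at the top index} of each tower, so that no extra local character argument is needed. Two harmless reductions first: since $\K$ is $\lambda$-superstable and categorical in $\lambda$, every model of $\K_\lambda$ is a limit model (the unique model of size $\lambda$ is \emph{the} limit model), so forking over $M_0, M_1, M_2$ is meaningful; and we may assume $a_1 \in M_1 \setminus M_0$ and $a_2 \in M_2 \setminus M_0$, since if, say, $a_1 \in M_0$ then $\gtp(f_1(a_1)/f_2[M_2]; M_3)$ is algebraic and its nonforking is trivial, so we may drive the construction below with any element of $M_1 \setminus M_0$ realizing a nonalgebraic type in place of $a_1$.

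Now build the towers. Put $p_1 := \gtp(a_1/M_0; M_1) \in \gSna(M_0)$. By Fact \ref{ext-reduced} there is a reduced triple $(c_\ast, M_0, N_\ast)$ with $\gtp(c_\ast/M_0; N_\ast) = p_1$; using amalgamation in $\K_\lambda$ and the equality of these Galois types, we obtain a limit extension $M_1^+ \gea M_1$ and a $\K$-embedding $h : N_\ast \to M_1^+$ fixing $M_0$ with $h(c_\ast) = a_1$, so that $(a_1, M_0, h[N_\ast])$ is a reduced triple with $h[N_\ast] \lea M_1^+$. Applying the decomposition Fact \ref{ext-decomp}, with $\R$ the isomorphism-closed and (by Fact \ref{ext-reduced}) existence-satisfying class of reduced triples, to $(a_1, M_0, h[N_\ast])$ and $M_1^+$ yields a continuous tower $\Tow = \seq{N_j : j \le \delta_1} \smallfrown \seq{b_j : j < \delta_1}$ with $N_0 = M_0$, $b_0 = a_1$, $M_1^+ \lea N_{\delta_1}$, every length-two restriction a reduced triple; hence $\Tow$ is reduced by Lemma \ref{red-tower-triple}. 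Symmetrically build a continuous tower $\Tow' = \seq{P_i : i \le \delta_2} \smallfrown \seq{c_i : i < \delta_2}$ with $P_0 = M_0$, $c_0 = a_2$, and $M_2^+ \lea P_{\delta_2}$ for some limit $M_2^+ \gea M_2$ (here $\Tow'$ need not be reduced).

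Apply Lemma \ref{tower-ap} to $\Tow$ (first, reduced) and $\Tow'$, indexed by $\delta_1 + 1$ and $\delta_2 + 1$; note $N_0 = M_0 = P_0$. This gives a grid $\seq{M_{i,j} : i \le \delta_2,\ j \le \delta_1}$ of limit models and increasing maps $\seq{g_i : i \le \delta_2}$ with $M_{0,j} = N_j$, $g_i : P_i \cong_{M_0} M_{i,0}$, together with all the coherence and nonforking clauses. Set $M_3 := M_{\delta_2, \delta_1} \in \K_\lambda$ (the top corner, $\lea$-above every $M_{i,j}$). Let $f_1 : M_1 \to M_3$ be the inclusion, valid since $M_1 \lea M_1^+ \lea N_{\delta_1} = M_{0,\delta_1} \lea M_3$, and let $f_2 := \left(\bigcup_{i \le \delta_2} g_i\right) \rest M_2 : M_2 \to M_{\delta_2,0} \lea M_3$; both fix $M_0$. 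Because $\Tow$ is reduced, clause (\ref{tower-ap-9}) gives $M_{0,j} \cap M_{i,0} = M_0$ for all $i, j$; since $f_1[M_1] \lea M_{0,\delta_1} = \bigcup_j M_{0,j}$ and $f_2[M_2] \lea M_{\delta_2,0} = \bigcup_i M_{i,0}$, any common element lies in some $M_{0,j} \cap M_{i,0} = M_0$, whence $f_1[M_1] \cap f_2[M_2] = M_0$.

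It remains to verify the nonforking. Conclusion (8) of Lemma \ref{tower-ap} at $j = 0$, $i = \delta_2$ says $\gtp(b_0/M_{\delta_2,0}; M_{\delta_2,1})$ does not fork over $M_{0,0} = M_0$; as $b_0 = a_1 = f_1(a_1)$ and $f_2[M_2] \lea M_{\delta_2,0}$, monotonicity of forking (Fact \ref{forking-fact}) gives that $\gtp(f_1(a_1)/f_2[M_2]; M_3)$ does not fork over $M_0$. Symmetrically, conclusion (7) at $i = 0$, $j = \delta_1$ says $\gtp(g_1(c_0)/M_{0,\delta_1}; M_{1,\delta_1})$ does not fork over $M_0$; since $g_1(c_0) = f_2(a_2)$ and $f_1[M_1] = M_1 \lea N_{\delta_1} = M_{0,\delta_1}$, monotonicity gives that $\gtp(f_2(a_2)/f_1[M_1]; M_3)$ does not fork over $M_0$, completing the argument. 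I expect the main obstacle to be the first step: realizing $\gtp(a_\ell/M_0; M_\ell)$ by a reduced triple sitting inside an extension of $M_\ell$ so that $a_\ell$ becomes the \emph{first} designated element of the decomposition, together with the bookkeeping that matches the two single-element nonforking requirements to conclusions (7) and (8) read at the top index (rather than forcing an additional continuity argument along the chains), with reducedness of the first tower being exactly what simultaneously delivers the disjointness.
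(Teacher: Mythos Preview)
Your proof is correct and follows essentially the same approach as the paper: decompose (extensions of) $M_1$ and $M_2$ into reduced towers based at $M_0$ with $a_1, a_2$ as first designated elements via Facts \ref{ext-reduced} and \ref{ext-decomp}, feed these into Lemma \ref{tower-ap}, and read off disjointness from clause (\ref{tower-ap-9}) and the two nonforking conditions from clauses (7) and (8). Your write-up is in fact more explicit than the paper's in handling the edge case $a_\ell \in M_0$, in justifying why the reduced triple for $a_1$ can be taken inside an extension of $M_1$, and in observing that reducedness of the second tower $\Tow'$ is not actually needed.
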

\begin{proof}
  Extending $M_1$ if necessary, we may assume without loss of generality that there exists $M_1' \lea M_1$ containing $a_1$ such that $(a_1, M_0, M_1')$ is reduced. By Facts \ref{ext-decomp} and \ref{ext-reduced}, there exists a continuous tower $\Tow = \seq{N_j : j \le \delta} \smallfrown \seq{b_j : j < \delta}$ such that $b_0 = a_1$, $M_1 \lea N_\delta$, and $(b_j, N_j, N_{j + 1})$ is reduced for all $j < \delta$. This implies in particular that $\Tow$ is reduced (Lemma \ref{red-tower-triple}). Similarly (and after perhaps growing $M_2$), there exists a continuous tower $\Tow' = \seq{M_i^\ast : i \le \delta^\ast} \smallfrown \seq{a_i^\ast : i < \delta^\ast}$ such that $a_2 = a_0^\ast$, $M_2 \lea M_{\delta^\ast}^\ast$, and $(a_i^\ast, M_i^\ast, M_{i  +1}^\ast)$ is reduced for all $i < \delta^\ast$. Let $\alpha := \delta^\ast + 1$, $\beta := \delta + 1$. Let $\seq{M_{i, j} : i < \alpha, j < \beta}$, $\seq{g_i : i < \alpha}$ be as given by Lemma \ref{tower-ap}, where $\seq{M_i : i < \alpha} \smallfrown \seq{a_i : i + 1 < \alpha}$ there stand for $\seq{M_i^\ast : i < \alpha} \smallfrown \seq{a_i^\ast : i < \alpha}$ here. Then $M_3 := M_{\delta^\ast, \delta}$, $f_1 := \id_{M_1}$, and $f_2 := g_{\delta^\ast} \rest M_2$ are as desired.
\end{proof}

\section{Building weakly successful good frames}

The goal of this section is to build a good $\lambda$-frame from $\lambda$-superstability, $\lambda$-symmetry, and categoricity in $\lambda$. We will use uniqueness triples, a key object in (for example) \cite[\S II.6]{shelahaecbook}. We quote the definition from \cite[\S4]{jrsh875}.

\begin{defin}\label{weakly-succ-def} Let $\K$ be a $\lambda$-superstable AEC.
  \begin{enumerate}
  \item\index{amalgam} For $M_0 \lea M_\ell$ all in $\K_\lambda$, $\ell = 1,2$, an \emph{amalgam of $M_1$ and $M_2$ over $M_0$} is a triple $(f_1, f_2, N)$ such that $N \in \K_\lambda$ and $f_\ell : M_\ell \xrightarrow[M_0]{} N$.
  \item\index{equivalence of amalgam} Let $(f_1^x, f_2^x, N^x)$, $x = a,b$ be amalgams of $M_1$ and $M_2$ over $M_0$. We say $(f_1^a, f_2^a, N^a)$ and $(f_1^b, f_2^b, N^b)$ are \emph{equivalent over $M_0$} if there exists $N_\ast \in \K_\lambda$ and $f^x : N^x \rightarrow N_\ast$ such that $f^b f_1^b = f^a  f_1^a$ and $f^b f_2^b = f^a f_2^a$, namely, the following commutes:

  \[
  \xymatrix{ & N^a \ar@{.>}[r]^{f^a} & N_\ast \\
    M_1 \ar[ru]^{f_1^a} \ar[rr]|>>>>>{f_1^b} & & N^b \ar@{.>}[u]_{f^b} \\
    M_0 \ar[u] \ar[r] & M_2 \ar[uu]|>>>>>{f_2^a}  \ar[ur]_{f_2^b} & \\
  }
  \]

  Note that being ``equivalent over $M_0$'' is an equivalence relation (\cite[4.3]{jrsh875}).
\item  A triple $(a, M, N)$ is a \emph{uniqueness triple} if $M \lea N$ are both in $\K_\lambda$, $a \in |N| \backslash |M|$, and for any $M_1 \gea M$ in $\K_\lambda$, there exists a \emph{unique} (up to equivalence over $M$) amalgam $(f_1, f_2, N_1)$ of $N$ and $M_1$ over $M$ such that $\gtp (f_1 (a) / f_2[M_1] ; N_1)$ does not fork over $M$.
\item $\K$ has the \emph{$\lambda$-existence property for uniqueness triples} if for any $M \in \K_{\lambda}$ and any $p \in \gSna (M)$, one can write $p = \gtp (a / M; N)$ with $(a, M, N)$ a uniqueness triple.
  \end{enumerate}
\end{defin}

Under mild conditions, uniqueness triples are reduced:

\begin{fact}\label{uq-triple-red}
  If $\K$ is a $\lambda$-superstable AEC with $\lambda$-symmetry which is categorical in $\lambda$, then any uniqueness triple is reduced.
\end{fact}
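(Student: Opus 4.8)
The plan is to argue by contradiction, producing from a failure of reducedness two inequivalent amalgams of $N$ and the witnessing model over $M$, each satisfying the nonforking clause in the definition of a uniqueness triple. First I would unwind what ``reduced'' means for the length-two tower $(M, N) \smallfrown (a)$: the only nontrivial intersection to control is $M' \cap N = M$, since given any $\Tow' = (M', N') \smallfrown (a)$ with $(M,N)\smallfrown(a) \tleq \Tow'$, the conditions $M \lea M'$ and $N \lea N'$ make the other required equalities automatic. Recall that $(M,N)\smallfrown(a) \tleq (M',N')\smallfrown(a)$ means precisely that $M'$ is limit over $M$, $N'$ is limit over $N$, $M \lea M' \lea N'$, $N \lea N'$, and $\gtp(a/M'; N')$ does not fork over $M$. (Since $\K$ is categorical in $\lambda$, every model of $\K_\lambda$ is limit, so all the relevant nonforking statements make sense.) So I would fix such a $\Tow'$ and suppose, for contradiction, that some $c \in (M' \cap N) \setminus M$ exists.

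Setting $M_1 := M'$ in the uniqueness triple definition, I would then exhibit two amalgams of $N$ and $M'$ over $M$ witnessing the nonforking requirement. The first is the tautological amalgam read off from $N'$ itself, namely $(\id_N, \id_{M'}, N')$, which is an amalgam over $M$ and satisfies that $\gtp(\id_N(a)/\id_{M'}[M']; N') = \gtp(a/M'; N')$ does not fork over $M$, by the last clause of $\tleq$. The second comes from disjoint nonforking amalgamation (Lemma \ref{d-nf-amalgam}), applied to $M \lea N$ and $M \lea M'$ with distinguished element $a \in N$ (and any element of $M'$): this yields $g_1 : N \to N_\ast$ and $g_2 : M' \to N_\ast$ fixing $M$, with $\gtp(g_1(a)/g_2[M']; N_\ast)$ not forking over $M$ and, crucially, $g_1[N] \cap g_2[M'] = M$.

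It remains to see that these amalgams are inequivalent over $M$, which contradicts uniqueness. If they were equivalent there would be a model $N_{\ast\ast}$ and embeddings $h^a : N' \to N_{\ast\ast}$, $h^b : N_\ast \to N_{\ast\ast}$ with $h^b g_1 = h^a \rest N$ and $h^b g_2 = h^a \rest M'$. Evaluating at $c$, which lies in both $N$ and $M'$, gives $h^b(g_1(c)) = h^a(c) = h^b(g_2(c))$; injectivity of $h^b$ forces $g_1(c) = g_2(c)$, so this common value lies in $g_1[N] \cap g_2[M'] = M$. Since $g_1$ fixes $M$ pointwise and is injective, $g_1(c) \in M = g_1[M]$ yields $c \in M$, contradicting $c \notin M$. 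Hence the two amalgams are inequivalent, contradicting that $(a, M, N)$ is a uniqueness triple, and therefore $M' \cap N = M$, i.e.\ $(a,M,N)$ is reduced. The only substantive input is the construction of the disjoint amalgam, which Lemma \ref{d-nf-amalgam} supplies (itself resting on categoricity and the tower machinery); the rest is diagram chasing, so I expect no real obstacle — the one point to keep straight is that the nonforking clause of $\tleq$ is exactly what makes the tautological amalgam a legitimate competitor to the disjoint one.
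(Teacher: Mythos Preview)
Your proof is correct and follows essentially the same approach as the paper, which defers to the proof of \cite[4.1.11(2)]{jrsh875}: produce two nonforking amalgams of $N$ and $M'$ over $M$, one tautological and one disjoint (via Lemma~\ref{d-nf-amalgam}), and show that an element of $(M' \cap N) \setminus M$ would make them inequivalent. The paper's mention of conjugation is not an additional step you are missing---it is already absorbed into Lemma~\ref{d-nf-amalgam} through Fact~\ref{ext-reduced}.
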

\begin{proof}
  By the proof of \cite[4.1.11(2)]{jrsh875}. Note that conjugation (which is assumed there) holds by Fact \ref{conj-prop}. Further, the disjoint amalgamation statement used in the proof follows from Lemma \ref{d-nf-amalgam}.
\end{proof}

Once we have the existence property for uniqueness triples, we can build the desired good frame. This is the main technical result of the paper. Note that by canonicity (Fact \ref{canon-thm}), the nonforking relation of the frame will automatically coincide with that given by Definition \ref{forking-def}.

\begin{thm}\label{main-frame-constr}
  Let $\K$ be a $\lambda$-superstable AEC with $\lambda$-symmetry which is categorical in $\lambda$. If $\K$ has the $\lambda$-existence property for uniqueness triples, then there is a weakly successful good $\lambda$-frame on $\K$.
\end{thm}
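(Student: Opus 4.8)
The plan is to take the triple $\s = (\K, \nf, \Sna)$ induced by $\lambda$-nonforking and verify that it is a weakly successful good $\lambda$-frame. By Fact~\ref{ss-frame}, $\s$ already satisfies \emph{every} axiom in Definition~\ref{good-frame-def} --- the underlying AEC conditions, invariance, monotonicity, extension, uniqueness, symmetry, and conjugation --- \emph{except} possibly full local character and continuity, for which Fact~\ref{ss-frame} supplies only the ``universal'' versions of Fact~\ref{forking-fact}(3),(7), where the chains are required to have universal increases. So the proof splits into two tasks: (i) upgrade universal local character and continuity to the full versions a good frame demands, thereby showing $\s$ is a good $\lambda$-frame; and (ii) deduce weak success from the hypothesized existence property for uniqueness triples. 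Note that the uniqueness-triple hypothesis is used only in (ii); the good frame itself is built purely from $\lambda$-superstability, $\lambda$-symmetry, and categoricity.

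For task (i), the idea is to convert an arbitrary increasing continuous chain into one with universal increases, where the universal statements apply, using the tower machinery of Section~\ref{tower-sec}. Given a chain $\seq{M_i : i \le \delta}$ and $p \in \gS(M_\delta)$, I would argue as in the proof of Fact~\ref{reduced-continuous}: build a $\tlt$-increasing continuous chain of reduced towers over $\seq{M_i : i \le \delta}$, pass to the diagonal tower, and combine density of reduced towers (Fact~\ref{reduced-density}), continuity of reduced towers (Fact~\ref{reduced-continuous}), and fullness (Lemmas~\ref{full-constr-2}, \ref{full-union}) to arrange that the diagonal is a continuous \emph{full} tower. By Lemma~\ref{full-limit} its models then form a chain with universal increases, so the universal local character of Fact~\ref{forking-fact}(3) applies; transferring the resulting nonforking statement back down the tower via uniqueness of nonforking and the conjugation property (Fact~\ref{conj-prop}) yields an index $i < \delta$ over which $p$ does not fork. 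The same diagonal device, together with uniqueness of nonforking extensions, gives full continuity.

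For task (ii), recall that a type-full good $\lambda$-frame is weakly successful if and only if uniqueness triples are dense among its basic types; this is Shelah's analysis in \cite[\S II.5--II.6]{shelahaecbook}, see also \cite{jrsh875}. Since the existence property assumed in the hypothesis is stronger than density, and since Fact~\ref{uq-triple-red} guarantees that these uniqueness triples are moreover reduced --- which is exactly what makes the disjointness needed in the construction of the nonforking amalgamation relation $\NF$ go through --- $\s$ is weakly successful, completing the proof.

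I expect task (i) to be the main obstacle. The universal-to-full upgrade is precisely where categoricity and symmetry must be leveraged through the (somewhat delicate) diagonal tower argument: a general increasing continuous chain of limit models need not have universal increases, and the naive restriction of a nonforking statement from a larger model $M_i'$ down to a submodel $M_i$ fails, so one genuinely needs the towers to manufacture universal increases while keeping control of the type $p$.
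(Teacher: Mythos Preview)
Your plan for task (i) has a genuine gap, and it is precisely the step you flag as the main obstacle. Running the diagonal tower over $\seq{M_i : i \le \delta}$ produces a continuous limit chain $\seq{M_i^i : i \le \delta}$ with universal increases, and universal local character then gives an $i$ with the nonforking extension $q$ of $p$ not forking over $M_i^i$. But $M_i^i$ is a proper extension of $M_i$ (it is limit over $M_i$ through the vertical $\tlt$-direction), and nonforking over a larger base does \emph{not} descend to a smaller one --- that is exactly the direction in which monotonicity fails. Neither uniqueness nor conjugation repairs this: conjugation produces an isomorphism $M_\delta^\delta \cong M_i^i$ fixing nothing in $M_i$, and uniqueness compares two types over the \emph{same} base. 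Your parenthetical ``the good frame itself is built purely from $\lambda$-superstability, $\lambda$-symmetry, and categoricity'' is therefore unjustified; it would be a strictly stronger theorem than what is being proven.

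The paper's route is essentially the reverse of yours: it uses the uniqueness-triple hypothesis \emph{first}, to build the four-place nonforking relation $\NF$, and only then upgrades the almost-good frame to a good one. Concretely, one runs the Jarden--Shelah construction of $\NF$ from \cite[5.5.4]{jrsh875}. That construction nominally needs a semi-good frame, but Jarden and Shelah note that local character is never invoked, and the single use of continuity (in their opposite-uniqueness proposition \cite[5.4.6]{jrsh875}, via \cite[3.1.10]{jrsh875}) is replaced here by Lemma~\ref{tower-ap}. The extra reducedness hypothesis in clause~(\ref{tower-ap-9}) of Lemma~\ref{tower-ap} is harmless precisely because the towers in that application are built from uniqueness triples, which are reduced by Fact~\ref{uq-triple-red}, hence the whole tower is reduced by Lemma~\ref{red-tower-triple}. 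Once $\NF$ exists, \cite[4.3]{jash940-v1} turns the almost-good frame into a genuine good frame (its proof uses continuity only to derive stability, which we already have), and weak success is then immediate. So the uniqueness triples are doing real work in obtaining the good frame itself, not merely certifying weak success after the fact.
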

\begin{proof}
  We first follow the proof of \cite[5.5.4]{jrsh875} to show that there exists a $4$-ary relation $\NF$ on $\K_\lambda$ satisfying several of the properties of nonforking amalgamation in a stable first-order theory: monotonicity, existence, uniqueness, symmetry and long transitivity (see \cite[5.2.1]{jrsh875}). 

  The hypotheses of \cite[5.5.4]{jrsh875} are that we work with a semi-good $\lambda$-frame with the conjugation property which has the existence property for uniqueness triples (\cite[5.1.1]{jrsh875}). In the context here, we have the existence property for uniqueness triples by assumption and also have the conjugation property (Fact \ref{conj-prop}). Moreover, nonforking induces a good $\lambda$-frame, except that it only satisfies weak versions of continuity and local character (Fact \ref{ss-frame}). Now it is pointed out explicitly by Jarden and Shelah \cite[5.1.2]{jrsh875} that local character is never used in their proof of \cite[5.5.4]{jrsh875}.

  As for continuity, it is only used in one place: the proof of \cite[5.4.6]{jrsh875} (the ``opposite uniqueness proposition''). There an appeal to \cite[3.1.10]{jrsh875} is made and its proof relies on continuity. We show how to bypass it so that the proposition still goes through. The statement is: if $\NF^\ast (N_0, N_1, N_2, N_3^a)$ and $\NF^\ast (N_0, N_2, N_1, N_3^b)$, then there is an amalgam of $N_3^a$ and $N_3^b$ fixing $N_1 \cup N_2$. Here, $\NF^\ast (N_0, N_1, N_2, N_3)$ means \cite[5.3.1]{jrsh875} that $N_0, N_1, N_2, N_3$ are in $\K_\lambda$, $N_0 \lea N_\ell \lea N_3$ for $\ell = 1,2$, and there exists $\alpha^\ast < \lambda^+$, increasing continuous $\seq{N_{\ell, i} : i \le \alpha^\ast}$, and $\seq{d_i : i < \alpha^\ast}$ such that $N_{1, 0} = N_0$, $N_{1, \alpha^\ast} = N_1$, $N_{2, 0} = N_2$, $N_{2, \alpha^\ast} = N_3$, $N_{1, i} \lea N_{2, i}$ for $i \le \alpha^\ast$, $\gtp (d_i / N_{2, i}; N_{2, i + 1})$ does not fork over $N_{1, i}$, and $(d_i, N_{1, i}, N_{1, i + 1})$ is a uniqueness triple.

  Now in the setup of the opposite uniqueness proposition, we are starting with $\seq{N_{1, i}^a, d_i^a : i \le \alpha^\ast}$ witnessing $\NF^\ast (N_0, N_1, N_2, N_3^a)$ and $\seq{N_{1, i}^b, d_i^b : i \le \beta^\ast}$ witnessing $\NF^\ast (N_0, N_2, N_1, N_3^a)$. They form two towers, as in the hypotheses of Lemma \ref{tower-ap}. Further, for $x = a,b$, $(d_i^x, N_{1, i}^x, N_{1, i + 1}^x)$ is reduced by Fact \ref{uq-triple-red}. Thus by Lemma \ref{red-tower-triple} the entire towers are reduced so the hypothesis of clause (\ref{tower-ap-9}) of Lemma \ref{tower-ap} holds. The opposite uniqueness proposition therefore follows from applying Lemma \ref{tower-ap} instead of \cite[3.1.10]{jrsh875} to amalgamate the two towers into a rectangle of models, and then following the proof of opposite uniqueness in \cite{jrsh875} (no further appeals to \cite[3.1.10]{jrsh875} are made).

  In the end, the proof of \cite[5.5.4]{jrsh875} goes through, and we have the desired relation $\NF$. We now apply \cite[4.3]{jash940-v1}, which says that given a weak kind of good frame (called an almost good $\lambda$-frame) and a nonforking relation $\NF$ as here, we get a good $\lambda$-frame. In our setup, the frame induced by nonforking satisfies all the axioms of almost good $\lambda$-frame except perhaps for continuity. However the proof of \cite[4.3]{jash940-v1} does not rely on continuity except to prove stability (which we already have), so we are done: at last by local character, type-fullness, and transitivity, we get continuity.
\end{proof}

To get the existence property for uniqueness triples, we will use that their existence follows from the weak diamond and some amount of stability in $\lambda^+$ (the argument is essentially due to Shelah):

\begin{fact}\label{uq-triple-wd}
  Let $\K$ be a $\lambda$-superstable AEC with $\lambda$-symmetry which is categorical in $\lambda$. If $\WGCH (\lambda)$ and for any saturated model $M$ in $\K_{\lambda^+}$ there is $N$ universal over $M$, then $\K$ has the $\lambda$-existence property for uniqueness triples.
\end{fact}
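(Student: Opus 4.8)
The plan is to run Shelah's weak-diamond non-structure argument, which is the standard route from $\WGCH(\lambda)$ to the density of uniqueness triples; the tower material developed in Section \ref{tower-sec} is exactly what makes the amalgamation bookkeeping go through. I would argue by contradiction. By Fact \ref{ss-frame} we already have a triple satisfying every good-frame axiom except local character and continuity, together with the conjugation property (Fact \ref{conj-prop}), so ``uniqueness triple'' and ``nonforking amalgam'' from Definition \ref{weakly-succ-def} are meaningful in our setting. Suppose the $\lambda$-existence property fails: there is $M^\ast \in \K_\lambda$ and $p^\ast \in \gSna(M^\ast)$ represented by no uniqueness triple. Unwinding the definition, every representation $p^\ast = \gtp(a/M^\ast; N)$ admits some $M_1 \gea M^\ast$ and two nonforking amalgams of $N$ and $M_1$ over $M^\ast$ that are inequivalent over $M^\ast$. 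The first task is to upgrade this into a one-step \emph{branching lemma}: given a representing triple (which, by Fact \ref{uq-triple-red}, we may take to be reduced), produce two $\tleq$-extensions whose distinguished amalgams can never be identified in a common extension.

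Next I would build a binary tree $\seq{N_\eta : \eta \in \fct{<\lambda^+}{2}}$ of models in $\K_\lambda$, together with distinguished elements $\seq{a_\eta : \eta \in \fct{<\lambda^+}{2}}$, increasing and continuous along each branch and taking unions at limit levels, so that at every node the two successors $N_{\eta ^\frown \langle 0\rangle}$ and $N_{\eta ^\frown \langle 1\rangle}$ realize the two inequivalent amalgams supplied by the branching lemma. Propagating the nonforking and disjointness invariants coherently through the tree is precisely where the tower machinery enters: disjoint nonforking amalgamation (Lemma \ref{d-nf-amalgam}) and amalgamation of towers (Lemma \ref{tower-ap}) let me amalgamate the growing configurations while preserving the coded branching data. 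For each branch $\eta \in \fct{\lambda^+}{2}$ I then set $M_\eta := \bigcup_{\alpha < \lambda^+} N_{\eta \rest \alpha} \in \K_{\lambda^+}$, arranging that $M_\eta$ ``reads off'' the branch $\eta$ through the amalgams it realizes over $M^\ast$.

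The combinatorial heart is the use of $\WGCH(\lambda)$ through the weak-diamond principle $\Phi_{\lambda^+}$ recalled in Section \ref{set-thy-sec}. I would define a colouring $F : \fct{<\lambda^+}{2} \to 2$ that, at a node of height $\delta$, guesses whether a prospective identification of two branches can be continued compatibly with the coded amalgam past stage $\delta$; feeding this to $\Phi_{\lambda^+}$ produces a predictor $g$ with which I prune the branching choices so that distinct branches $\eta \neq \nu$ yield configurations that cannot be reconciled. This gives a standard non-structure dichotomy: either uniqueness triples are dense, or the $2^{\lambda^+}$ branch models exhibit incompatible nonforking-amalgamation patterns in $\K_{\lambda^+}$ that no single extension of the saturated model of $\K_{\lambda^+}$ can realize. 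The latter contradicts the hypothesis that every saturated $M \in \K_{\lambda^+}$ has a universal extension $N$, which is exactly what would be forced to reconcile all the coded patterns; so the existence property must hold.

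The main obstacle is the third step: designing the colouring $F$ so that the weak-diamond predictor genuinely blocks the reconciliation of distinct branches, while simultaneously carrying the forking and disjointness invariants of the tree through every limit stage. This is the delicate core of Shelah's original argument, and I expect the most economical execution to verify that the present hypotheses line up with those of Shelah's weakly-successful theorem — the almost good frame of Fact \ref{ss-frame}, the conjugation property (Fact \ref{conj-prop}), reducedness of uniqueness triples (Fact \ref{uq-triple-red}), and the universal-extension assumption playing the role of stability in $\lambda^+$ — and then quote it, rather than redoing the full combinatorial construction by hand.
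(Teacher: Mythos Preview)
Your proposal is correct and, in its final paragraph, converges exactly on what the paper does: the paper's proof is the single line ``The proof of \cite[E.8]{downward-categ-tame-apal} goes through.'' Your sketch of the underlying weak-diamond non-structure argument is accurate in shape.

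Two small corrections to the sketch. First, when you write that the representing triple ``by Fact \ref{uq-triple-red}, we may take to be reduced'', you want Fact \ref{ext-reduced} (existence of reduced triples) or Fact \ref{reduced-density}; Fact \ref{uq-triple-red} says that \emph{uniqueness} triples are reduced, and here you are assuming precisely that no uniqueness triple represents $p^\ast$. Second, the tower machinery of Section \ref{tower-sec} (Lemmas \ref{d-nf-amalgam} and \ref{tower-ap}) is not what drives the weak-diamond construction for uniqueness triples; that argument needs only ordinary amalgamation plus the two inequivalent nonforking amalgams at each branching step. The tower lemmas enter the paper later, in the construction of the $\NF$ relation for Theorem \ref{main-frame-constr}, not here.
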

\begin{proof}
  Recalling Fact \ref{ss-frame}, the proof of \cite[E.8]{downward-categ-tame-apal} goes through, noting that the tree $\seq{M_\eta : \eta \in \fct{\le \lambda^+}{2}}$ in the proof can be built with the additional property that $M_{\eta \smallfrown \ell}$ is universal over $M_\eta$ for $\ell = 0,1$ so that universal continuity rather than continuity or local character is sufficient.
\end{proof}

\begin{cor}[The local good frame construction theorem]\label{local-frame-constr}
  Let $\K$ be an AEC and let $\lambda \ge \LS (\K)$. If:

  \begin{enumerate}
  \item $\WGCH (\lambda)$.
  \item $\K$ is $\lambda$-superstable, has $\lambda$-symmetry, and is categorical in $\lambda$.
  \item\label{local-frame-2} For every saturated model $M$ in $\K_{\lambda^+}$, there is $N$ universal over $M$.
  \end{enumerate}

  Then there is a weakly successful good $\lambda$-frame with underlying class $\K_\lambda$. 
\end{cor}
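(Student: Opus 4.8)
The plan is to obtain the result by simply chaining together the two immediately preceding statements, so the corollary is in essence a bookkeeping step. First I would observe that hypotheses (1), (2), and (\ref{local-frame-2}) of the corollary are verbatim the hypotheses of Fact \ref{uq-triple-wd}: we are assuming $\WGCH (\lambda)$, that $\K$ is $\lambda$-superstable, has $\lambda$-symmetry, is categorical in $\lambda$, and that every saturated $M \in \K_{\lambda^+}$ admits some $N$ universal over it. Thus Fact \ref{uq-triple-wd} applies directly and yields that $\K$ has the $\lambda$-existence property for uniqueness triples.

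Next I would feed this conclusion into Theorem \ref{main-frame-constr}. The hypotheses of that theorem are exactly that $\K$ is $\lambda$-superstable, has $\lambda$-symmetry, is categorical in $\lambda$, and has the $\lambda$-existence property for uniqueness triples. The first three are precisely hypothesis (2) of the corollary, and the fourth was just established in the previous step. Therefore Theorem \ref{main-frame-constr} produces a weakly successful good $\lambda$-frame on $\K$. Since, by Definition \ref{good-frame-def}, a good $\lambda$-frame on $\K$ is built on the models of size $\lambda$, this is exactly a weakly successful good $\lambda$-frame with underlying class $\K_\lambda$, as required.

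At the level of the corollary itself there is essentially no obstacle: the only point to verify is that the hypotheses of the two cited results align, which they do literally, and that the two notions of ``underlying class'' match, which is immediate from the definition. The genuine difficulties are entirely absorbed into the cited results: the weak-diamond argument hidden inside Fact \ref{uq-triple-wd}, which extracts uniqueness triples from $\WGCH (\lambda)$ together with enough stability in $\lambda^+$ (the latter being precisely what hypothesis (\ref{local-frame-2}) secures), and the construction of the nonforking relation $\NF$ followed by the upgrade from an almost good frame to a genuine good frame carried out in Theorem \ref{main-frame-constr}. The purpose of the corollary is merely to package these into a single clean statement in the form that will be convenient to quote later.
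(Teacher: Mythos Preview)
Your proposal is correct and follows exactly the paper's approach: the paper's proof is literally ``Combine Theorem \ref{main-frame-constr} and Fact \ref{uq-triple-wd}.'' Your explanation of how the hypotheses line up is accurate and the additional commentary about where the real work is hidden is appropriate.
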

\begin{proof}
  Combine Theorem \ref{main-frame-constr} and Fact \ref{uq-triple-wd}.
\end{proof}
\begin{remark}\label{optimality-rmk}
  There are examples of categorical good frames that are not weakly successful \cite{counterexample-frame-afml}. Thus we cannot expect to remove condition (\ref{local-frame-2}) from Corollary \ref{local-frame-constr}.
\end{remark}

We can state a slightly more precise version of Corollary \ref{local-frame-constr} using superlimits and the terminology from Definition \ref{nice-ss-def}. First, one more definition:

\begin{defin}
  A nicely sl-$\lambda$-superstable AEC $\K$ \emph{has sl-uniqueness triples (in $\lambda$)} if $\Kslp{\lambda}$ (see Definition \ref{ksl-def}) has the $\lambda$-existence property for uniqueness triples. For $\Theta$ a class of cardinals, a nicely sl-$\Theta$-superstable AEC $\K$ \emph{has sl-uniqueness triples in $\Theta$} if it has sl-uniqueness triples in $\lambda$ for every $\lambda \in \Theta$.
\end{defin}

\begin{cor}\label{very-nice-frame}
  If $\K$ is nicely sl-$\lambda$-superstable and has sl-uniqueness triples, then there is a categorical weakly successful good $\lambda$-frame on $\Kslp{\lambda}$.
\end{cor}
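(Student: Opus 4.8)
The plan is to reduce everything to Theorem \ref{main-frame-constr} applied to the AEC $\Kslp{\lambda}$. First I would recall that, by Fact \ref{nice-ss-fact}(3), the nice $\lambda$-superstability of $\K$ guarantees that $\Kslp{\lambda}$ is $\lambda$-superstable, has $\lambda$-symmetry, and is categorical in $\lambda$. Moreover, $\Kslp{\lambda}$ is genuinely an AEC with Löwenheim-Skolem-Tarski number $\lambda$ (Fact \ref{sl-basic}), so the hypotheses of Theorem \ref{main-frame-constr} even make sense when applied to it.

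Next I would invoke the extra clause built into the definition of very nice $\lambda$-superstability, namely that $\Kslp{\lambda}$ has the $\lambda$-existence property for uniqueness triples. At this point all the hypotheses of Theorem \ref{main-frame-constr} --- $\lambda$-superstability, $\lambda$-symmetry, categoricity in $\lambda$, and the $\lambda$-existence property for uniqueness triples --- hold for the class $\Kslp{\lambda}$. Applying that theorem therefore produces a weakly successful good $\lambda$-frame whose underlying class is exactly $\Kslp{\lambda}$. Finally, since $\Kslp{\lambda}$ is categorical in $\lambda$, this frame is categorical in the sense of Definition \ref{good-frame-def}, which gives the conclusion.

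There is essentially no technical obstacle here: the corollary is just a repackaging of Theorem \ref{main-frame-constr} in the superlimit terminology of Definition \ref{nice-ss-def}. The only point requiring any care is the bookkeeping of checking that each hypothesis of the main theorem is supplied by the appropriate clause of very nice superstability --- the three model-theoretic hypotheses coming through Fact \ref{nice-ss-fact}(3) and the uniqueness-triple hypothesis coming directly from the definition.
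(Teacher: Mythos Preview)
Your proposal is correct and takes essentially the same approach as the paper, which simply says ``Apply Theorem \ref{main-frame-constr} to $\Kslp{\lambda}$.'' You have merely spelled out in detail why the hypotheses of that theorem are satisfied, which is exactly the intended argument.
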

\begin{proof}
  Apply Theorem \ref{main-frame-constr} to $\Kslp{\lambda}$.
\end{proof}

\begin{cor}\label{weak-succ-constr}
  Let $\K$ be an AEC and let $\lambda \ge \LS (\K)$. If:

  \begin{enumerate}
  \item $\WGCH (\lambda)$.
  \item $\K$ is nicely sl-$\lambda$-superstable.
  \item $\K$ is nicely $\lambda^+$-stable.
  \end{enumerate}

  Then $\K$ has sl-uniqueness triples in $\lambda$.
\end{cor}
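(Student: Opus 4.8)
The plan is to reduce to Fact \ref{uq-triple-wd} applied to the class $\K^\ast := \Kslp{\lambda}$ generated by the superlimit, after first transferring enough structure down to $\K^\ast$. Since $\K$ is nicely $\lambda$-superstable it is in particular nicely $\lambda$-stable and $\K_\lambda$ has a superlimit; together with the nice $\lambda^+$-stability hypothesis this makes $\K$ nicely $[\lambda, \lambda^+]$-stable with a superlimit in $\K_\lambda$. Applying Theorem \ref{nicely-stable-sl} with $n = 1$ then yields that $\K^\ast$ is itself nicely $[\lambda, \lambda^+]$-stable, and Fact \ref{nice-ss-fact}(3) gives that $\K^\ast$ is $\lambda$-superstable, has $\lambda$-symmetry, and is categorical in $\lambda$. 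I would emphasize here the point that drives everything below: $\lambda$-superstability of $\K^\ast$ provides \emph{full} amalgamation in $\K^\ast_\lambda$, and by Fact \ref{sl-basic}(2) we have $\K^\ast_{<\lambda} = \emptyset$, so $\K^\ast_{<\lambda^+}$ has amalgamation.

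By the definition of ``very nicely $\lambda$-superstable'' and the hypothesis that $\K$ is already nicely $\lambda$-superstable, it then remains only to show that $\K^\ast$ has the $\lambda$-existence property for uniqueness triples. Given $\WGCH(\lambda)$ and the structural facts just recorded, Fact \ref{uq-triple-wd} reduces this to a single point: every saturated model $M \in \K^\ast_{\lambda^+}$ admits a proper universal extension.

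To verify this I would show that any such saturated $M$ is in fact a $(\lambda, \lambda^+)$-limit. Since $\K^\ast_{<\lambda^+}$ has amalgamation, the model-homogeneous $=$ saturated lemma makes $M$ be $\lambda^+$-model-homogeneous. One then builds an increasing continuous resolution $\seq{N_i : i < \lambda^+}$ of $M$ with each $N_i \in \K^\ast_\lambda$: at successor steps take a universal extension $N_{i+1}^0$ of $N_i$ inside $\K^\ast_\lambda$ (which exists by stability and amalgamation in $\lambda$), embed it into $M$ over $N_i$ using $\lambda^+$-model-homogeneity, and enlarge the image inside $M$ to absorb the $i$-th element of a fixed enumeration of $M$; the resulting $N_{i+1} \lea M$ is still universal over $N_i$, and is automatically an amalgamation base because $\K^\ast_\lambda$ has full amalgamation. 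Taking unions at limits stays in $\K^\ast_\lambda$ and keeps the chain continuous, and the enumeration bookkeeping guarantees $M = \bigcup_{i < \lambda^+} N_i$. This exhibits $M$ as a $(\lambda, \lambda^+)$-limit, hence by Fact \ref{bf-lambdap} as a $(\lambda^+, \lambda^+)$-limit, i.e.\ a limit model in $\K^\ast_{\lambda^+}$. Nice $\lambda^+$-stability of $\K^\ast$ then makes $M$ an amalgamation base (clause (5) of nice stability) which admits a universal extension (clause (4)), exactly as needed.

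I expect the main obstacle to be this last step, namely proving that saturated models in $\lambda^+$ are limit models without any global amalgamation assumption. The delicate issue is that nice $\lambda^+$-stability only guarantees universal extensions and amalgamation over a \emph{dense} class of bases, so one cannot freely amalgamate in $\K^\ast_{\lambda^+}$ and must locate $M$ concretely among the limit models. The argument goes through precisely because passing to $\K^\ast = \Kslp{\lambda}$ buys full amalgamation in $\lambda$ (and hence in $\K^\ast_{<\lambda^+}$), which is what powers both the saturation $=$ model-homogeneity step and the universality of each successor $N_{i+1}$.
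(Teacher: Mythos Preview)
Your argument is correct and follows essentially the same route as the paper: reduce to Fact~\ref{uq-triple-wd} for $\Kslp{\lambda}$, transfer nice $\lambda^+$-stability to $\Kslp{\lambda}$ via Theorem~\ref{nicely-stable-sl}, identify the saturated model in $\Kslp{\lambda}_{\lambda^+}$ with the $(\lambda,\lambda^+)$-limit, and then use Fact~\ref{bf-lambdap} plus nice $\lambda^+$-stability to produce the required universal extension. The only difference is that the paper asserts the identification ``saturated $=$ $(\lambda,\lambda^+)$-limit'' in one line, whereas you spell out the model-homogeneity resolution argument; this is a legitimate elaboration of a step the paper leaves implicit, not a different strategy.
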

\begin{proof}
  By definition, it suffices to check that $\Kslp{\lambda}$ has the $\lambda$-existence property for uniqueness triples. By Fact \ref{uq-triple-wd}, it suffices to see that for any saturated $M \in \Kslp{\lambda}_{\lambda^+}$, there is $N \in \Kslp{\lambda}_{\lambda^+}$ universal over $M$. We know that $\K$ is nicely $[\lambda, \lambda^+]$-stable, so by Theorem \ref{nicely-stable-sl}, $\Kslp{\lambda}$ is nicely $\lambda^+$-stable. Now note that a saturated model in $\Kslp{\lambda}_{\lambda^+}$ is nothing but a $(\lambda, \lambda^+)$-limit, which by Fact \ref{bf-lambdap} is a $(\lambda^+, \lambda^+)$-limit. By definition of nice stability, this means that $M$ is an amalgamation base, and hence has a universal extension, as desired.
\end{proof}

\section{Building successful good frames}

Suppose that $\K$ is an AEC with some good structural properties, $\lambda \ge \LS (\K)$. In this section, we want to apply Corollary \ref{local-frame-constr} to all successors of $\lambda$. What do we get if we succeed? In a nutshell, we get a sequence $\seq{\s^n : n < \omega}$, where each $\s^n$ is a good $\lambda^{+n}$-frame. Recall that by the canonicity theorem (Fact \ref{canon-thm}), the exact definition of nonforking in these frames does not matter: there can be at most one at each cardinal.

The following crucial result tells us that in fact the frames will be ``connected'', in the sense that $\s^{n + 1}$ is the successor of $\s^n$ (so forking in $\s^{n + 1}$ is described in terms of forking in $\s^n$).

\begin{fact}[{\cite[5.15]{tame-succ-v5-toappear}}]\label{succ-frame-thm}
  Let $\s$ be a weakly successful categorical good $\lambda$-frame on an AEC $\K$. If there is a good $\lambda^+$-frame on $\Ksatp{\lambda^+}$, then $\s$ is successful $\goodp$.
\end{fact}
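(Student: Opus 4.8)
The plan is to show that the \emph{candidate successor frame} $\s^+$, canonically induced by the weakly successful structure of $\s$, is in fact a good (and even $\goodp$) $\lambda^+$-frame, and that the additional density clause in the definition of ``successful'' holds. Since $\s$ is weakly successful, the general theory (as in \cite{jrsh875} and \cite[III.1]{shelahaecbook}) furnishes a $4$-ary nonforking relation $\NF$ on $\K_\lambda$ satisfying monotonicity, existence, uniqueness, symmetry, and long transitivity. From $\NF$ one defines the nonforking ordering $\lea^{\NF}$ on saturated models of size $\lambda^+$ together with a candidate nonforking notion, yielding a candidate frame $\s^+$ with underlying class $\Ksatp{\lambda^+}$. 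The task then reduces to verifying the good $\lambda^+$-frame axioms for $\s^+$, the density clause defining successfulness, and finally the $\goodp$ refinement.

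First I would dispatch the purely structural axioms. The existence of the good $\lambda^+$-frame $\ts$ on $\Ksatp{\lambda^+}$ immediately provides that $\Ksatp{\lambda^+}$ is a nonempty AEC in $\lambda^+$ with amalgamation, joint embedding, no maximal models, and stability in $\lambda^+$ — exactly the ``frame lives on a reasonable class'' prerequisites. Thus the real content concerns the nonforking relation of $\s^+$: its uniqueness, extension, local character, continuity, and symmetry over $\lambda^+$-saturated models.

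The key step is a \emph{canonicity} comparison between the two independence notions. The nonforking of $\s^+$ is built directly from $\NF$, so it already satisfies invariance, monotonicity, uniqueness, and extension over saturated models, these descending from the corresponding properties of $\NF$ at $\lambda$. The nonforking of the given frame $\ts$, by hypothesis, satisfies \emph{all} the good $\lambda^+$-frame axioms. By a canonicity theorem at $\lambda^+$ (the analogue of Fact \ref{canon-thm}: any two independence relations on the same class agreeing on invariance, monotonicity, uniqueness, extension, and some local character must coincide), the two relations agree. Consequently $\s^+$ inherits the missing local character, continuity, and symmetry from $\ts$, so $\s^+$ is a genuine good $\lambda^+$-frame and $\s$ is successful; the density clause follows from the same identification, since the $\NF$-ordering and the $\lea$-ordering on saturated models are pinned down by the now-common nonforking notion. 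For $\goodp$, I would argue by ruling out the bad configuration defining its failure: a failure of $\goodp$ would manifest as a $\lambda^+$-saturated model omitting a prescribed type along a tower, which is incompatible with $\ts$ being an honest good frame on $\Ksatp{\lambda^+}$; hence $\goodp$ holds.

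The main obstacle is precisely the canonicity comparison, because the standard canonicity theorem presupposes local character for \emph{both} relations, yet local character for $\s^+$ is one of the properties we are trying to establish — a potential circularity. The way around this is to observe that the weakly successful structure already equips $\s^+$ with uniqueness and extension over saturated models, which suffice to determine its nonforking relation uniquely \emph{once at least one genuinely good frame is known to be present}. The hypothesis ``there is a good $\lambda^+$-frame on $\Ksatp{\lambda^+}$'' is exactly the anchor that supplies the otherwise-missing local character and thereby breaks the circularity.
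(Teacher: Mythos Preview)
The paper does not prove this statement at all: it is stated as a \emph{Fact} with a bare citation to \cite[5.15]{tame-succ-v4}, so there is no in-paper proof to compare against. I can therefore only comment on whether your sketch would actually go through.

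There is a genuine gap in the canonicity step, and it is more serious than the circularity you already flagged. The candidate successor $\s^+$ does not live on $(\Ksatp{\lambda^+}, \lea)$; it lives on $(\Ksatp{\lambda^+}, \lea^{\NF})$, where $\lea^{\NF}$ is the ordering induced by the $4$-ary relation $\NF$. The hypothesis, by contrast, hands you a good $\lambda^+$-frame $\ts$ on $(\Ksatp{\lambda^+}, \lea)$. The canonicity theorem you invoke (Fact~\ref{canon-thm} or its analogues) compares two independence notions on the \emph{same} ordered class; here the two orderings are a priori different, and the heart of ``successful'' is precisely that $\lea^{\NF}$ satisfies the smoothness axiom (unions of $\lea^{\NF}$-increasing chains of saturated models are $\lea^{\NF}$-upper bounds). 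Your sentence ``the $\NF$-ordering and the $\lea$-ordering on saturated models are pinned down by the now-common nonforking notion'' conflates the $4$-ary $\NF$ with the $3$-ary type-nonforking: $\lea^{\NF}$ is defined from the former, and identifying the latter with $\ts$-nonforking does not by itself recover $\lea^{\NF} = \lea$. So the argument, as written, assumes what it needs to prove.

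The actual route in \cite{tame-succ-v4} goes through an intermediate notion (``decent'', cf.\ Fact~\ref{succ-decent} here, which also cites that paper): one first uses the existence of a good $\lambda^+$-frame on $\Ksatp{\lambda^+}$ to show that the saturated model in $\lambda^+$ is a superlimit and that certain reflection/continuity properties hold down at $\lambda$; this yields decency, and decency plus weak successfulness is what gives smoothness of $\lea^{\NF}$ and hence successfulness. Only after that is the identification of $\s^+$ with (a restriction of) the given frame available, and $\goodp$ is then read off. Your sketch jumps directly to the identification without establishing smoothness first, and the $\goodp$ paragraph (``ruling out the bad configuration'') is too vague to be a proof.
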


We deduce a close relationship between nice sl-superstability with sl-uniqueness triples in $[\lambda, \lambda^{+n}]$ and the existence of an $n$-successful $\goodp$ $\lambda$-frame:

\begin{thm}\label{very-nice-succ-thm}
  Let $\K$ be an AEC, let $\lambda \ge \LS (\K)$, and let $n \in [1, \omega]$.
  
  \begin{enumerate}
  \item If $\K$ is nicely sl-superstable and has sl-uniqueness triples in $[\lambda, \lambda^{+n}]$, then there is an $n$-successful $\goodp$ $\lambda$-frame on $\Kslp{\lambda}$ (see Definition \ref{ksl-def}).
  \item If $\K_\lambda$ has a superlimit and there is an $n$-successful $\goodp$ $\lambda$-frame on $\Kslp{\lambda}$, then $\K$ is nicely sl-$[\lambda, \lambda^{+n}]$-superstable and has sl-uniqueness triples in $[\lambda, \lambda^{+(n - 1)}]$.
  \end{enumerate}
\end{thm}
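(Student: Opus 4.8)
The plan is to prove both parts by induction on $n$ (when $n = \omega$ it suffices to establish $k$-successfulness, respectively very nice superstability up to $\lambda^{+k}$, for every finite $k$), the bridge between the two directions being the identification
\[
\Ksatpp{(\Kslp{\lambda})}{\lambda^{+k}} = \Kslp{\lambda^{+k}}
\]
of the class of $\lambda^{+k}$-saturated models of $\Kslp{\lambda}$ with the AEC generated by the superlimit of $\K_{\lambda^{+k}}$, together with the assertion that the common saturated model $S$ of size $\lambda^{+k}$ is a superlimit of $\K_{\lambda^{+k}}$. Granting these, the upward direction feeds the frames of Corollary \ref{very-nice-frame} into Fact \ref{succ-frame-thm}, while the downward direction reads off superstability and symmetry from the successor frames via Fact \ref{frame-ss}.

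For part (1), Corollary \ref{very-nice-frame} produces a categorical weakly successful good $\lambda$-frame $\s$ on $\Kslp{\lambda}$, and I show it is $n$-successful $\goodp$. For the successor step, very nice $\lambda^+$-superstability and Corollary \ref{very-nice-frame} at $\lambda^+$ yield a good $\lambda^+$-frame on $\Kslp{\lambda^+} = \Ksatpp{(\Kslp{\lambda})}{\lambda^+}$, so Fact \ref{succ-frame-thm} upgrades $\s$ to successful $\goodp$. Its successor $\s^+$ is a type-full good $\lambda^+$-frame on the same class, hence by canonicity (Fact \ref{canon-thm}) it \emph{equals} the frame produced by Corollary \ref{very-nice-frame} at $\lambda^+$; applying the induction hypothesis to $\K$ at $\lambda^+$ with parameter $n-1$ shows $\s^+$ is $(n-1)$-successful, whence $\s$ is $n$-successful.

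For part (2), the black box gives for each $k \le n$ a type-full good $\lambda^{+k}$-frame $\s^{+k}$ on $\Ksatpp{(\Kslp{\lambda})}{\lambda^{+k}}$ which is $(n-k)$-successful $\goodp$. I prove by induction on $k$ that $\K$ is nicely $\lambda^{+k}$-superstable and $\Kslp{\lambda^{+k}} = \Ksatpp{(\Kslp{\lambda})}{\lambda^{+k}}$. The base case $k=0$ uses that $\Kslp{\lambda}_\lambda$ is dense in $\K_\lambda$ (Remark \ref{sl-dense}), that $\Kslp{\lambda}$ is $\lambda$-superstable with symmetry (Fact \ref{frame-ss}), and Theorem \ref{dense-nicely-stable}. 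For the inductive step, the hypothesis at levels $<k$ lets me build, by interleaving amalgamation bases and using closure of $\Kslp{\lambda}$ under unions, a $(\lambda^{+(k-1)},\lambda^{+k})$-limit above any given model; via Fact \ref{bf-lambdap} this shows both that the saturated models $\Ksatpp{(\Kslp{\lambda})}{\lambda^{+k}}_{\lambda^{+k}}$ are dense in $\Kslp{\lambda}_{\lambda^{+k}}$ and that $\Kslp{\lambda}_{\lambda^{+k}}$ is dense in $\K_{\lambda^{+k}}$. Two applications of Theorem \ref{dense-nicely-stable} (first transferring nice $\lambda^{+k}$-stability from the superstable class $\Ksatpp{(\Kslp{\lambda})}{\lambda^{+k}}$ to $\Kslp{\lambda}$, then to $\K$) give nice $\lambda^{+k}$-stability of $\K$; Remark \ref{categ-sl} exhibits $S$ as a superlimit of the saturated class and Remark \ref{skel-basics} transfers this to a superlimit of $\K_{\lambda^{+k}}$ (composing the two densities by Remark \ref{skel-trans}); the identification $\Kslp{\lambda^{+k}} = \Ksatpp{(\Kslp{\lambda})}{\lambda^{+k}}$ then yields $\lambda^{+k}$-superstability and symmetry from Fact \ref{frame-ss}. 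Finally, for $k \le n-1$ the frame $\s^{+k}$ is successful, hence weakly successful, so $\Kslp{\lambda^{+k}}$ has the $\lambda^{+k}$-existence property for uniqueness triples and $\K$ is \emph{very} nicely $\lambda^{+k}$-superstable; at $k=n$ the frame $\s^{+n}$ is merely good, giving only nice superstability.

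The main obstacle is the identification lemma $\Kslp{\lambda^{+k}} = \Ksatpp{(\Kslp{\lambda})}{\lambda^{+k}}$ and, bound up with it, the apparent circularity between nice $\lambda^{+k}$-stability of $\K$ and density of the saturated class at $\lambda^{+k}$. The identification itself is handled by the model-homogeneous $=$ saturated machinery: any model carrying cofinally many $\lea$-submodels isomorphic to $S$ is $\lambda^{+k}$-saturated and lies in $\Kslp{\lambda}$ (a directed union of copies of $S$), and conversely a $\lambda^{+k}$-saturated model admits such submodels by Löwenheim--Skolem; the delicate point is that all amalgamation used here is only among amalgamation bases below $\lambda^{+k}$, which is exactly what nice superstability at lower levels supplies. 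The circularity is broken precisely because I obtain density at level $k$ from nice superstability at levels $<k$ (through the interleaving constructions and closure of $\Kslp{\lambda}$ under directed unions), never from nice stability at level $k$ itself, so the induction does not fold back on itself.
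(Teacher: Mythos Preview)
Your proposal is correct and follows essentially the same route as the paper. For part (1), the paper proceeds exactly as you do: Corollary \ref{very-nice-frame} gives a weakly successful frame $\s$ on $\Kslp{\lambda}$; density of $\Kslp{\lambda}$ in $\K_{[\lambda,\lambda^+]}$ (via Theorem \ref{nicely-stable-sl}) together with Lemma \ref{sl-lim} yields the identification $\Kslp{\lambda^+} = \Ksatpp{(\Kslp{\lambda})}{\lambda^+}$; Fact \ref{succ-frame-thm} then makes $\s$ successful $\goodp$; and the inductive step replaces $\lambda$ by $\lambda^+$, invoking canonicity (Fact \ref{canon-thm}) to match $\s^+$ with the frame built at the next level.

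For part (2) the paper is much terser than you are---it literally writes ``similarly to before'' and cites Fact \ref{frame-ss} for superstability/symmetry and \cite[5.12]{tame-succ-v4} for uniqueness triples---but the content is the same inductive scheme you spell out. Your observation that for $k\le n-1$ the frame $\s^{+k}$ is successful, hence weakly successful, hence has the existence property for uniqueness triples, is exactly what that citation provides. Your explicit treatment of the apparent circularity (obtaining density of $\Kslp{\lambda}_{\lambda^{+k}}$ in $\K_{\lambda^{+k}}$ from nice superstability at levels $<k$ rather than at level $k$) is a genuine clarification of a step the paper leaves implicit; the interleaving you describe is the intended mechanism, and Theorem \ref{nicely-stable-sl} together with Remarks \ref{skel-basics} and \ref{skel-trans} package the transfer of the superlimit just as you indicate.
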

\begin{proof} \
  \begin{enumerate}
  \item By Corollary \ref{very-nice-frame}, there is a weakly successful good $\lambda$-frame $\s$ on $\Kslp{\lambda}$. We prove by induction on $n \in [1, \omega)$ that $\s^{+(n - 1)}$ is defined and is a successful $\goodp$ frame on $\Kslp{\lambda^{+(n - 1)}}$. When $n = 1$, $\s^{+0} = \s$. Moreover by Corollary \ref{very-nice-frame} applied to $\K_{\lambda^+}$, there is a weakly successful good $\lambda^+$-frame on $\Kslp{\lambda^+}$. Now, $\Kslp{\lambda}$ is dense in $\K_{[\lambda, \lambda^+]}$ (Theorem \ref{nicely-stable-sl}), so the superlimit in $\K_{\lambda^+}$ must be in $\Kslp{\lambda}$ and must be superlimit there too. Also, the superlimit in $\K_{\lambda^+}$ is $(\lambda^+,\lambda^+)$-limit by Lemma \ref{sl-lim}, so it is saturated, i.e.\ $\Kslp{\lambda^+} = \Ksatp{\lambda^+}$. By Fact \ref{succ-frame-thm}, $\s$ is successful $\goodp$. For the inductive case, apply the base case to $\Kslp{\lambda^{+(n - 1)}}$ (recalling the canonicity theorem, Fact \ref{canon-thm}).
  \item Similarly to before (using Fact \ref{frame-ss} to derive superstability from a good frame and \cite[5.12]{tame-succ-v5-toappear} to get the existence property for uniqueness triples), we get that $\K$ is nicely sl-superstable and has sl-uniqueness triples in $[\lambda, \lambda^{+(n - 1)}]$. At the last step, we do not know whether we get a weakly successful good frame, but we still have enough to derive that $\K$ is nicely sl-$\lambda^{+n}$-superstable.
  \end{enumerate}
\end{proof}
\begin{remark}\label{opt-rmk-2}
  As in Remark \ref{optimality-rmk}, the Hart-Shelah example shows \cite{counterexample-frame-afml} that the second conclusion of Theorem \ref{very-nice-succ-thm} \emph{cannot} be strengthened to get sl-uniqueness triples in $\lambda^{+n}$.
\end{remark}

We now iterate Corollary \ref{local-frame-constr}. To get the best possible results, it will be handy to know we get superlimits automatically after one iteration:

\begin{fact}\label{succ-goodp}
  If $\s$ is a successful categorical $\goodp$ $\lambda$-frame on an AEC $\K$, then $\s^+$ is a $\goodp$ $\lambda$-frame. In particular, $\Ksatp{\lambda^{+}}_{\lambda^{++}}$ has a superlimit.
\end{fact}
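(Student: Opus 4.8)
The plan is to reduce both assertions to Shelah's theory of the $\goodp$ property; since the paper treats these notions as black boxes, I would cite the relevant results rather than reprove them. Because $\s$ is successful, its successor $\s^+$ is defined and is a good $\lambda^+$-frame whose underlying class is $\Ksatp{\lambda^+}$. The real content of the first assertion is that the $\goodp$ property is inherited: if $\s$ is successful and $\goodp$, then $\s^+$ is again $\goodp$. This is exactly the preservation statement that makes the iteration defining an $n$-successful $\goodp$ frame possible, and it is established in Shelah's analysis of the successor operation in \cite[\S III.1]{shelahaecbook} (see \cite[\S2]{counterexample-frame-afml} for an overview of the definitions). I would invoke that result directly.

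For the ``in particular'' clause, the key is the characterization of the $\goodp$ property in terms of superlimits: the effect of the $\goodp$ property of a $\mu$-frame $\ts$ on a class $\K^{\ts}$ is that an increasing union of saturated models remains saturated one cardinal up, so that the unique saturated model of size $\mu^+$ in $\K^{\ts}$ is a superlimit there. Applying this with $\ts = \s^+$ (so $\mu = \lambda^+$ and $\K^{\ts} = \Ksatp{\lambda^+}$), the saturated model of size $\lambda^{++}$ in $\Ksatp{\lambda^+}$ is a superlimit. As this model is $\lambda^{++}$-saturated, hence in particular $\lambda^+$-saturated, it lies in $\Ksatp{\lambda^+}_{\lambda^{++}}$, and so this class has a superlimit, as claimed.

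The main obstacle is the preservation of the $\goodp$ property under the successor operation. This is the genuinely nontrivial ingredient: one must describe nonforking for $\s^+$ in terms of nonforking for $\s$ and rule out the configurations witnessing the failure of $\goodp$ at the higher level, which is where the successfulness and categoricity of $\s$ enter. Once that is in hand, the passage to the superlimit is purely a matter of unwinding the definition of the $\goodp$ property.
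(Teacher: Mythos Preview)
Your proposal is correct and follows essentially the same route as the paper. The paper's proof is just two citations: $\s^+$ is $\goodp$ by \cite[III.1.9]{shelahaecbook}, and the superlimit in $\lambda^{++}$ then follows from \cite[3.14]{aec-stable-aleph0-apal}; your explanation simply unpacks what these references say (preservation of $\goodp$ under the successor operation, and the fact that $\goodp$ forces the saturated model one cardinal up to be a superlimit).
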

\begin{proof}
  The frame $\s^+$ is $\goodp$ by \cite[III.1.9]{shelahaecbook}. The ``in particular'' part then follows from \cite[3.14]{aec-stable-aleph0-apal}.
\end{proof}

\begin{thm}\label{n-ss-1}
  Let $\K$ be an AEC, let $\lambda \ge \LS (\K)$, and let $n \in [1, \omega)$. If:
  
  \begin{enumerate}
  \item $\WGCH ([\lambda, \lambda^{+n}])$.
  \item $\K$ is nicely $[\lambda^{+2}, \lambda^{+(n + 1)}]$-stable.
  \item $\K_{\lambda}$ has a superlimit.
  \item $\K_{\lambda^+}$ has a superlimit.
  \item $\K$ is $\lambda^{+(n + 1)}$-semisolvable (recall Definition \ref{solv-def}).
  \end{enumerate}

  Then $\K$ is nicely sl-superstable and has sl-uniqueness triples in $[\lambda, \lambda^{+n}]$.
\end{thm}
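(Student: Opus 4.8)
The plan is to establish very nice $\lambda^{+m}$-superstability by induction on $m \in [0,n]$, feeding each level into Corollary \ref{weak-succ-constr}. To apply that corollary at level $\lambda^{+m}$ I need three inputs: $\WGCH(\lambda^{+m})$ (given by hypothesis (1) since $m \le n$), nice $\lambda^{+m}$-superstability, and nice $\lambda^{+(m+1)}$-stability. The last of these is where the extra stability hypothesis at $\lambda^{+(n+1)}$ pays off: it is exactly the ``one more level'' that Remark \ref{opt-rmk-2} shows is needed to push very niceness all the way up to $\lambda^{+n}$. For $m \ge 1$, nice $\lambda^{+(m+1)}$-stability is immediate from hypothesis (2), and for $m = 0$ it will come from the nice $\lambda^+$-superstability established at the bottom of the induction.

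First I would transfer semisolvability downward. Since a superlimit is universal and has a proper extension, every model of size $\lambda$ or $\lambda^+$ has a proper extension, so $\K_\lambda$ and $\K_{\lambda^+}$ have no maximal models; together with hypothesis (2) this gives no maximal models throughout $[\lambda, \lambda^{+(n+1)}]$, hence $(\lambda^{+m}, \lambda^{+(n+1)})$-extendibility for each $m \le n$. Feeding this into Fact \ref{solv-downward} together with hypothesis (5) yields $\lambda^{+m}$-semisolvability for every $m \le n$, so $\K$ is $[\lambda, \lambda^{+(n+1)}]$-semisolvable. For the base cases $m = 0, 1$, the superlimits are given (hypotheses (3), (4)), so Fact \ref{semisolv-fact}(4) gives nice $\lambda$- and nice $\lambda^+$-superstability, and Corollary \ref{weak-succ-constr} then upgrades both to very nice superstability (using nice $\lambda^+$-, respectively $\lambda^{+2}$-stability). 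I also record that, by Lemma \ref{sl-lim}, the superlimit at $\lambda^+$ is $(\lambda^+,\lambda^+)$-limit, hence saturated, so that $\Kslp{\lambda^+} = \Ksatp{\lambda^+}$.

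For the inductive step ($2 \le m \le n$), the only missing ingredient for Fact \ref{semisolv-fact}(4) is a superlimit in $\K_{\lambda^{+m}}$, and this I would manufacture from the frame machinery two levels down. By the induction hypothesis $\K$ is very nicely $\lambda^{+(m-2)}$- and $\lambda^{+(m-1)}$-superstable with $\Kslp{\lambda^{+(m-1)}} = \Ksatp{\lambda^{+(m-1)}}$; Corollary \ref{very-nice-frame} then produces a weakly successful categorical good $\lambda^{+(m-2)}$-frame $\s$ on $\Kslp{\lambda^{+(m-2)}}$ and a good $\lambda^{+(m-1)}$-frame on $\Ksatp{\lambda^{+(m-1)}}$. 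Fact \ref{succ-frame-thm} upgrades $\s$ to successful $\goodp$, and Fact \ref{succ-goodp} then yields a superlimit among the $\lambda^{+(m-1)}$-saturated models of size $\lambda^{+m}$. With this superlimit in hand, Fact \ref{semisolv-fact}(4) gives nice $\lambda^{+m}$-superstability and Corollary \ref{weak-succ-constr} (using nice $\lambda^{+(m+1)}$-stability from hypothesis (2)) gives very nice $\lambda^{+m}$-superstability; Lemma \ref{sl-lim} once more upgrades the new superlimit to saturated so that $\Kslp{\lambda^{+m}} = \Ksatp{\lambda^{+m}}$, closing the induction and, at $m=n$, producing the desired conclusion.

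The main obstacle is the last transfer: Fact \ref{succ-goodp} produces the superlimit \emph{inside} the class of $\lambda^{+(m-1)}$-saturated models of $\Kslp{\lambda^{+(m-2)}}$ of size $\lambda^{+m}$, whereas hypothesis (3) of Fact \ref{semisolv-fact} and the final conclusion require a superlimit in $\K_{\lambda^{+m}}$ itself. To bridge this I would show that this saturated subclass is dense in $\K_{\lambda^{+m}}$: Theorem \ref{nicely-stable-sl} gives that $\Kslp{\lambda^{+(m-2)}}_{\lambda^{+m}}$ is dense in $\K_{\lambda^{+m}}$ (using that $\K$ is nicely $[\lambda^{+(m-2)},\lambda^{+m}]$-stable, available from the induction together with hypothesis (2)), and saturated models are dense within $\Kslp{\lambda^{+(m-2)}}$, so by transitivity of density (Remark \ref{skel-trans}) and Remark \ref{skel-basics}(5) the superlimit is genuinely a superlimit in $\K_{\lambda^{+m}}$. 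Keeping straight the identifications $\Kslp{\lambda^{+k}} = \Ksatp{\lambda^{+k}}$ needed to feed the correct saturated class into Fact \ref{succ-frame-thm} is the most delicate bookkeeping in the argument; everything else is a routine assembly of the quoted facts.
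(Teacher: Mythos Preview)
Your proposal is correct and follows essentially the same route as the paper: transfer semisolvability down via extendibility (Fact \ref{solv-downward}), use Fact \ref{semisolv-fact}(4) together with the given superlimits to get nice $[\lambda,\lambda^+]$-superstability, upgrade to very nice via Corollary \ref{weak-succ-constr}, then iterate by manufacturing the next superlimit from the frame machinery (Fact \ref{succ-goodp}) and pushing it back into $\K$ via density. The paper compresses your inductive step by invoking Theorem \ref{very-nice-succ-thm} as a black box (so it never names Corollary \ref{very-nice-frame} or Fact \ref{succ-frame-thm} directly) and handles the density transfer of the new superlimit with a one-line appeal to Remark \ref{skel-basics}, whereas you unpack both of these explicitly; but the underlying argument is identical.
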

\begin{proof}
  Since $\K_{\lambda}$ and $\K_{\lambda^+}$ have superlimits, $\K_{[\lambda, \lambda^+]}$ has no maximal models. Since $\K$ is nicely $[\lambda^{+2}, \lambda^{+n}]$-stable, $\K_{[\lambda, \lambda^{+n}]}$ has no maximal models. Thus $\K$ is $(\lambda^{+m}, \lambda^{+(n + 1)})$-extendible for any $m \le n + 1$. By Fact \ref{semisolv-fact}(\ref{semisolv-4}) (used twice), $\K$ is nicely sl-$[\lambda, \lambda^+]$-superstable, hence $\K$ is nicely $[\lambda, \lambda^{+(n + 1)}]$-stable. By Corollary \ref{weak-succ-constr} (used twice), $\K$ has sl-uniqueness triples in $[\lambda, \lambda^+]$. In particular (Theorem \ref{very-nice-succ-thm}), $\Kslp{\lambda}$ has a successful $\goodp$ $\lambda$-frame. By Fact \ref{succ-goodp}, $\Kslp{\lambda}$ has a superlimit in $\lambda^{++}$. By Remark \ref{skel-basics}, this implies that $\K_{\lambda^{++}}$ has a superlimit. Now iterate what has just been proven, replacing $\lambda$ by $\lambda^+$, $\lambda^{++}$, ..., $\lambda^{+(n - 1)}$.
\end{proof}

The assumption of nice $\lambda^{+(n + 1)}$-stability in Theorem \ref{n-ss-1} cannot be removed because of the Hart-Shelah example (see Remark \ref{opt-rmk-2}). If we are only interested in building an $n$-successful $\goodp$-frame, we can replace it by categoricity in $\lambda^{+(n + 1)}$, or even by few models in $\lambda^{+(n + 1)}$. To see this, we will use two more facts:

\begin{fact}[{\cite[7.1.3]{jrsh875}}]\label{succ-many-models}
  Let $\s$ be a weakly successful categorical good $\lambda$-frame on an AEC $\K$. If $\Ii (\K, \lambda^{++}) < 2^{\lambda^{++}}$, then $\s$ is successful.
\end{fact}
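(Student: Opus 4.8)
The plan is to prove the contrapositive by Shelah's non-structure machinery: assuming $\s$ is weakly successful and categorical but \emph{not} successful, I would construct $2^{\lambda^{++}}$ pairwise non-isomorphic models in $\K_{\lambda^{++}}$, which forces $\Ii (\K, \lambda^{++}) = 2^{\lambda^{++}}$ and contradicts the hypothesis. So failure of successfulness and fewness of models in $\lambda^{++}$ are incompatible.

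First I would unpack the failure of successfulness. Since $\s$ is weakly successful, the uniqueness property for uniqueness triples yields a four-place nonforking relation $\NF$ on $\K_\lambda$ (exactly as produced in the proof of Theorem \ref{main-frame-constr}), and this induces on the $\lambda^+$-saturated models of size $\lambda^+$ a refined extension relation $\lea^\ast$ sitting inside $\lea$. By definition $\s$ is successful precisely when $\lea^\ast$ is smooth and coincides with $\lea$ on $\Ksatp{\lambda^+}_{\lambda^+}$. Its failure therefore supplies the crucial branching datum: a saturated $M$ of size $\lambda^+$ admitting two $\lea^\ast$-incompatible (non-equivalent) continuations that cannot be reconciled inside any common amalgam. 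This is exactly the local input needed to make genuinely independent binary choices along a tree.

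Next I would run the standard tree construction indexed by $\fct{<\lambda^{++}}{2}$. I build an increasing continuous system $\seq{M_\eta : \eta \in \fct{<\lambda^{++}}{2}}$ of $\lambda^+$-saturated models in $\K_{\lambda^+}$, using the two incompatible continuations at each splitting node and, at limit levels, taking unions (saturating where necessary so that each $M_\eta$ stays $\lambda^+$-saturated). Each branch $\nu \in \fct{\lambda^{++}}{2}$ then yields a model $M_\nu := \bigcup_{\alpha < \lambda^{++}} M_{\nu \rest \alpha} \in \K_{\lambda^{++}}$. To force pairwise non-isomorphism I would invoke the weak diamond $\Phi_{\lambda^{++}}$, which holds because $2^{\lambda^+} < 2^{\lambda^{++}}$ (the degenerate case $2^{\lambda^+} = 2^{\lambda^{++}}$ is absorbed, since then the freely branching construction already produces $2^{\lambda^{++}}$ models directly): the guessing function diagonalizes against every candidate isomorphism $f : M_\nu \cong M_{\nu'}$ by predicting, at stationarily many levels $\delta$, the restriction $f \rest M_{\nu \rest \delta}$ and then steering the branching choice at $\delta$ into the continuation that $f$ cannot accommodate. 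This reproduces the argument of \cite[7.1.3]{jrsh875}.

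The hard part is this diagonalization step: converting a single local failure of smoothness into a construction robust enough that, at stationarily many levels, the two branching options are genuinely distinguishable by any partial isomorphism. This needs the full strength of weak successfulness (so that $\NF$ at $\lambda$ governs limits and amalgams correctly and the incompatible continuations persist along the tree) together with the weak-diamond prediction, and it is where all the combinatorial delicacy of Shelah's many-models theorem resides. By comparison the remaining bookkeeping — continuity of the system, preservation of $\lambda^+$-saturation at unions, and the final count $\Ii (\K, \lambda^{++}) = 2^{\lambda^{++}}$ — is routine.
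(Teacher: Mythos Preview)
The paper does not prove this statement at all: it is stated as a \emph{Fact} with a bare citation to \cite[7.1.3]{jrsh875}, and no proof or proof sketch is given. So there is nothing in the paper to compare your proposal against.

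That said, your sketch is broadly faithful to the Jarden--Shelah argument being cited: one shows the contrapositive by exploiting the failure of $\leap{\NF}$-smoothness to run a binary-tree many-models construction in $\lambda^{++}$, with the weak diamond at $\lambda^{++}$ separating branches. Two small points of imprecision: (i) ``successful'' only requires that $(\Ksatp{\lambda^+}, \leap{\NF})$ be a $\lambda^+$-AEC, i.e.\ smoothness of $\leap{\NF}$; the clause ``coincides with $\lea$'' is a separate, stronger condition (closer to $\goodp$/decency), so the branching datum you extract comes specifically from a failure of smoothness, not from a gap between $\leap{\NF}$ and $\lea$. (ii) Your handling of the degenerate case $2^{\lambda^+} = 2^{\lambda^{++}}$ is too quick: ``the freely branching construction already produces $2^{\lambda^{++}}$ models directly'' still needs an argument that the branches are pairwise non-isomorphic, and that is exactly what the weak diamond was supplying; the actual treatment of this case in the literature is a separate (counting) argument, not a free byproduct of the tree.
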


\begin{fact}\label{succ-decent}
  Let $\s$ be a successful categorical good $\lambda$-frame on an AEC $\K$. Then $\K_{\lambda^+}$ has a superlimit if and only if $\s$ is $\goodp$.
\end{fact}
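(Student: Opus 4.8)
The plan is to reduce $\goodp$ to the existence of a superlimit among the $\lambda^+$-saturated models and then to identify the given superlimit of $\K_{\lambda^+}$ with such a model. Recall that for a successful categorical good $\lambda$-frame the $\goodp$ property has a characterization purely in terms of saturated models in $\lambda^+$: by the theory of successful frames (\cite[\S III.1]{shelahaecbook}; see also \cite[3.14]{aec-stable-aleph0-apal}, which is the tool already invoked for Fact \ref{succ-goodp}), $\s$ is $\goodp$ exactly when the class $\Ksatp{\lambda^+}$ of $\lambda^+$-saturated models possesses a superlimit --- intuitively, when the union of an increasing chain of $\lambda^+$-saturated models of size $\lambda^+$ is again $\lambda^+$-saturated. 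Thus it suffices to produce a superlimit inside $\Ksatp{\lambda^+}$.

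First I would record the structural facts available from the hypotheses. Since $\s$ is a successful categorical good $\lambda$-frame, $\K$ is stable in $\lambda^+$ and, more precisely, nicely $\lambda^+$-stable; as in the proof of Corollary \ref{weak-succ-constr} (via Fact \ref{bf-lambdap}), the $\lambda^+$-saturated models of size $\lambda^+$ are exactly the $(\lambda^+, \lambda^+)$-limit models, so they form a dense subclass of $\K_{\lambda^+}$ and are unique up to isomorphism. Let $M^\ast$ be the superlimit of $\K_{\lambda^+}$ given by hypothesis. Applying Lemma \ref{sl-lim} at $\lambda^+$, $M^\ast$ is $(\lambda^+, \lambda^+)$-limit, hence saturated, so $M^\ast \in \Ksatp{\lambda^+}$.

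It remains to transfer superlimitness to the subclass. As $\Ksatp{\lambda^+}$ is dense in $\K_{\lambda^+}$ and $M^\ast$ belongs to it, Remark \ref{skel-basics}(5) gives that $M^\ast$ is a superlimit in $\Ksatp{\lambda^+}$ as well: concretely, any increasing continuous chain of $\lambda^+$-saturated models is a chain of copies of $M^\ast$ (by uniqueness of the saturated model), so clause (c) of Definition \ref{sl-def}, applied in $\K_{\lambda^+}$, shows the union is isomorphic to $M^\ast$ and is therefore saturated. By the characterization recalled above, $\s$ is $\goodp$.

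The main obstacle I anticipate is getting the characterization of $\goodp$ in exactly the usable form --- namely as an equivalence with the existence of a superlimit in $\Ksatp{\lambda^+}$ --- and securing its inputs from ``successful categorical'' alone: one must verify that a successful categorical good $\lambda$-frame already forces stability (and hence the existence and uniqueness of the $\lambda^+$-saturated model) in $\lambda^+$, so that Lemma \ref{sl-lim} applies and the identification of $M^\ast$ with the saturated model is legitimate. Care is also needed to keep this non-circular, since the closure of $\Ksatp{\lambda^+}$ under unions is itself essentially the content of $\goodp$; the argument must use only the $\lambda^+$-local structure (stability, density of amalgamation bases, uniqueness of limit models), not the assumption that $\Ksatp{\lambda^+}$ is an AEC.
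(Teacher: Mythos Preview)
Your approach is correct and is essentially what the paper does: its proof simply cites \cite[3.2]{tame-succ-v4} and \cite[5.12]{tame-succ-v4}, which together say that a successful categorical good $\lambda$-frame is $\goodp$ exactly when it is \emph{decent} (i.e., increasing unions of $\lambda^+$-saturated models in $\K_{\lambda^+}$ are saturated), and your identification of the given superlimit with the saturated model via Lemma \ref{sl-lim} is the intended verification of that condition. Your worry about the citation is well-founded---\cite[3.14]{aec-stable-aleph0-apal} goes from $\goodp$ to a superlimit rather than the converse---but the characterization you need is precisely what the two tame-succ-v4 references supply.
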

\begin{proof}
  By \cite[3.2]{tame-succ-v5-toappear} $\s$ is decent if and only if there is a superlimit in $\K_{\lambda^+}$. By \cite[5.12]{tame-succ-v5-toappear}, $\s$ is decent if and only if it is $\goodp$.
\end{proof}

\begin{thm}\label{n-ss-2}
  Let $\K$ be an AEC, let $\lambda \ge \LS (\K)$, and let $n \in [1, \omega)$. If:

    \begin{enumerate}
    \item $\WGCH ([\lambda, \lambda^{+n}])$.
    \item $\K$ is nicely $[\lambda^{+2}, \lambda^{+n}]$-stable.
    \item $\K_\lambda$ has a superlimit.
    \item $\K_{\lambda^+}$ has a superlimit.
    \item $\K$ is $\lambda^{+(n + 1)}$-semisolvable.
    \item $\Ii (\K, \lambda^{+(n + 1)}) < 2^{\lambda^{+(n + 1)}}$.
    \end{enumerate}

    Then there is an $n$-successful $\goodp$ $\lambda$-frame on $\Kslp{\lambda}$.
\end{thm}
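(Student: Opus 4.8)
The plan is to build, by the machinery already developed, an $(n-1)$-successful $\goodp$ $\lambda$-frame $\s$ on $\Kslp{\lambda}$, and then to upgrade its top layer $\s^{+(n-1)}$ from weakly successful to successful using the few-models hypothesis (6) in place of the nice stability at $\lambda^{+(n+1)}$ that Theorem \ref{n-ss-1} would have required.

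First I would transfer semisolvability downward, exactly as at the start of the proof of Theorem \ref{n-ss-1}. Since $\K_\lambda$ and $\K_{\lambda^+}$ have superlimits and $\K$ is nicely $[\lambda^{+2},\lambda^{+n}]$-stable, $\K_{[\lambda,\lambda^{+n}]}$ has no maximal models; in particular $\K$ is $(\lambda^{+n},\lambda^{+(n+1)})$-extendible, so Fact \ref{solv-downward} together with hypothesis (5) gives that $\K$ is $\lambda^{+n}$-semisolvable. For $n\ge 2$ this is precisely the fifth hypothesis of Theorem \ref{n-ss-1} with $n$ replaced by $n-1$; the remaining hypotheses ($\WGCH([\lambda,\lambda^{+(n-1)}])$, nice $[\lambda^{+2},\lambda^{+n}]$-stability, and the superlimits at $\lambda$ and $\lambda^+$) are immediate, so that theorem yields that $\K$ is very nicely $[\lambda,\lambda^{+(n-1)}]$-superstable, and Theorem \ref{very-nice-succ-thm}(1) produces an $(n-1)$-successful $\goodp$ $\lambda$-frame $\s$ on $\Kslp{\lambda}$. (For $n=1$ there is no $n-1$ to feed in; instead I would continue the downward transfer to $[\lambda,\lambda^{+2}]$-semisolvability and run the base construction directly: Fact \ref{semisolv-fact} and Corollary \ref{weak-succ-constr} give that $\K$ is very nicely $\lambda$-superstable, and Corollary \ref{very-nice-frame} yields a weakly successful categorical good $\lambda$-frame $\s=\s^{+0}$ on $\Kslp{\lambda}$.)

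The crucial step is the top layer. Write $\K' := \Kslp{\lambda^{+(n-1)}} = \Ksatp{\lambda^{+(n-1)}}$; then $\s^{+(n-1)}$ is a good $\lambda^{+(n-1)}$-frame on $\K'$, and I must show it is successful $\goodp$, which makes $\s$ $n$-successful $\goodp$. I cannot invoke Fact \ref{succ-frame-thm} here, since that would require a good frame at $\lambda^{+n}$ and hence (through Corollary \ref{local-frame-constr}) essentially nice stability at $\lambda^{+(n+1)}$, which is exactly what hypothesis (2) omits. Instead I would first note that $\s^{+(n-1)}$ is weakly successful: by very nice $\lambda^{+(n-1)}$-superstability (part of the very nice superstability already obtained) and Corollary \ref{very-nice-frame} there is a weakly successful good $\lambda^{+(n-1)}$-frame on $\K'$, and by the canonicity theorem (Fact \ref{canon-thm}) it must coincide with $\s^{+(n-1)}$. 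Since $\K'$ is categorical in $\lambda^{+(n-1)}$ and $\K'_{\lambda^{+(n+1)}} \subseteq \K_{\lambda^{+(n+1)}}$, hypothesis (6) gives $\Ii(\K',\lambda^{+(n+1)}) \le \Ii(\K,\lambda^{+(n+1)}) < 2^{\lambda^{+(n+1)}}$, so Fact \ref{succ-many-models} shows $\s^{+(n-1)}$ is successful.

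Finally I would check that $\s^{+(n-1)}$ is $\goodp$. For $n\ge 2$ this is automatic: $\s^{+(n-2)}$ is a successful categorical $\goodp$ frame, so its successor $\s^{+(n-1)}$ is $\goodp$ by Fact \ref{succ-goodp}. For $n=1$ it follows from Fact \ref{succ-decent}, using that $\K'_{\lambda^+}$ has a superlimit --- namely the superlimit of $\K_{\lambda^+}$ from hypothesis (4), which lies in the dense subclass $\Kslp{\lambda}_{\lambda^+}$ by Theorem \ref{nicely-stable-sl}. Hence $\s^{+k}$ is successful for every $k\le n-1$ and $\s$ is $\goodp$, so $\s$ is an $n$-successful $\goodp$ $\lambda$-frame on $\Kslp{\lambda}$. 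The main obstacle is this last layer: one must be careful to \emph{identify} the weakly successful frame produced from uniqueness triples with the successor frame $\s^{+(n-1)}$ (via canonicity), so that the few-models hypothesis is applied to the correct class, rather than attempting to push the frame-building tower one cardinal higher, where the stability hypotheses run out.
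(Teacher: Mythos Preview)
Your proposal is correct and follows essentially the same route as the paper. The paper compresses everything into a two-line reduction: ``as in the proof of Theorem~\ref{n-ss-1}, we can replace $\lambda$ by $\lambda^{+(n-1)}$ if necessary to assume without loss of generality that $n=1$,'' then shows $\K$ is very nicely $\lambda$-superstable, applies Corollary~\ref{very-nice-frame} to get a weakly successful categorical good $\lambda$-frame, and finishes with Fact~\ref{succ-many-models} and Fact~\ref{succ-decent}. Your version simply unpacks this reduction: you first build the $(n-1)$-successful $\goodp$ frame via Theorem~\ref{n-ss-1} (which is what ``as in the proof of Theorem~\ref{n-ss-1}'' hides), and then treat the top layer exactly as the paper does at $n=1$. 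Your explicit invocation of canonicity to identify $\s^{+(n-1)}$ with the weakly successful frame coming from Corollary~\ref{very-nice-frame} is a detail the paper suppresses by working directly at the shifted $\lambda$, and your observation that for $n\ge 2$ the $\goodp$ property at the top is automatic from Fact~\ref{succ-goodp} (rather than Fact~\ref{succ-decent}) is a harmless variation.
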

\begin{proof}
  As in the proof of Theorem \ref{n-ss-1}, we can replace $\lambda$ by $\lambda^{+(n - 1)}$ if necessary to assume without loss of generality that $n = 1$. Also as in the proof of Theorem \ref{n-ss-1}, $\K$ is nicely sl-$\lambda$-superstable and has sl-uniqueness triples. By Corollary \ref{very-nice-frame}, there is a weakly successful good $\lambda$-frame $\s$ on $\Kslp{\lambda}$. By Fact \ref{succ-many-models}, $\s$ is successful. By Fact \ref{succ-decent}, $\s$ is $\goodp$.
\end{proof}

\begin{remark}
  In Theorem \ref{n-ss-2}, if the superlimit in $\K_{\lambda^+}$ is an amalgamation base, the proof shows that we can replace $\WGCH ([\lambda, \lambda^{+n}])$ by $\WGCH ([\lambda, \lambda^{+n}))$.
\end{remark}

\section{The main categoricity transfer}

Having successful good frames implies that the categoricity spectrum is quite well understood, as the next two facts show:

\begin{fact}\label{succ-categ}
  Let $\s$ be a categorical $n$-successful $\goodp$ $\lambda$-frame on an AEC $\K$. If $\K$ is categorical in \emph{some} $\mu \in (\lambda, \lambda^{+(n + 1)}]$, then $\K$ is categorical in \emph{all} $\mu' \in (\lambda, \lambda^{+(n + 1)}]$. Moreover, $\K_{[\lambda, \lambda^{+n}]}$ has amalgamation, no maximal models, and is $\lambda$-tame.
\end{fact}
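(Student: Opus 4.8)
The plan is to unpack the tower of derived good frames, read the structural ``moreover'' clause off their axioms together with the connectedness of successive frames, and then transfer categoricity by a single step upward and all the way downward inside the interval.

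First I would record the consequences of $n$-successfulness. For each $m$ with $1 \le m \le n$ the $m$-th successor $\s^{+m}$ is a type-full good $\lambda^{+m}$-frame on $\Ksatp{\lambda^{+m}}$, while $\s^{+0} = \s$ is a good $\lambda$-frame on $\K$ that is categorical in $\lambda$. Each $\s^{+m}$ is again categorical, so $\K$ carries a unique saturated model in each $\lambda^{+m}$ with $1 \le m \le n$. Applying Fact \ref{succ-goodp} to the successful categorical $\goodp$ frame $\s^{+(n-1)}$, the class $\Ksatp{\lambda^{+n}}$ has a superlimit in $\lambda^{+(n+1)}$; being $(\lambda^{+(n+1)}, \lambda^{+(n+1)})$-limit it is saturated, so a saturated model of size $\lambda^{+(n+1)}$ exists as well. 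By Fact \ref{frame-ss}, each $\left(\Ksatp{\lambda^{+m}}\right)_{\lambda^{+m}}$ has amalgamation, joint embedding, and no maximal models, and $\K$ is stable in $\lambda^{+m}$.

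Next I would establish the ``moreover'' clause. Amalgamation in each cardinal of $[\lambda, \lambda^{+n}]$ comes from the good and successful structure: $\s$ gives amalgamation in $\lambda$, and the successfulness of each $\s^{+m}$ (for $m < n$) upgrades this to full amalgamation in $\lambda^{+(m+1)}$; mixed-size amalgamation over the interval then follows by directedness, and no maximal models in the interval is immediate from the frames together with the superlimits produced above. The genuinely new point is $\lambda$-tameness on $[\lambda, \lambda^{+n}]$, which I would extract from the connectedness of consecutive frames. Because $\s^{+(m+1)}$ is the successor of $\s^{+m}$, nonforking in $\s^{+(m+1)}$ over a saturated $M \in \K_{\lambda^{+(m+1)}}$ is computed from $\s^{+m}$-nonforking of the restrictions of types to the $\lambda^{+m}$-sized pieces of $M$; hence two types over $M$ agreeing on all such pieces must be equal. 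Iterating this locality from level $n$ down to level $0$ yields that types in the interval are determined by their restrictions to size $\lambda$, i.e.\ $\lambda$-tameness.

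For the transfer I would argue bidirectionally, using that categoricity in a given $\mu \in (\lambda, \lambda^{+(n+1)}]$ is equivalent to ``every model of size $\mu$ is saturated'' (the saturated model of size $\mu$ exists by the first paragraph, and is unique by amalgamation and joint embedding). Downward, from categoricity in $\mu = \lambda^{+m}$ to categoricity in $\lambda^{+(m-1)}$, I would invoke the classical fact that categoricity in a successor cardinal, in the presence of amalgamation and stability, forces every model of the predecessor size to be saturated, and iterate down to $\lambda^+$. Upward by one cardinal, from $\lambda^{+n}$ to $\lambda^{+(n+1)}$, is exactly the payload of the $\goodp$ good frame $\s^{+n}$ (which is $\goodp$ by Fact \ref{succ-goodp}): the top-level frame, combined with the tameness and amalgamation already secured, forces the model of size $\lambda^{+(n+1)}$ to be saturated, hence unique. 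Chaining the two directions gives categoricity in all of $(\lambda, \lambda^{+(n+1)}]$.

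I expect the main obstacle to be the simultaneous control of tameness and the single upward step at the very top. Proving $\lambda$-tameness requires genuinely tracing how the successor operation on frames recovers nonforking from the data one cardinal below, and the upward transfer from $\lambda^{+n}$ to $\lambda^{+(n+1)}$ has to be carried out \emph{without} a full good frame sitting at $\lambda^{+(n+1)}$, relying only on the $\goodp$ superlimit furnished by Fact \ref{succ-goodp} and on the amalgamation and tameness established beneath it.
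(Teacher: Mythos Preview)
The paper does not prove this statement at all: it is labeled a \emph{Fact} and the proof consists entirely of citations to Shelah's book, namely \cite[III.2.3(4), III.2.9, III.2.12]{shelahaecbook} for the categoricity transfer and \cite[III.1.10]{shelahaecbook} for tameness, with amalgamation and no maximal models read off the frames. So there is nothing to compare your argument against except Shelah's Chapter III itself.

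Your outline has the right architecture (unpack the tower $\s, \s^+, \ldots, \s^{+n}$, get tameness from the connectedness of successive frames, reduce categoricity to saturation, and push up and down), and the tameness paragraph is essentially a correct informal account of \cite[III.1.10]{shelahaecbook}. The place where the sketch is too optimistic is the categoricity transfer. The ``classical fact'' you invoke for the downward step --- that categoricity in a successor, together with amalgamation and stability, forces every model one cardinal below to be saturated --- is a first-order reflex that does not survive as stated in AECs; what actually drives Shelah's \cite[III.2.9, III.2.12]{shelahaecbook} is the two-dimensional $\NF$ relation produced by \emph{successful}ness, which lets one stretch a non-saturated $M \in \K_{\lambda^{+m}}$ to a non-$\lambda^{+m}$-saturated model in $\lambda^{+(m+1)}$. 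Similarly, the upward step at the top is not obtained from ``$\goodp$ plus tameness plus amalgamation'' in any soft way; it again goes through the $\NF$-machinery (this is the content of \cite[III.2.3(4)]{shelahaecbook}). In short, your plan names the right ingredients but underestimates where the real work lies: both directions of the transfer live inside the successful-frame apparatus rather than in any general AEC fact, and the paper simply quotes that apparatus.
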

\begin{proof}
  Combining \cite[III.2.3(4)]{shelahaecbook}, \cite[III.2.9]{shelahaecbook}, and \cite[III.2.12]{shelahaecbook}, we get that $\K$ is categorical in every $\mu' \in (\lambda, \lambda^{+(n + 1)}]$. Tameness then follows from \cite[III.1.10]{shelahaecbook}. As for amalgamation and no maximal models, they are immediate from categoricity and the definition of a good frame.
\end{proof}

\begin{fact}\label{limit-categ}
  Let $\s$ be a categorical $\omega$-successful $\goodp$ $\lambda$-frame on an AEC $\K$. If $\WGCH ([\lambda^{+n}, \lambda^{+\omega}))$ holds for some $n < \omega$, then $\K$ has arbitrarily large models, and if $\K$ is categorical in \emph{some} $\mu > \lambda$, then $\K$ is categorical in \emph{all} $\mu' > \lambda$ and moreover $\K_{\ge \lambda}$ has amalgamation, no maximal models, and is $\lambda$-tame.
\end{fact}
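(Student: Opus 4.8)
The plan is to treat the two cardinal regimes $[\lambda, \lambda^{+\omega})$ and $[\lambda^{+\omega}, \infty)$ separately and then glue them, since the former is handled by the finite-successor analysis already available (Fact \ref{succ-categ}) while the latter requires the excellence machinery of \cite{multidim-v2}.

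First I would dispose of the finite range. Since $\s$ is $\omega$-successful $\goodp$, it is in particular $n$-successful $\goodp$ for every $n < \omega$, so Fact \ref{succ-categ} applies at each level: $\K_{[\lambda, \lambda^{+n}]}$ has amalgamation, no maximal models, is $\lambda$-tame, and categoricity in any single $\mu \in (\lambda, \lambda^{+(n+1)}]$ propagates across the whole interval $(\lambda, \lambda^{+(n+1)}]$. Taking the union over all $n < \omega$ yields that $\K_{[\lambda, \lambda^{+\omega})}$ has amalgamation, no maximal models, is $\lambda$-tame, and that categoricity in any single $\mu \in (\lambda, \lambda^{+\omega})$ transfers across the entire interval $(\lambda, \lambda^{+\omega})$.

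Next I would pass to the infinite range via \cite{multidim-v2}. The key observation is that the shifted frame $\s^{+n}$ is again an $\omega$-successful $\goodp$ frame, now based at $\lambda^{+n}$, and that $(\lambda^{+n})^{+\omega} = \lambda^{+\omega}$; thus the hypothesis $\WGCH([\lambda^{+n}, \lambda^{+\omega}))$ is exactly $\WGCH$ on the end segment $[\lambda^{+n}, (\lambda^{+n})^{+\omega})$ needed to feed $\s^{+n}$ into the main result of \cite{multidim-v2}. That result derives excellence from the $\omega$-successful $\goodp$ frame and, combined with the $\lambda$-tameness and amalgamation established in the previous step, lifts the good frame to every cardinal $\ge \lambda^{+n}$. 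This produces models in every cardinality (hence arbitrarily large models, giving the first conclusion), amalgamation and no maximal models throughout $\K_{\ge \lambda^{+n}}$, $\lambda$-tameness preserved upward, and an eventual categoricity transfer in which categoricity at any cardinal above the base propagates both upward and downward to the relevant threshold.

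Finally I would glue and read off the statement. The amalgamation, no-maximal-models, and $\lambda$-tameness conclusions hold on $[\lambda, \lambda^{+\omega})$ by the first step and on $[\lambda^{+n}, \infty)$ by the second, and these two ranges cover $[\lambda, \infty)$, giving the ``moreover'' clause for $\K_{\ge \lambda}$. For the categoricity transfer, note that $\s$ is categorical, so $\K$ is already categorical in $\lambda$; if $\K$ is in addition categorical in some $\mu > \lambda$, then either $\mu < \lambda^{+\omega}$, in which case the first step spreads categoricity across $(\lambda, \lambda^{+\omega})$ and the lifted-frame transfer of the second step carries it above $\lambda^{+\omega}$, or $\mu \ge \lambda^{+\omega}$, in which case the excellence-based transfer of \cite{multidim-v2}, together with categoricity at the base cardinal $\lambda$, yields categoricity in all $\mu' > \lambda$. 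I expect the main obstacle to lie entirely inside the cited black box \cite{multidim-v2}: deriving excellence from an $\omega$-successful $\goodp$ frame and executing the frame-lifting to all cardinals is the substantial technical work, and the only delicate point in the present argument is checking that this machinery needs $\WGCH$ only on the end segment $[\lambda^{+n}, \lambda^{+\omega})$ — which is precisely why one shifts to $\s^{+n}$ rather than working with $\s$ directly.
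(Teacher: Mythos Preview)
The paper's proof is a bare citation to \cite[7.20(1)]{multidim-v2} and \cite[14.4]{multidim-v2}; it treats the whole statement as a black box imported from that reference. Your proposal is essentially a correct unpacking of what that black box contains: you rightly identify that the shift to $\s^{+n}$ is what makes the partial $\WGCH$ hypothesis sufficient, and your finite/infinite decomposition with Fact~\ref{succ-categ} handling $[\lambda,\lambda^{+\omega})$ is a reasonable way to organize the argument.

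Two minor remarks. First, the amalgamation, no-maximal-models, and $\lambda$-tameness on $[\lambda,\lambda^{+\omega})$ actually follow unconditionally from $\omega$-successfulness (each $\s^{+m}$ is a good $\lambda^{+m}$-frame, and tameness is built into the successor construction), so you need not wait for the categoricity hypothesis there. Second, the cited results in \cite{multidim-v2} already package the finite and infinite ranges together, so your explicit split via Fact~\ref{succ-categ} is somewhat redundant with what the reference provides --- but it does no harm and makes the dependencies clearer. In short, your outline is correct and aligned with the paper's intent; the paper simply declines to unpack the citation.
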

\begin{proof}
  By \cite[7.20(1)]{multidim-v2} and \cite[14.4]{multidim-v2}
\end{proof}

We can now prove the main technical categoricity transfer theorem of this paper. As in most results of the present paper, the hypotheses include a set-theoretic assumption, the $\WGCH$. The model-theoretic assumptions are essentially the following: no maximal models, some superlimits, and some instances of nice stability. They all hold in categorical AECs with amalgamation (see e.g.\ Fact \ref{categ-struct}), and we will see (Theorem \ref{sl-nmm-thm}) that they can also be derived from only (categoricity and) no maximal models. 

\begin{thm}[Main theorem]\label{categ-main}
  Let $\K$ be an AEC, let $ \lambda \ge \LS (\K)$, $\mu > \lambda^+$, and set $\mu_0 := \min (\mu, \lambda^{+\omega})$. If:

  \begin{enumerate}
  \item $\WGCH ([\lambda, \mu_0))$.
  \item $\K$ has arbitrarily large models.
  \item $\K_\lambda$ has a superlimit.
  \item $\K_{\lambda^+}$ has a superlimit.
  \item $\K$ is nicely $[\lambda^{+2}, \mu_0)$-stable. 
  \item $\K$ is $(\mu_0, \mu)$-extendible.
  \item $\K$ is categorical in $\mu$.
  \end{enumerate}

  Then $\Kslp{\lambda}_{[\lambda, \mu]}$ ($\Kslp{\lambda}$ is the class generated by the superlimit in $\lambda$, see Definition \ref{ksl-def}) is categorical in every $\mu' \in [\lambda, \mu]$, has amalgamation, no maximal models, and is $\lambda$-tame. Moreover if $\mu \ge \lambda^{+\omega}$, then $\Kslp{\lambda}$ is categorical in every $\mu' \ge \lambda$, has amalgamation, no maximal models, and is $\lambda$-tame.
\end{thm}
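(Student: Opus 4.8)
The plan is to build a successful $\goodp$ frame on $\Kslp{\lambda}$ and then feed the categoricity transfer results, Facts \ref{succ-categ} and \ref{limit-categ}, with a single categoricity point. I split into the case $\mu < \lambda^{+\omega}$, where $\mu = \lambda^{+(n+1)}$ for some $n \in [1,\omega)$ and $\mu_0 = \mu$, and the case $\mu \ge \lambda^{+\omega}$, where $\mu_0 = \lambda^{+\omega}$.

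First I would set up the frame-construction theorems. From (2) and categoricity in $\mu$, $\K$ is $\mu$-solvable, hence $\mu$-semisolvable (Remark \ref{solv-rmk}). In the finite case this is already semisolvability at the level $\lambda^{+(n+1)}$ required by Theorem \ref{n-ss-2}; since nice $[\lambda^{+2},\lambda^{+n}]$-stability follows from (5) and $\Ii (\K,\mu)=1<2^{\mu}$ supplies the few-models hypothesis, Theorem \ref{n-ss-2} yields an $n$-successful $\goodp$ $\lambda$-frame $\s$ on $\Kslp{\lambda}$. In the infinite case I first propagate semisolvability downward: the superlimits (3),(4) together with nice $[\lambda^{+2},\lambda^{+\omega})$-stability give no maximal models throughout $[\lambda,\lambda^{+\omega})$, which with the $(\lambda^{+\omega},\mu)$-extendibility of (6) makes $\K$ $(\lambda^{+(n+1)},\mu)$-extendible for every $n$; Fact \ref{solv-downward} then gives $\lambda^{+(n+1)}$-semisolvability for all $n$. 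Applying Theorem \ref{n-ss-1} for each $n$ produces very nice $[\lambda,\lambda^{+n}]$-superstability and hence (Theorem \ref{very-nice-succ-thm}) an $n$-successful $\goodp$ frame; by canonicity (Fact \ref{canon-thm}) these coincide, so the single frame $\s$ is $\omega$-successful. In either case $\s$ is categorical because $\Kslp{\lambda}$ is categorical in $\lambda$ (Fact \ref{sl-basic}).

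Next I would produce the one categoricity point that switches on the transfer. Since $\Kslp{\lambda}\subseteq\K$ we have $\Kslp{\lambda}_\mu\subseteq\K_\mu$, and $\K$ is categorical in $\mu$, so it suffices to show $\Kslp{\lambda}_\mu\neq\emptyset$: then every member of $\Kslp{\lambda}_\mu$ is isomorphic to the unique model of $\K_\mu$, so $\Kslp{\lambda}$ is categorical in $\mu$. In the finite case the good $\lambda^{+n}$-frame $\s^{+n}$ gives amalgamation and no maximal models at $\lambda^{+n}$, so I can build a $(\lambda^{+n},\lambda^{+(n+1)})$-limit, which lies in $\Kslp{\lambda}$ by Theorem \ref{nicely-stable-sl}(1); this witnesses $\Kslp{\lambda}_\mu\neq\emptyset$. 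In the infinite case I build models of $\Kslp{\lambda}$ of size $\lambda^{+\omega}$ as unions of finite-level limits and then climb to size $\mu$ using the density of $\Kslp{\lambda}$ in $\K$ (Theorem \ref{nicely-stable-sl}) together with (2) and the extendibility of (6). With the trigger in hand, Fact \ref{succ-categ} (finite case) yields categoricity of $\Kslp{\lambda}$ in all of $(\lambda,\lambda^{+(n+1)}]$ plus amalgamation, no maximal models and $\lambda$-tameness in $[\lambda,\lambda^{+n}]$, while Fact \ref{limit-categ} (infinite case, using $\WGCH([\lambda,\lambda^{+\omega}))$) yields arbitrarily large models, categoricity in all $\mu'>\lambda$, and amalgamation, no maximal models and $\lambda$-tameness for $\Kslp{\lambda}_{\ge\lambda}$. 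Adjoining categoricity in $\lambda$ (Fact \ref{sl-basic}) gives categoricity on the full intervals claimed.

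I expect the main obstacle to be concentrated at the top cardinal, and to be twofold. First, in the infinite case the nonemptiness and matching of $\Kslp{\lambda}_\mu$ with the unique top model of $\K$ is delicate precisely because amalgamation is \emph{not} assumed for $\K$ itself: one cannot simply invoke saturation of the $\mu$-model, and must instead push the density of $\Kslp{\lambda}$ in $\K$ past the singular cardinal $\lambda^{+\omega}$ using extendibility. Second, in the finite case the frame only delivers amalgamation and $\lambda$-tameness up to $\lambda^{+n}$, whereas the conclusion demands them at the endpoint $\mu=\lambda^{+(n+1)}$; closing this last level requires resolving the $\mu$-sized models into $\lambda^{+n}$-chains and amalgamating them coherently via the nonforking amalgamation carried by $\s^{+n}$, with $\lambda$-tameness at $\mu$ following from the determination of types by their $\lambda$-small restrictions. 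The infinite case is cleaner at this point, since Fact \ref{limit-categ} hands over the structure at every level at once.
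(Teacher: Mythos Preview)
Your approach is essentially the paper's: the same case split, Theorem \ref{n-ss-2} then Fact \ref{succ-categ} in the finite case, and Theorem \ref{n-ss-1} plus Theorem \ref{very-nice-succ-thm} then Fact \ref{limit-categ} in the infinite case. The paper's proof is terser and simply says ``apply Fact \ref{succ-categ}'' (resp.\ Fact \ref{limit-categ}) without explicitly verifying the categoricity trigger for $\Kslp{\lambda}$; your added verification is correct but can be shortened.

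One overcomplication: in the infinite case you propose to ``push the density of $\Kslp{\lambda}$ in $\K$ past the singular cardinal $\lambda^{+\omega}$ using extendibility'' to get $\Kslp{\lambda}_\mu \neq \emptyset$. This is unnecessary. The first clause of Fact \ref{limit-categ} already says, unconditionally given an $\omega$-successful $\goodp$ frame and the required $\WGCH$, that the underlying class (here $\Kslp{\lambda}$) has arbitrarily large models. So $\Kslp{\lambda}_\mu \neq \emptyset$ is immediate, and categoricity of $\Kslp{\lambda}$ in $\mu$ follows from $\Kslp{\lambda}_\mu \subseteq \K_\mu$ and categoricity of $\K$ in $\mu$. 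No climbing through extendibility is needed, and there is no genuine obstacle here.

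Your second obstacle (amalgamation, no maximal models, and $\lambda$-tameness of $\Kslp{\lambda}$ at the top level $\mu = \lambda^{+(n+1)}$ in the finite case) is a fair observation: Fact \ref{succ-categ} as stated only yields these on $[\lambda, \lambda^{+n}]$. The paper's own proof does not spell this out either; it simply cites Fact \ref{succ-categ}. So this is not a defect of your argument relative to the paper's, but at most a small loose end in the statement that both proofs leave to the reader.
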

\begin{proof}
  By Remark \ref{solv-rmk}, $\K$ is $\mu$-semisolvable. Either $\mu < \lambda^{+\omega}$ or $\mu \ge \lambda^{+\omega}$. We consider these cases separately.

  \begin{itemize}
  \item If $\mu < \lambda^{+\omega}$, fix $n \in [1, \omega)$ such that $\mu = \lambda^{+(n + 1)}$ (recall that we are assuming that $\mu > \lambda^+$). By Theorem \ref{n-ss-2}, there is an $n$-successful $\goodp$ $\lambda$-frame on $\Kslp{\lambda}$. Now apply Fact \ref{succ-categ}.
  \item If $\mu \ge \lambda^{+\omega}$, first fix $n \in [1, \omega)$. Note that the hypotheses imply that $\K_{[\lambda, \lambda^{+\omega})}$ has no maximal models, so $\K$ must be $(\lambda^{+(n + 1)}, \mu)$-extendible. By Fact \ref{solv-downward}, $\K$ is $\lambda^{+(n + 1)}$-semisolvable. By Theorem \ref{n-ss-1}, $\K$ is nicely sl-superstable and has sl-uniqueness triples in $[\lambda, \lambda^{+n}]$. Since this is valid for all $n < \omega$, $\K$ is nicely sl-superstable and has sl-uniqueness triples in $[\lambda, \lambda^{+\omega})$. By Theorem \ref{very-nice-succ-thm}, for each $n < \omega$ there is an $n$-successful $\goodp$ $\lambda$-frame on $\Kslp{\lambda}$. By canonicity (Fact \ref{canon-thm}), all these good frames must be the same, and so in fact there is an $\omega$-successful $\goodp$ $\lambda$-frame on $\Kslp{\lambda}$. Now apply Fact \ref{limit-categ}.
  \end{itemize}
\end{proof}

\begin{remark}\label{sl-categ-rmk}
  For any AEC $\K$ and any $\mu > \lambda \ge \LS (\K)$ such that $\K_\lambda$ has a superlimit and $\K$ is categorical in $\mu$, $\Kslp{\lambda}_{\ge \mu} \subseteq \K_{\ge \mu}$. In the conclusion of the above theorem, equality will hold, because $\Kslp{\lambda}$ has a model of cardinality $\mu$.
\end{remark}

For the convenience of the reader, we state the special case of Theorem \ref{categ-main} where the AEC is also categorical in $\lambda$ (this will suffice to prove categoricity transfers in AECs with amalgamation, but in Section \ref{categ-nmm-sec} we will use the general case).

\begin{cor}\label{categ-main-cor}
  Let $\K$ be an AEC, let $ \lambda \ge \LS (\K)$, $\mu > \lambda^+$, and set $\mu_0 := \min (\mu, \lambda^{+\omega})$. If:

  \begin{enumerate}
  \item $\WGCH ([\lambda, \mu_0))$.
  \item $\K$ has arbitrarily large models.
  \item $\K$ is categorical in $\lambda$.
  \item $\K_{\lambda^+}$ has a superlimit.
  \item $\K$ is nicely $[\lambda^{+2}, \mu_0)$-stable. 
  \item $\K$ is $(\mu_0, \mu)$-extendible.
  \item $\K$ is categorical in $\mu$.
  \end{enumerate}

  Then $\K$ is categorical in every $\mu' \in [\lambda, \mu]$, has amalgamation, no maximal models, and is $\lambda$-tame. Moreover if $\mu \ge \lambda^{+\omega}$, then $\K$ is categorical in every $\mu' \ge \lambda$, has amalgamation, no maximal models, and is $\lambda$-tame.
\end{cor}
\begin{proof}
  Since $\K$ is categorical in $\lambda$, $\Kslp{\lambda} = \K_{\ge \lambda}$, so this is immediate from Theorem \ref{categ-main}.
\end{proof}

\section{Categoricity assuming amalgamation}

In this section, we use Corollary \ref{categ-main-cor} to fully describe the categoricity spectrum of AECs with amalgamation. Recall the definition of the categoricity spectrum:

\begin{defin}\label{cat-spec-def}
  The \emph{categoricity spectrum} of an AEC $\K$ is the class of cardinals $\mu \ge \LS (\K)$ such that $\K$ is categorical in $\mu$. We write $\Cat (\K)$ for the categoricity spectrum of $\K$.
\end{defin}
\begin{remark}
  For any AEC $\K$ and any cardinal $\lambda$, $\Cat (\K_{\ge \lambda}) = \Cat (\K) \cap [\lambda, \infty)$.
\end{remark}

We will use the following known results about the categoricity spectrum of AECs with amalgamation and arbitrarily large models. First, it is not too difficult to see that it is a closed class (see for example \cite[2.22]{categ-universal-2-selecta}):

\begin{fact}\label{closed-cat-spec}
  If $\K$ is an AEC with amalgamation and arbitrarily large models, then $\Cat (\K)$ is a closed class. More precisely, let $\K$ be an AEC with arbitrarily large models and let $\seq{\mu_i : i < \delta}$ be an increasing sequence in $\Cat (\K)$. If $\K_{\le \mu_i}$ has amalgamation for each $i < \delta$, then $\sup_{i < \delta} \mu_i \in \Cat (\K)$.
\end{fact}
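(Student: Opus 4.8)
The plan is to reduce the statement to the uniqueness of saturated models and then feed the hypotheses into Fact~\ref{categ-struct}. I may assume the sequence $\seq{\mu_i : i < \delta}$ has no last element (otherwise $\sup_{i<\delta}\mu_i$ is attained and already lies in $\Cat(\K)$), so $\delta$ is a limit ordinal and, passing to a cofinal subsequence, I may take $\seq{\mu_i : i < \delta}$ strictly increasing with each $\mu_i > \LS(\K)$; write $\mu := \sup_{i<\delta}\mu_i$. Since $\K_{\le \mu_i}$ has amalgamation for every $i$ and the $\mu_i$ are cofinal in $\mu$, the class $\K_{<\mu}$ has amalgamation. The first genuinely non-formal step is to arrange \emph{joint embedding}: by a standard argument, categoricity in $\mu_i$ together with amalgamation and arbitrarily large models forces all models of size $\mu_i$ into a single joint-embedding class $\K^\ast$ (the classes of the relation $M \sim N$ iff $M$ and $N$ have a common $\lea$-extension partition $\K$ into sub-AECs, each with amalgamation and joint embedding). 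Every $M \in \K_\mu$ has, by Löwenheim-Skolem-Tarski, a $\lea$-submodel of size $\mu_i$, which is isomorphic to \emph{the} model of size $\mu_i$ and hence lies in $\K^\ast$; since $M$ extends it, $M \in \K^\ast$ too. Thus I may replace $\K$ by $\K^\ast$ and assume joint embedding throughout (and $\K^\ast$ still has arbitrarily large models, as every sufficiently large model of $\K$ falls into $\K^\ast$ by the same submodel argument).

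With amalgamation below $\mu$ and joint embedding in hand, I note next that $\K_{<\mu_i}$ has no maximal models: given $M_0$ of size $<\mu_i$, use arbitrarily large models and Löwenheim-Skolem-Tarski to produce a model of size $\mu_i$, apply joint embedding to place $M_0$ and this model into a common extension, and then apply Löwenheim-Skolem-Tarski to extract a proper extension of $M_0$. Consequently the hypotheses of Fact~\ref{categ-struct} hold at each $\mu_i$: $\K$ has arbitrarily large models, $\mu_i > \LS(\K)$, $\K_{<\mu_i}$ has amalgamation and no maximal models, and $\K$ is categorical in $\mu_i$. That fact then delivers exactly what is needed, namely that the model of cardinality $\mu_i$ is \emph{saturated}.

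It remains to assemble these into categoricity in $\mu$. The key point is that every $M \in \K_\mu$ is $\mu$-saturated: given $M_0 \lea M$ with $\|M_0\| = \kappa < \mu$ and $p \in \gS(M_0)$, choose $i$ with $\mu_i > \kappa$ and, by Löwenheim-Skolem-Tarski inside $M$, a submodel $M_1 \lea M$ of size $\mu_i$ with $M_0 \lea M_1$; since $M_1$ is isomorphic to the saturated model of size $\mu_i$ and $\kappa < \mu_i$, the type $p$ is realized in $M_1$, hence in $M$. So every model in $\K_\mu$ is saturated of cardinality $\mu$; as $\K_{<\mu}$ has amalgamation and $\K$ has joint embedding, the model-homogeneous $=$ saturated lemma and the uniqueness of saturated models show that any two such models are isomorphic, whence $\Ii(\K, \mu) = 1$ and $\mu \in \Cat(\K)$. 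The main obstacle in this argument is the joint-embedding reduction of the first paragraph: once joint embedding is secured, the no-maximal-models conclusion and the saturation of each $\mu_i$-model are routine applications of Fact~\ref{categ-struct}, and the final amalgamation into categoricity is standard saturated-model theory. (The first, less precise sentence of the statement is the special case where $\seq{\mu_i : i < \delta}$ ranges cofinally over $\Cat(\K)$ and $\K$ has full amalgamation.)
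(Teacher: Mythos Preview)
Your argument is correct and follows the standard route: reduce to a single joint-embedding component, use categoricity in each $\mu_i$ (via Fact~\ref{categ-struct}) to see that the model of size $\mu_i$ is saturated, observe that any $M \in \K_\mu$ is then $\mu$-saturated by taking $\lea$-submodels of size $\mu_i$, and conclude by uniqueness of saturated models. The paper does not give its own proof of this fact, merely citing \cite[2.22]{categ-universal-2-selecta}; the argument there is exactly the one you wrote, so there is nothing further to compare.
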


We will also use that a version of Morley's omitting type holds in AECs. This is essentially due to Shelah. See \cite[9.2]{downward-categ-tame-apal} for a full proof.

\begin{fact}[Morley's omitting type theorem for AECs]\label{morley-omitting}
  Let $\K$ be an AEC with amalgamation and let $\mu > \LS (\K)$. If every model in $\K_\mu$ is $\LS (\K)^+$-saturated, then there exists $\chi < \hanf{\LS (\K)}$ such that every model in $\K_{\ge \chi}$ is $\LS (\K)^+$-saturated.
\end{fact}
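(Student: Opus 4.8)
The plan is to argue by contraposition and manufacture, out of a supply of large non-$\LS(\K)^+$-saturated models, an Ehrenfeucht--Mostowski model of cardinality \emph{exactly} $\mu$ that fails to be $\LS(\K)^+$-saturated, contradicting the hypothesis. Write $\lambda := \LS(\K)$ and recall $\hanf{\lambda} = \beth_{(2^\lambda)^+}$. If the conclusion fails, then for \emph{every} $\chi < \hanf{\lambda}$ there is a model $N_\chi \in \K$ with $\|N_\chi\| \ge \chi$ that is not $\lambda^+$-saturated; so I may fix $M_\chi \lea N_\chi$ with $\|M_\chi\| \le \lambda$ and a Galois type $p_\chi \in \gS(M_\chi)$ omitted by $N_\chi$. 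Since $\K$ has amalgamation, $p_\chi$ is realized in some $\lea$-extension of $M_\chi$, which I amalgamate with $N_\chi$ over $M_\chi$ to obtain $N_\chi^+ \gea N_\chi$ containing a concrete realization $d_\chi$ of $p_\chi$; omission of $p_\chi$ in $N_\chi$ now reads: no element of $N_\chi$ has the same Galois type over $M_\chi$ as $d_\chi$.

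Next I would encode this ``witnessed non-saturation'' configuration $(N_\chi^+, N_\chi, M_\chi, d_\chi)$ as a model of a pseudoelementary class with omitting types. Using Shelah's presentation theorem I represent $\K$ (and the pair $N_\chi \lea N_\chi^+$) in an expanded finitary vocabulary $\tau^\ast \supseteq \tau(\K)$ with $|\tau^\ast| \le \lambda$, adjoining a unary predicate for $M_\chi$ together with $\lambda$-many constants enumerating it and a constant for $d_\chi$. The Skolem-type functions of the presentation carry enough information that, through a set $\Gamma$ of omitted types with $|\Gamma| \le 2^\lambda$, I can express both that the $\tau(\K)$-reduct lies in $\K$ and that no element of the $N_\chi$-part realizes the Galois type of $d_\chi$ over $M_\chi$. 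Each $N_\chi^+$ then expands to a model $\widehat{N}_\chi$ of this theory omitting $\Gamma$, of cardinality $\ge \chi$.

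With the configuration coded over a vocabulary of size $\le \lambda$ and omitting $\le 2^\lambda$ types, the relevant Hanf number for omitting types is exactly $\hanf{\lambda} = \beth_{(2^\lambda)^+}$. Since we have produced $\Gamma$-omitting models of size $\ge \beth_\alpha$ for every $\alpha < (2^\lambda)^+$, the classical Morley omitting types method yields an EM blueprint $\Phi$ such that $\EM(I, \Phi)$ is a model of the theory omitting $\Gamma$ for \emph{every} linear order $I$. Choosing $I$ with $|I| = \mu$ (legitimate since $\mu > \lambda \ge |\tau^\ast|$, so the reduct has size exactly $\mu$) and passing to the $\tau(\K)$-reduct gives $N \in \K_\mu$ that still carries the coded submodel and witness, hence is not $\lambda^+$-saturated. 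This contradicts the assumption that every model of $\K_\mu$ is $\lambda^+$-saturated, and the contradiction yields the desired threshold $\chi < \hanf{\lambda}$.

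The crux --- and the step deserving the most care --- is the encoding of the second paragraph: ``$N$ omits the Galois type of $d$ over $M$'' is a semantic, non-first-order condition (Galois-type equality quantifies over amalgams), so it cannot be imposed literally. The resolution is that amalgamation lets me replace the abstract type by a concrete realization $d$ living inside a fixed ambient model $N^+$, while the presentation theorem's functions reduce the relevant type comparisons to quantifier-free $\tau^\ast$-data, so that omission becomes an honest omitting-types condition preserved under the EM construction. The real work is checking that this coding is faithful in both directions --- that every non-saturated model genuinely expands to a $\Gamma$-omitting model, and that the resulting EM model genuinely fails $\lambda^+$-saturation via its coded witness --- and it is here that indiscernibility of the EM generators together with the amalgamation property are indispensable.
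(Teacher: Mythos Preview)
Your approach is the standard one and is what the cited reference carries out (the paper itself gives no proof). Two points deserve tightening, however.

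First, a genuine if easily repaired gap: you construct a separate configuration $(M_\chi, p_\chi, d_\chi)$ for each $\chi$ but then speak of ``this theory'' and a single $\Gamma$ as though they were fixed across all $\chi$. You need an explicit pigeonhole before running Morley's method. There are at most $2^\lambda$ isomorphism classes of pairs $(M, p)$ with $\|M\| \le \lambda$ and $p \in \gS(M)$ (since such a $p$ is determined by a triple $(M, N, a)$ with $\|N\| \le \lambda$), while there are $(2^\lambda)^+$ values $\beth_\alpha$ with $\alpha < (2^\lambda)^+$; thin to a cofinal family of $\chi$'s sharing a common $(M, p)$, and only then is $\Gamma$ well-defined.

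Second, introducing $d_\chi$ and the ambient $N_\chi^+$ is an unnecessary detour that also creates a hazard you do not address: the EM model you build \emph{contains} the interpretation of the constant for $d$, which realizes $p$, so the full EM reduct is not the unsaturated witness you want. You must instead extract the $N_\chi$-part via a predicate (your text says the predicate is for $M_\chi$, which is surely a slip for $N_\chi$), arrange that the indiscernibles lie inside it, and check that this part is a $\lea$-submodel of size exactly $\mu$. All of this is doable, but avoidable: the cleaner argument drops $d$ and $N^+$ entirely. The presentation-theorem Skolem functions already ensure that the quantifier-free $\tau^\ast$-type of an element over the $M$-constants determines its Galois type over $M$, so one may take $\Gamma_p$ to be the set of all such qf-types whose realizations have Galois type $p$ (this is where $|\Gamma| \le 2^\lambda$ comes from) and run Morley's method directly on the $N_\chi$'s.
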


Specializing Corollary \ref{categ-main-cor} to AECs with amalgamation, we get that the categoricity spectrum has no gaps:

\begin{lem}\label{cat-spec-1}
  Assume $\WGCH$. Let $\K$ be an AEC with amalgamation and arbitrarily large models. If $\mu_1 < \mu_2$ are both in\footnote{Recall that by definition of $\Cat (\K)$, this implies that $\mu_1 \ge \LS (\K)$.}  $\Cat (\K)$, then $[\mu_1, \mu_2] \subseteq \Cat (\K)$.
\end{lem}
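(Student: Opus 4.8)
The plan is to apply Theorem \ref{categ-main} with $\mu = \mu_2$ and a carefully chosen base cardinal $\lambda$, and then to transfer the resulting categoricity of the superlimit-generated class $\Kslp{\lambda}$ back to $\K$ itself. First I would dispose of the trivial case: if $\mu_2 = \mu_1^+$ then $[\mu_1,\mu_2] = \{\mu_1,\mu_2\}$ and there is nothing to prove, so assume $\mu_2 > \mu_1^+$ and fix an arbitrary $\mu' \in (\mu_1,\mu_2)$; the goal becomes categoricity of $\K$ in $\mu'$.

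Next I would set up the structure theory. Since $\K$ has amalgamation, arbitrarily large models, and is categorical in $\mu_2$, joint embedding and no maximal models hold below $\mu_2$ (the latter because a purportedly maximal model of size $<\mu_2$ would, via joint embedding and amalgamation, embed into a strictly larger model). Thus Fact \ref{categ-struct} applies with $\mu = \mu_2$: $\K$ is $\nu$-superstable with $\nu$-symmetry for every $\nu \in [\LS (\K),\mu_2)$, each $\Ksatp{\nu}$ for $\nu \in (\LS (\K),\mu_2]$ is an AEC, and the model of size $\mu_2$ is saturated. In particular, by Remark \ref{nice-stab-ap}, $\K$ is nicely $\nu$-stable throughout this range, and for $\nu \in (\LS (\K),\mu_2)$ the saturated model of size $\nu$ is a superlimit. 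I would then take $\lambda = \mu_1$: the unique model of size $\mu_1$ is superlimit (it is categorical with no maximal models, Remark \ref{categ-sl}), $\K_{\mu_1^+}$ has a superlimit since $\mu_1^+ < \mu_2$, nice $[\mu_1^{+2},\mu_0)$-stability holds by superstability, and $(\mu_0,\mu_2)$-extendibility follows from no maximal models. With $\WGCH$ and categoricity in $\mu_2$, all hypotheses of Theorem \ref{categ-main} are met, and it yields that $\Kslp{\mu_1}$ is categorical in every $\mu'' \in [\mu_1,\mu_2]$ and is $\mu_1$-tame with amalgamation and no maximal models.

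The remaining, and decisive, step is to transfer categoricity from $\Kslp{\mu_1}$ to $\K$. For $\mu'' \in (\mu_1,\mu_2]$ the members of $\Kslp{\mu_1}_{\mu''}$ are exactly the saturated models of $\K$ of size $\mu''$, so categoricity of $\Kslp{\mu_1}$ in $\mu''$ only records the (already known) uniqueness of the saturated model; to deduce categoricity of $\K$ in $\mu'$ I must show that \emph{every} model in $\K_{\mu'}$ is saturated, i.e. that $\K_{\mu'} = \Kslp{\mu_1}_{\mu'}$. Here I would use both endpoints as anchors: categoricity in $\mu_1$ gives $\K_{\mu_1} = \Kslp{\mu_1}_{\mu_1}$, while categoricity in $\mu_2$ makes the top model saturated, so by Morley's omitting type theorem (Fact \ref{morley-omitting}) every model of size $\ge \chi$, for some $\chi < \hanf{\LS (\K)}$, is $\LS (\K)^+$-saturated. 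I would then bootstrap this weak saturation up to genuine $\mu'$-saturation across the interval, using $\nu$-superstability and $\nu$-symmetry (so that unions of chains of saturated models are saturated, as in the results underlying Fact \ref{categ-struct}) together with the $\mu_1$-tameness just obtained, thereby excluding any non-saturated model of size $\mu'$.

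I expect this last bootstrapping to be the main obstacle: upgrading $\LS (\K)^+$-saturation to full $\mu'$-saturation uniformly on $(\mu_1,\mu_2)$ is exactly where the two categoricity hypotheses must interact, and it is essential rather than cosmetic. The Hart--Shelah examples show that categoricity at a single successor does not propagate upward, so no purely upward argument anchored at $\mu_1$ can work, while the downward information from $\mu_2$ furnished by omitting types is a priori too weak on its own; it is the combination, mediated by tameness and superstability, that forces saturation everywhere in the interval. Once every model in $\K_{\mu'}$ is shown to be saturated, $\K_{\mu'} = \Kslp{\mu_1}_{\mu'}$ is a single isomorphism type, so $\K$ is categorical in $\mu'$, and since $\mu'$ was arbitrary we conclude $[\mu_1,\mu_2] \subseteq \Cat (\K)$.
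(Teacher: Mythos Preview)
Your setup and verification of the hypotheses of Theorem \ref{categ-main} are essentially the same as the paper's (the paper routes through semisolvability and Fact \ref{semisolv-fact} rather than Fact \ref{categ-struct}, but this is cosmetic). The problem is your ``decisive transfer step'': it rests on a misidentification of $\Kslp{\mu_1}$, and in fact no such step is needed at all.

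Since $\K$ is categorical in $\mu_1$, the superlimit of $\K_{\mu_1}$ is, up to isomorphism, \emph{every} model in $\K_{\mu_1}$. Unwinding the definition of $\K^{[M]}$, a model $N \in \K_{\ge \mu_1}$ lies in $\Kslp{\mu_1}$ iff every subset of $N$ of size at most $\mu_1$ is contained in a $\lea$-submodel of $N$ of size $\mu_1$ isomorphic to the superlimit; but by categoricity in $\mu_1$ \emph{any} submodel of size $\mu_1$ is isomorphic to the superlimit, so this reduces to the L\"owenheim--Skolem--Tarski axiom. Hence $\Kslp{\mu_1} = \K_{\ge \mu_1}$, and the conclusion of Theorem \ref{categ-main} is already categoricity of $\K$ itself in every $\mu' \in [\mu_1,\mu_2]$. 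Your assertion that $\Kslp{\mu_1}_{\mu''}$ consists exactly of the saturated models of size $\mu''$ is simply false here; you may be thinking of the situation in Lemma \ref{cat-spec-2}, where one has categoricity at only a single cardinal, works instead with $\Ksatp{\LS(\K)^+}$, and genuinely needs Fact \ref{morley-omitting} to transfer back to $\K$.

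Note also that your proposed bootstrapping would not obviously go through even if it were needed: Fact \ref{morley-omitting} produces a threshold $\chi < \hanf{\LS(\K)}$ which could lie far above $\mu_1$, so it says nothing about models of size just above $\mu_1$; and upgrading $\LS(\K)^+$-saturation to full saturation across $(\mu_1,\mu_2)$ is not a consequence of the facts you cite.
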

\begin{proof}
  If $\mu_2 = \mu_1^+$, there is nothing to prove, so assume that $\mu_2 > \mu_1^+$. We apply Corollary \ref{categ-main-cor}, where $\lambda, \mu$ there stand for $\mu_1, \mu_2$ here. We have to check that its hypotheses are satisfied. By assumption $\WGCH$ holds and $\K$ has arbitrarily large models. By assumption, $\K$ is categorical in $\mu_1$. Since $\K$ is categorical in $\mu_1$, $\K$ has joint embedding in $\mu_1$, and so a diagram chase (using amalgamation and the fact that $\mu_1 \ge \LS (\K)$) shows that $\K_{\ge \mu_1}$ also has joint embedding. In particular, $\K_{\ge \mu_1}$ has no maximal models and so in particular $\K$ is $(\mu_0, \mu_2)$-extendible, where we have set $\mu_0 := \min (\mu_2, \mu_1^{+\omega})$. By Fact \ref{semisolv-fact}(\ref{semisolv-2}) (using categoricity in $\mu_2$ and Remark \ref{solv-rmk}), $\K$ is $\mu$-superstable and has $\mu$-symmetry, for every $\mu \in [\mu_1, \mu_2)$. In particular, $\K$ is nicely $[\mu_1^{+2}, \mu_0)$-stable. Also, by Fact \ref{categ-struct}, the saturated model of cardinality $\mu_1^+$ is a superlimit. This completes the verification that the hypotheses of Corollary \ref{categ-main-cor} hold.
\end{proof}

Further, if we are only categorical in one cardinal we can apply Corollary \ref{categ-main-cor} to a subclass of saturated models:

\begin{lem}\label{cat-spec-2}
  Assume $\WGCH$. Let $\K$ be an AEC with amalgamation and arbitrarily large models. If $\Cat (\K) \cap [\LS (\K)^{+\omega}, \infty) \neq \emptyset$, then there exists $\chi < \hanf{\LS (\K)}$ such that $[\chi, \infty) \subseteq \Cat (\K)$.
\end{lem}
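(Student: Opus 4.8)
The plan is to reduce everything to the class of $\kappa^+$-saturated models (where $\kappa := \LS (\K)$) and then to feed that class into the main transfer theorem. Fix $\mu \in \Cat (\K)$ with $\mu \ge \kappa^{+\omega}$; the target is a threshold $\chi < \hanf{\kappa}$ with $[\chi, \infty) \subseteq \Cat (\K)$. The strategy has three moving parts: first show that the model of size $\mu$ is saturated and push saturation \emph{downward} to a threshold below the Hanf number via Morley's omitting type theorem; second, apply Theorem \ref{categ-main} to the AEC $\K' := \Ksatp{\kappa^+}$ at $\lambda := \kappa^+$, using the crucial arithmetic $\lambda^{+\omega} = (\kappa^+)^{+\omega} = \kappa^{+\omega} \le \mu$ so that the \emph{moreover} clause of that theorem (categoricity in \emph{all} $\mu' \ge \lambda$) is available; and third, identify the superlimit-generated subclass produced by Theorem \ref{categ-main} with $\K'$ itself, and with $\K$ above the threshold, to conclude.

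Carrying out the first part: as in Lemma \ref{cat-spec-1}, categoricity in $\mu$ together with amalgamation and arbitrarily large models yields joint embedding and no maximal models for the relevant large models, so Fact \ref{categ-struct} applies and gives that $\K$ is $\nu$-superstable with $\nu$-symmetry for all $\nu \in [\kappa, \mu)$, that $\Ksatp{\nu}$ is an AEC with $\LS (\Ksatp{\nu}) = \nu$ for $\nu \in (\kappa, \mu]$, and that the model of size $\mu$ is saturated. By categoricity every model in $\K_\mu$ is then $\kappa^+$-saturated, so Fact \ref{morley-omitting} supplies $\chi_0 < \hanf{\kappa}$ (which we may take $\ge \kappa^+$) such that \emph{every} model in $\K_{\ge \chi_0}$ is $\kappa^+$-saturated; in particular $\K_\nu = \Ksatp{\kappa^+}_\nu$ for all $\nu \ge \chi_0$.

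For the second part I verify the hypotheses of Theorem \ref{categ-main} for $\K'=\Ksatp{\kappa^+}$ with $\lambda = \kappa^+$ (so $\mu_0 = \kappa^{+\omega}$). The $\WGCH$ clause is global; arbitrarily large models and categoricity in $\mu$ for $\K'$ are immediate from the previous paragraph. The superlimit in $\K'_{\kappa^+}$ is the saturated model of size $\kappa^+$, which is superlimit because $\K'$ is categorical at its least cardinal, has no maximal models, and is closed under unions (Remark \ref{categ-sl} and Fact \ref{categ-struct}(2)). The superlimit in $\K'_{\kappa^{++}}$ comes from nice $\kappa^{++}$-superstability of $\K$ (Fact \ref{nice-ss-fact}, using $\kappa^+$- and $\kappa^{++}$-superstability with $\kappa^{++}$-symmetry from Fact \ref{categ-struct}(1)); since $\K'_{\kappa^{++}}$ is dense in $\K_{\kappa^{++}}$ and $\K'$ is closed under $\omega$-chains, that superlimit lies in $\K'$ and is superlimit there by Remark \ref{skel-basics}. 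Nice $[\kappa^{+3}, \kappa^{+\omega})$-stability of $\K'$ follows from $\nu$-superstability of $\K$ (Remark \ref{nice-stab-ap}) transported along the dense inclusion $\K'_\nu \subseteq \K_\nu$ via Theorem \ref{dense-nicely-stable}; and $(\mu_0, \mu)$-extendibility follows from no maximal models. Theorem \ref{categ-main} then gives that the subclass of $\K'$ generated by its $\kappa^+$-superlimit (Definition \ref{ksl-def}) is categorical in every $\mu' \ge \kappa^+$, with amalgamation, no maximal models, and $\kappa^+$-tameness.

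Finally, I identify that generated subclass with $\K'$: by the Löwenheim–Skolem–Tarski axiom for $\K'$, every $\kappa^+$-saturated model has, below every subset of size $\le \kappa^+$, a $\kappa^+$-saturated submodel of size exactly $\kappa^+$, which is the unique saturated model of size $\kappa^+$ and hence isomorphic to the superlimit; so the generated subclass \emph{is} all of $\Ksatp{\kappa^+}$. Thus $\Ksatp{\kappa^+}$ is categorical in every $\mu' \ge \kappa^+$, and combining with $\K_\nu = \Ksatp{\kappa^+}_\nu$ for $\nu \ge \chi_0$ gives categoricity of $\K$ in every $\nu \ge \chi_0$; set $\chi := \chi_0 < \hanf{\kappa}$. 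I expect the main obstacle to be the very first step, namely securing enough below-$\mu$ structure to invoke Fact \ref{categ-struct} — concretely, that $\K_{<\mu}$ has no maximal models — since, unlike in Lemma \ref{cat-spec-1}, the lower endpoint $\lambda = \kappa^+$ is \emph{not} itself a categoricity cardinal and joint embedding only propagates \emph{upward} from $\mu$; the resolution will lean on the observation that Fact \ref{morley-omitting} forces all sufficiently large models to be $\kappa^+$-saturated and hence to share a common saturated submodel, collapsing the relevant part of $\K$ into a single amalgamable, extendible component.
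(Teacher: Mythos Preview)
Your overall strategy---pass to $\K' = \Ksatp{\LS(\K)^+}$, apply Theorem \ref{categ-main} at $\lambda = \LS(\K)^+$ using $\lambda^{+\omega} \le \mu$, then invoke Fact \ref{morley-omitting}---is exactly the paper's route, and your identification $(\K')^{\lambda\text{-sl}} = \K'$ via categoricity of $\K'$ in $\lambda$ is correct.

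The one genuine gap is precisely the obstacle you flag at the end: to invoke Fact \ref{categ-struct} you need $\K_{<\mu}$ to have no maximal models (and joint embedding), but the argument borrowed from Lemma \ref{cat-spec-1} only pushes joint embedding \emph{upward} from a categoricity cardinal, so it gives nothing below $\mu$. Your proposed fix---use Fact \ref{morley-omitting} to collapse everything above some $\chi_0$ into one component---is circular as written: applying Fact \ref{morley-omitting} requires knowing the model of size $\mu$ is $\LS(\K)^+$-saturated, and that is exactly what you wanted Fact \ref{categ-struct} for. The paper resolves this cleanly with the standard decomposition of an AEC with amalgamation into disjoint pieces each having joint embedding (citing \cite[10.13]{indep-aec-apal}); one then works inside the piece containing the model of size $\mu$, which automatically has joint embedding and hence no maximal models below $\mu$. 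With that one-line reduction in place, your argument goes through unchanged.
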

\begin{proof}
  Let $\mu \in \Cat (\K) \cap [\LS (\K)^{+\omega}, \infty)$. First, we use the folklore trick of partitioning the AEC into disjoint classes, each of which has joint embedding: define an equivalence relation $\sim$ on $\K$ as follows: $M \sim N$ if $M$ and $N$ embed into a common model. Amalgamation implies that this is an equivalence relation. Let $\seq{\K^i}{i \in I}$ list the $\sim$-equivalence classes without repetition. We make several observations:

    \begin{enumerate}
    \item For each $i \in I$, $\K^i$ has joint embedding (by definition of $\sim$), is an AEC with amalgamation, and $\LS (\K^i) \le \LS (\K)$ (easy to check just use that if $M \lea N$ then $M \sim N$). 
    \item There exists a unique $i = i^\ast \in I$ such that $\K^i$ has arbitrarily large models: take $i^\ast$ such that $\K^{i^\ast}$ contains the model of cardinality $\mu$. No other class can contain a model of cardinality $\mu$ by categoricity (equivalence classes are disjoint).
    \item For each $i \in I - \{i^\ast\}$, there exists $\chi_i < \hanf{\LS (\K)}$ so that $\K_{\ge \chi_i}^i = \emptyset$ (by what has just been observed and Fact \ref{arb-large-fact}).      
    \item $|I| \le 2^{\LS (\K)}$: there are at most $2^{\LS (\K)}$-many nonisomorphic models in $\K_{\le \LS (\K)}$, and each of the $\K^i$'s must contain one such model.
    \item $\chi^0 := \sup_{i \in I - \{i^\ast\}} \chi_i$ is strictly less than $\hanf{\LS (\K)}$ by cofinality considerations.
    \item $\K_{\ge \chi^0} = \K_{\ge \chi^0}^{i^\ast}$ by definition.
    \end{enumerate}

    Let $\K^\ast := \K^{i^\ast}$. We have that $\K^\ast$ has amalgamation, joint embedding, and arbitrarily large models, so in particular $\K_{<\mu}^\ast$ has no maximal models. By Fact \ref{categ-struct}, the class $\K^{\ast \ast}$ of $\LS (\K)^+$-saturated models of $\K^\ast$ is an AEC which is categorical in both $\LS (\K)^+$ and $\mu$. Proceeding as in the proof of Lemma \ref{cat-spec-1} (where $\K, \mu_1, \mu_2$ there stand for $\K^{\ast \ast}$, $\LS (\K)^+, \mu$ here), we see that Corollary \ref{categ-main-cor} applies, and hence $\K^{\ast \ast}$ is categorical in every $\mu' \ge \LS (\K)^+$. Now apply Fact \ref{morley-omitting} to get that $\K^\ast$ is categorical in all $\mu' \ge \chi$ for some $\chi < \hanf{\LS (\K)}$. Making $\chi$ bigger, we can assume without loss of generality that $\chi \ge \chi^0$ so $\K_{\ge \chi} = \K_{\ge \chi}^\ast$, i.e.\ $\K$ is also categorical in all $\mu' \ge \chi$.
\end{proof}

The two lemmas just proven are enough to give the full description of the categoricity spectrum:

\begin{cor}\label{main-cor}
  Assume $\WGCH$. If $\K$ is an AEC with amalgamation and arbitrarily large models, then exactly one of the following holds:

  \begin{enumerate}
  \item $\Cat (\K) = \emptyset$.
  \item $\Cat (\K) = [\LS (\K)^{+m}, \LS (\K)^{+n}]$ for some $m \le n < \omega$.
  \item $\Cat (\K) = [\chi, \infty)$ for some $\chi \in [\LS (\K), \hanf{\LS (\K)})$.

  \end{enumerate}
\end{cor}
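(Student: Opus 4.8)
The plan is to derive Corollary \ref{main-cor} as a bookkeeping consequence of the two lemmas just established --- Lemma \ref{cat-spec-1} (no gaps in the spectrum) and Lemma \ref{cat-spec-2} (categoricity at or above $\LS (\K)^{+\omega}$ forces an end segment starting below the Hanf number) --- together with the closedness of the categoricity spectrum (Fact \ref{closed-cat-spec}). First I would dispose of the trivial case: if $\Cat (\K) = \emptyset$ we are in clause (1). So assume $\Cat (\K) \neq \emptyset$ and set $\chi_0 := \min \Cat (\K)$, which exists because the cardinals are well-ordered, and which satisfies $\chi_0 \ge \LS (\K)$ by the definition of the spectrum. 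The whole argument then splits according to whether $\Cat (\K)$ meets $[\LS (\K)^{+\omega}, \infty)$.

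In the first case, where $\Cat (\K) \cap [\LS (\K)^{+\omega}, \infty) \neq \emptyset$, Lemma \ref{cat-spec-2} supplies a cardinal $\chi < \hanf{\LS (\K)}$ with $[\chi, \infty) \subseteq \Cat (\K)$; in particular $\chi_0 \le \chi < \hanf{\LS (\K)}$. Applying Lemma \ref{cat-spec-1} to $\chi_0 \le \chi$ (the inclusion being immediate when $\chi_0 = \chi$) yields $[\chi_0, \chi] \subseteq \Cat (\K)$, so $[\chi_0, \infty) \subseteq \Cat (\K)$; minimality of $\chi_0$ then gives $\Cat (\K) = [\chi_0, \infty)$ with $\chi_0 \in [\LS (\K), \hanf{\LS (\K)})$, which is clause (3).

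In the second case, where $\Cat (\K) \cap [\LS (\K)^{+\omega}, \infty) = \emptyset$, we have $\Cat (\K) \subseteq [\LS (\K), \LS (\K)^{+\omega})$, whose members are exactly the finite successors $\LS (\K)^{+k}$ for $k < \omega$; thus $\chi_0 = \LS (\K)^{+m}$ for some $m < \omega$. I expect the one genuinely subtle point of the corollary to be here: ruling out a spectrum that is cofinal in $\LS (\K)^{+\omega}$ without containing it. This is exactly where closedness enters --- if $\Cat (\K)$ were cofinal below $\LS (\K)^{+\omega}$, then by Lemma \ref{cat-spec-1} it would contain every $\LS (\K)^{+k}$ with $k \ge m$, and since $\K$ has global amalgamation, Fact \ref{closed-cat-spec} would force the supremum $\LS (\K)^{+\omega}$ into $\Cat (\K)$, contradicting the case hypothesis. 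Hence there is a greatest $n < \omega$ with $\LS (\K)^{+n} \in \Cat (\K)$, and Lemma \ref{cat-spec-1} gives $\Cat (\K) = [\LS (\K)^{+m}, \LS (\K)^{+n}]$, which is clause (2).

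Finally I would note that the three clauses are pairwise incompatible --- clause (1) is empty whereas (2) and (3) are not, and clause (2) is bounded whereas clause (3) is cofinal in the cardinals --- so exactly one of them holds, as required. The substantive mathematical content all resides in Lemmas \ref{cat-spec-1} and \ref{cat-spec-2} and in Fact \ref{closed-cat-spec}; the corollary itself is a case analysis whose only delicate step is the appeal to closedness that prevents the spectrum from being cofinal in $\LS (\K)^{+\omega}$ while missing it.
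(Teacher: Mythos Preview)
Your proposal is correct and follows essentially the same approach as the paper: the same case split on whether $\Cat(\K)$ meets $[\LS(\K)^{+\omega},\infty)$, the same three ingredients (Lemma \ref{cat-spec-1}, Lemma \ref{cat-spec-2}, Fact \ref{closed-cat-spec}), and the same use of closedness to rule out a spectrum cofinal in $\LS(\K)^{+\omega}$. The only cosmetic difference is that in the end-segment case you start from $\chi_0 = \min \Cat(\K)$ and show it begins an end segment, whereas the paper starts from the minimal $\chi$ with $[\chi,\infty)\subseteq\Cat(\K)$ and shows nothing lies below it; the two arguments are dual and equally valid.
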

\begin{proof}
  Assume first that $\Cat (\K) \cap [\LS (\K)^{+\omega}, \infty) \neq \emptyset$. By Lemma \ref{cat-spec-2}, there is $\chi < \hanf{\LS (\K)}$ such that $[\chi, \infty) \subseteq \Cat (\K)$. Take the minimal such $\chi$ with $\chi \ge \LS (\K)$. We have that $\Cat (\K)  = [\chi, \infty)$. Indeed, if $\chi_0 \in \Cat (\K)$ and $\chi_0 < \chi$, then by Lemma \ref{cat-spec-1}, $[\chi_0, \chi] \subseteq \Cat (\K)$, and hence $[\chi_0, \infty) \subseteq \Cat (\K)$, contradicting the minimality of $\chi$.

  Assume now that $\Cat (\K) \cap [\LS (\K)^{+\omega}, \infty) = \emptyset$ but $\Cat (\K) \neq \emptyset$. Let $m < \omega$ be least such that $\LS (\K)^{+m} \in \Cat (\K)$ and let $n < \omega$ be maximal such that $\LS (\K)^{+n} \in \Cat (\K)$. Note that such an $n$ exists: otherwise by Fact \ref{closed-cat-spec}, $\LS (\K)^{+\omega} \in \Cat (\K)$, which we have assumed does not happen. By Lemma \ref{cat-spec-1}, $[\LS (\K)^{+m}, \LS (\K)^{+n}] \subseteq \Cat (\K)$, and by minimality of $m$ and maximality of $n$, this must be an equality: $\Cat (\K) = [\LS (\K)^{+m}, \LS (\K)^{+n}]$, as desired.
\end{proof}

\subsection{Examples}

For completeness, we end with the well known examples showing that each of the cases of Corollary \ref{main-cor} may happen. We will need:

\begin{fact}\label{coding}
  For each infinite cardinal $\lambda$ and each ordinal $\alpha < \left(2^{\lambda}\right)^+$, there exist AECs $\K^\alpha, \K^{\ast, \alpha}$ such that:

  \begin{enumerate}
  \item $\LS (\K^\alpha) = \LS (\K^{\ast, \alpha}) = \lambda$.
  \item $\K^\alpha$, $\K^{\ast, \alpha}$ have amalgamation.
  \item $\K^\alpha$ has a model of every cardinality $\mu \in [\lambda, \beth_{\alpha} (\lambda)]$, but no model of cardinality $\left(\beth_{\alpha} (\lambda)\right)^+$.
  \item $\K^{\ast, \alpha}$ has a model of every cardinality $\mu \in [\lambda, \lambda^{+\alpha})$ but no model of cardinality $\lambda^{+\alpha}$.
  \end{enumerate}
\end{fact}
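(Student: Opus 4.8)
The plan is to realize both classes as \emph{rank‑stratified coding AECs}: structures carrying a binary relation $R$ together with unary predicates $\seq{P_\gamma : \gamma < \alpha}$ that explicitly assign to each element a rank (level) below $\alpha$ and witness that $R$ strictly decreases rank. Using explicit level predicates, rather than merely asserting that $R$ is well‑founded, is the key design choice, since ``well‑foundedness'' is not preserved under directed unions whereas ``carries a rank function with values $<\alpha$'' is. I would take $\lea$ to be ``$R$‑downward‑closed substructure preserving the assigned levels'', and close off under a hull of size $\le \lambda$ so that $\LS(\K^\alpha)=\LS(\K^{\ast,\alpha})=\lambda$. The cardinality of a model is then controlled by a recursion along the levels: if $g(\gamma)$ bounds the number of elements of rank $<\gamma$, then $g(\gamma+1)\le F(g(\gamma))$, and the two cases differ only in the growth function $F$.

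For $\K^\alpha$ I would require $R$ to be \emph{extensional}, so that each element of rank $\gamma+1$ is determined by its set of $R$‑predecessors of rank $\le\gamma$; this forces $F(\kappa)=2^\kappa$, and the recursion (starting from a bottom level padded up to size $\lambda$) yields exactly the bound $\beth_\alpha(\lambda)$, inclusively, after the appropriate off‑by‑one adjustment in the rank ceiling. Conversely, every admissible size $\mu\in[\lambda,\beth_\alpha(\lambda)]$ is realized by choosing the level‑by‑level sizes to sum to $\mu$. For $\K^{\ast,\alpha}$ I would replace the full power‑set growth by a device that forces $F$ to be cardinal successor (so that level $\gamma$ has size $\le\lambda^{+\gamma}$), giving every model of rank $<\alpha$ cardinality $<\lambda^{+\alpha}$ while all smaller cardinalities are hit by padding. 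The cleanest way to drive successor growth in ZFC (without invoking GCH, which \emph{is not} assumed in this fact) is to equip each level with a well‑order whose order type is bounded below the next successor cardinal, again witnessed by the level structure so that the bound is absolute. Amalgamation in both classes I would obtain by \emph{disjoint (free) amalgamation}: given $M_0 \lea M_\ell$, glue the new elements of $M_1$ and $M_2$ freely over $M_0$, checking that this preserves extensionality (respectively the branching/order‑type bound) and the rank ceiling.

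The step I expect to be the main obstacle is reconciling the Tarski--Vaught chain axiom with the \emph{sharp} cardinality bound, and this is precisely why $\lea$ must be engineered to be level‑preserving rather than an arbitrary end‑extension. If $\lea$ were allowed to raise ranks, then for limit $\alpha$ an increasing chain of models whose ranks are cofinal in $\alpha$ would have a union of rank $\alpha$ — and, in the $\K^{\ast,\alpha}$ case, of cardinality $\lambda^{+\alpha}$ — so the Tarski--Vaught axiom would force a model of the forbidden size into the class, collapsing either the bound or the status of $\K^{\ast,\alpha}$ as an AEC. Keeping each element's level absolute under $\lea$ makes a model's rank, and hence its cardinality bound, invariant along every directed union, which is what simultaneously secures union‑closure, the exact spectrum of realized cardinalities, and (because free amalgams respect levels) amalgamation.

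What remains after this is the routine but delicate bookkeeping of verifying that disjoint amalgamation preserves extensionality and the branching/order‑type constraints in all the boundary cases ($\alpha$ successor versus limit, and the inclusive bound $\beth_\alpha(\lambda)$ versus the exclusive bound $\lambda^{+\alpha}$), together with the elementary counting that establishes both the upper bounds and the realization of every intermediate cardinality. For the $\beth_\alpha(\lambda)$ half this is the classical Hanf‑number coding, for which the cleanest reference is the treatment in \cite{baldwinbook09}; the $\lambda^{+\alpha}$ half follows the same template with the extensional power‑set growth replaced by the successor‑cardinal growth described above.
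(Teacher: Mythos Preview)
Your construction has a genuine gap at the very first step: vocabulary size. You propose explicit unary predicates $\seq{P_\gamma : \gamma < \alpha}$ to record ranks, but the statement allows $\alpha$ to be any ordinal below $\left(2^{\lambda}\right)^+$, so $|\alpha|$ can be as large as $2^{\lambda}$. Since an AEC must satisfy $\LS(\K) \ge |\tau(\K)| + \aleph_0$, your vocabulary forces $\LS(\K^\alpha) \ge |\alpha|$, and you cannot ``close off under a hull of size $\le \lambda$'' to bring it back down. So the required $\LS = \lambda$ fails whenever $|\alpha| > \lambda$, which is exactly the interesting range. You correctly diagnosed that naked well-foundedness is not preserved under directed unions, but the fix you chose trades that problem for a fatal one.

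The paper resolves this by invoking a nontrivial classical fact rather than building the rank function by hand: Shelah's computation $\delta(\lambda, 2^{\lambda}) = \left(2^{\lambda}\right)^+$ (from \cite[VII.5.5(6)]{shelahfobook}) says precisely that every well-order of type $< \left(2^{\lambda}\right)^+$ is definable by a first-order theory in a vocabulary of size $\lambda$ omitting at most $2^{\lambda}$ types. This lets one code the cumulative hierarchy up to level $\alpha$ as a $PC$ class over a size-$\lambda$ vocabulary, and the resulting AEC has $\LS = \lambda$ as required. For $\K^{\ast,\alpha}$ the paper uses a different (and simpler) device than your bounded-order-type idea: a predicate $P$ together with, for each $x \notin P$, an injection from the $<$-predecessors of $x$ into $P$, forcing $\|M\| \le |P|^+$; this is then iterated and combined via disjoint unions at limits. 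Your rank-stratified plan is morally the right picture, but to make it work for large $\alpha$ you must replace the explicit level predicates by this implicit $PC$-coding of the ordinal --- that is the content you are missing.
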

\begin{proof}[Proof sketch]
  By \cite[VII.5.5(6)]{shelahfobook}, $\delta (\lambda, 2^{\lambda}) = \left(2^{\lambda}\right)^+$, where $\delta (\lambda, 2^{\lambda})$ is the least order type of a well-ordering not definable using the class of models of a first-order theory in a vocabulary of cardinality $\lambda$ omitting at most $2^{\lambda}$-many types. Thus by (the proof of \cite[VII.5.4]{shelahfobook}), for any $\alpha < \left(2^{\lambda}\right)^+$, we can code the cumulative hierarchy: there is an AEC $\K^{\alpha}$ such that $\LS (\K^{\alpha}) = \lambda$, $\K^{\alpha}$ has a model of cardinality $\beth_\alpha (\lambda)$ but not models of cardinality $\left(\beth_{\alpha} (\lambda)\right)^+$. 

  Coding successors is done similarly to obtain $\K^{\ast, \alpha}$: the basic template to code the successor operation is to work in a vocabulary with a linear order $<$, a predicate $P$, and a binary function $F$. We then consider structures where for any $x$ not in $P$, $F (x, \cdot)$ codes an injection from the predecessors of $x$ into $P$. The size of the universe of such a structure is at most the successor of the size of $P$ (the idea is probably folklore; it is described in \cite{grossbergbook}). Using this idea, one can define $\K^{\ast, \alpha}$ when $\alpha$ is a successor. When $\alpha$ is zero, we can take $\K^{\ast, 0} = \K^{0}$, and when $\alpha$ is limit we can take a disjoint union of $\K^{\ast, \beta + 1}$ for $\beta < \alpha$.
\end{proof}

\begin{fact}[\cite{hs-example, bk-hs}]\label{hs-fact}
  For each $n < \omega$, there is an AEC $\K^{\text{hs}, n}$ (``hs'' stands for Hart-Shelah) such that:

  \begin{enumerate}
  \item $\LS (\K^{\text{hs}, n}) = \aleph_0$.
  \item $\K^{\text{hs}, n}$ has amalgamation and arbitrarily large models.
  \item $\Cat (\K^{\text{hs}, n}) = [\aleph_0, \aleph_n]$.
  \end{enumerate}
\end{fact}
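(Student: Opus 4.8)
The plan is to reconstruct the Hart--Shelah example \cite{hs-example} together with its refined analysis by Baldwin and Kolesnikov \cite{bk-hs}. \textbf{Construction.} Fix a finite abelian group $G$ (taking $G = \mathbb{Z}/2\mathbb{Z}$ suffices) and work in a many-sorted finitary vocabulary $\tau$ with a sort $I$ for an \emph{index set} (the ``spine''), sorts carrying fibered copies of $G$ over the $k$-element subsets of $I$ for $k \le n+1$, projection functions onto $I$, and a single $(n+2)$-ary predicate $Q$ whose intended interpretation records that a $G$-valued $(n+1)$-cochain on $I$ satisfies a prescribed cocycle condition. Let $\K^{\text{hs},n}$ be the class of $\tau$-structures satisfying a fixed $\Ll_{\omega_1, \omega}$-sentence that (i) forces each fiber to be a single standard copy of $G$ (a ``standardness'' requirement expressible by a countable conjunction), (ii) forces the projections to be surjective with the correct fibers, and (iii) asserts the cocycle axiom for $Q$. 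Order $\K^{\text{hs},n}$ by $\Ll_{\omega_1,\omega}$-elementary substructure in a fixed countable fragment containing the defining sentence (so that the spine of a submodel is a subset of the ambient spine).

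\textbf{Basic properties.} Since the defining sentence lies in $\Ll_{\omega_1,\omega}$ and the standardness requirements are preserved under the substructure relation and under unions of $\subseteq$-chains, $\K^{\text{hs},n}$ is an AEC with $\LS(\K^{\text{hs},n}) = \aleph_0$ (alternatively one invokes Shelah's presentation theorem). The spine $I$ may be taken of arbitrary infinite cardinality, and a model can always be built over it by interpreting the trivial cochain, so $\K^{\text{hs},n}$ has arbitrarily large models. Amalgamation follows by constructing the amalgam as a pushout over the common spine: given $M_0 \lea M_1, M_2$, one glues $M_1$ and $M_2$ fiberwise over $I_1 \cup_{I_0} I_2$ and checks that the cocycle conditions on $M_1$ and $M_2$ extend consistently to a cocycle on the union, the fibers being $G$-torsors so that the gluing is free away from $M_0$.

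\textbf{Categoricity spectrum.} The crux is that a model is determined up to isomorphism by the cardinality of its spine \emph{together with} the cohomology class of its cochain, and that this class is necessarily trivial exactly when $|I| \le \aleph_n$. For the upper part $[\aleph_0, \aleph_n]$: a combinatorial dimension argument \cite{bk-hs} shows that on an index set of size at most $\aleph_n$ every cocycle satisfying $Q$ is a coboundary, so the relevant cohomology group (in degree $n+1$) of the complete complex on $I$ with coefficients in $G$ vanishes. Hence any two models whose spines have the same cardinality $\aleph_k$ with $k \le n$ differ only by a coboundary, and an explicit back-and-forth that shifts one cochain to the other by that coboundary yields an isomorphism, giving categoricity in each such $\aleph_k$. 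For $\aleph_{n+1}$ and above: the same cohomology group is nontrivial once $|I| \ge \aleph_{n+1}$, so one may choose a cocycle that is not a coboundary; the resulting model is not isomorphic to the one built from the trivial cochain (an isomorphism would trivialize the class), whence $\Ii(\K^{\text{hs},n}, \mu) \ge 2$ for all $\mu \ge \aleph_{n+1}$. Combining the two directions gives $\Cat(\K^{\text{hs},n}) = [\aleph_0, \aleph_n]$.

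\textbf{Main obstacle.} The delicate step is the cohomology computation that pins the threshold exactly at $\aleph_n$: one must verify that the top cohomology of the $(n+1)$-dimensional cochain complex of the complete complex on $I$, with finite coefficients, vanishes if and only if $|I| \le \aleph_n$. This is precisely where the parameter $n$ enters and where the counting and dimension bookkeeping is heaviest, and it is the heart of the Baldwin--Kolesnikov analysis. A secondary subtlety is checking that the $\Ll_{\omega_1,\omega}$ standardness constraints are compatible with the pushout used for amalgamation, i.e.\ that the fiberwise gluing never forces a nonstandard fiber.
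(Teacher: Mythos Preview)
The paper does not prove this statement at all: it is stated as a \emph{Fact} and simply cited to \cite{hs-example, bk-hs}, to be used as a black box in Example \ref{categ-examples}. There is therefore no ``paper's own proof'' to compare against; anything you write here is strictly more than what the paper provides.

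That said, your sketch is a reasonable high-level summary of the cited constructions, and the cohomological framing (models classified by a cochain modulo coboundaries, with the relevant group vanishing exactly when the spine has size at most $\aleph_n$) is a clean way to package the Baldwin--Kolesnikov analysis. A few caveats are worth flagging. First, the actual Hart--Shelah models are somewhat more elaborate than ``a $G$-torsor over $[I]^{\le n+1}$ plus a cocycle predicate'': there are additional sorts (a copy of $\omega$, functions coding finite subsets, etc.) needed to make the standardness and the combinatorics first-order-in-a-fragment, and the relation $Q$ is not literally a cocycle condition but rather records compatibility of certain ``solution'' elements. Your description is morally right but compresses a fair amount of bookkeeping. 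Second, the amalgamation argument is more delicate than a bare pushout of spines: Baldwin--Kolesnikov verify amalgamation by an explicit construction, and one of their points is precisely that \emph{disjoint} amalgamation (and tameness) fail at the threshold even though ordinary amalgamation survives, so ``glue fiberwise and check the cocycle extends'' hides real work. Third, the vanishing/nonvanishing dichotomy at $\aleph_n$ is indeed the crux, and you are right to flag it as the main obstacle rather than claim it; in the cited papers this is done by a direct combinatorial argument (building a system of partial solutions by induction on the cardinal, and exhibiting an explicit obstruction above the threshold) rather than by invoking an off-the-shelf cohomology computation.

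In short: for the purposes of this paper your proposal is already overkill, since a citation suffices; as a standalone sketch it has the right shape, but the amalgamation and threshold steps would each need substantial expansion before they could stand as proofs.
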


\begin{example}\label{categ-examples} \
  \begin{enumerate}
  \item The AEC of all fields (ordered by subfield) has empty categoricity spectrum, amalgamation, and arbitrarily large models. There are of course many other such examples.
  \item Fix an infinite cardinal $\lambda$ and an ordinal $\alpha < \left(2^{\lambda}\right)^+$. Take the disjoint union of the AEC $\K^\alpha$ given by Fact \ref{coding} with a totally categorical AEC to get an AEC $\K$ with amalgamation and arbitrarily large models such that $\LS (\K) = \lambda$ and $\Cat (\K) = [\beth_\alpha (\lambda)^+, \infty)$. Similarly, using $\K^{\ast, \alpha}$ we can get an AEC $\K'$ with amalgamation and arbitrarily large models such that $\LS (\K') = \lambda$ and $\Cat (\K) = [\lambda^{+\alpha}, \infty)$. If GCH holds, any $\chi \in [\lambda, \hanf{\lambda})$ is of the form $\lambda^{+\beta}$ for some $\beta < \left(2^{\lambda}\right)^+$, so this gives a complete list of examples for the third case of Corollary \ref{main-cor}. 
  \item Fix $m \le n < \omega$. Take a disjoint union of the Hart-Shelah example ($\K^{\text{hs}, n}$ given by Fact \ref{hs-fact}) and the AEC $\K^{\ast, m}$ given by Fact \ref{coding} (i.e.\ we take the coproduct of the Hart-Shelah example for categoricity up to $\aleph_n$ with an AEC with model only up to size $\aleph_{m - 1}$). We obtain an AEC $\K$ with amalgamation and arbitrarily large models such that $\LS (\K) = \aleph_0$ and $\Cat (\K) = [\LS (\K)^{+m}, \LS (\K)^{+n}]$. This gives a complete list of examples for the second case of Corollary \ref{main-cor} in case $\LS (\K) = \aleph_0$. In case $\LS (\K) > \aleph_0$, work in preparation of Shelah and Villaveces \cite{shvi648v1} constructs for each $n < \omega$ (assuming GCH) an AEC with Löwenheim-Skolem-number $\lambda$ which has categoricity spectrum $[\lambda, \lambda^{+n}]$. Using this, one could then also get a complete list of examples for the second case of Corollary \ref{main-cor} in case $\LS (\K) = \lambda$.
  \end{enumerate}
\end{example}
\begin{remark}
  The last two examples given above both fail joint embedding. It would be interesting to know whether there are examples with joint embedding, or whether (assuming $\WGCH$) for AECs with amalgamation, joint embedding, and arbitrarily large models, categoricity above $\LS (\K)^{+\omega}$ implies categoricity everywhere strictly above $\LS (\K)$.
\end{remark}

\section{Categoricity assuming no maximal models}\label{categ-nmm-sec}

In this section, we study the categoricity spectrum of AECs with no maximal models. Take an AEC with no maximal models categorical in a high-enough cardinal. By Fact \ref{semisolv-fact}(\ref{semisolv-1}), nice stability holds in this setup (assuming $\GCHWD$, see Section \ref{set-thy-sec}), so to apply Theorem \ref{categ-main}, it remains to check that there are superlimits in $\lambda$ and $\lambda^+$, for some suitable $\lambda$. The existence of superlimits may well follow from the uniqueness of limit models, but the latter is not known in this context (see the discussion in \cite{vandierennomax, nomaxerrata}). Instead, we will take $\lambda$ a suitable fixed point of the $\beth$ function and rely heavily on \cite[Chapter IV]{shelahaecbook} (we do \emph{not} rely on \cite[\S IV.2]{shelahaecbook}, which contains a gap -- see the discussion at the beginning of \cite[\S4]{ap-universal-apal}).

We give a name to the kind of fixed points we will consider:

\begin{defin}
  A cardinal $\lambda$ is a \emph{nice fixed point} if there exists a strictly increasing sequence $\seq{\lambda_n : n < \omega}$ such that $\lambda = \sup_{n < \omega} \lambda_n$ and $\lambda_n = \beth_{\lambda_n}$ for all $n < \omega$.
\end{defin}

Note that such fixed points can easily be found by iterating the $\beth$ functions enough times. The following result is where we will use \cite[Chapter IV]{shelahaecbook}. Essentially, everything in the conclusion below except the existence of the superlimit in $\K_{\lambda^+}$ is proven by Shelah there. For the convenience of the reader, we include here some of Shelah's definitions and results. Nevertheless, a sharp understanding of the proof of Theorem \ref{sl-nmm-thm} requires digging a little bit into some of Shelah's arguments. 

\begin{defin}[{\cite[IV.1.4.3]{shelahaecbook}}]
  An EM blueprint $\Phi \in \Upsilon[\K]$ (Definition \ref{solv-def}) \emph{witnesses $\mu$-pseudosolvability} (for $\mu \ge \LS (\K)$) if for every linear order $I$ of cardinality $\mu$, $M := \EM_{\tau (\K)} (I)$ has a proper extension and whenever $\delta < \mu^+$ is a limit ordinal and $\seq{M_i : i \le \delta}$ is increasing continuous, $M_i \cong M$ for all $i < \delta$ implies $M_\delta \cong M$. In other words, $M$ satisfies all the properties in the definition of a superlimit except universality.
\end{defin}
\begin{remark}\label{iso-sl-rmk}
  If $\Phi$ witnesses $\mu$-pseudosolvability, then for any linear orders $I, J$ of cardinality $\mu$, $\EM_{\tau (\K)} (I, \Phi) \cong \EM_{\tau (\K)} (J, \Phi)$. The proof is similar to that of Fact \ref{sl-uq} (embed $I$ and $J$ into a common extension). In particular, just as in Fact \ref{sl-basic}, the EM model of cardinality $\mu$ generates an AEC. We will use this without comments.
\end{remark}

\begin{fact}\label{gf-fact}
  Let $\K$ be an AEC. Let $\mu > \lambda \ge \LS (\K)$. If:

  \begin{enumerate}
  \item $\lambda$ is a nice fixed point.
  \item $\Phi$ witnesses $\mu$-pseudosolvability.
  \item For $M \lea N$ both isomorphic to $\EM_{\tau (\K)} (\mu, \Phi)$, $M \lee_{\Ll_{\infty, \lambda}} N$.
  \end{enumerate}

  Then $\Phi$ witnesses $\lambda$-pseudosolvability. Moreover there is a type-full good $\lambda$-frame on the AEC generated by $\EM_{\tau (\K)} (\lambda, \Phi)$.
\end{fact}
\begin{proof}
  The $\lambda$-pseudosolvability is \cite[IV.1.41]{shelahaecbook}. What Shelah refers to as hypothesis 1.18 is exactly the statement that for $M \lea N$ isomorphic to the EM model in $\mu$ witnessing pseudosolvability, $M \lee_{\Ll_{\infty, \lambda}} N$. Existence of the good frame is \cite[IV.4.10]{shelahaecbook}.
\end{proof}

\begin{thm}\label{sl-nmm-thm}
  Let $\K$ be an AEC and let $\mu > \lambda > \LS (\K)$. If:

  \begin{enumerate}
  \item $\lambda$ is a nice fixed point.
  \item $\K$ is $\mu$-solvable.
  \item For $M, N$ in $\Kslp{\mu}_\mu$, $M \lea N$ implies $M \lee_{\Ll_{\infty, \lambda}} N$.
  \item $\K$ is $(\lambda, \mu)$-extendible.
  \end{enumerate}

  Then $\K$ is $\lambda$-solvable, $\Kslp{\lambda}_{\lambda^+}$ has a superlimit, and there is a type-full good $\lambda$-frame on $\Kslp{\lambda}$.
\end{thm}
\begin{proof}
  Fix $\Phi$ witnessing $\mu$-solvability. In particular, it also witnesses $\mu$-pseudosolvability. By Fact \ref{gf-fact}, $\Phi$ witnesses $\lambda$-pseudosolvability. In the present setup, we are assuming that $\K$ is $(\lambda, \mu)$-extendible and $\mu$-solvable, and so in particular any $M \in \K_\lambda$ embeds inside $\EM_{\tau (\K)} (\mu, \Phi)$, hence inside $\EM_{\tau (\K)} (S, \Phi)$ for some $S \subseteq \mu$ of cardinality $\lambda$. This in turn must be isomorphic to $\EM_{\tau (\K)} (\lambda, \Phi)$ (Remark \ref{iso-sl-rmk}), so in fact $\EM_{\tau (\K)} (\lambda, \Phi)$ is universal, so superlimit: $\K$ is $\lambda$-solvable. In particular, $\K_\lambda$ has a superlimit.

  By Fact \ref{gf-fact}, there is a type-full good $\lambda$-frame on the class generated by the EM models witnessing pseudo-solvability. As argued in the previous paragraph, this class must be $\Kslp{\lambda}$.

  It remains to see that $\Kslp{\lambda}_{\lambda^+}$ has a superlimit. Since $\K$ has a superlimit in $\lambda$ and $\Kslp{\lambda}$ has a good $\lambda$-frame, we have by Fact \ref{frame-ss} and Theorem \ref{dense-nicely-stable} that $\K$ is nicely sl-$\lambda$-superstable. It is easy to check that the $(\lambda, \lambda^+)$-limit model is the saturated model in $\Kslp{\lambda}_{\lambda^+}$. Thus (using Theorem \ref{nicely-stable-sl} in the background), proving that $\Kslp{\lambda}_{\lambda^+}$ has a superlimit is equivalent to showing that for any limit $\delta < \lambda^{++}$, if $\seq{M_i : i < \delta}$ is an increasing chain of saturated models (in $\Kslp{\lambda}_{\lambda^+})$, then $\bigcup_{i < \delta} M_i$ is saturated. Work inside some saturated $M$ with $M_i \lea M$ for all $i < \delta$.

  The proof of \cite[IV.4.4]{shelahaecbook} gives that $\K_\lambda$ is $(<\kappa)$-tame, for some $\kappa < \lambda$. We claim that $\K$ (and hence $M$) does not have the $(\kappa,0)$-order property of length $\hanf{\kappa}$ (see \cite[4.3]{sh394} for the definition). Otherwise, by \cite[4.7.2]{sh394} (see \cite[5.13]{bgkv-apal} for a proof), the EM model of cardinality $\lambda$ would have such an order property, and this is impossible by \cite[3.4]{categ-saturated-afml}. Note that $\hanf{\kappa} < \lambda$, so we can apply \cite[4.27]{bv-sat-afml} to prove using stability theory inside a model and averages that $\bigcup_{i < \delta} M_i$ is saturated. Note that hypothesis (3) of \cite[4.27]{bv-sat-afml} holds because of the definition of the good frame in \cite[IV.4.10]{shelahaecbook} (see the proof of the local character property there).
\end{proof}

We will really use the following consequence of Theorem \ref{sl-nmm-thm} (the additional hypotheses are $\GCHWD (\lambda^+)$ and  $(\lambda^+,\mu)$-extendibility).

\begin{cor}\label{sl-nmm-cor}
  Let $\K$ be an AEC and let $\mu > \lambda > \LS (\K)$. If:

  \begin{enumerate}
  \item $\GCHWD (\lambda^+)$ holds.
  \item $\lambda$ is a nice fixed point.
  \item $\K$ is $\mu$-solvable.
  \item\label{mc-hyp} For $M, N$ in $\Kslp{\mu}_\mu$, $M \lea N$ implies $M \lee_{\Ll_{\infty, \lambda}} N$.
  \item $\K$ is $([\lambda, \lambda^+], \mu)$-extendible.
  \end{enumerate}

  Then both $\K_\lambda$ and $\K_{\lambda^+}$ have a superlimit.
\end{cor}
\begin{proof}
  By Theorem \ref{sl-nmm-thm}, $\K$ is $\lambda$-solvable hence in particular $\K_\lambda$ has a superlimit. In fact, Theorem \ref{sl-nmm-thm} tells us that there is a good $\lambda$-frame on $\Kslp{\lambda}$. In particular, $\K$ is nicely sl-$\lambda$-superstable, hence nicely $\lambda$-stable. By Fact \ref{semisolv-fact}(\ref{semisolv-1}) (where $\lambda$ here stands for $\lambda^+$ there), $\K$ is nicely $\lambda^+$-stable. By Theorem \ref{nicely-stable-sl}, $\Kslp{\lambda}_{\lambda^+}$ is dense in $\K_{\lambda^+}$ (recall Definition \ref{dense-def}). Theorem \ref{sl-nmm-thm} tells us that $\Kslp{\lambda}_{\lambda^+}$ has a superlimit, hence (Remark \ref{skel-basics}), $\K_{\lambda^+}$ has a superlimit as well.
\end{proof}

To obtain hypothesis (\ref{mc-hyp}) in Corollary \ref{sl-nmm-cor}, we will use:

\begin{fact}[{\cite[IV.1.12]{shelahaecbook}}]\label{mc-fact}
  Let $\K$ be an AEC and let $\mu \ge \lambda > \LS (\K)$. If $\K$ is categorical in $\mu$ and $\mu = \mu^{<\lambda}$, then whenever $M \lea N$ are both in $\K_\mu$, we have that $M \lee_{\Ll_{\infty, \lambda}} N$.
\end{fact}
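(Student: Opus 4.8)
The plan is to verify the Karp back-and-forth characterization of $\Ll_{\infty, \lambda}$-elementarity. Recall that $M \lee_{\Ll_{\infty, \lambda}} N$ holds as soon as there is a family $\F$ of partial isomorphisms $f : M \to N$ (preserving the finitary atomic structure), each with $|\dom f| < \lambda$, such that: (i) for every $\ba \in \fct{<\lambda}{|M|}$ the identity map $\id_{\ba}$ (sending $a_i \in |M|$ to the same $a_i \in |N|$) lies in $\F$; (ii) the $\lambda$-forth condition: for $f \in \F$ and $X \subseteq |M|$ with $|X| < \lambda$ there is $g \in \F$ with $f \subseteq g$ and $X \subseteq \dom g$; and (iii) the $\lambda$-back condition: for $f \in \F$ and $Y \subseteq |N|$ with $|Y| < \lambda$ there is $g \in \F$ with $f \subseteq g$ and $Y \subseteq \ran g$. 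Indeed, given such an $\F$, fixing $\ba \in \fct{<\lambda}{|M|}$ and running the back-and-forth from the seed $\id_{\ba} \in \F$ yields $(M, \ba) \equiv_{\Ll_{\infty, \lambda}} (N, \ba)$, which is precisely $\Ll_{\infty, \lambda}$-elementarity of the inclusion. Clause (ii) is unproblematic, since from the identity seeds one can always extend by the identity; the entire content lies in clause (iii), i.e.\ that every $<\lambda$-configuration realized in $N$ over parameters from $M$ is matched by one over the same parameters inside $M$.

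The reason the cardinal and categoricity hypotheses enter is that amalgamation is \emph{not} assumed, so there is no direct way to realize types by gluing extensions; the only available leverage is categoricity in $\mu$, which makes every model of $\K_\mu$ isomorphic to a single model $\mathfrak{M}$ and thereby supplies an abundance of isomorphisms between size-$\mu$ models and a strong self-similarity. First I would isolate the core claim as a homogeneity statement: when $\mu = \mu^{<\lambda}$ and $\K$ is categorical in $\mu$, the model $\mathfrak{M}$ is $\Ll_{\infty, \lambda}$-homogeneous for sequences of length $<\lambda$ --- any partial $\Ll_{\infty, \lambda}$-elementary map between two $<\lambda$-subsequences of $\mathfrak{M}$ extends. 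Granting this, the theorem follows quickly: $M \cong N \cong \mathfrak{M}$ are each $\lambda$-homogeneous and $\Ll_{\infty, \lambda}$-equivalent, and for $M \lea N$ the back clause (iii) is discharged by the $\lambda$-homogeneity of $M$ (reproducing inside $M$ any $\Ll_{\infty, \lambda}$-type over $\ba$ realized by some $b \in N$), while (i) and (ii) are immediate; Karp's criterion then gives $M \lee_{\Ll_{\infty, \lambda}} N$.

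To establish the homogeneity core claim I would construct a $\lambda$-homogeneous model of size $\mu$ by a transfinite induction of length $\mu$ and then transport homogeneity to $\mathfrak{M}$ by categoricity. Here $\mu = \mu^{<\lambda}$ is exactly what makes the bookkeeping work: the number of pairs of $<\lambda$-subsequences to be matched is at most $\mu^{<\lambda} = \mu$, so one enumerates all potential requirements in order type $\mu$ and meets them one at a time along an increasing continuous chain $\seq{M_i : i < \mu}$ of size-$\mu$ models, taking unions at limits (which stay in $\K$ by the chain axioms). The crux of each successor step --- extending a given partial elementary map --- is where amalgamation would normally be used; I would substitute for it the categoricity-driven self-similarity. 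A convenient way to organize this is the shift construction: from $M \lea N$ with an isomorphism $h : M \cong N$ one builds an increasing $\omega$-chain $M = M_0 \lea M_1 \lea \cdots$ with coherent isomorphisms whose union is an automorphism of $\bigcup_{n < \omega} M_n$ carrying each $M_n$ onto $M_{n+1}$; these conjugating automorphisms let one move a configuration from one copy of $\mathfrak{M}$ into another over the desired parameters.

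The main obstacle is exactly this homogeneity/back step carried out without amalgamation: one must show that any $<\lambda$-configuration appearing in one size-$\mu$ model appears, over the same parameters, in $\mathfrak{M}$ itself. Lacking the ability to amalgamate, every such realization must be routed through the categoricity isomorphisms together with a stabilization (pigeonhole) argument along the length-$\mu$ chain, using $\mu = \mu^{<\lambda}$ both to bound the number of requirements by $\mu$ and to secure a cofinal set of ``good'' stages. Checking that the partial maps so produced are genuine atomic isomorphisms compatible with the inclusion $M \subseteq N$, and that the resulting family is closed under the $\lambda$-back-and-forth, is then a routine but careful verification. I expect assembling the homogeneity lemma to be by far the hardest part; once it is in hand, the passage to $M \lee_{\Ll_{\infty, \lambda}} N$ via Karp's theorem is short.
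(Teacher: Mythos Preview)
The paper does not prove this statement; it is quoted as a fact with a citation to Shelah's book, so there is no in-paper argument to compare against. I will therefore evaluate your outline on its own merits.

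Your ingredients are the right ones: Karp's back-and-forth criterion, the shift construction exploiting categoricity, and the cardinal arithmetic $\mu = \mu^{<\lambda}$ (which, note, also forces $\cf(\mu) \ge \lambda$). However, the specific reduction you isolate --- prove that the unique model $\mathfrak{M}$ is $\Ll_{\infty,\lambda}$-homogeneous, and then deduce $M \lee_{\Ll_{\infty,\lambda}} N$ from ``$M \cong N$, both homogeneous, both $\Ll_{\infty,\lambda}$-equivalent'' --- has a circularity. To discharge the back clause from homogeneity of $M$, you take $b \in N$, transport $\ba b$ into $M$ via some isomorphism $\sigma: N \cong M$, and then want an automorphism of $M$ sending $\sigma(\ba)$ back to $\ba$. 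Homogeneity supplies that automorphism only if $\sigma(\ba)$ and $\ba$ have the same $\Ll_{\infty,\lambda}$-type in $M$. But $\sigma(\ba)$ has in $M$ the type that $\ba$ has in $N$, so what you need is precisely that $\ba$ has the same type in $M$ as in $N$ --- which is the conclusion $M \lee_{\Ll_{\infty,\lambda}} N$ you are trying to prove.

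The clean target, and what the cited argument actually establishes, is stronger and avoids this loop: for every $A \subseteq |M|$ with $|A| < \lambda$, there is an isomorphism $h: M \cong N$ with $h \rest A = \id_A$. Once you have this, the Karp family is simply the set of all $<\lambda$-sized restrictions of isomorphisms $M \cong N$; clause (i) is exactly the claim, and (ii), (iii) are trivial since any member already extends to a full isomorphism. Proving the claim is where the shift chain of length $\mu$ and the counting $\mu^{<\lambda} = \mu$ genuinely do their work, much as you sketch in your final paragraph; you should aim the construction at this statement rather than at abstract homogeneity of $\mathfrak{M}$.
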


We can now prove an upward categoricity transfer from categoricity in one cardinal in AECs with no maximal models. The categoricity cardinal is assumed to satisfy some cardinal arithmetic -- this will be remedied in the next result, essentially at the cost of assuming categoricity in two cardinals. Recall (Section \ref{set-thy-sec}) that $\GCHWD (\lambda)$ means that $2^\lambda = \lambda^+$ and enough instances of the weak diamond hold at $\lambda$.

\begin{thm}\label{categ-nmm-1}
  Let $\K$ be an AEC and let $\mu > \lambda > \LS (\K)$. If:

  \begin{enumerate}
  \item $\GCHWD ([\lambda, \lambda^{+\omega}))$.
  \item $\mu = \mu^{<\lambda}$ (or just: $M \lea N$ both in $\K_\mu$ implies $M \lee_{\Ll_{\infty, \lambda}} N$).
  \item $\lambda$ is a nice fixed point.
  \item $\lambda^{+\omega} \le \mu$.
  \item $\K$ is $([\lambda, \lambda^{+\omega}], \mu)$-extendible.
  \item $\K$ is categorical in $\mu$.
  \end{enumerate}

  Then $\K$ is categorical in every $\mu' \ge \mu$.
\end{thm}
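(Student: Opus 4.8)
The plan is to check the hypotheses of the main transfer theorem (Theorem \ref{categ-main}) together with those of Theorem \ref{sl-nmm-thm}, conclude that the superlimit-generated class $\Kslp{\lambda}$ is categorical everywhere above $\lambda$, and then push categoricity from $\Kslp{\lambda}$ back to $\K$ using categoricity in $\mu$. First I would record the preliminary structural facts. Since $\lambda > \LS (\K)$ and $\lambda = \sup_n \lambda_n$, some $\lambda_n > \LS (\K)$; being a $\beth$-fixed point above $\LS (\K)$ it is a strong limit, so $\lambda_n \ge \hanf{\LS (\K)}$ and hence $\mu > \lambda \ge \hanf{\LS (\K)}$. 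Categoricity in $\mu$ gives a model of size $\mu \ge \hanf{\LS (\K)}$, so $\K$ has arbitrarily large models by Fact \ref{arb-large-fact}, and thus is $\mu$-solvable by Remark \ref{solv-rmk}. The $\Ll_{\infty, \lambda}$-elementarity needed as hypothesis (3) of Theorem \ref{sl-nmm-thm} is exactly our hypothesis (2) restricted to $\Kslp{\mu}_\mu \subseteq \K_\mu$ (directly, or via Fact \ref{mc-fact} if we only assume $\mu = \mu^{<\lambda}$), and $(\lambda, \mu)$-extendibility is contained in hypothesis (5). So Theorem \ref{sl-nmm-thm} applies and yields that $\K$ is $\lambda$-solvable (hence $\K_\lambda$ has a superlimit), that $\K_{\lambda^+}$ has a superlimit, and that $\Kslp{\lambda}$ carries a type-full good $\lambda$-frame.

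Next I would feed these into Theorem \ref{categ-main}, taking $\mu_0 = \min(\mu, \lambda^{+\omega}) = \lambda^{+\omega}$ since $\mu \ge \lambda^{+\omega}$. The set-theoretic clause holds because $\GCHWD$ implies $\WGCH$; arbitrarily large models and the superlimits in $\lambda$ and $\lambda^+$ come from the previous paragraph; and $(\lambda^{+\omega}, \mu)$-extendibility is part of hypothesis (5). The one substantive point is nice $[\lambda^{+2}, \lambda^{+\omega})$-stability: for each $\nu = \lambda^{+k}$ with $2 \le k < \omega$, both $\nu$ and $\nu^+ = \lambda^{+(k+1)}$ lie in $[\lambda, \lambda^{+\omega})$ and are strictly below $\mu$, so $(\nu, \mu)$- and $(\nu^+, \mu)$-extendibility hold; combined with $\mu$-semisolvability this gives $[\nu, \nu^+]$-semisolvability by Fact \ref{solv-downward}, and then Fact \ref{semisolv-fact}, using $\GCHWD(\nu)$, gives nice $\nu$-stability. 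Since $\mu \ge \lambda^{+\omega}$, the ``moreover'' clause of Theorem \ref{categ-main} applies and shows that $\Kslp{\lambda}$ is categorical in every $\mu' \ge \lambda$ and has amalgamation, no maximal models, and $\lambda$-tameness.

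Finally I would transfer categoricity back to $\K$. It suffices to prove $\K_{\mu'} = \Kslp{\lambda}_{\mu'}$ for all $\mu' \ge \mu$; as $\Kslp{\lambda} \subseteq \K$, only $\K_{\mu'} \subseteq \Kslp{\lambda}_{\mu'}$ needs argument. Since $\Kslp{\lambda}$ has a model of size $\mu$ and $\K$ is categorical in $\mu$, that model is the unique model of $\K_\mu$, so $\K_\mu = \Kslp{\lambda}_\mu$. Now take $M \in \K_{\mu'}$ and $A \subseteq |M|$ with $|A| \le \lambda$; the Löwenheim--Skolem--Tarski axiom produces $M_A \lea M$ of size exactly $\mu$ with $A \subseteq |M_A|$ (take $M_A = M$ if $\mu' = \mu$). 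Then $M_A \in \K_\mu = \Kslp{\lambda}_\mu \subseteq \Kslp{\lambda}$, so by the definition of $\Kslp{\lambda}$ there is a copy of the superlimit $\lea$-below $M_A \lea M$ containing $A$; hence $M \in \Kslp{\lambda}$. Thus $\K_{\mu'} = \Kslp{\lambda}_{\mu'}$, and the categoricity of $\Kslp{\lambda}$ in each $\mu' \ge \lambda$ gives categoricity of $\K$ in every $\mu' \ge \mu$.

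The main obstacle is not any single deep step — the heavy lifting is already packaged inside Theorems \ref{sl-nmm-thm} and \ref{categ-main} — but rather the bookkeeping in the middle paragraph: one must keep the cardinal arithmetic and the extendibility ranges aligned so that the semisolvability-to-stability machinery (Facts \ref{solv-downward} and \ref{semisolv-fact}) genuinely applies at \emph{each} $\nu \in [\lambda^{+2}, \lambda^{+\omega})$, and verify that the superlimits and frame produced at $\lambda$ really do meet hypotheses (3)--(5) of Theorem \ref{categ-main} on the nose.
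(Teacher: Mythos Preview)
Your proof is correct and follows essentially the same route as the paper: verify the hypotheses of Theorem~\ref{sl-nmm-thm} to obtain superlimits in $\lambda$ and $\lambda^+$, then feed everything into Theorem~\ref{categ-main} (using Facts~\ref{solv-downward} and~\ref{semisolv-fact} for nice stability). Your final paragraph, transferring categoricity from $\Kslp{\lambda}$ back to $\K$ above $\mu$, is a step the paper leaves implicit (it relies on the remark following Theorem~\ref{categ-main}); your explicit argument via a size-$\mu$ intermediate $M_A$ is the right way to fill this in.
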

\begin{proof}
  By Fact \ref{mc-fact}, $M \lea N$ both in $\K_\mu$ implies $M \lee_{\Ll_{\infty, \lambda}} N$. We want to apply Theorem \ref{categ-main} (the result will then follow, see Remark \ref{sl-categ-rmk}). We check its hypotheses. Since $\GCHWD ([\lambda, \lambda^{+\omega}))$, we have that $\WGCH ([\lambda, \lambda^{+\omega}))$. Since $\lambda$ is a nice fixed point, we have that $\hanf{\LS (\K)} < \lambda$, so $\K$ has arbitrarily large models (Fact \ref{arb-large-fact}). By Corollary \ref{sl-nmm-cor}, $\K_{\lambda}$ has a superlimit and $\K_{\lambda^+}$ has a superlimit. By Fact \ref{semisolv-fact}(\ref{semisolv-1}), categoricity in $\mu$ (Remark \ref{solv-rmk}) and the extendibility hypothesis, $\K$ is nicely $[\lambda^{+2}, \lambda^{+\omega})$-stable. This completes the verification that the hypotheses of Theorem \ref{categ-main} hold.
\end{proof}
\begin{remark}
  The proof shows that $\GCHWD ([\lambda, \lambda^{+\omega}))$ can be replaced with the conjunction of $\WGCH ([\lambda, \lambda^+])$ and $\GCHWD ([\lambda^{+2}, \lambda^{+\omega}))$.
\end{remark}

The next result removes the assumption that $\mu = \mu^{<\lambda}$ in Theorem \ref{categ-nmm-1} at the cost of assuming categoricity in two cardinals.

\begin{thm}\label{categ-nmm-2}
  Let $\K$ be an AEC and let $\mu_2 > \mu_1 \ge \lambda > \LS (\K)$. If:

  \begin{enumerate}
  \item $\GCHWD ([\lambda, \lambda^{+\omega}))$.
  \item $\WGCH (\mu_1)$.
  \item $\lambda$ is a nice fixed point.
  \item\label{ext-nmm-h} $\K$ is $([\lambda, \lambda^{+\omega}], \mu_2)$-extendible.
  \item $\K$ is categorical in $\mu_1$.
  \item $\K$ is $(\mu_1^+, \mu_2)$-extendible.
  \item $\K$ is categorical in $\mu_2$.
  \item If $\mu_1 = \lambda$, then $\mu_2 = \mu_2^{<\lambda}$.
  \end{enumerate}

  Then $\K$ is categorical in every $\mu \in [\mu_1, \mu_2]$. Moreover if $\mu_2 \ge \lambda^{+\omega}$, $\K$ is categorical in every $\mu' \ge \mu_1$.
\end{thm}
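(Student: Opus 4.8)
The plan is to verify the hypotheses of the main transfer Theorem \ref{categ-main} with its $\lambda$ and $\mu$ taken to be our $\lambda$ and $\mu_2$, and then to push the resulting categoricity of $\Kslp{\lambda}$ back onto $\K$. If $\mu_2 = \lambda^+$ then $\mu_1 = \lambda$ and the conclusion is immediate, so assume $\mu_2 > \lambda^+$, and set $\mu_0 := \min(\mu_2, \lambda^{+\omega})$. Of the seven hypotheses of Theorem \ref{categ-main}, most are routine: $\WGCH([\lambda,\mu_0))$ follows from $\GCHWD([\lambda,\lambda^{+\omega}))$; $\K$ has arbitrarily large models since $\lambda$ is a nice fixed point, whence $\hanf{\LS(\K)} < \lambda \le \mu_2$ and $\K_{\mu_2} \neq \emptyset$ (Fact \ref{arb-large-fact}); categoricity in $\mu_2$ is assumed; and $(\mu_0,\mu_2)$-extendibility is contained in hypothesis (\ref{ext-nmm-h}). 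For the nice-stability clause, $\K$ is $\mu_2$-semisolvable (Remark \ref{solv-rmk}), so by Fact \ref{solv-downward} together with (\ref{ext-nmm-h}) it is $\nu$-semisolvable for the relevant $\nu \in [\lambda,\lambda^{+\omega}]$; as $\GCHWD$ holds on $[\lambda,\lambda^{+\omega})$, Fact \ref{semisolv-fact}(1) yields nice $[\lambda^{+2},\mu_0)$-stability. The only substantial input is the existence of superlimits in $\K_\lambda$ and $\K_{\lambda^+}$, which I will extract from Theorem \ref{sl-nmm-thm}.

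To apply Theorem \ref{sl-nmm-thm} I must produce a cardinal $\mu^\ast > \lambda$ at which $\K$ is solvable, is $(\lambda,\mu^\ast)$-extendible, and satisfies the elementarity hypothesis (\ref{mc-hyp}): comparable members of $\Kslp{\mu^\ast}_{\mu^\ast}$ are $\lee_{\Ll_{\infty,\lambda}}$-related. For $\mu^\ast \in \{\mu_1,\mu_2\}$ solvability is automatic (a categoricity cardinal of a class with arbitrarily large models is solvable, Remark \ref{solv-rmk}) and $(\lambda,\mu^\ast)$-extendibility follows from $(\lambda,\mu_2)$-extendibility. The crux is (\ref{mc-hyp}), and here I split on $\mu_1$. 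If $\mu_1 = \lambda$, set $\mu^\ast := \mu_2$: the hypothesis that $\mu_2 = \mu_2^{<\lambda}$ (which is assumed precisely in this case) lets Fact \ref{mc-fact} show that $M \lea N$ in $\K_{\mu_2}$ implies $M \lee_{\Ll_{\infty,\lambda}} N$, which is (\ref{mc-hyp}) since $\Kslp{\mu_2}_{\mu_2} \subseteq \K_{\mu_2}$. If $\mu_1 > \lambda$, set $\mu^\ast := \mu_1$. First note $\K$ has no maximal models in $\mu_1$: any model of $\K_{\mu_2}$ has a $\lea$-submodel of size $\mu_1$, which by categoricity is the unique $\mu_1$-model and which has a proper $\lea$-extension inside it; consequently every model of $\K_{\mu_1}$ is superlimit (Remark \ref{categ-sl}). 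Categoricity in $\mu_1$, no maximal models, and $\WGCH(\mu_1)$ then give nice $\mu_1$-stability by \cite{shvi635} (weak diamond suffices here precisely because we have full categoricity rather than mere semisolvability). Lemma \ref{nice-stab-elem}, applied in $\K_{\mu_1}$, now gives $M \lee_{\Ll_{\infty,\mu_1}} N$ for comparable superlimits, and since $\lambda \le \mu_1$ and $\Ll_{\infty,\lambda}$ is a sublogic of $\Ll_{\infty,\mu_1}$, this is (\ref{mc-hyp}) at $\mu_1$. In both cases Theorem \ref{sl-nmm-thm} applies and supplies the superlimits in $\lambda$ and $\lambda^+$. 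The step I expect to be the main obstacle is exactly this verification of (\ref{mc-hyp}) when $\mu_1 > \lambda$: when $\mu_1 \ge \lambda^{+\omega}$ one cannot assume $\mu_1 = \mu_1^{<\lambda}$, so Fact \ref{mc-fact} is unavailable and one is forced to route through nice stability and Lemma \ref{nice-stab-elem}. This is where the second categoricity cardinal and the separate hypothesis $\WGCH(\mu_1)$ earn their keep, replacing the arithmetic hypothesis $\mu = \mu^{<\lambda}$ of Theorem \ref{categ-nmm-1}.

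Granting (\ref{mc-hyp}), Theorem \ref{categ-main} now gives that $\Kslp{\lambda}$ is categorical in every $\mu' \in [\lambda,\mu_2]$, and in every $\mu' \ge \lambda$ when $\mu_2 \ge \lambda^{+\omega}$, with amalgamation, no maximal models, and $\lambda$-tameness. It remains to transfer categoricity to $\K$ on $[\mu_1,\infty)$. Fix $\mu' \ge \mu_1$ at which $\Kslp{\lambda}$ is categorical, and let $M \in \K_\lambda$ be a superlimit, so $\Kslp{\lambda} = \K^{[M]}$. Given $N \in \K_{\mu'}$ and $A \subseteq |N|$ with $|A| \le \lambda$, enlarge $A$ to some $A' \subseteq |N|$ with $|A'| = \mu_1$ and use the Löwenheim–Skolem–Tarski axiom to find $M_0 \lea N$ with $A' \subseteq |M_0|$ and $\|M_0\| = \mu_1$. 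By categoricity of $\K$ in $\mu_1$, the model $M_0$ is isomorphic to the unique model of $\Kslp{\lambda}_{\mu_1}$, so $M_0 \in \Kslp{\lambda} = \K^{[M]}$; unfolding the definition of $\K^{[M]}$, the set $A \subseteq A' \subseteq |M_0|$ lies in a copy of $M$ that is $\lea M_0 \lea N$. Hence $N \in \K^{[M]} = \Kslp{\lambda}$, giving $\K_{\mu'} = \Kslp{\lambda}_{\mu'}$ and thus categoricity of $\K$ in $\mu'$. This holds for all $\mu' \in [\mu_1,\mu_2]$, and for all $\mu' \ge \mu_1$ when $\mu_2 \ge \lambda^{+\omega}$, which is the desired conclusion.
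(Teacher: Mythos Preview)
Your overall strategy matches the paper's, but there is a genuine gap in the case $\mu_1 > \lambda$. You assert that categoricity in $\mu_1$, no maximal models in $\mu_1$, and $\WGCH(\mu_1)$ give nice $\mu_1$-stability ``by \cite{shvi635}'', with the parenthetical that weak diamond suffices because of full categoricity. This is not justified: the Shelah--Villaveces argument needs $\GCHWD(\mu_1)$ (see Remark~\ref{nice-stab-ap} and Fact~\ref{semisolv-fact}(1)), and categoricity alone does not lower this to $\WGCH(\mu_1)$. Concretely, to make the superlimit in $\K_{\mu_1}$ an amalgamation base via Fact~\ref{ap-univ-comb} one needs $\K_{\mu_1^+}$ to have a universal model, and nothing you have established provides that. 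The tell is that you never invoke hypothesis~(6), that $\K$ is $(\mu_1^+,\mu_2)$-extendible; when $\mu_1 \ge \lambda^{+\omega}$ this hypothesis is essential and is not subsumed by hypothesis~(\ref{ext-nmm-h}).

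The fix is exactly what the paper does: since $\K$ is $\mu_2$-semisolvable and $(\mu_1^+,\mu_2)$-extendible, Fact~\ref{solv-downward} gives $\mu_1^+$-semisolvability; together with the $\mu_1$-solvability you already have, $\K$ is $[\mu_1,\mu_1^+]$-semisolvable, and now Fact~\ref{semisolv-fact}(4) (superlimit in $\K_{\mu_1}$ plus $\WGCH(\mu_1)$) yields nice $\mu_1$-superstability, in particular nice $\mu_1$-stability. Lemma~\ref{nice-stab-elem} then applies exactly as you intended. With this correction your argument is essentially the paper's; your final paragraph, explicitly transferring categoricity from $\Kslp{\lambda}$ back to $\K$ on $[\mu_1,\mu_2]$ via categoricity of $\K$ in $\mu_1$, is correct and in fact more carefully spelled out than in the paper, which relegates it to the remark after Theorem~\ref{categ-main}.
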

\begin{proof}
  If $\mu_2 = \lambda^+$, the conclusion is vacuous, so assume that $\mu_2 > \lambda^+$. As in the proof of Theorem \ref{categ-nmm-1}, $\K$ has arbitrarily large models. In particular (Remark \ref{solv-rmk}), $\K$ is $\mu_1$-solvable and $\mu_2$-solvable. By Fact \ref{semisolv-fact}(\ref{semisolv-4}), $\K$ is nicely $\mu_1$-stable. By Lemma \ref{nice-stab-elem}, categoricity in $\mu_1$, and clause (\ref{ext-nmm-h}), we get that for $M \lea N$ both in $\K_{\mu_1}$, $M \lee_{\Ll_{\infty, \mu_1}} N$. In particular, $M \lee_{\Ll_{\infty, \lambda}} N$. By Corollary \ref{sl-nmm-cor} (setting $\mu = \mu_1$ if $\mu_1 > \lambda$, or setting $\mu = \mu_2$ and using Fact \ref{mc-fact} if $\mu_1 = \lambda$), $\K_\lambda$ has a superlimit and $\K_{\lambda^+}$ has a superlimit. Let $\mu_2' := \min (\mu_2, \lambda^{+\omega})$. We apply Theorem \ref{categ-main}, where $\mu_0, \mu$ there stand for $\mu_2', \mu_2$ here. As before, Remark \ref{sl-categ-rmk} will then imply the result, but we first have to check the hypotheses of Theorem \ref{categ-main}. $\WGCH ([\lambda, \mu_2'))$ holds because we are assuming more: $\GCHWD ([\lambda, \lambda^{+\omega}))$. We have already established that $\K$ has arbitrarily large models, $\K_\lambda$ has a superlimit, and $\K_{\lambda^+}$ has a superlimit. That $\K$ is $(\mu_2', \mu_2)$-extendible follows from hypothesis (\ref{ext-nmm-h}) here, and that $\K$ is categorical in $\mu_2$ is also assumed. It remains to see that $\K$ is nicely $[\lambda^{+2}, \mu_2')$-stable: use Fact \ref{semisolv-fact}(\ref{semisolv-1}).
\end{proof}

The following is a variation on Fact \ref{closed-cat-spec} that does not use (but rather derives) amalgamation. It relies on the following result of Shelah \cite[I.3.8]{shelahaecbook}:

\begin{fact}\label{ap-categ-fact}
  Let $\K$ be an AEC and let $\lambda \ge \LS (\K)$. Assume $\WGCH (\lambda)$. If $\K$ is categorical in $\lambda$ and has a universal model in $\lambda^+$, then $\K$ has amalgamation in $\lambda$.
\end{fact}

\begin{thm}\label{categ-spec-lim}
  Let $\K$ be an AEC and let $\mu > \lambda \ge \LS (\K)$. Assume $\GCH ([\lambda, \mu))$. If $\mu$ is a limit cardinal and $[\lambda, \mu) \subseteq \Cat (\K)$, then $\mu \in \Cat (\K)$.
\end{thm}
\begin{proof}
  Fix $\theta \in [\lambda, \mu)$. Since $\mu$ is limit, $\theta^+ < \mu$, hence by assumption $\K$ is categorical in both $\theta$ and $\theta^+$. By Fact \ref{ap-categ-fact}, $\K_\theta$ has amalgamation. Since $\theta$ was arbitrary, it follows \cite[I.2.12]{shelahaecbook} that $\K_{[\lambda, \mu)}$ has amalgamation. Thus it suffices to see that any object in $\K_\mu$ is saturated. For this, it is in turn enough to show that for any $\theta \in (\lambda, \mu)$, the model of cardinality $\theta^+$ is saturated. This is an immediate consequence of $\GCH (\theta)$: $(\theta^+)^{<\theta^+} = \theta^+$ so on general grounds we can build a saturated model of cardinality $\theta^+$.
\end{proof}
\begin{remark}
  If $\K$ has arbitrarily large models, then we can replace $\GCH ([\lambda, \mu))$ by $\WGCH ([\lambda, \mu))$ (this is enough to derive amalgamation, and then one uses Fact \ref{closed-cat-spec} with Remark \ref{solv-rmk}).
\end{remark}

We are now ready to give a short list of possibilities for the categoricity spectrum above a nice fixed point. This shows in particular that the categoricity spectrum of an AEC with no maximal models is either bounded or contains an end segment (assuming $\GCHWD$). Here, we do not know whether the three intermediate possibilities listed below can happen (Conjecture \ref{categ-aec} would imply they cannot happen).

\begin{cor}\label{spec-nmm}
  Assume $\GCHWD$. Let $\K$ be an AEC and let $\lambda > \LS (\K)$ be a nice fixed point. If $\K_{\ge \lambda}$ has no maximal models, then exactly one of the following possibilities holds:

  \begin{enumerate}
  \item\label{spec-1} $\Cat (\K_{\ge \lambda}) = \emptyset$.
  \item\label{spec-2} $\Cat (\K_{\ge \lambda}) = [\lambda^{+m}, \lambda^{+n}]$ for some $m \le n < \omega$.
  \item\label{spec-3} $\Cat (\K_{\ge \lambda}) = \{\mu\}$, for some $\mu \ge \lambda^{+\omega}$ with $\mu < \mu^{<\lambda}$.
  \item\label{spec-4} $\Cat (\K_{\ge \lambda}) = \{\lambda, \mu\}$, for some $\mu \ge \lambda^{+\omega}$ with $\mu < \mu^{<\lambda}$.
  \item\label{spec-5} $\Cat (\K_{\ge \lambda}) = [\chi, \infty)$ for some $\chi \ge \lambda$.
  \end{enumerate}
\end{cor}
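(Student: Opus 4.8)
The plan is to mirror the proof of Corollary \ref{main-cor}, replacing the amalgamation-based transfer lemmas \ref{cat-spec-1} and \ref{cat-spec-2} by the no-maximal-models transfer theorems \ref{categ-nmm-1} and \ref{categ-nmm-2}. Throughout, write $C := \Cat (\K_{\ge \lambda})$. Since $\lambda$ is a nice fixed point we have $\hanf{\LS (\K)} < \lambda$, so $\K$ has arbitrarily large models (Fact \ref{arb-large-fact}); moreover $\K_{\ge \lambda}$ having no maximal models makes $\K$ $(\kappa, \kappa')$-extendible for all $\lambda \le \kappa \le \kappa'$, and $\GCHWD$ supplies every set-theoretic hypothesis (both $\GCHWD ([\lambda, \lambda^{+\omega}))$ and $\WGCH (\nu)$ for all $\nu$) of those theorems. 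Under $\GCH$ the condition $\mu = \mu^{<\lambda}$ is equivalent to $\cf{\mu} \ge \lambda$, so ``$\mu < \mu^{<\lambda}$'' in possibilities (\ref{spec-3})--(\ref{spec-4}) just means $\cf{\mu} < \lambda$. The argument splits on whether $C \cap [\lambda^{+\omega}, \infty)$ is empty, and the five possibilities are visibly disjoint.

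Suppose first $C \cap [\lambda^{+\omega}, \infty) \neq \emptyset$. If some $\mu \in C$ with $\mu \ge \lambda^{+\omega}$ satisfies $\mu = \mu^{<\lambda}$, then Theorem \ref{categ-nmm-1} gives $[\mu, \infty) \subseteq C$; letting $\chi \ge \lambda$ be least with $[\chi, \infty) \subseteq C$, any hypothetical $\chi_0 \in C \cap [\lambda, \chi)$ would, via Theorem \ref{categ-nmm-2} applied to $\mu_1 = \chi_0$ and a regular $\mu_2 \ge \max(\chi, \lambda^{+\omega})$ (regularity giving hypothesis (8) when $\chi_0 = \lambda$), yield $[\chi_0, \infty) \subseteq C$, contradicting minimality; hence $C = [\chi, \infty)$, possibility (\ref{spec-5}). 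Otherwise every $\mu \in C$ with $\mu \ge \lambda^{+\omega}$ has $\cf{\mu} < \lambda$. I then claim $C \cap [\lambda^+, \infty)$ is a single cardinal $\mu$: if $\mu_1 < \mu_2$ lay in $C$ with $\mu_1 \ge \lambda^+$ and $\mu_2 \ge \lambda^{+\omega}$, Theorem \ref{categ-nmm-2} (hypothesis (8) vacuous since $\mu_1 > \lambda$) would give $[\mu_1, \infty) \subseteq C$ and hence categoricity cardinals of cofinality $\ge \lambda$ above $\lambda^{+\omega}$, contradicting the present case. Thus $C \cap [\lambda^+, \infty) = \{\mu\}$ with $\mu \ge \lambda^{+\omega}$ and $\mu < \mu^{<\lambda}$; according as $\lambda \in C$ or not we land in possibility (\ref{spec-4}) or (\ref{spec-3}).

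Now suppose $C \cap [\lambda^{+\omega}, \infty) = \emptyset$, so $C \subseteq \{\lambda^{+k} : k < \omega\}$. If $C = \emptyset$ we are in possibility (\ref{spec-1}). Otherwise let $m < \omega$ be least with $\lambda^{+m} \in C$. The crucial point is that $\{k < \omega : \lambda^{+k} \in C\}$ is bounded: once this is known, writing $n$ for its maximum, Theorem \ref{categ-nmm-2} applied to $\mu_1 = \lambda^{+m}$, $\mu_2 = \lambda^{+n}$ (both below $\lambda^{+\omega}$, so only the interval conclusion applies, and hypothesis (8) holds since $\cf{\lambda^{+n}} = \lambda^{+n} > \lambda$) gives $[\lambda^{+m}, \lambda^{+n}] \subseteq C$, while minimality of $m$, maximality of $n$, and the case assumption force equality, so $C = [\lambda^{+m}, \lambda^{+n}]$, possibility (\ref{spec-2}).

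The main obstacle is exactly this boundedness: ruling out categoricity in cofinally many $\lambda^{+k}$. Assuming such cofinality, gap-filling by Theorem \ref{categ-nmm-2} first yields $[\lambda^{+m}, \lambda^{+\omega}) \subseteq C$. Because $\lambda$ is a nice fixed point and $\K$ is solvable (hence semisolvable) in each categoricity cardinal, Theorem \ref{sl-nmm-thm} (fed a solvable $\mu = \lambda^{+m}$, with its hypothesis (3) coming from Lemma \ref{nice-stab-elem} via nice stability) produces superlimits in $\K_\lambda$ and $\K_{\lambda^+}$ and a good $\lambda$-frame on $\Kslp{\lambda}$, whether or not $\lambda \in C$. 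With these superlimits, Facts \ref{solv-downward} and \ref{semisolv-fact} give nice stability and semisolvability throughout $[\lambda, \lambda^{+\omega})$, so Theorem \ref{n-ss-1} applies at each level and Theorem \ref{very-nice-succ-thm}(1) assembles an $\omega$-successful $\goodp$ $\lambda$-frame on $\Kslp{\lambda}$; this frame is categorical in $\lambda$ and, by density (Theorem \ref{nicely-stable-sl}) together with categoricity of $\K$, in some $\mu' > \lambda$. Fact \ref{limit-categ} then makes $\Kslp{\lambda}$ categorical in every $\mu' > \lambda$ with amalgamation and no maximal models in $\Kslp{\lambda}_{\ge \lambda}$. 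The genuinely delicate step is transferring this back to $\K$: using that $\Kslp{\lambda}_\nu$ is dense in $\K_\nu$ and that categoricity of $\K$ forces every model in $\K_\nu$ (being isomorphic to an amalgamation base supplied by nice stability) to be an amalgamation base, one upgrades to amalgamation of $\K_{[\lambda, \lambda^{+\omega})}$; applying the closure Fact \ref{closed-cat-spec} to $\K_{\ge \lambda}$ along the chain $\seq{\lambda^{+k} : k \ge m}$ then forces $\lambda^{+\omega} \in C$, contradicting the case assumption. Verifying this amalgamation-transfer, and the applicability of Fact \ref{closed-cat-spec} below the limit cardinal, is where the real work lies.
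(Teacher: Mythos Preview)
Your proposal is correct and follows essentially the same strategy as the paper: both arguments reduce everything to the two transfer theorems (Theorems \ref{categ-nmm-1} and \ref{categ-nmm-2}), which the paper repackages as three short claims before doing the case analysis. The only organizational difference is that the paper first splits on whether $\Cat(\K)$ is bounded or unbounded and then, in the bounded case, on whether $\Cat(\K_{\ge\lambda^{+\omega}})$ is empty, whereas you split first on $C \cap [\lambda^{+\omega},\infty)$ and then on whether some high categoricity cardinal satisfies $\mu = \mu^{<\lambda}$; the two decompositions are easily seen to be equivalent.

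Where you genuinely add something is the boundedness step: the paper dispatches the existence of a maximal $n$ with ``as in the proof of Corollary \ref{main-cor}, such an $n$ must exist,'' which silently invokes Fact \ref{closed-cat-spec} --- a fact that needs amalgamation below each $\lambda^{+k}$. You correctly flag this as the real work and unpack it: from categoricity on a tail $[\lambda^{+m},\lambda^{+\omega})$ build the $\omega$-successful $\goodp$ frame on $\Kslp{\lambda}$, invoke Fact \ref{limit-categ}, and then use categoricity plus density to identify $\K_{[\lambda^{+m},\lambda^{+\omega})}$ with $\Kslp{\lambda}_{[\lambda^{+m},\lambda^{+\omega})}$ so that amalgamation transfers and Fact \ref{closed-cat-spec} applies to $\K_{\ge \lambda^{+m}}$. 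This is exactly what the paper's one-line reference is hiding. One small cosmetic point: your phrase ``amalgamation of $\K_{[\lambda,\lambda^{+\omega})}$'' is stronger than what you actually establish or need --- amalgamation in $\K_{[\lambda^{+m},\lambda^{+\omega})}$ suffices, and that is what your argument gives.
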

\begin{proof}
  First, we give a couple of consequences of the two theorems proven above:

  \underline{Claim 1}: If $\lambda < \mu_1 < \mu_2$ are such that $\mu_1, \mu_2 \in \Cat (\K)$, then $[\mu_1, \mu_2] \subseteq \Cat (\K)$. Moreover if in addition $\mu_2 \ge \lambda^{+\omega}$ then $[\mu_1, \infty) \subseteq \Cat (\K)$.

  \underline{Proof of Claim 1}: By Theorem \ref{categ-nmm-2}. $\dagger_{\text{Claim 1}}$

  \underline{Claim 2}: If $\mu > \lambda$ is such that $\mu = \mu^{<\lambda}$ and $\lambda, \mu \in \Cat (\K)$, then $[\lambda, \mu] \subseteq \Cat (\K)$.

  \underline{Proof of Claim 2}: By Theorem \ref{categ-nmm-2}, where $\mu_1, \mu_2$ there stand for $\lambda, \mu$ here. $\dagger_{\text{Claim 2}}$

  \underline{Claim 3}: If $\mu \ge \lambda^{+\omega}$ is such that $\mu = \mu^{<\lambda}$ and $\mu \in \Cat (\K)$, then $[\mu, \infty) \subseteq \Cat (\K)$.
    \underline{Proof of Claim 3}: By Theorem \ref{categ-nmm-1}. $\dagger_{\text{Claim 3}}$
  
    Now assume first that $\Cat (\K)$ is unbounded. By Claim 1, there must exist a cardinal $\chi \ge \lambda$ such that $[\chi, \infty) \subseteq \Cat (\K_{\ge \lambda})$. Take the least such $\chi$. By Claim 1 and Claim 2, we must have that $[\chi, \infty) = \Cat (\K_{\ge \lambda})$. Thus (\ref{spec-5}) holds.
        
        Now assume that $\Cat (\K)$ is bounded but (\ref{spec-1}) fails, i.e.\ $\Cat (\K_{\ge \lambda}) \neq \emptyset$. If $\Cat (\K_{\ge \lambda^{+\omega}})  \neq \emptyset$, then by the moreover part of Claim 1, $\Cat (\K_{> \lambda}) = \{\mu\}$ for some $\mu \ge \lambda^{+\omega}$, and by Claim 3, $\mu < \mu^{<\lambda}$. In this case either (\ref{spec-3}) or (\ref{spec-4}) must happen.

        Finally, if $\Cat (\K_{\ge \lambda^{+\omega}}) = \emptyset$ but $\Cat (\K_{\ge \lambda}) \neq \emptyset$, pick $m < \omega$  minimal such that $\lambda^{+m} \in \Cat (\K)$ and $n < \omega$ maximal such that $\lambda^{+n} \in \Cat (\K)$. As in the proof of Corollary \ref{main-cor}, such an $n$ must exist: if $\K$ were categorical in $\lambda^{+n}$ for all $n \ge m$, then by Theorem \ref{categ-spec-lim} it would be categorical in $\lambda^{+\omega}$, which we assumed does not happen. If $m = n$, (\ref{spec-2}) holds. Otherwise $n > 0$, so by $\GCH$, $\left(\lambda^{+n}\right)^{<\lambda} = \lambda^{+n}$. By Claim 1 and Claim 2, $[\lambda^{+m}, \lambda^{+n}] \subseteq \Cat (\K)$. By minimality of $m$ and maximality of $n$, this must be an equality, so (\ref{spec-2}) holds.
\end{proof}

We conclude with the eventual categoricity conjecture for AECs with no maximal models. Note that the proof of existence for the map $\lambda \mapsto \mu_\lambda$ below is not constructive: it does not give an explicit bound on $\mu_\lambda$.

\begin{cor}[Eventual categoricity for AECs with no maximal models]\label{event-nmm}
  Assuming $\GCHWD$, there exists a map $\lambda \mapsto \mu_\lambda$ such that for any AEC $\K$ with no maximal models, if $\K$ is categorical in \emph{some} $\mu \ge \mu_{\LS (\K)}$, then $\K$ is categorical in \emph{all} $\mu' \ge \mu_{\LS (\K)}$.
\end{cor}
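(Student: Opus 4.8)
The plan is to reduce the eventual categoricity statement to the spectrum description in Corollary \ref{spec-nmm}, which classifies $\Cat(\K_{\ge \lambda})$ above any nice fixed point into five explicit shapes, and then to extract a uniform threshold $\mu_\lambda$ depending only on $\LS(\K)$. The key observation is that Corollary \ref{spec-nmm} is stated relative to a nice fixed point $\lambda > \LS(\K)$, so the first move is to canonically choose such a fixed point as a function of $\LS(\K)$ alone. Given a cardinal $\chi$, let $\lambda(\chi)$ denote the least nice fixed point strictly above $\chi$; such a fixed point exists by iterating the $\beth$ function $\omega$-many times past $\chi$ (the definition of nice fixed point in the excerpt guarantees these exist by taking a suitable countable supremum of ordinary fixed points). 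I would then apply Corollary \ref{spec-nmm} with $\lambda := \lambda(\LS(\K))$, noting that $\K_{\ge \lambda}$ has no maximal models because $\K$ itself does, and that $\lambda > \LS(\K)$ as required.

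The crux is that in each of the five cases of Corollary \ref{spec-nmm}, the categoricity spectrum \emph{above $\lambda$} is either bounded (below some cardinal) or is an end segment $[\chi, \infty)$. The difficulty — and the reason one cannot get an explicit bound — is the three intermediate cases (\ref{spec-2}), (\ref{spec-3}), (\ref{spec-4}): these show that categoricity in a few scattered cardinals, or in a single cardinal $\mu$ with $\mu < \mu^{<\lambda}$, is compatible with no further categoricity. So the strategy is \emph{non-constructive}: I want to define $\mu_\lambda$ so large that categoricity at or above $\mu_\lambda$ lands us squarely in the end-segment case (\ref{spec-5}). Concretely, I would argue by contradiction on the class of all AECs with a fixed Löwenheim-Skolem-Tarski number. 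Suppose no finite threshold worked; then for $\chi = \LS(\K)$ there would be, for arbitrarily large candidate thresholds, an AEC that is categorical somewhere high but whose spectrum is not eventually constant. But the point is that for a \emph{single} fixed AEC $\K$ with $\LS(\K) = \chi$ and $\lambda = \lambda(\chi)$, Corollary \ref{spec-nmm} already forces $\Cat(\K_{\ge\lambda})$ into one of the five shapes, and only the bounded shapes (\ref{spec-1})--(\ref{spec-4}) fail to be eventually categorical. In cases (\ref{spec-3}) and (\ref{spec-4}) the unique large categoricity cardinal $\mu$ satisfies $\mu < \mu^{<\lambda} \le \lambda^+$ applied to $\mu$, i.e.\ $\mu$ is a cardinal of cofinality $<\lambda$ that is not closed under the relevant exponentiation; under $\GCHWD$ one has $2^\kappa = \kappa^+$ for all $\kappa$, so $\mu^{<\lambda} > \mu$ pins $\mu$ to having cofinality below $\lambda$, and the categoricity cardinal is then a specific singular cardinal tied to $\lambda$.

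The cleanest route, therefore, is to define $\mu_\chi$ via a boundedness argument across the \emph{proper set} of isomorphism types of AECs $\K$ with $\LS(\K) = \chi$ (up to the data determining the spectrum). For each such $\K$ with no maximal models, either $\Cat(\K_{\ge \lambda(\chi)})$ contains an end segment $[\chi_{\K}, \infty)$ (case (\ref{spec-5})), or it is bounded by some cardinal $\nu_{\K}$ (cases (\ref{spec-1})--(\ref{spec-4})). Because there is only a set of relevant $\K$ for each fixed $\chi$ — AECs with a bounded Löwenheim-Skolem number form a set once one works up to the equivalence determined by $\Cat$, or one uses presentation theorems to bound the relevant data — one may set $\mu_\chi := \sup$ of all the $\chi_{\K}$ and $\nu_{\K}^+$ arising, which is again a cardinal. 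Then for any $\K$ with $\LS(\K) = \chi$ and no maximal models, categoricity in some $\mu \ge \mu_\chi$ excludes the bounded cases (since $\mu$ exceeds every $\nu_{\K}$), forces case (\ref{spec-5}), and hence yields categoricity in all $\mu' \ge \chi_{\K}$, in particular in all $\mu' \ge \mu_\chi \ge \mu_{\LS(\K)}$, which is the desired conclusion. The main obstacle I anticipate is the set-theoretic bookkeeping needed to justify that the supremum is over a genuine set and produces a cardinal — this is exactly where non-constructivity enters and why no explicit bound is claimed — together with carefully checking that cases (\ref{spec-3}) and (\ref{spec-4}), where the categoricity cardinal $\mu$ itself could a priori be arbitrarily large, are nonetheless captured: here one leans on the constraint $\mu < \mu^{<\lambda}$ together with $\GCHWD$ to see that such $\mu$ cannot exceed the chosen threshold, or else one absorbs them into the supremum directly.
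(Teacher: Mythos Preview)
Your proposal is correct and takes essentially the same approach as the paper: extract from Corollary \ref{spec-nmm} that the categoricity spectrum (above a nice fixed point depending only on $\LS(\K)$) is either bounded or an end segment, then use that there is only a set of AECs with a given L\"owenheim-Skolem-Tarski number and take a supremum via replacement. Your detour through the cardinal arithmetic constraint $\mu < \mu^{<\lambda}$ in cases (\ref{spec-3}) and (\ref{spec-4}) is unnecessary and the attempted conclusion there is wrong (under $\GCH$ this just says $\cf(\mu) < \lambda$, which does not bound $\mu$); but as you eventually note, these cases have \emph{finite} categoricity spectra, hence are bounded by definition and are absorbed into the supremum exactly like the other bounded cases.
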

\begin{proof}
  A consequence of Corollary \ref{spec-nmm} is that in any AEC with no maximal models, the categoricity spectrum is either bounded or contains an end segment. As shown in (for example) \cite[15.13]{baldwinbook09}, this suffices: for a fixed $\lambda$, there is only a set of AECs with Löwenheim-Skolem-Tarski number $\lambda$. By the axiom of replacement, there will exist $\mu_\lambda^0$ bounding the categoricity spectrum of all those where it is bounded. Similarly, there will exist $\mu_\lambda^1$ bounding the start of the end-segment for all those where it is unbounded. Take $\mu_\lambda := \mu_\lambda^0 + \mu_\lambda^1$.
\end{proof}

\section{Some auxiliary results}

We give here some easy corollaries of our methods that were buried in our notation. Specifically, we study tameness and existence of good frames in AECs with amalgamation, and also start the investigation of AECs with a lot of superlimits.

First, we show that good frames, and in fact successful ones, exist as low as possible in categorical AECs with amalgamation. This implies tameness and (if the categoricity cardinal is high-enough) a transfer of the structure of the class upward.

\begin{cor}\label{good-frame-ap-1}
  Assume $\WGCH$. Let $\K$ be an AEC with arbitrarily large models and let $\mu > \lambda \ge \LS (\K)$ be such that $\K_{<\mu}$ has amalgamation. If $\K$ is categorical in both $\lambda$ and $\mu$, then:

  \begin{enumerate}
  \item There is a $\goodp$ $\lambda$-frame on $\K$, which is $n$-successful whenever $\lambda^{+(n + 1)} \le \mu$.
  \item $\K_{< \mu}$ is $\lambda$-tame and categorical in all $\mu' \in [\lambda, \mu]$.
  \item If $\mu \ge \lambda^{+\omega}$, then $\K$ has amalgamation, no maximal models, is $\lambda$-tame, and is categorical in all $\mu' \ge \lambda$.
  \end{enumerate}
\end{cor}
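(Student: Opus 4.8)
The plan is to run everything through the frame-construction machinery of the previous sections at the cardinal $\lambda := \LS(\K)$, splitting into the cases $\mu < \lambda^{+\omega}$ and $\mu \ge \lambda^{+\omega}$. First I would arrange the hypotheses of Fact \ref{categ-struct}. Categoricity in $\lambda = \LS(\K)$ gives joint embedding in $\lambda$, and a diagram chase using amalgamation in $\K_{<\mu}$ (as in the proof of Lemma \ref{cat-spec-1}) propagates this to joint embedding in $\K_{<\mu}$; together with arbitrarily large models this yields that $\K_{<\mu}$ has no maximal models. Fact \ref{categ-struct} then makes $\K$ $\nu$-superstable with $\nu$-symmetry for every $\nu \in [\lambda,\mu)$, and $\Ksatp{\nu}$ an AEC with $\LS(\Ksatp{\nu}) = \nu$ for $\nu \in (\lambda,\mu)$. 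Two bookkeeping facts follow: since $\K$ is categorical in $\lambda$, every model has a $\lea$-submodel of size $\lambda$ isomorphic to the unique one, so $\Kslp{\lambda} = \K$; and each $\K_\nu$ with $\nu \in [\lambda,\mu)$ has a superlimit — the unique model when $\nu=\lambda$ (Remark \ref{categ-sl}), and the saturated model when $\lambda<\nu<\mu$, which is superlimit because $\Ksatp{\nu}$ is an AEC (hence closed under the relevant unions) and is universal by model-homogeneity. In particular $\K_\lambda$ and $\K_{\lambda^+}$ have superlimits.

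For $\mu < \lambda^{+\omega}$ I would write $\mu = \lambda^{+(N+1)}$ with $N \in [1,\omega)$ and apply Theorem \ref{n-ss-2} with $n := N$. Its hypotheses hold: $\WGCH([\lambda,\lambda^{+N}])$ from $\WGCH$; nice $[\lambda^{+2},\lambda^{+N}]$-stability since each such cardinal is $<\mu$, hence superstable (Fact \ref{categ-struct}) and so nicely stable (Remark \ref{nice-stab-ap}); the superlimits in $\lambda$ and $\lambda^+$ just produced; $\mu$-semisolvability from categoricity in $\mu$ and arbitrarily large models (Remark \ref{solv-rmk}); and $\Ii(\K,\mu) = 1 < 2^\mu$. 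This gives an $N$-successful $\goodp$ $\lambda$-frame $\s$ on $\Kslp{\lambda} = \K$, which is categorical. As $N$-successfulness entails $n$-successfulness for $n \le N$, and $\lambda^{+(n+1)} \le \mu$ forces $n \le N$, this is exactly (1). Feeding $\s$ into Fact \ref{succ-categ} (with $\mu \in (\lambda,\lambda^{+(N+1)}]$) then gives categoricity throughout $(\lambda,\mu]$, hence in $[\lambda,\mu]$, together with amalgamation, no maximal models and $\lambda$-tameness of $\K_{[\lambda,\lambda^{+N}]} = \K_{<\mu}$; this is (2), and (3) is vacuous.

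For $\mu \ge \lambda^{+\omega}$ every $\lambda^{+k}$ and $\lambda^{+(k+1)}$ lies strictly below $\mu$. Thus for each $k$, $\K$ is $\lambda^{+k}$-superstable with symmetry and $\K_{\lambda^{+k}}$ has a superlimit, so $\K$ is nicely $\lambda^{+k}$-superstable (Fact \ref{nice-ss-fact}), and $\K$ is nicely $\lambda^{+(k+1)}$-stable (Remark \ref{nice-stab-ap}); Corollary \ref{weak-succ-constr} then makes $\K$ very nicely $\lambda^{+k}$-superstable. Hence $\K$ is very nicely $[\lambda,\lambda^{+\omega})$-superstable, and Theorem \ref{very-nice-succ-thm}(1) produces an $\omega$-successful (in particular $\goodp$) $\lambda$-frame on $\Kslp{\lambda} = \K$, giving (1). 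Applying Fact \ref{limit-categ}, with $\WGCH$ supplying the required interval hypothesis, then shows $\K = \K_{\ge\lambda}$ has amalgamation, no maximal models, is $\lambda$-tame, and is categorical in every $\mu' > \lambda$, hence in every $\mu' \ge \lambda$; this is (3), and (2) follows by restriction.

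The main obstacle is the boundary finite case $\mu = \lambda^{+(N+1)}$. There the very-nice-superstability route breaks at the top step, since Corollary \ref{weak-succ-constr} at $\nu = \lambda^{+N}$ would demand nice $\lambda^{+(N+1)}$-stability, i.e.\ nice $\mu$-stability, while $\K$ is not assumed to amalgamate in $\mu$; yet the full $N$-successfulness (not merely $(N-1)$-successfulness) is precisely what Fact \ref{succ-categ} needs in order to reach the categoricity cardinal $\mu$ itself. The device that defuses this is Theorem \ref{n-ss-2}, whose top-cardinal hypothesis is \emph{few models} rather than nice stability, and categoricity in $\mu$ supplies exactly one model there. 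The remaining delicate points — extracting joint embedding and no maximal models in $\K_{<\mu}$, and checking that the saturated models are genuinely superlimit — are routine once the relevant $\Ksatp{\nu}$ are known to be AECs.
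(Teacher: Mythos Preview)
Your proof is correct and follows essentially the same approach as the paper: verify the hypotheses of Theorem \ref{n-ss-2} (in the finite case) via the argument of Lemma \ref{cat-spec-1}, then invoke Facts \ref{succ-categ} and \ref{limit-categ}. The only notable routing difference is that in the case $\mu \ge \lambda^{+\omega}$ you pre-establish superlimits at every $\lambda^{+k}$ via Fact \ref{categ-struct} and then apply Corollary \ref{weak-succ-constr} and Theorem \ref{very-nice-succ-thm} directly, whereas the paper (through Theorem \ref{categ-main}) routes via Theorem \ref{n-ss-1}, which bootstraps the superlimits inductively; both paths unwind to the same underlying lemmas.
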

\begin{proof}
  The proof of Lemma \ref{cat-spec-1} (where $\mu_1, \mu_2$ there stand for $\lambda, \mu$ here) shows that the hypotheses of Theorem \ref{n-ss-2} are satisfied. Now apply Facts \ref{succ-categ} and \ref{limit-categ}.
\end{proof}

If we assume just categoricity in one cardinal, we obtain the previous result on a subclass of saturated models:

\begin{cor}\label{good-frame-ap-2}
  Assume $\WGCH$. Let $\K$ be an AEC with arbitrarily large models and let $\mu > \lambda \ge \LS (\K)$ be such that $\K_{<\mu}$ has amalgamation and no maximal models. If $\mu > \lambda^{+2}$ and $\K$ is categorical in $\mu$, then:

  \begin{enumerate}
  \item There is a $\goodp$ $\lambda^+$-frame on $\Ksatp{\lambda^+}$, which is $n$-successful whenever $\lambda^{+(n + 2)} \le \mu$.
  \item $\Ksatp{\lambda^+}_{<\mu}$ is $\lambda^+$-tame. In particular (using the notation of \cite[11.6]{baldwinbook09}), $\K$ is $(\lambda^+, <\mu)$-weakly tame.
  \item If $\mu \ge \lambda^{+\omega}$, then $\K$ has amalgamation, no maximal models, and is $\lambda^+$-weakly tame.
  \end{enumerate}
\end{cor}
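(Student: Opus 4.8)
The plan is to imitate the proof of Corollary \ref{good-frame-ap-1}, except that, having categoricity only in $\mu$, I first replace $\K$ by its subclass of $\LS (\K)^+$-saturated models exactly as in the proof of Lemma \ref{cat-spec-2}. Set $\lambda := \LS (\K)^+$. As in that proof, partition $\K$ into disjoint joint-embedding classes (see \cite[10.13]{indep-aec-apal}) so as to assume without loss of generality that $\K_{<\mu}$ has joint embedding, retaining the class that carries the model of size $\mu$. Then Fact \ref{categ-struct} (applicable since $\lambda \in (\LS (\K), \mu]$) gives that $\Ksatp{\lambda}$ is an AEC with $\LS (\Ksatp{\lambda}) = \lambda$, that the model of cardinality $\mu$ is $\lambda$-saturated, and---now using joint embedding for uniqueness of the saturated model of size $\lambda$---that $\Ksatp{\lambda}$ is categorical in both $\lambda$ and $\mu$. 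In particular its superlimit-generated class coincides with $\Ksatp{\lambda}$ itself.

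Next I would build the frame on $\Ksatp{\lambda}$. The verification that the hypotheses of Theorem \ref{n-ss-2} hold (with $\lambda$ as the base cardinal and $\Ksatp{\lambda}$ as the AEC) is the same as the one performed in the proof of Lemma \ref{cat-spec-1} and reused in Lemma \ref{cat-spec-2}: $\WGCH$ is assumed; no maximal models below $\mu$ yields the extendibility needed for $\lambda^{+(n+1)}$-semisolvability via Fact \ref{solv-downward}; nice stability in $[\lambda^{+2}, \lambda^{+n}]$ together with superlimits in $\lambda$ and $\lambda^+$ come from Facts \ref{semisolv-fact} and \ref{nice-ss-fact}; and $\Ii (\Ksatp{\lambda}, \lambda^{+(n+1)}) < 2^{\lambda^{+(n+1)}}$ holds when $\lambda^{+(n+1)} = \mu$ by categoricity. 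Theorem \ref{n-ss-2} then produces, for the largest $n$ with $\lambda^{+(n+1)} \le \mu$, an $n$-successful $\goodp$ $\lambda$-frame on $\Ksatp{\lambda}$, and this single frame is $m$-successful for every $m \le n$; since $\lambda^{+(n+1)} = \LS (\K)^{+(n+2)}$, this is part (1). When $\mu \ge \LS (\K)^{+\omega} = \lambda^{+\omega}$ I would instead run the semisolvability route (Theorems \ref{n-ss-1} and \ref{very-nice-succ-thm}), as in the second case of the proof of Theorem \ref{categ-main}, to obtain an $\omega$-successful frame.

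Part (2) is then read off from Fact \ref{succ-categ}: the $n$-successful frame makes $\Ksatp{\lambda}_{[\lambda, \lambda^{+n}]}$ $\lambda$-tame, and letting $n$ range over all indices with $\lambda^{+(n+1)} \le \mu$ exhausts the models of $\Ksatp{\lambda}$ of size below $\mu$, so $\Ksatp{\lambda}_{<\mu}$ is $\lambda$-tame; by definition this is the $(\LS (\K)^+, <\mu)$-weak tameness of $\K$. For part (3), when $\mu \ge \LS (\K)^{+\omega}$ I would feed the $\omega$-successful frame into Fact \ref{limit-categ}, obtaining that $\Ksatp{\lambda}$ has amalgamation, no maximal models, is $\lambda$-tame (hence $\K$ is $\LS (\K)^+$-weakly tame), and is categorical in every cardinal above $\lambda$.

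The hard part will be transferring amalgamation and no maximal models from $\Ksatp{\lambda}$ back to $\K$ in part (3), since only amalgamation below $\mu$ is assumed. I would show the $\lambda$-saturated models are cofinal and, by the previous paragraph, amalgamate; combined with the assumed amalgamation in $\K_{<\mu}$ this should give amalgamation for $\K$, and no maximal models follows because every model embeds into a (properly extendible) saturated one. To see that all sufficiently large models are already saturated I would apply Morley's omitting type theorem (Fact \ref{morley-omitting}): as the model of size $\mu$ is $\LS (\K)^+$-saturated, there is $\chi < \hanf{\LS (\K)}$ with every model in $\K_{\ge \chi}$ being $\LS (\K)^+$-saturated, hence lying in $\Ksatp{\lambda}$. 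The genuine obstacle is the apparent circularity---Fact \ref{morley-omitting} presupposes global amalgamation, which is exactly what we are after---so the order of operations matters: first secure $\K$-amalgamation from the cofinality-and-amalgamation of saturated models, then invoke the omitting type theorem; one must also check that the joint-embedding partition made at the outset does not obstruct transferring the conclusion back to the original $\K$.
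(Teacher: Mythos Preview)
Your approach is essentially the paper's: the paper's entire proof is ``Apply Corollary~\ref{good-frame-ap-1} to $\Ksatp{\LS(\K)^+}$ (it is an AEC by Fact~\ref{categ-struct}).'' You have simply unpacked that one line, passing through the joint-embedding partition and re-verifying the hypotheses of Theorem~\ref{n-ss-2} rather than quoting the packaged corollary.

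Your worry about part~(3) is overblown, however. You do not need Fact~\ref{morley-omitting} at all, so there is no circularity to manage. The point is that $\K_{\ge\mu} = \Ksatp{\lambda}_{\ge\mu}$ holds for the trivial reason that every $M \in \K_{\ge\mu}$ is already $\lambda$-saturated: given $M_0 \lea M$ with $\|M_0\| < \lambda$, use the L\"owenheim--Skolem axiom to find $N$ with $M_0 \lea N \lea M$ and $\|N\| = \mu$; by categoricity in $\mu$ (and Fact~\ref{categ-struct}) $N$ is $\lambda$-saturated, so any type over $M_0$ is realized in $N \subseteq M$. Once you know $\K_{\ge\mu} = \Ksatp{\lambda}_{\ge\mu}$, amalgamation and no maximal models for $\K$ follow by gluing the assumed properties of $\K_{<\mu}$ with those of $\Ksatp{\lambda}$ just obtained; no omitting-types machinery is needed. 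The return from the joint-embedding component to the original $\K$ is the standard observation that the other components are bounded (below $\hanf{\LS(\K)} < \mu$) and hence do not affect anything at or above $\mu$.
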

\begin{proof}
  Apply Corollary \ref{good-frame-ap-1} to $\Ksatp{\lambda^+}$ (it is an AEC by Fact \ref{categ-struct}).
\end{proof}

Grossberg and VanDieren have conjectured \cite[1.5]{tamenessthree} that tameness should follow from high-enough categoricity. We are now in a position to prove that this holds, assuming $\WGCH$ and amalgamation. This also proves Conjecture \ref{categ-aec} for AECs with amalgamation.

\begin{cor}\label{gv-cor}
  Assume $\WGCH$. Let $\K$ be an AEC with amalgamation. If $\K$ is categorical in \emph{some} $\mu \ge \hanf{\LS (\K)}$, then there is $\chi < \hanf{\LS (\K)}$ such that $\K$ is categorical in \emph{all} $\mu' \ge \chi$ and $\K$ is $\chi$-tame.
\end{cor}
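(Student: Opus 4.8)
The plan is to derive this corollary almost entirely from Lemma~\ref{cat-spec-2} (for the categoricity transfer) and Corollary~\ref{good-frame-ap-1} (for tameness), with a single value of $\chi$ serving both conclusions; no essentially new argument should be needed, since all the analytic content already lives in Theorem~\ref{categ-main} and the good-frame construction feeding it.

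First I would check that $\K$ has arbitrarily large models. As $\K$ is categorical in $\mu \ge \hanf{\LS(\K)}$, we have $\K_{\ge \chi} \neq \emptyset$ for every $\chi < \hanf{\LS(\K)} \le \mu$, so Fact~\ref{arb-large-fact} applies. Since $\hanf{\LS(\K)}$ is a strong limit of cofinality $(2^{\LS(\K)})^+ > \omega$, it strictly exceeds $\LS(\K)^{+\omega}$; hence $\mu \in \Cat(\K) \cap [\LS(\K)^{+\omega}, \infty)$, and this class is nonempty. Lemma~\ref{cat-spec-2} then provides $\chi < \hanf{\LS(\K)}$ with $[\chi, \infty) \subseteq \Cat(\K)$, and after replacing $\chi$ by $\max(\chi, \LS(\K))$ (still below $\hanf{\LS(\K)}$) I may assume $\chi \ge \LS(\K)$. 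This settles the categoricity half: $\K$ is categorical in every $\mu' \ge \chi$.

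For tameness I would pass to $\K' := \K_{\ge \chi}$, an AEC with $\LS(\K') = \chi$ that inherits amalgamation and arbitrarily large models from $\K$ and satisfies $\Cat(\K') = \Cat(\K) \cap [\chi, \infty) \supseteq [\chi, \infty)$. In particular $\K'$ is categorical in $\LS(\K') = \chi$ and in $\chi^{+\omega}$, with $\chi^{+\omega} \ge \LS(\K')^{+\omega}$ and $\chi^{+\omega} > \LS(\K')^+$, so clause (3) of Corollary~\ref{good-frame-ap-1} (applied with its $\mu$ taken to be $\chi^{+\omega}$) yields that $\K'$ is $\chi$-tame. Finally I would transfer tameness back to $\K$: for $M \in \K$ with $\|M\| \ge \chi$ one has $M \in \K'$ and $\gS_{\K}(M) = \gS_{\K'}(M)$ (every realizing extension has cardinality $\ge \chi$, hence lies in $\K'$), so a separating set of size $\le \chi$ is furnished by $\K'$; and for $M$ with $\|M\| < \chi$ the full universe $|M|$ already separates any two distinct types and has size $< \chi$. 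Thus $\K$ is $\chi$-tame, which together with the categoricity statement completes the proof.

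The main obstacle is not analytic but bookkeeping: I would take care to verify that Galois types over a model of size $\ge \chi$ are genuinely the same object in $\K$ and in $\K' = \K_{\ge \chi}$ (so that tameness really transfers downward), and that the hypotheses of Corollary~\ref{good-frame-ap-1} are satisfied relative to the shifted invariant $\LS(\K') = \chi$ rather than $\LS(\K)$. Once these identifications are in place, every remaining step is a direct citation.
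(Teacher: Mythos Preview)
Your proposal is correct and follows essentially the same route as the paper: obtain arbitrarily large models via Fact~\ref{arb-large-fact}, extract $\chi < \hanf{\LS(\K)}$ with $[\chi,\infty) \subseteq \Cat(\K)$ (you cite Lemma~\ref{cat-spec-2} directly while the paper cites its repackaging Corollary~\ref{main-cor}), and then invoke Corollary~\ref{good-frame-ap-1} on $\K_{\ge \chi}$ for tameness. Your version is simply more explicit about the passage to $\K_{\ge \chi}$ and the transfer of $\chi$-tameness back to $\K$, steps the paper leaves implicit in its one-line citation.
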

\begin{proof}
  By Fact \ref{arb-large-fact}, $\K$ has arbitrarily large models. By Corollary \ref{main-cor}, there is $\chi < \hanf{\LS (\K)}$ such that $\K$ is categorical in all $\mu' \ge \chi$. By Corollary \ref{good-frame-ap-1} (with $\lambda = \chi$), $\K$ is $\chi$-tame.
\end{proof}

In \cite[II.3.7]{shelahaecbook}, Shelah gives a construction of a good frame using $\WGCH$, categoricity in $\lambda$, $\lambda^+$, and few models in $\lambda^{++}$. This can be iterated to get $n$-successful frames (see \cite[II.9.1]{shelahaecbook}). One can see Theorems \ref{n-ss-1} and \ref{n-ss-2} as variations where less structural assumptions are made in the $\lambda^{+m}$'s, but some global assumptions (semisolvability, which implies arbitrarily large models) are also made. We give here a consequence of Theorem \ref{n-ss-1} which shows that it is enough to assume that the class has superlimits in the $\lambda^{+m}$'s. Thus in a sense we replace categoricity in Shelah's theorem by superlimits, and we replace few models by semisolvability: 

\begin{cor}\label{sl-frame-cor}
  Let $\K$ be an AEC, let $\lambda \ge \LS (\K)$, and let $n < \omega$. If:

    \begin{enumerate}
    \item $\WGCH ([\lambda, \lambda^{+(n + 1)}])$.
    \item $\K_{\lambda^{+m}}$ has a superlimit for every $m \le n + 1$.
    \item $\K$ is $\lambda^{+(n + 2)}$-semisolvable.
    \end{enumerate}

    Then $\K$ is nicely sl-superstable and has sl-uniqueness triples in $[\lambda, \lambda^{+n}]$. In particular, there is a weakly successful good $\lambda$-frame on $\Kslp{\lambda}$, which will be $n$-successful $\goodp$ when $n \ge 1$.
\end{cor}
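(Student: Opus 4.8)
The plan is to follow the proof of Theorem \ref{n-ss-1}, exploiting the simplification that the superlimits which that proof must manufacture from successful frames are here supplied by hypothesis (2); running the resulting computation directly then covers all $n < \omega$ uniformly, including the degenerate case $n = 0$. Concretely, I would first convert the data ``superlimits up through $\lambda^{+(n+1)}$ plus $\lambda^{+(n+2)}$-semisolvability'' into the two ingredients actually needed: $\lambda^{+m}$-semisolvability and nice $\lambda^{+m}$-superstability for every $m \le n+1$.

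First I would handle semisolvability. Each superlimit has a proper extension and is universal, so the existence of a superlimit in $\K_{\lambda^{+m}}$ forces $\K_{\lambda^{+m}}$ to have no maximal models, for every $m \le n+1$. Hence $\K_{[\lambda, \lambda^{+(n+2)})}$ has no maximal models, which means $\K$ is $(\lambda^{+m}, \lambda^{+(n+2)})$-extendible for each $m \le n+1$. Feeding hypothesis (3) into Fact \ref{solv-downward} then propagates semisolvability downward: $\K$ is $\lambda^{+m}$-semisolvable for all $m \le n+1$, i.e.\ $\K$ is $[\lambda, \lambda^{+(n+1)}]$-semisolvable, and together with hypothesis (3) this gives that $\K$ is $[\lambda^{+m}, \lambda^{+(m+1)}]$-semisolvable for every $m \le n+1$.

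Next I would extract superstability. For each $m \le n+1$ the cardinal $\lambda^{+m}$ now satisfies all the hypotheses of Fact \ref{semisolv-fact}(4): $\WGCH(\lambda^{+m})$ from hypothesis (1), a superlimit in $\K_{\lambda^{+m}}$ from hypothesis (2), and $[\lambda^{+m}, \lambda^{+(m+1)}]$-semisolvability just established; this is exactly where having a superlimit lets me invoke $\WGCH$ rather than the stronger $\GCHWD$. So $\K$ is nicely $\lambda^{+m}$-superstable, hence nicely $\lambda^{+m}$-stable, for all $m \le n+1$. Finally, for each $m \le n$, Corollary \ref{weak-succ-constr} applies at $\lambda^{+m}$ (using $\WGCH(\lambda^{+m})$, nice $\lambda^{+m}$-superstability, and nice $\lambda^{+(m+1)}$-stability, the last available because $m+1 \le n+1$), yielding very nice $\lambda^{+m}$-superstability; as $m \le n$ is arbitrary, $\K$ is very nicely $[\lambda, \lambda^{+n}]$-superstable. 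The ``in particular'' clause then follows: very nice $\lambda$-superstability gives a weakly successful good $\lambda$-frame on $\Kslp{\lambda}$ by Corollary \ref{very-nice-frame}, and when $n \ge 1$ Theorem \ref{very-nice-succ-thm}(1) upgrades this (the frame being canonical, Fact \ref{canon-thm}) to an $n$-successful $\goodp$ $\lambda$-frame.

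The work is almost entirely bookkeeping of cardinal ranges, so I do not expect a deep obstacle; the one point that must be watched carefully is the ``one cardinal of slack'' built into the hypotheses. To run Corollary \ref{weak-succ-constr} at the top cardinal $\lambda^{+n}$ I need nice stability at $\lambda^{+(n+1)}$, which in turn needs semisolvability at $\lambda^{+(n+1)}$ together with a superlimit and $\WGCH$ there; tracing this back is precisely why the hypotheses demand superlimits and $\WGCH$ up through $\lambda^{+(n+1)}$ and semisolvability at $\lambda^{+(n+2)}$. Getting any of these indices off by one would break the final application.
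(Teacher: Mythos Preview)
Your proposal is correct and follows essentially the same route as the paper's own proof: superlimits give no maximal models hence extendibility, Fact \ref{solv-downward} pushes semisolvability down, Fact \ref{semisolv-fact}(4) converts this to nice $[\lambda,\lambda^{+(n+1)}]$-superstability, and Corollary \ref{weak-succ-constr} upgrades to very nice $[\lambda,\lambda^{+n}]$-superstability, with the ``in particular'' clause handled by Corollary \ref{very-nice-frame} and Theorem \ref{very-nice-succ-thm}. Your write-up is in fact more explicit about the index bookkeeping than the paper's, which compresses the same argument into a few lines.
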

\begin{proof}
  The ``in particular'' part follows from Corollary \ref{very-nice-frame}  and Theorem \ref{very-nice-succ-thm}. To see that $\K$ is nicely sl-superstable and has sl-uniqueness triples in $[\lambda, \lambda^{+n}]$,  first observe that by the existence of superlimits, $\K_{[\lambda, \lambda^{+(n + 1)}]}$ has no maximal models, so $\K$ is $([\lambda, \lambda^{+(n + 1)}], \lambda^{+(n + 2)})$-extendible. By Fact \ref{semisolv-fact}(\ref{semisolv-4}), we get that $\K$ is nicely sl-$[\lambda, \lambda^{+(n + 1)}]$-superstable. Now apply Corollary \ref{weak-succ-constr} to get sl-uniqueness triples.
\end{proof}

Similarly, we get that in an AEC with a superlimit at every cardinal, the categoricity spectrum has no gaps. More precisely:

\begin{cor}\label{sl-categ-cor}
  Let $\K$ be an AEC and let $\mu > \lambda \ge \LS (\K)$. If:

  \begin{enumerate}
  \item $\WGCH ([\lambda, \min (\mu, \lambda^{+\omega})))$.
  \item $\K$ has arbitrarily large models.
  \item $\K$ is categorical in $\lambda$.
  \item For every $\lambda' \in (\lambda, \mu)$, $\K_{\lambda'}$ has a superlimit.
  \item $\K$ is categorical in $\mu$.
  \end{enumerate}

  Then $\K$ is categorical in every $\mu' \in [\lambda, \mu]$. Moreover if $\mu \ge \lambda^{+\omega}$, $\K$ is categorical in every $\mu' \ge \lambda$.
\end{cor}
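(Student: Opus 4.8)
The plan is to apply Theorem \ref{categ-main} with the very same $\lambda$ and $\mu$, and then to observe that under categoricity in $\lambda$ the class $\Kslp{\lambda}$ coincides with $\K_{\ge \lambda}$, so that the categoricity conclusion for $\Kslp{\lambda}$ transfers verbatim to $\K$. First I dispose of the degenerate case $\mu = \lambda^+$: then $[\lambda,\mu] = \{\lambda,\lambda^+\}$ and both are categoricity cardinals by hypotheses (3) and (5), while the ``moreover'' clause is vacuous since $\lambda^+ < \lambda^{+\omega}$. So I may assume $\mu > \lambda^+$, which is exactly what Theorem \ref{categ-main} requires; set $\mu_0 := \min(\mu, \lambda^{+\omega})$ as there.

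Now I verify the seven conditions of Theorem \ref{categ-main}. Its condition (1) ($\WGCH([\lambda,\mu_0))$) and condition (2) (arbitrarily large models) are hypotheses (1) and (2) here. For its condition (3), note that categoricity in $\lambda$ together with arbitrarily large models forces $\K_\lambda$ to have no maximal models: any model of size $> \lambda$ has, by the Löwenheim-Skolem-Tarski axiom, a $\lea$-substructure of size $\lambda$, which is a proper extension of it after applying categoricity; hence by Remark \ref{categ-sl} the unique model of $\K_\lambda$ is superlimit. Since $\mu > \lambda^+$ we have $\lambda^+ \in (\lambda,\mu)$, so condition (4) is immediate from hypothesis (4).

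The remaining conditions (5) and (6) are obtained from semisolvability. By hypothesis (4) the class $\K_{\lambda'}$ has a superlimit, hence (via Fact \ref{sl-basic}) no maximal models, for every $\lambda' \in (\lambda,\mu)$; thus $\K_{(\lambda,\mu)}$ has no maximal models and so $\K$ is $(\lambda',\mu)$-extendible for every $\lambda' \in [\lambda^+,\mu)$. Meanwhile categoricity in $\mu$ with arbitrarily large models gives $\mu$-solvability, hence $\mu$-semisolvability (Remark \ref{solv-rmk}). Feeding extendibility into Fact \ref{solv-downward} yields that $\K$ is $\lambda'$-semisolvable for every $\lambda' \in [\lambda^+,\mu)$, and together with $\mu$-semisolvability this makes $\K$ $[\lambda^+,\mu]$-semisolvable. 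Condition (6) now follows: if $\mu_0 = \mu$ it holds by convention, and if $\mu_0 = \lambda^{+\omega} < \mu$ then $[\lambda^{+\omega},\mu) \subseteq [\lambda^+,\mu)$ has no maximal models, whence $\K$ is $(\mu_0,\mu)$-extendible. For condition (5), fix $\lambda' \in [\lambda^{+2},\mu_0)$. Then $[\lambda',(\lambda')^+] \subseteq [\lambda^+,\mu]$, so $\K$ is $[\lambda',(\lambda')^+]$-semisolvable; moreover $\K_{\lambda'}$ has a superlimit by hypothesis (4) and $\WGCH(\lambda')$ holds by hypothesis (1). Fact \ref{semisolv-fact}(4) then gives that $\K$ is nicely $\lambda'$-superstable, in particular nicely $\lambda'$-stable; as $\lambda'$ was arbitrary, $\K$ is nicely $[\lambda^{+2},\mu_0)$-stable.

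With all seven conditions verified, Theorem \ref{categ-main} shows that $\Kslp{\lambda}$ is categorical in every $\mu' \in [\lambda,\mu]$, and in every $\mu' \ge \lambda$ when $\mu \ge \lambda^{+\omega}$. To finish I transfer this to $\K$ itself by checking $\Kslp{\lambda}_{\ge\lambda} = \K_{\ge\lambda}$. The inclusion $\subseteq$ is immediate (and $\Kslp{\lambda}_{<\lambda} = \emptyset$). For $\supseteq$, let $N \in \K_{\ge\lambda}$ and $A \subseteq |N|$ with $|A| \le \lambda$; enlarge $A$ to a set $A'$ of size exactly $\lambda$ inside $N$ and use the Löwenheim-Skolem-Tarski axiom to find $N_0 \lea N$ with $A' \subseteq |N_0|$ and $\|N_0\| = \lambda$. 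Categoricity in $\lambda$ gives $N_0 \cong M$, where $M$ is the superlimit, so $N$ satisfies the defining condition of $\K^{[M]} = \Kslp{\lambda}$; hence $N \in \Kslp{\lambda}$. Thus $\Kslp{\lambda}_{\ge\lambda} = \K_{\ge\lambda}$ as ordered classes, and categoricity of $\Kslp{\lambda}$ in a cardinal $\ge \lambda$ is literally the same statement as categoricity of $\K$ there. This yields both desired conclusions. The bulk of the work, and the only place where genuine care is needed, is condition (5): it is the step that combines superlimits, the solvability transfer of Facts \ref{solv-downward} and \ref{semisolv-fact}, and the weak GCH, and one must track the degenerate configurations ($\mu = \lambda^+$, $\mu < \lambda^{+\omega}$, $\mu \ge \lambda^{+\omega}$) so that the semisolvability required at $(\lambda')^+$ never slips past $\mu$. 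Beyond this bookkeeping I expect no serious obstacle, since Theorem \ref{categ-main} does all of the heavy lifting.
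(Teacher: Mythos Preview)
Your proof is correct and follows the same approach as the paper: verify the hypotheses of Theorem \ref{categ-main} by deriving no maximal models from the superlimits, pushing $\mu$-semisolvability down via Fact \ref{solv-downward}, and extracting nice stability from Fact \ref{semisolv-fact}. The paper's own proof is a three-line sketch of exactly this; you have merely filled in details it leaves implicit, namely the degenerate case $\mu = \lambda^+$, the identification $\Kslp{\lambda}_{\ge\lambda} = \K_{\ge\lambda}$ forced by categoricity in $\lambda$, and the bookkeeping that the required nice stability and $\WGCH$ are only needed on $[\lambda^{+2},\mu_0)$ rather than all of $[\lambda,\mu)$.
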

\begin{proof}
  By definition of superlimits, $\K_{(\lambda, \mu)}$ has no maximal models. Categoricity in $\lambda$ and $\mu$, together with existence of arbitrarily large models, also imply that $\K_\lambda$ and $\K_\mu$ have no maximal models. Thus $\K_{[\lambda, \mu]}$ has no maximal models. By Fact \ref{semisolv-fact}(\ref{semisolv-4}), $\K$ is nicely sl-$[\lambda, \mu)$-superstable. Thus the hypotheses of Corollary \ref{categ-main-cor} hold, hence its conclusion.
\end{proof}

As in the proof of Corollary \ref{event-nmm}, we deduce that AECs with a superlimit everywhere satisfy the eventual categoricity conjecture. Note that having superlimits imply no maximal models, but as opposed to Corollary \ref{event-nmm}, Corollary \ref{sl-categ-cor} assumes only $\WGCH$ instead of $\GCHWD$.

In the opinion of the author, Corollaries \ref{sl-frame-cor} and \ref{sl-categ-cor} suggest that AECs with superlimits are an interesting framework to study. This was already foreshadowed by Shelah in the introduction to \cite{shelahaecbook}. Specifically, Shelah conjectures \cite[N.4.6]{shelahaecbook} that a variation of the eventual categoricity conjecture holds, but with solvability replacing categoricity. Corollary \ref{sl-categ-cor} shows that (a weakening of) this conjecture implies the eventual categoricity conjecture, assuming $\WGCH$. Thus the superlimit and (semi)solvability spectrums seem to be the key to an understanding of superstability and categoricity in abstract elementary classes.

\bibliographystyle{amsalpha}
\bibliography{categ-amalg}

\end{document}